\setlist[enumerate,1]{label=\textup{(\arabic*)}}
\theoremstyle{plain}
\newtheorem{thm}{Theorem}[section]
\newtheorem{lem}[thm]{Lemma}
\newtheorem{prop}[thm] {Proposition}
\newtheorem{cor}[thm]{Corollary}
\theoremstyle{definition}
\newtheorem{defn}[thm]{Definition}
\newtheorem{rem}[thm]{Remark}
\newtheorem{ex}[thm]{Example}
\newcommand{\eqn}{\begin{equation}}
\newcommand{\eqne}{\end{equation}}
\newcommand*{\UMult}{\mathcal UM}
\newcommand*{\Mult}{\mathcal M}
\newcommand{\donto}{\stackrel{\mathrm{dense}}{\onto}} 
\newcommand{\anti}{\stackrel{\mathrm{anti}}{\cong}} 
\mathchardef\mhyphen="2D 
\DeclareMathOperator{\Aut}{Aut}
\DeclareMathOperator{\PBij}{PBij}
\DeclareMathOperator{\PHomeo}{PHomeo}
\DeclareMathOperator{\PAut}{PAut}
\DeclareMathOperator{\iso}{iso}
\DeclareMathOperator{\op}{op}
\DeclareMathOperator{\red}{r}
\DeclareMathOperator{\alg}{alg}
\DeclareMathOperator{\tight}{tight}
\DeclareMathOperator{\spa}{spa}
\DeclareMathOperator{\reg}{reg}
\DeclareMathOperator{\SPIso}{SPIso}
\DeclareMathOperator{\PIso}{PIso}
\DeclareMathOperator{\clsp}{\overline{span}}
\DeclareMathOperator{\spane}{span}
\DeclareMathOperator{\supp}{supp}
\DeclareMathOperator{\Bis}{Bis}
\newcommand{\id}{\mathrm{id}} 
\newcommand*{\Ad}{\textup{Ad}}
\newcommand{\B}{\mathcal B}
\newcommand{\LL}{\mathcal L}
\newcommand{\G}{\mathcal G} 
\newcommand{\EE}{\mathcal E}
\newcommand{\RR}{\mathcal R}
\newcommand{\F}{\mathbb F}
\newcommand{\p}{\varphi}
\renewcommand{\L}{\mathcal L}
\newcommand{\FF}{\mathcal F}
\newcommand{\C}{\mathbb C}
\newcommand{\R}{\mathbb R}
\newcommand{\N}{\mathbb N}
\newcommand{\T}{\mathbb T}
\newcommand{\cst}{\ifmmode\mathrm{C}^*\else{$\mathrm{C}^*$}\fi}
\newcommand*{\onto}{\twoheadrightarrow}
\keywords{Banach algebra; groupoid; Banach space representation; Lp-operator algebra}
\subjclass[2000]{47L10, 46H15, 22A22}
\begin{document}
\author{Krzysztof Bardadyn}

\author{Bartosz Kwa\'sniewski}

\author{Andrew McKee}

\date{\today}

\title{\small Banach algebras associated to twisted \'etale groupoids:\\
inverse semigroup disintegration
 \\
and representations on $L^p$-spaces}

\begin{abstract} 
We introduce Banach algebras associated to  twisted \'etale  groupoids $(\mathcal{G},\mathcal{L})$ and to twisted inverse semigroup actions. 
This provides a unifying framework for numerous recent papers on $L^p$-operator algebras and the theory of groupoid $C^*$-algebras. 
We prove  disintegrations theorems that allow  to study Banach algebras associated to $(\mathcal{G},\mathcal{L})$ as universal Banach algebras generated by $C_0(X)$ and a twisted inverse semigroup $S$ of partial isometries subject to some relations. 
They work best when  the target of a representation is a dual Banach algebra. For representations on dual Banach spaces, they 
 allow to extend representations to twisted Borel convolution algebras, which is crucial when the groupoid is non-Hausdorff.

We establish fundamental norm estimates and hierarchy for full and reduced $L^p$-operator algebras for $(\mathcal{G},\mathcal{L})$ and  $p \in [1,\infty]$, whose special cases have been studied recently by Gardella--Lupini, Choi--Gardella--Thiel and Hetland--Ortega.  
We show that in the constructions of $L^p$-analogues of Cuntz or graph algebras, by Phillips and Corti\~{n}as--Rodr\'{\i}guez, the use of spatial partial isometries is not an assumption, in fact it is forced by the relations. 
We also introduce tight inverse semigroup Banach algebras that cover ample groupoid Banach algebras, and discuss Banach algebras associated to  directed graphs. 

Our results cover non-Hausdorff \'{e}tale groupoids and both real and complex algebras. 
Some of the results are new  already for complex $C^*$-algebras.
\end{abstract}

\maketitle

%
%
\section{Introduction}
\label{sec:Introduction}

The theory of Banach algebras generated by groups is a classical part of  harmonic analysis. 
In the realm of $C^*$-algebras it was extended with huge success to transformation groups, groupoids and even more general structures and actions.  
This inspired a number of attempts to generalize this theory to Banach algebras or at least to operator algebras acting on $L^p$-spaces.  
A series of initial preprints about Banach algebra crossed products and $L^p$-operator algebras  
appeared in the 2010s, see \cite{DDW}, \cite{PhLp1}, \cite{PhLp2a}, \cite{Phillips}.
Phillips's program to create an analogue of the $C^*$-theory for $L^p$-operator algebras has noticeably sparked in recent years, see the  survey paper \cite{Gardella} and/or \cite{PH}, \cite{Gardella_Lupini17}, \cite{cortinas_rodrogiez}, \cite{cgt}, \cite{Gardella_Thiel2}, \cite{Austad_Ortega}, \cite{Hetland_Ortega}. 
A number of significant $C^*$-algebraic constructions and results have been carried over to the $L^p$-setting. 
For $p\neq 2$ the proofs usually require different techniques, and also  some new phenomena occur. 
An initial motivation behind the present paper was to solve \cite[Problem 8.2]{Gardella_Lupini17} which asks whether a standard simplicity criterion for groupoid $C^*$-algebras works for $L^p$-groupoid algebras. 
Working on this, we realized we can develop a neat theory of Banach algebras associated to twisted \'etale groupoids that not only allows us to prove strong results, such as simplicity and pure infiniteness, see \cite{BK}, \cite{BKM}, but also gives a deep insight into representation theory that allows us to define and study interesting Banach algebras in terms of generators and relations. 
This yields a general theoretical background that covers $L^p$-analogs of Cuntz algebras \cite{PhLp2a}, graph $L^p$-algebras \cite{cortinas_rodrogiez},  Cuntz-like Banach algebras \cite{Daws_Horwath}, (twisted) transformation group Banach algebras \cite{DDW}, \cite{Phillips},   AF-algebras, $\ell^p$-uniform Roe algebras \cite{Spakula_Willett}, \cite{Chung_Li}, groupoid $L^p$-operator algebras  \cite{Gardella_Lupini17}, \cite{cgt}, or reduced twisted groupoid Banach algebras  \cite{Austad_Ortega}, \cite{Hetland_Ortega}, which have so far often been studied using ad hoc methods. 
This theory has a potential to be applied to numerous other constructions. To illustrate this, in the present paper we introduce Banach algebras associated to inverse semigroups and to directed graphs. 
Further examples and applications can be found in  \cite{BKM}. 
Also, since all classifiable simple $C^*$-algebras are modeled by twisted \'etale groupoids \cite{Li}, it is reasonable to expect such models to play a crucial role in other contexts. 
On top of that we consider general Banach algebras over the field $\F=\R,\C$ of real or complex numbers, so in particular our results apply to real $C^*$-algebras,  which has recently have been experiencing a renaissance  due to numerous important applications, see \cite{Rosenberg}, \cite{Blecher} and references therein.
Real crossed products and real Cuntz $C^*$-algebras were already considered, see \cite{Schroder}, \cite{Rosenberg}.

One of the fundamental ideas, that stands behind the success of the theory of $C^*$-algebras associated to \'etale groupoid $C^*$-algebras, and Cartan $C^*$-subalgebras in particular, is that algebras associated to such groupoids can be viewed as crossed products by actions of (discrete) inverse semigroups, cf. \cite{Renault_book}, \cite{Paterson}, \cite{Sims}, 
\cite{Kumjian0}, \cite{Exel}, \cite{Buss_Exel}, \cite{Buss_Exel2}, \cite{BHM} \cite{Kwa-Meyer}. 
We make this idea a starting point and the main tool of our analysis.
In particular, we generalize the construction of crossed products by twisted inverse semigroup actions for $C^*$-algebras~\cite{Sieben98}, \cite{Buss_Exel}, to the realm of Banach algebras. 
Since the category of (real or complex) $C^*$-algebras with $*$-homomorphisms as morphisms is a full subcategory of Banach algebras with contractive homomorphisms as morphisms, we decided to work in this Banach algebra category. 
Thus by a \emph{representation of a Banach algebra} $A$ in a Banach algebra $B$ we mean a contractive homomorphism $\pi:A\to B$. 
For an inverse semigroup $S$ a representation is a semigroup homomorphism $v : S\to B_1\subseteq B$ into contractive elements of $B$. 
Then the operators $\{v_t\}_{t\in S}\subseteq B$ are \emph{partial isometries} in the sense of Mbekhta~\cite{Mbekhta} (which are the usual partial isometries when $B$ is a $C^*$-algebra). 
For a \emph{twisted  action} $(\alpha, u)$, \`{a} la Buss--Exel~\cite{Buss_Exel}, of the inverse semigroup $S$ on a Banach algebra $A$ we define the crossed product $A \rtimes_{(\alpha, u)} S$ as a Banach algebra universal for \emph{covariant representations} $(\pi,v)$ of $(\alpha, u)$,
where $\pi$ is a representation of $A$ in $B$ and $v$ is a twisted representation of $S$ in $B$ satisfying natural relations similar to those in~\cite{Sieben98}, \cite{Buss_Exel}.
To get uniqueness of \emph{disintegration} of  representations of $A \rtimes_{(\alpha, u)} S$ we need to impose a \emph{normalization} condition on covariant representations. 
In different situations this could mean different things.
When the domains of the action are not unital this requires weak limits and so normalization works well for instance when $(B,B_*)$ is a dual Banach algebra in the sense of Runde, see \cite{Runde}, \cite{Daws}. The latter case covers  covariant representations on reflexive Banach spaces $E$. In general, one needs to pass to the double dual algebra $B''$ with one of the Arens products. 
We discuss these issues in detail in Section \ref{sec:Inverse_semigroup_crossed_products}.

For a twisted \'etale groupoid $(\G,\LL)$,   with a locally compact Hausdorff unit space $X$,  the Connes $*$-algebra $\mathfrak{C}_c(\G,\LL)$ of quasi-continuous compactly supported sections  has plenty of natural Banach algebra completions.
We decided to work only with norms that coincide with the supremum norm $\|\cdot\|_{\infty}$ for sections with supports in open bisections (apparently all norms considered so far in the literature have this property). 
For any unital  inverse semigroup $S$ of open bisections covering $\G$ we define the \emph{twisted groupoid Banach algebra} $F^S(\G,\LL)$  of $(\G,\LL)$.
If $S$ is wide and consists of bisections where the bundle $\LL$ is trivial (which can be always arranged), we have an inverse semigroup action $(\alpha, u)$ of $S$ on $C_0(X)$ that can be used to recover $(\G,\LL)$ (this is implicit in \cite{Buss_Exel}, \cite{Buss_Exel2}, and we make it explicit in Subsection \ref{subsec:twisted_groupoids_vs_twisted_actions}).
Our first main result (Theorem~\ref{thm:disintegration}) shows that   we have an isometric isomorphism
\[
    F^S(\G,\LL)\cong C_0(X)\rtimes_{(\alpha,u)} S
\]
that establishes a bijective correspondence between representations of $F^S(\G,\LL)$ and normalized covariant representations of $(\alpha,u)$. 
In our second disintegration theorem we use that the action $(\alpha, u)$ of $S$ on $C_0(X)$ extends naturally to an action $(\widetilde{\alpha}, u)$ of $S$ on the algebra $\B(X)$ of bounded Borel functions on $X$. We show that  the associated crossed product may be identified with  a closure of certain convolution 
algebra $\B^S(\G,\LL)$ of Borel sections of $\LL$ (see Theorem \ref{thm:Borel_extension_of_representations})
so that we have
$$
F^S(\G,\LL)=\overline{\mathfrak{C}_c(\G,\LL)}^{\| \cdot \|_{\max}^{S}}\subseteq   \overline{\B^S(\G,\LL)}^{\| \cdot \|_{\max}^{S}}\cong\B(X)\rtimes_{(\widetilde{\alpha},u)} S.
$$
Moreover, every representation of $F^S(\G,\LL)$ on a dual Banach space $E$ extends to a representation of $\B(X)\rtimes_{(\widetilde{\alpha},u)} S$,
and so it disintegrates to a covariant representation of $(\widetilde{\alpha}, u)$ on $E$. 
This can be viewed (see Corollary \ref{cor:Borel_for_C*-algebras}) as a far reaching generalization of similar recent results in \cite{BussMartinez}, \cite{BGHL}, which   were recently used in the study of amenability of the groupoid $C^*$-algebra $C^*(\G)$ when $\G$ is non-Hausdorff. We believe this tool will be useful in other contexts.

These results can be applied to Banach algebras $F_{\RR}^S(\G,\LL)$ which are universal for representations of $F^S(\G,\LL)$ from a fixed class $\RR$ of representations.
For $p\in [1,\infty]$  we define the \emph{universal twisted groupoid $L^p$-operator algebra} $F^p(\G,\LL)$ as $F_{\RR}^S(\G,\LL)$, where $\RR$ is the class of all  representations on spaces $L^p(\mu)$  whose restriction to $C_0(X)$ is positive in Banach lattice sense. Here the main structural result (Theorem \ref{thm:initial_on_L^p_full}) is that  this definition does not depend on the choice of $S$, and in the complex case  $F^p(\G,\LL)$ is universal for all representations  in all $L^p$-operator algebras.
In fact, our disintegration theorem allows us to prove (see Theorem~\ref{thm:L_p_norm_estimates})
that denoting by $\|\cdot \|_{L^p}$  the norm in $F^p(\G,\LL)$,  for all $p,q\in [1,\infty]$ with $1/p+1/q=1$, we have
the following  estimates
\[
    \| f \|_{L^p} \leq \| f \|_{L^1}^{1/p} \, \| f \|_{L^\infty}^{1/q} \leq \|f\|_{I}, \qquad f\in  \mathfrak{C}_c(\G,\LL) 
\]
where  
\[
    \| f \|_{L^1} := \sup_{x\in X}\sum_{d(\gamma)=x} |f(\gamma)|,\qquad  \|f\|_{L^\infty} := \sup_{x\in X}\sum_{r(\gamma)=x} |f(\gamma)|
\]
 and  $\|f\|_{I} := \max\{\|f\|_{L^1},\|f\|_{L^{\infty}}\}$ is Hahn's \emph{$I$-norm}. 
These estimates  are already interesting when $p=q=2$, as then they show that the maximal $C^*$-norm on $\mathfrak{C}_c(\G,\LL)$ is $I$-norm bounded, a fact that for non-Hausdorff $\G$ 
is quite non-trivial and was only recently proved in \cite{Clark_Zimmerman} under the assumption that $\G$ is  second countable and there is no twist. 
These estimates imply  that $L^p$-operator algebra crossed products agree with $L^p$-operator algebras of the corresponding transformation groupoids --  the problem  formulated below \cite[Proposition 6.5]{cgt}. They also show that our definition  agrees with the one given by Gardella--Lupini \cite{Gardella_Lupini17} (in the complex, untwisted, separable, finite $p$ case). In general, the algebra $F^p(\G,\LL)$ is a universal model for \emph{the reduced twisted groupoid $L^p$-operator algebra} $F^p_{\red}(\G,\LL)$ given by a concrete representation, which was recently studied by Hetland--Ortega \cite{Hetland_Ortega}, and by   Choi--Gardella--Thiel \cite{cgt}. One more  important consequence of the above norm estimates is that for $p=1,\infty$ the  full and reduced groupoid algebras always coincide.

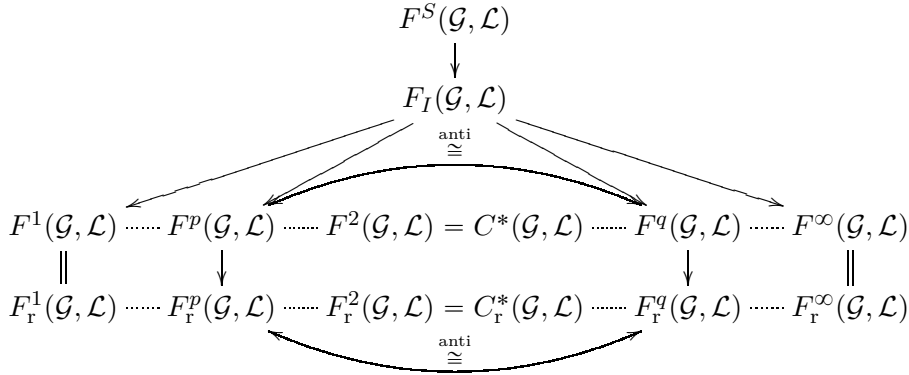
\begin{figure}[ht]
$
\xymatrixcolsep{1pc} \xymatrixrowsep{1pc} 
\xymatrix{
 &  &  \ar@{->}[lldd]  \ar@{->}[ldd] F_I(\G,\LL)   \ar@{->}[ddr]  \ar@{->}[ddrr]&  &  
\\
 &  &  &  &  
\\  
F^1(\G,\LL) \ar@{=}[d] \ar@{.}[r] & F^{p} (\G,\LL) \ar@{->}[d]  \ar@{.}[r]   \ar@/^2pc/[rr]^{\anti} & F^{2} (\G,\LL)= C^*(\G,\LL)   \ar@{.}[r]  & \ar@/_2pc/[ll] F^{q} (\G,\LL) \ar@{->}[d] \ar@{.}[r] & 
 F^\infty(\G,\LL)  \ar@{=}[d] 
\\
F^1_{\red}(\G,\LL) \ar@{.}[r] 
& F^{p}_{\red} (\G,\LL)  \ar@{.}[r]   \ar@/_2pc/[rr]  & F^2_{\red} (\G,\LL)= C^*_{\red}(\G,\LL)   \ar@{.}[r]  &  F^{q}_{\red} (\G,\LL)   \ar@/^2pc/[ll]_{\anti} \ar@{.}[r] & 
 F^\infty_{\red}(\G,\LL) 
}
$
\caption{Hierarchy of twisted groupoid Banach algebras}\label{fig.algebras}
\end{figure} 
The hierarchy of the discussed  algebras is summarized in Figure~\ref{fig.algebras}, where
all algebras are completions of $\mathfrak{C}_c(\G,\LL)$, in particular, $F_I(\G,\LL) := \overline{\mathfrak{C}_c(\G,\LL)}^{\|\cdot\|_{I}}$, and $C^*(\G,\LL)$ and $C^*_{\red}(\G,\LL)$ are standard $C^*$-algebras associated to $(\G,\LL)$. The algebras in the middle column are Banach $*$-algebras, the horizontal anti-isomorphisms are given by the involution in $\mathfrak{C}_c(\G,\LL)$, $1/p+1/q=1$, and the downward arrows are representations extending identities on $\mathfrak{C}_c(\G,\LL)$:

As another application of our disintegration theorems, we  give a geometric description of representations  of $F^{p} (\G,\LL)$ on spaces $L^p(\mu)$,  where $\mu$ is a localizable measure. 
Following Phillips~\cite{PhLp1} and Gardella~\cite{Gardella} we define \emph{spatial partial isometries} as certain \emph{weighted composition operators} on $L^p(\mu)$. 
They form a natural inverse semigroup $\SPIso(L^p(\mu))$ of partial isometries in $B(L^{p}(\mu))$. 
In fact, using the well established notion of an \emph{$L^p$-projection}, see \cite{Agniel}, \cite{BDEGGMM}, we define \emph{$L^p$-partial isometries} on an arbitrary Banach space $E$. 
For $p\neq 2$, they form an inverse semigroup, and when $E=L^p(\mu)$ they coincide with spatial partial isometries by the (generalized) Lamperti Theorem (Theorem~\ref{thm:spatial^partial_isometries_description}). 
Our third main result (Theorem \ref{thm:spatial_representations_of_groupoid_algebras}) says that for every $p\in [1,\infty]$,
the algebra $F^p(\G,\LL)$ is universal for \emph{spatial covariant representations}, i.e.\ covariant representations $(\pi,v)$ where  $v:S\to \SPIso(L^p(\mu))$ takes values in spatial partial isometries and $\pi:C_0(X)\to B(L^p(\mu))$ acts by multiplication operators (by functions from  $L^{\infty}(\mu)$).
This in particular gives a natural bijective correspondence between spatial representations of $F^p(\G,\LL)$ and $F^{p'}(\G,\LL)$ for any $p,p'\in (1,\infty)$, 
which is quite surprising as  in many cases there is no non-zero continuous homomorphism from $F^p(\G,\LL)$ to $F^{p'}(\G,\LL)$.  In the complex case, for $p \in (1,\infty)\setminus\{2\}$,  every non-degenerate representation of $F^p(\G,\LL)$ on $L^{p}(\mu)$ is spatial.

As an illustration, and having future applications in mind, see e.g.\ \cite{BKM}, we introduce the universal and the \emph{tight Banach algebra of an inverse semigroup} $S$.
The new paradigm that appears here is that for general representations $v : S \to B$ into a Banach algebra $B$ we need to impose a condition that certain associated projections are \emph{jointly $\F$-contractive} (see Definitions~\ref{defn:jointly_contractive} and \ref{defn:representation_semigroup_contractive}), which is related to a number of similar conditions  studied by various authors, cf. \cite{Agniel}, \cite{Bernau_Lacey2}, \cite{Jamison}, \cite{Stacho_Zalar}. 
This condition is automatic when $B$ is a $C^*$-algebra or when $v : S \to \SPIso(L^p(\mu))$ takes values in spatial partial isometries, and therefore has not been visible in previous works.
The tight algebras are modeled by the tight groupoid of Exel \cite{Exel}.  Based on the algebraic setting of Steinberg--Szak\'{a}cs~\cite{Steinberg_Szakacs} and our results, we characterise when representations of such algebras are isometric on the unit subalgebra, which leads to convenient improvements even for the already existing theory of tight $C^*$-algebras.
 
We  define the \emph{Banach algebra $F(Q)$ of a directed graph} $Q$ as the universal algebra for a Banach $Q$-family of partial isometries and projections,
and use our results to show it is naturally isomorphic to a groupoid Banach algebra and a tight inverse semigroup Banach algebra. 
Our results imply that non-degenerate representations of the complex Banach algebra $F(Q)$ on $L^p$-spaces, for $p\in (1,\infty)\setminus\{2\}$, are necessarily given by $Q$-families with values in spatial partial isometries $\SPIso(L^p(\mu))$. 
Thus the use of spatial partial isometries in the definition of the graph $L^p$-operator algebras $F^p(Q)$ in \cite{cortinas_rodrogiez} and \cite{PhLp1} is not an assumption --- it is forced by the groupoid model. When $Q$ has only one vertex, $F(Q)$ is a Banach Cuntz algebra which seems to be more tractable than the associated Cuntz-like Banach $*$-algebra of  Daws--Horv\'{a}th~\cite{Daws_Horwath} (cf.\ Remark~\ref{rem:Cuntz-algebras}).



The present paper is organized as follows. 
After preliminaries on $L^p$-operator algebras, groupoids and partial isometries in Banach algebras, in Section \ref{sec:Preliminaries}, we define crossed products for twisted inverse semigroup actions on Banach algebras in Section~\ref{sec:Inverse_semigroup_crossed_products}. 
Section~\ref{sec:GroupoidBanachAlgebras} introduces the title concept - twisted groupoid Banach algebras, and presents the general inverse semigroup disintegration theorems. 
In Section~\ref{sec:Groupoid Lp-operator algebras} we analyze representations and $L^p$-operator algebras associated to $(\G,\LL)$. Section \ref{subsec:Banach_algebras_inverse_semigroups} is devoted to inverse semigroup algebras and tight groupoids.

%
\subsection*{Acknowledgements}

The first two named authors were supported by the National Science Center (NCN) grant no.~2019/35/B/ST1/02684.
The second named author  was  supported by the National Science Centre, Poland, through the WEAVE-UNISONO grant no. 2023/05/Y/ST1/00046. 
The second named author would like to thank Chris Phillips, Eusebio Gardella and Hannes Thiel for sharing their expert knowledge so generously, during various meetings and via e-mail. 
We thank Jan Gundelach for the careful reading of the first draft of the paper and his comments.
Finally, we  thank the anonymous referee for finding a gap in the proof of Theorem~\ref{thm:L_p_norm_estimates} when $\G$ is non-Hausdorff.
To fix it efficiently, we developed the Borel extension-disintegration  described in  Subsection~\ref{sec:Borel_disintegration}.

\section{Representations of inverse semigroups and $C_0(X)$}
\label{sec:Preliminaries}

Most of the facts presented here are probably known to experts, so we included only sketches of, or coordinates leading to, the arguments when we were not able to find a reference in appropriate generality. 
We discuss representations of $C_0(X)$ on  $L^p$-spaces with $p\in [1,\infty]$, and the key result here is Theorem \ref{thm:L^p_representations of C_0(X)}. For an introduction to $L^p$-operator algebras we refer to \cite{Gardella}. We generalize spatial partial isometries of Phillips and Gardella by defining $L^p$-partial isometries on an arbitrary Banach space $E$. The link between the two is provided by a version of the Banach--Lamperti theorem (Theorem \ref{thm:spatial^partial_isometries_description}).
Also we recall the fundamental relationship between actions of inverse semigroups on topological spaces and \'etale groupoids.

\subsection{Spatial isometries on $L^p$-spaces}

Throughout we fix $p \in [1,\infty]$ and the field $\F = \R,\C$ of real or complex numbers. 
If $(\Omega,\Sigma, \mu)$ is a measure space we write $L^p(\mu)$ for the corresponding Lebesgue space viewed as a Banach space over $\F$. 
Occasionally, to emphasize which field is used, we will write $L^p(\mu, \F)$. 
Recall that the Banach space $L^p(\mu)$ can be recovered from $\mu : \Sigma \to [0,\infty]$, or in fact from the measure $[\mu] : [\Sigma] \to  [0,\infty]$ defined on the Boolean algebra $[\Sigma]$ obtained from $\Sigma$ by identifying sets of measure zero (the presentation in \cite{Gardella}, \cite{Gardella_Thiel2} uses this `point-free' approach to $L^p$-spaces, but we stay with the old-fashioned approach).
Also if $p<\infty$, then $L^p(\mu) \cong L^p(\mu_{f})$ where $\mu_{f}(A) = \sup\{\mu(B) : A\supseteq B\in \Sigma, \mu(B)<\infty \}$ is the \emph{semi-finite} part of $\mu$, and the isomorphism is identity on integrable simple functions. 
Recall that a measure $\mu$ is called \emph{localizable} if it is semi-finite and $[\Sigma]$ is Dedekind complete.
In general, if $\mu$ is semi-finite, then $[\mu]$ extends uniquely to the Dedekind completion of $[\Sigma]$ giving a localizable measure on the completion \cite{Fremlin}. 
Using the Loomis--Sikorski Theorem we may represent this abstract measure by a concrete localizable measure space $(\overline{\Omega},\overline{\Sigma}, \overline{\mu})$ (see \cite{Fremlin}, \cite{BGL}) and again $L^p(\mu)\cong L^p(\overline{\mu})$ by the isometric isomorphism sending characteristic functions to characteristic functions. 
Thus when considering $L^p$-spaces with $p<\infty$ we may always assume the measure is localizable, or even decomposable (strictly localizable), see \cite[Corollary on page 136]{Lacey} or \cite[Theorem 7.17]{BGL}).
For $p = \infty$ the assumption that $\mu$ is localizable is also natural. For instance, $\mu$ is localizable if and only if $L^{\infty}(\mu)$ is canonically isomorphic to the dual of $L^1(\mu)$ if and only if $L^{\infty}(\mu)$ is canonically a von~Neumann algebra acting on $L^2(\mu)$ (and every abelian $W^*$-algebra is of this form), see \cite{BGL} and references therein.

\begin{defn}\label{de:SetIsomorphism}
A \emph{set isomorphism} from a measure space $(\Omega_\mu,\Sigma_\mu, \mu)$ onto a measure space $(\Omega_\nu,\Sigma_\nu, \nu)$ is a map $\Phi : \Sigma_\nu \to \Sigma_\mu$ that descends to an isomorphism  of Boolean algebras $[\Phi] : [\Sigma_{\nu}] \to [\Sigma_{\mu}]$. 
Thus, since the Boolean structure is determined by its preorder, it suffices to assume
\begin{enumerate} 
    \item\label{item:set_automorphism1} $\nu(C\setminus D)=0$ if and only if $\mu(\Phi(C)\setminus \Phi(D)) = 0$ for all $C, D\in \Sigma_{\nu}$;
        \item\label{item:set_automorphism2} for each $C\in \Sigma_{\mu}$ there is $\Phi^*(C)\in \Sigma_{\nu}$ such that $\Phi(\Phi^*(C)) = C$ up to a $\mu$-null set.
\end{enumerate} 
Condition \ref{item:set_automorphism2} gives a map $\Phi^* : \Sigma_{\mu}\to \Sigma_{\nu}$, which is a  set isomorphism that descends to an isomorphism $[\Phi^*] : [\Sigma_{\mu}] \to [\Sigma_{\nu}]$ inverse to $[\Phi]:[\Sigma_{\nu}]\to [\Sigma_{\mu}]$.
\end{defn}

We denote by $L_0(\mu)$ the algebra of $\F$-valued measurable functions identified up to equality $\mu$-almost everywhere. 
A set isomorphism $\Phi : \Sigma_\nu \to \Sigma_\mu$ uniquely defines a linear operator $T_{\Phi} : L_0(\nu) \to L_0(\mu)$ such that 
\[ 
    T_{\Phi} 1_{C} = 1_{\Phi(C)}, \qquad C\in \Sigma_{\nu},
\]
and $T_{\Phi}$ preserves (monotone) limits, cf. \cite[Proposition 5.6]{PhLp1}. 
This is an algebra isomorphism $L_0(\nu) \cong L_0(\mu)$ whose inverse is $T_{\Phi^*}$.
We call $T_{\Phi}$ a (generalized) \emph{composition operator}. 
It preserves the range of functions and in particular it restricts to an isometric isomorphism  $L^{\infty}(\nu) \cong L^{\infty}(\mu)$ \cite[Proposition 7.16]{BGL}. 
However, unless $\Phi$ preserves measures, it does not preserve the $L^p$-spaces for $p < \infty$. 
To force this, we will consider \emph{weighted composition operators} $a T_{\Phi} : L_0(\nu) \to L_0(\mu)$ where $a \in L_0(\mu)$ and $[a T_{\Phi}] \xi (x) := a(x) T_{\Phi}(\xi) (x)$.
Note that $\mu\circ \Phi$ is a measure equivalent to $\nu$. Thus if $\nu$ is localizable then 
the Radon--Nikodym derivative $\frac{\mu\circ \Phi}{\nu}$ exists, and is strictly positive $\nu$-almost everywhere, see \cite[Theorem 2.7]{Gardella_Thiel2}, \cite[Theorem 4.4]{BGL}.
Similar comments apply to the measures $\nu \circ \Phi^*$ and $\mu$.

\begin{lem}\label{le:SpatialIsometries}
Consider localizable measures $\mu$ and $\nu$ and $p\in[1,\infty]$.    
For any set isomorphism $\Phi:\Sigma_\nu\to \Sigma_\mu$ the weighted composition operator 
\[
    U_{\Phi} := \left( \frac{d\nu\circ\Phi^{*}}{d\mu} \right)^{\frac{1}{p}} T_{\Phi}
\]
is an invertible isometry $U_{\Phi} : L^p(\nu) \to L^p(\mu)$.
Moreover, the group $UL^{\infty}(\mu)$ of measurable modulus one functions $\omega \in L_0(\mu)$  embeds into the group of invertible isometries of $L^p(\mu)$ as multiplication operators. 
Thus for any $\omega\in UL^{\infty}(\mu)$ the weighted composition operator $\omega U_{\Phi} : L^p(\nu) \to L^p(\mu)$ is an invertible isometry.
\end{lem}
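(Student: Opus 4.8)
The plan is to verify the claimed properties by a direct computation, exploiting that $T_\Phi$ is an algebra isomorphism compatible with monotone limits and that Radon--Nikodym derivatives behave well under the set isomorphism $\Phi$ and its inverse $\Phi^*$.

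First I would establish the $p < \infty$ case. For a characteristic function $1_C$ with $C \in \Sigma_\nu$, we have $U_\Phi 1_C = \left(\frac{d\nu\circ\Phi^*}{d\mu}\right)^{1/p} 1_{\Phi(C)}$, so
\[
    \|U_\Phi 1_C\|_p^p = \int_{\Phi(C)} \frac{d\nu\circ\Phi^*}{d\mu}\, d\mu = (\nu\circ\Phi^*)(\Phi(C)) = \nu(C),
\]
using that $\Phi^*(\Phi(C)) = C$ up to a $\mu$-null set and hence also up to a $(\nu\circ\Phi^*)$-null set (recall $\nu\circ\Phi^*$ is equivalent to $\mu$). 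By linearity this gives $\|U_\Phi f\|_p = \|f\|_p$ for integrable simple functions $f$; since these are dense in $L^p(\nu)$ for $p<\infty$ and $U_\Phi$ is linear, $U_\Phi$ extends to an isometry $L^p(\nu)\to L^p(\mu)$. For surjectivity I would identify the inverse explicitly: the natural candidate is $U_{\Phi^*} = \left(\frac{d\mu\circ\Phi}{d\nu}\right)^{1/p} T_{\Phi^*}$, and one checks $U_{\Phi^*} U_\Phi = \id$ on characteristic functions using $T_{\Phi^*}T_\Phi = \id$ (since $T_{\Phi^*}$ is the inverse of $T_\Phi$ by the remark after Definition~\ref{de:SetIsomorphism}) together with the chain rule for Radon--Nikodym derivatives, $\frac{d\mu\circ\Phi}{d\nu}\cdot\left(\frac{d\nu\circ\Phi^*}{d\mu}\circ\Phi\right) = 1$ $\nu$-a.e.; symmetrically $U_\Phi U_{\Phi^*} = \id$, so $U_\Phi$ is an invertible isometry. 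The $p=\infty$ case is easier: then $U_\Phi = T_\Phi$ (the derivative is raised to the power $0$), which is an isometric isomorphism $L^\infty(\nu)\cong L^\infty(\mu)$ by the already-cited \cite[Proposition 7.16]{BGL}.

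Next, for the statement about $UL^\infty(\mu)$: if $\omega\in L_0(\mu)$ has $|\omega|=1$ $\mu$-a.e., then multiplication by $\omega$ is clearly linear, maps $L^p(\mu)$ to itself with $\|\omega\xi\|_p = \||\omega|\,|\xi|\|_p = \|\xi\|_p$ (pointwise $|\omega\xi| = |\xi|$), and has inverse multiplication by $\overline\omega$ (for $\F=\C$) or by $\omega$ itself (for $\F=\R$, where $\omega$ is $\pm1$-valued), so it is an invertible isometry; the assignment $\omega\mapsto M_\omega$ is visibly a group homomorphism, and it is injective since $M_\omega = \id$ forces $\omega = 1$ $\mu$-a.e. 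Composing, $\omega U_\Phi = M_\omega\circ U_\Phi$ is a composition of invertible isometries $L^p(\nu)\to L^p(\mu)\to L^p(\mu)$, hence an invertible isometry, which is the final assertion.

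The main obstacle is the bookkeeping with the two Radon--Nikodym derivatives and the null-set caveats in condition~\ref{item:set_automorphism2}: one must be careful that $\Phi(C)$ and $\Phi^*$ are only defined up to null sets, and that the relevant derivatives exist and are strictly positive a.e.\ (which is exactly why localizability of both $\mu$ and $\nu$ is assumed, via \cite[Theorem 4.4]{BGL}). The cleanest route is to do everything on characteristic functions, where the Boolean-algebra isomorphism $[\Phi]$ makes the identities transparent, and then invoke density and continuity — so the ``hard part'' is really just organizing the measure-theoretic lemmas (existence, equivalence $\nu\circ\Phi^*\sim\mu$, and the chain rule), all of which are available from the cited references.
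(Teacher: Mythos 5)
Your proof is correct and is exactly the standard direct verification (compute on characteristic functions, use that $(\nu\circ\Phi^*)(\Phi(C))=\nu(C)$, extend by density, exhibit $U_{\Phi^*}$ as the inverse via the Radon--Nikodym chain rule, and compose with modulus-one multiplications); the paper itself gives no argument but simply cites \cite[Lemmas 3.2, 3.3]{Gardella_Thiel2}, where this same computation is carried out. The only nitpick is your null-set bookkeeping in the first display: the relevant fact is that $\Phi^*(\Phi(C))=C$ up to a $\nu$-null set (these sets live in $\Sigma_\nu$), which gives $(\nu\circ\Phi^*)(\Phi(C))=\nu(\Phi^*(\Phi(C)))=\nu(C)$ directly, rather than the ``$\mu$-null, hence $(\nu\circ\Phi^*)$-null'' detour you wrote.
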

\begin{proof}
This is straightforward and known, see, for instance, \cite[Lemmas 3.2, 3.3]{Gardella_Thiel2}. 
\end{proof}

\begin{defn}[\cite{PhLp1}]\label{de:SpatialIsometries}
We call the invertible isometries $\omega U_{\Phi}$ described in Lemma~\ref{le:SpatialIsometries} 
\emph{spatial}.
\end{defn}

\begin{prop}\label{prop:group_of_spatial_isometries} 
We have $(\omega U_{\Phi})^{-1} = T_{\Phi^*}(\overline{\omega}) U_{\Phi^*}$ and $\omega U_{\Phi} \circ  \upsilon  T_{\Psi}= \omega T_{\Phi}(\upsilon) U_{\Phi \circ \Psi}$ for all localizable measures $\mu$, $\nu$, $\eta$, all $p\in[1,\infty]$, set isomorphisms $\Phi : \Sigma_\nu \to \Sigma_\mu$, $\Psi : \Sigma_\nu \to \Sigma_\eta$ and functions $\omega \in UL^{\infty}(\mu)$, $\upsilon \in UL^{\infty}(\nu)$. 

In particular, invertible spatial isometries on $L^p(\mu)$, for localizable $\mu$ and $p \in [1,\infty]$, form a group which is isomorphic to the semi-direct product $UL^{\infty}(\mu) \rtimes \Aut([\Sigma_\mu])$
where $[\Phi] \omega =T_{\Phi}(\omega)$ for $[\Phi]\in \Aut([\Sigma_\mu])$ and $\omega\in UL^{\infty}(\mu)$.
\end{prop}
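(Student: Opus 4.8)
The plan is to prove the composition and inversion formulas first by direct computation, and then to deduce the group structure as a formal consequence.

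First I would verify the composition formula $\omega U_{\Phi} \circ \upsilon T_{\Psi} = \omega T_{\Phi}(\upsilon) U_{\Phi \circ \Psi}$ by unwinding the definition $U_\Phi = \bigl(\tfrac{d\nu\circ\Phi^*}{d\mu}\bigr)^{1/p} T_\Phi$ and using that $T_\Phi$ is an algebra homomorphism $L_0(\eta) \to L_0(\nu)$ (wait --- here $\Psi : \Sigma_\nu \to \Sigma_\eta$, so $T_\Psi : L_0(\eta) \to L_0(\nu)$ and $T_\Phi : L_0(\nu) \to L_0(\mu)$, hence $T_\Phi \circ T_\Psi = T_{\Phi \circ \Psi}$, which follows since both send $1_C \mapsto 1_{\Phi(\Psi(C))}$ and preserve monotone limits). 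The only genuinely non-formal point is the chain rule for Radon--Nikodym derivatives: one needs $\bigl(\tfrac{d\nu\circ\Phi^*}{d\mu}\bigr)^{1/p} \cdot T_\Phi\bigl(\tfrac{d\eta\circ\Psi^*}{d\nu}\bigr)^{1/p} = \bigl(\tfrac{d\eta\circ(\Phi\circ\Psi)^*}{d\mu}\bigr)^{1/p}$ (interpreting the $p=\infty$ case as the trivial weight $1$). I would obtain this by noting $(\Phi\circ\Psi)^* = \Psi^* \circ \Phi^*$ on $[\Sigma]$ and applying the transformation rule $T_\Phi\bigl(\tfrac{d\lambda}{d\nu}\bigr) = \tfrac{d\lambda\circ\Phi^*}{d\mu}\big/\tfrac{d\nu\circ\Phi^*}{d\mu}$ (which is itself a change-of-variables statement for the isomorphism $[\Phi]$, or can be cited from \cite{Gardella_Thiel2}, \cite{BGL}). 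Pushing the scalar weight $\omega$ through $T_\Phi$ uses only that $T_\Phi$ is multiplicative, giving the stray factor $T_\Phi(\upsilon)$; since $|T_\Phi(\upsilon)| = T_\Phi(|\upsilon|) = T_\Phi(1) = 1$, it lies in $UL^\infty(\mu)$. The inversion formula $(\omega U_\Phi)^{-1} = T_{\Phi^*}(\overline\omega) U_{\Phi^*}$ then follows by plugging $\Psi = \Phi^*$ into the composition formula and checking both composites equal the identity, using $\Phi \circ \Phi^* = \id$ on $[\Sigma_\mu]$ and $U_{\id} = \Id$.

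Having these two formulas, the group statement is essentially bookkeeping. Fix a single localizable $\mu$ and $p \in [1,\infty]$. By Lemma~\ref{le:SpatialIsometries} every $\omega U_\Phi$ with $\Phi \in \Aut(\Sigma_\mu)$ (i.e.\ $[\Phi] \in \Aut([\Sigma_\mu])$) is an invertible isometry of $L^p(\mu)$; the composition formula shows this set is closed under composition and the inversion formula shows it is closed under inverses, so it is a subgroup $G$ of the isometry group. To identify $G$ with $UL^\infty(\mu) \rtimes \Aut([\Sigma_\mu])$, define $\rho : UL^\infty(\mu) \rtimes \Aut([\Sigma_\mu]) \to G$ by $\rho(\omega, [\Phi]) = \omega U_\Phi$, where the semidirect product uses the action $[\Phi]\cdot\omega = T_\Phi(\omega)$ (well-defined as an action on $UL^\infty(\mu)$ since $T_\Phi$ restricts to a group automorphism of $UL^\infty(\mu) \subseteq L^\infty(\mu)$ and $T_{\Phi\circ\Psi} = T_\Phi T_\Psi$). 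The composition formula with $\eta = \mu$, $\Psi \in \Aut(\Sigma_\mu)$, $\upsilon \in UL^\infty(\mu)$ reads $(\omega U_\Phi)(\upsilon U_\Psi) = \omega T_\Phi(\upsilon) U_{\Phi\circ\Psi}$, which is exactly the multiplication $(\omega,[\Phi])(\upsilon,[\Psi]) = (\omega\, [\Phi]\!\cdot\!\upsilon,\ [\Phi][\Psi])$ in the semidirect product, so $\rho$ is a homomorphism. Surjectivity is the definition of $G$, and injectivity follows because $\omega U_\Phi = \Id$ forces $U_\Phi = \overline\omega \Id$ to be a multiplication operator; applying it to characteristic functions $1_C$ and using $T_\Phi 1_C = 1_{\Phi(C)}$ together with strict positivity of the Radon--Nikodym weight gives $[\Phi] = \id$ and then $\omega = 1$.

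The main obstacle will be the Radon--Nikodym chain-rule identity in the non-$\sigma$-finite, point-free localizable setting: one must be careful that $T_\Phi$ intertwines Radon--Nikodym derivatives correctly and that all the a.e.\ strict positivity statements needed for injectivity are available at this level of generality. I expect this to be handled cleanly by citing \cite[Theorem 2.7]{Gardella_Thiel2} and \cite[Theorem 4.4, Proposition 7.16]{BGL} rather than reproving it, so that the argument reduces to formal manipulation once those facts are in hand; the rest --- multiplicativity of $T_\Phi$, the semidirect-product identification --- is routine.
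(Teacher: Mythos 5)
Your proposal is correct and follows essentially the same route as the paper, which simply cites the calculations in \cite[Lemmas 3.3, 3.4]{Gardella_Thiel2} (the Radon--Nikodym chain rule for $T_\Phi$, multiplicativity of $T_\Phi$, and the resulting cocycle identity) --- these are exactly the computations you carry out explicitly, and your reading of the misprinted $\upsilon T_\Psi$ as $\upsilon U_\Psi$ is the intended one. The semidirect-product identification, including the injectivity argument via applying $\omega U_\Phi=\Id$ to characteristic functions of finite-measure sets, matches the paper's (cited) argument.
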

\begin{proof}
The first part follows from (the calculations in the proof of) \cite[Lemmas 3.3, 3.4]{Gardella_Thiel2}. It readily implies the second part of the assertion,
see also \cite[Theorem 3.7]{Gardella_Thiel2} or \cite[Proposition 2.10]{Gardella}.
\end{proof}

\begin{prop}[Banach--Lamperti theorem]\label{prop:Banach_Lamperti} 
Let $p \in [1,\infty] \setminus \{ 2 \}$ and $\mu , \nu$ be localizable measures. 
Then every invertible isometry from $L^p(\mu)$ to $L^p(\nu)$ is spatial.
\end{prop}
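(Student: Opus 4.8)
The plan is to establish the Banach--Lamperti theorem by reducing to the classical statement for $\sigma$-finite (or semi-finite) measures and then bootstrapping to the localizable case via a gluing argument over a maximal family of disjoint sets of finite measure. First I would record the classical Lamperti theorem in the form we need: if $(\Omega_\mu,\Sigma_\mu,\mu)$ and $(\Omega_\nu,\Sigma_\nu,\nu)$ are $\sigma$-finite and $p \in [1,\infty)\setminus\{2\}$, then every surjective isometry $V : L^p(\mu) \to L^p(\nu)$ has the form $V = \omega U_\Phi$ for a set isomorphism $\Phi$ and $\omega \in UL^\infty(\nu)$. This is Lamperti's 1958 result; the key mechanism is that for $p \neq 2$ the equality case in Clarkson's inequality forces $V$ to send disjointly supported functions to disjointly supported functions (i.e.\ $V$ preserves the band/disjointness structure), and an order-preserving disjointness-preserving surjective linear isometry between $L^p$-spaces is automatically implemented by a set isomorphism together with a weight; the weight is forced to be a unimodular function times the Radon--Nikodym power $(d\nu\circ\Phi^*/d\mu)^{1/p}$ by isometry on characteristic functions. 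The case $p=\infty$ I would treat separately, either by duality with $p=1$ (an isometry of $L^\infty$ that is weak-$*$ continuous dualizes to one of $L^1$) or by noting $L^\infty(\mu)$ is a commutative von Neumann algebra and invoking the known description of isometries between such algebras; if weak-$*$ continuity is not available one must be a little careful, but the statement as used downstream only needs the generalized (possibly weak-$*$) version, so I would simply cite the appropriate reference.

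Second, I would reduce the localizable case to the semi-finite/$\sigma$-finite case. Fix a surjective isometry $V : L^p(\mu) \to L^p(\nu)$ with $\mu,\nu$ localizable, $p \in [1,\infty]\setminus\{2\}$. Again the disjointness-preservation argument (valid pointwise on functions, independent of $\sigma$-finiteness) shows $V$ carries bands to bands: for any $A \in [\Sigma_\mu]$ the image $V(L^p(A))$ is a band $L^p(B)$ in $L^p(\nu)$, and $A \mapsto B$ is a Boolean isomorphism $[\Phi^*]:[\Sigma_\mu]\to[\Sigma_\nu]$, hence gives the candidate set isomorphism $\Phi$. Now choose a maximal disjoint family $\{A_i\}_{i\in I}$ in $[\Sigma_\mu]$ with $\mu(A_i) < \infty$; by semi-finiteness of $\mu$ it is a partition of $[\Sigma_\mu]$ in the Dedekind-complete sense, and $\{B_i\}$ with $B_i = \Phi^*(A_i)$ is the corresponding partition of $[\Sigma_\nu]$, each $\nu(B_i)$ finite. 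On each block $V$ restricts to a surjective isometry $L^p(\mu|_{A_i}) \to L^p(\nu|_{B_i})$ between $\sigma$-finite (indeed finite) measure spaces, so the classical theorem gives $V|_{A_i} = \omega_i U_{\Phi_i}$. The restrictions $\Phi_i$ are mutually compatible (they are all restrictions of the single Boolean isomorphism $[\Phi^*]$), and the $\omega_i \in UL^\infty(\nu|_{B_i})$ patch to a single $\omega \in UL^\infty(\nu)$ because the blocks are disjoint and $L^\infty$ of a localizable measure is the $\ell^\infty$-direct sum over the blocks. One then checks $V = \omega U_\Phi$ on each band $L^p(A_i)$, hence on their closed span, which is all of $L^p(\mu)$ by localizability/maximality. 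Finally Proposition~\ref{prop:group_of_spatial_isometries} (or just the Radon--Nikodym computation of Lemma~\ref{le:SpatialIsometries}) identifies $\omega U_\Phi$ with a spatial isometry in the sense of Definition~\ref{de:SpatialIsometries}, completing the proof.

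The main obstacle I anticipate is the gluing step in the non-$\sigma$-finite localizable setting: one must be careful that (a) the Boolean isomorphism $[\Phi^*]$ genuinely exists globally --- this needs the disjointness-preserving property of $V$ to be proven first and independently of any finiteness, which is where one actually uses $p \neq 2$ (via strict convexity / the Clarkson-type equality condition), and (b) that the locally-defined weights and set isomorphisms assemble into honest global objects, which rests on the characterization of $L^\infty$ and $L^p$ of a localizable measure as the appropriate direct sums over a maximal disjoint finite-measure family (the "strictly localizable" representation recalled in the preliminaries, e.g.\ via Loomis--Sikorski). A secondary subtlety is the $p=\infty$ endpoint, where one should decide whether to phrase the result for genuinely surjective isometries (and cite the commutative von Neumann algebra description) or to allow the weak-$*$ continuous version; I would handle $p=1$ and $p\in(1,\infty)\setminus\{2\}$ uniformly by the band argument and dispatch $p=\infty$ by duality or by citing \cite{BGL}, \cite{Gardella_Thiel2}.
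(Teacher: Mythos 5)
Your argument is correct in substance, but it takes a genuinely different route from the paper in the main case. For $p\in(1,\infty)\setminus\{2\}$ and $p=1$ the paper does not reduce to the $\sigma$-finite case at all: it simply observes that the proof of \cite[Theorem 3.7]{Gardella_Thiel2} --- which is exactly the Clarkson-equality/disjointness-preservation mechanism you invoke --- goes through verbatim for two localizable measures and for $\F=\R$. Your alternative is to take the classical $\sigma$-finite Lamperti theorem as a black box and bootstrap to localizable measures by first extracting the global Boolean isomorphism from the band-preserving property of $V$, then decomposing over a maximal disjoint family of finite-measure sets and gluing the local weights; this is legitimate (the band-to-band step is indeed independent of any finiteness hypothesis, and $L^p(\mu)$ for $p<\infty$ is the closed span of the blocks by semi-finiteness), and it buys a more self-contained explanation of why localizability is harmless, at the cost of the extra bookkeeping you anticipate. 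The one place you should commit to a single option is $p=\infty$: the ``duality with $p=1$'' route does not work for an arbitrary invertible isometry of $L^\infty$-spaces, since such an isometry need not be weak-$*$ continuous, i.e.\ need not be the adjoint of anything. The paper runs the duality in the opposite, unproblematic direction --- deducing $p=1$ from $p=\infty$ by taking adjoints --- and proves the $p=\infty$ case exactly as in your fallback, via the Banach--Stone and Gelfand theorems applied to $L^\infty(\mu)\cong C(X)$ with $X$ hyperstonean, reading off the set isomorphism from the action of the resulting algebra isomorphism on idempotents.
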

\begin{proof}  
For $p \in (1,\infty)\setminus \{2\}$, $\F = \C$, and $\mu = \nu$ this is proved in \cite[Theorem 3.7]{Gardella_Thiel2}, see also \cite[Theorem 2.12]{Gardella}, but the proof works for two localizable measures $\mu$ and $\nu$, and also for $\F=\R$ and $p=1$. 
In fact since $L^{\infty}(\mu) \cong L^1(\mu)'$ the case $p=1$ can be deduced from the case $p=\infty$. 
The assertion for $p=\infty$ follows from the Banach--Stone Theorem combined with the Gelfand Theorem. Indeed,  we have $L^{\infty}(\mu)\cong C(X)$ and $L^{\infty}(\nu)\cong C(Y)$ where $X$, $Y$ are compact Hausdorff (hyperstonean) spaces, and every invertible isometry $C(Y)\to C(X)$ is of the form $a T$ where $a\in C(X)$, $|a|=1$, and $T:C(Y)\to C(X)$ is an isometric algebra isomorphism. 
The Boolean algebras of idempotent elements in $C(Y)$ and $C(X)$ are canonically isomorphic to $[\Sigma_{\nu}]$ and $[\Sigma_{\mu}]$, respectively. 
Thus $T$ induces a set isomorphism $\Phi : \Sigma_\nu\to \Sigma_\mu$ such that the action of
$T_{\Phi}:L^\infty(\nu)\to L^\infty(\mu)$ on idempotents in $L^\infty(\nu)$ agrees with the action of $T$ on the corresponding idempotents in $C(Y)$.
As the idempotents are linearly dense in the considered  spaces we conclude that the isometry $L^{\infty}(\nu)\to L^{\infty}(\mu)$ corresponding to $a T$ is $\omega T_{\Phi}$ where 
$\omega\in L^{\infty}(\mu)$ is the function corresponding to $a\in C(X)\cong L^{\infty}(\mu)$. 
\end{proof}

\subsection{$L^p$-operator algebras and representations of $C_0(X)$}

Throughout this paper $A$ and $B$ will be  Banach algebras, and $B(E)$ will denote the Banach algebra of bounded operators on a Banach space $E$.

\begin{defn}\label{de:Representations}
A \emph{representation of $A$ in  $B$} is a contractive algebra homomorphism $\pi : A \to B$. 
If $B = B(E)$ for some Banach space $E$, we call $\pi:A\to B(E)$ a \emph{representation on the space $E$}, and we say $\pi$ is \emph{non-dedegenerate} if $\overline{\pi(A)E} = E$. 
\end{defn}


\begin{defn}[\cite{Phillips}] \label{def:L^p_operator_algebra}
Let $p\in[1,\infty]$. 
A Banach algebra $A$ is an \emph{$L^p$-operator algebra} if there is an isometric representation $\pi : A \to B(L^p(\mu))$ for some measure $\mu$.
\end{defn}
By an \emph{approximate unit} in a Banach algebra we will always mean a contractive two-sided approximate unit. 
If $p\in (1,\infty)$, every  $L^p$-operator algebra $A$  with an approximate unit has an isometric non-degenerate representation  $\pi : A \to B(L^p(\mu))$,   \cite[Theorem 4.3]{Gardella_Thiel1}.
Unlike for $C^*$-algebras there is still no axiomatic definiton of an $L^p$-operator algebra.
\begin{defn}\label{def:C*-algebra}
A \emph{$C^*$-algebra} is a Banach $*$-algebra $A$ such that $\| a^*a \| = \| a \|^2$, $a\in A$, and if $\F=\R$ we also require $\| a^*a \| \leq \| a^*a + b^*b \|$ for all $a,b\in A$. 
We call a norm with these properties a \emph{$C^*$-norm}.
\end{defn}

\begin{rem}\label{rem:complex_the_real_C*}
It is well known that a Banach $*$-algebra $A$ (complex or real) is a $C^*$-algebra if and only if it is isometrically $*$-isomorphic to a $*$-subalgebra of bounded operators on some $L^2$-space (complex or real).
Also a real Banach algebra $A$ is a real $C^*$-algebra if and only if its complexification $A_{\C} := A + i A$ admits a (necessarily unique) $C^*$-norm that extends that of $A$, see \cite{Li_book}.
A homomorphism $A\to B$ between two $C^*$-algebras is contractive if and only if it is $*$-preserving 
(these well known facts for complex $C^*$-algebras \cite[A.5.8]{BM} also hold in the real case \cite[Proposition 5.4.1 and Theorem 5.6.4]{Li_book}). 
\end{rem}

Let $X$ be a locally compact Hausdorff space. 
The algebra $C_0(X)$ of continuous functions on $X$ which vanish at infinity, equipped with the supremum norm $\|\cdot\|_{\infty}$, is an $L^p$-operator algebra for any $p\in [1,\infty]$. 
By \cite[Theorem 5.3]{Gardella_Thiel1} a complex $C^*$-algebra $A$ is an $L^p$-operator algebra for $p\in[1,\infty)\setminus\{2\}$ if and only if it is commutative and hence of the form $C_0(X)$.
A real commutative Banach algebra $A$ is isometrically isomorphic to $C_0(X)$ for some locally compact Hausdorff space if and only if $\| a \|^2 = \| a^2 \| \leq \| a^2 + b^2 \|$ for all $a,b\in A$, see \cite{Arens}. 

\begin{prop}[Kaplansky--Bonsall]
\label{prop:minimality_of_sup_norm} 
Let $A$ be a $C^*$-algebra (real or complex). 
The $C^*$-norm on $A$ is minimal among all submultiplicative norms on $A$. 
Thus every injective representation $\pi : A \to B$ in a Banach algebra $B$ is automatically isometric. 
More generally, every representation $\pi : A \to B$ descends to an isometric representation $\pi:A/\ker(\pi)\to B$. 
\end{prop}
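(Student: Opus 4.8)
The plan is to reduce everything to the classical Kaplansky density / Gelfand--Naimark machinery, handling the real case by complexification. First I would observe that the statement has three parts: (i) minimality of the $C^*$-norm among submultiplicative norms; (ii) that injective representations are isometric; (iii) that a general representation $\pi$ descends to an isometric representation of $A/\ker\pi$. Parts (ii) and (iii) follow from (i) together with the fact (recalled in Remark~\ref{rem:complex_the_real_C*}) that the quotient of a $C^*$-algebra by a closed ideal is again a $C^*$-algebra and that $\ker\pi$ is automatically closed since $\pi$ is contractive, hence continuous; indeed $\pi$ induces an injective contractive homomorphism $\overline\pi:A/\ker\pi\to B$, and if injective homomorphisms from $C^*$-algebras are isometric we are done. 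So the heart of the matter is (i) and the injective case.

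For (i), let $\|\cdot\|'$ be any submultiplicative norm on $A$ and let $\|\cdot\|$ denote the $C^*$-norm; I want $\|a\|\le\|a\|'$ for all $a$. The standard argument: for a self-adjoint element $h$ (or more precisely a normal element), the $C^*$-norm equals the spectral radius $r(h)=\lim_n\|h^n\|^{1/n}$, and the spectral radius is an intrinsic algebraic quantity bounded above by $\limsup_n(\|h^n\|')^{1/n}\le\|h\|'$ by submultiplicativity of $\|\cdot\|'$ — with the subtlety that the spectrum might shrink when passing to a completion or a larger algebra, so one must argue that the relevant spectrum (say in the unitization) does not grow, which is where one uses that a $C^*$-algebra is inverse-closed / that the spectral radius is already achieved. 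For a general $a$, write $\|a\|^2=\|a^*a\|=r(a^*a)$ and estimate $r(a^*a)\le\|a^*a\|'\le\|a^*\|'\|a\|'$; the only remaining point is to control $\|a^*\|'$, for which one notes that $b\mapsto \|b^*\|'$ is again a submultiplicative norm, so $\max\{\|a\|',\|a^*\|'\}$ is a submultiplicative norm dominating the $C^*$-norm symmetrically, and combining the two estimates gives $\|a\|\le\|a\|'$. In the real case I would pass to the complexification $A_{\mathbb C}$, which carries a unique $C^*$-norm extending that of $A$ by Remark~\ref{rem:complex_the_real_C*}, extend $\|\cdot\|'$ to a submultiplicative norm on $A_{\mathbb C}$ (e.g.\ $\|a+ib\|'':=\sup_{\theta}\|\cos\theta\, a-\sin\theta\, b\|'$ or any of the standard complexification norms), apply the complex case, and restrict.

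Then for the injective case: given injective contractive $\pi:A\to B$, the pullback $\|a\|_\pi:=\|\pi(a)\|_B$ is a submultiplicative (in fact a norm, by injectivity) seminorm on $A$ with $\|a\|_\pi\le\|a\|$ since $\pi$ is contractive, and by (i) $\|a\|\le\|a\|_\pi$; hence $\pi$ is isometric. For (iii), apply (ii) to $\overline\pi:A/\ker\pi\to B$ after noting $A/\ker\pi$ is a $C^*$-algebra — here I would cite Remark~\ref{rem:complex_the_real_C*} (quotients of real or complex $C^*$-algebras are $C^*$-algebras) and the elementary fact that a contractive homomorphism has closed kernel.

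The main obstacle I anticipate is the bookkeeping around \emph{spectra in non-closed or non-unital subalgebras}: the clean identity ``$C^*$-norm $=$ spectral radius'' needs the element to be normal and needs the spectrum computed in a $C^*$-algebra (or its unitization) rather than in some abstract completion of $(A,\|\cdot\|')$, and one has to be slightly careful that the inequality $r(a^*a)\le\|a^*a\|'$ is using the spectral radius as computed within $A$ (or $\widetilde A$), which is legitimate because spectral radius only ever decreases under a continuous homomorphism into a unital Banach algebra. Everything else — the symmetrization trick for $\|a^*\|'$, the descent to the quotient, the real-to-complex reduction — is routine once that point is pinned down. I would structure the written proof as: reduce to complex case; prove the key lemma $r(a^*a)\le\|a^*a\|'$ for any submultiplicative $\|\cdot\|'$; derive (i); derive (ii) by pulling back; derive (iii) via the quotient $C^*$-algebra.
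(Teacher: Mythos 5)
The paper's own proof of this proposition is a one-line citation (Bonsall's Theorem~10 for the complex case, a lemma from Li's book for the real case, plus the fact that $A/\ker\pi$ is again a $C^*$-algebra), so you are attempting something substantially more ambitious; unfortunately the attempt founders on a misreading of the statement and on the two ``subtleties'' you flag but do not actually resolve. First, ``minimal among all submultiplicative norms'' means \emph{minimal element}: no submultiplicative norm lies strictly below $\|\cdot\|$. It does not mean $\|a\|\le\|a\|'$ for \emph{every} submultiplicative norm $\|\cdot\|'$ --- that statement is false. On $M_2(\C)$ the norm $\|a\|':=\|SaS^{-1}\|$ with $S=\mathrm{diag}(t,1)$, $0<t<1$, is submultiplicative, yet $\|e_{12}\|'=t<1=\|e_{12}\|$. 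The corollaries need only the minimal-element version, since the pullback norm $\|\pi(\cdot)\|_B$ is automatically dominated by $\|\cdot\|$; but your argument is pitched at the false minimum version, and the hypothesis $\|\cdot\|'\le\|\cdot\|$ that you have thereby discarded is exactly what is needed to repair both gaps below.

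Concretely: (a) the inequality $r_A(h)\le\lim_n(\|h^n\|')^{1/n}$ does not follow from ``inverse-closedness''. The right-hand side is the spectral radius of $h$ in the $\|\cdot\|'$-completion $\tilde A$, and since $A\to\tilde A$ is a unital homomorphism the spectrum can only \emph{shrink}, which gives the inequality in the wrong direction; boundary permanence of spectra (which would give $\sigma_A(h)=\partial\sigma_A(h)\subseteq\sigma_{\tilde A}(h)$ for self-adjoint $h$) is a theorem about \emph{closed} subalgebras, and its proof breaks down when $A$ is merely dense in $\tilde A$. The standard repair uses $\|\cdot\|'\le\|\cdot\|$: a point $\lambda\in\sigma(h)$ with $|\lambda|=\|h\|$ makes $h-\lambda$ a topological zero divisor for $\|\cdot\|$ (take $u_n=f_n(h)$ with nested bumps satisfying $f_nf_{n+1}=f_{n+1}$, so that $\|u_n\|'\ge 1$ while $\|(h-\lambda)u_n\|'\le\|(h-\lambda)u_n\|\to0$), hence $h-\lambda$ is not invertible in $\tilde A$ and $\|h\|=|\lambda|\le r_{\tilde A}(h)\le\|h\|'$. (b) Your symmetrization yields only $\|a\|\le\max\{\|a\|',\|a^*\|'\}$, which is strictly weaker than $\|a\|\le\|a\|'$ (and in the $M_2$ example above is all that is true, since $\|e_{12}^*\|'=1/t$). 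With the domination hypothesis one instead writes $\|a\|^2=\|a^*a\|\le\|a^*\|'\,\|a\|'\le\|a^*\|\,\|a\|'=\|a\|\,\|a\|'$ and cancels. Your reductions of parts (ii) and (iii) to part (i), and the use of the fact that $A/\ker\pi$ is a $C^*$-algebra, are fine.
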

\begin{proof} 
The first part of the assertion in the complex case is \cite[Theorem 10]{Bns}, but the proof also works in the real case by applying \cite[Lemma 5.1.8]{Li_book}. 
The second part is equivalent to the first. For the last part it suffices to note that the quotient $A/\ker(\pi)$ is again a $C^*$-algebra, which is well known when $\F=\C$, but also holds when $\F=\R$, see \cite[Proposition 5.4.1]{Li_book}.
\end{proof}


We now slightly extend \cite[Proposition 2.7]{cgt}, see also \cite[Proposition 2.12]{BP} and references therein, which describes hermitian operators acting on complex $L^p$-spaces.
This forces the representations of the complex algebra $C_0(X)$ on $L^p$ and $C_0$-spaces to act by multiplication operators.

\begin{prop}\label{prop:hermitian_operators} 
Assume $\F=\C$  and let $E = L^p(\mu)$ for a localizable $\mu$ and $p \in [1,\infty] \setminus \{ 2 \}$, or $E = C_0(\Omega)$ for a locally compact Hausdorff space $\Omega$.  
An element $a\in B(E)$ is hermitian (i.e.\ $\| e^{ita} \| = 1$ for all $t\in \R$) if and only if $a$ is a multiplication operator by a bounded real-valued function (from $L^{\infty}(\mu,\R)$ if $E=L^p(\mu)$ and from $C_{b}(\Omega,\R)$ if $E=C_0(\Omega)$).
\end{prop}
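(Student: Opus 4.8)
The plan is to reduce the two cases ($E=L^p(\mu)$ and $E=C_0(\Omega)$) to a common framework and then identify hermitian operators by studying the one-parameter group $t\mapsto e^{ita}$ of invertible isometries. First I would recall that an operator $a$ is hermitian precisely when $\{e^{ita}\}_{t\in\R}$ is a (strongly continuous, by norm continuity of $t\mapsto e^{ita}$) one-parameter group of invertible isometries of $E$. For $E=L^p(\mu)$ with $p\in[1,\infty]\setminus\{2\}$, the Banach--Lamperti theorem (Proposition~\ref{prop:Banach_Lamperti}) tells us that every invertible isometry is spatial, so $e^{ita}=\omega_t U_{\Phi_t}$ for some $\omega_t\in UL^\infty(\mu)$ and set automorphism $\Phi_t$ of $\Sigma_\mu$; for $E=C_0(\Omega)$ the Banach--Stone theorem gives the analogous decomposition $e^{ita}=\omega_t\circ(\text{composition by a homeomorphism }h_t)$, with $|\omega_t|\equiv 1$. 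In either case Proposition~\ref{prop:group_of_spatial_isometries} identifies the isometry group with a semidirect product $UL^\infty(\mu)\rtimes\Aut([\Sigma_\mu])$ (resp.\ $C_b(\Omega)^{(1)}\rtimes\Homeo(\Omega)$), and the key structural fact is that this projects onto the ``set-automorphism'' part.

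Next I would argue that the set-automorphism (resp.\ homeomorphism) part of $e^{ita}$ is trivial for all $t$. Composing the one-parameter group with the quotient homomorphism onto $\Aut([\Sigma_\mu])$ yields a one-parameter subgroup of $\Aut([\Sigma_\mu])$; differentiating the relation $e^{i(s+t)a}=e^{isa}e^{ita}$ at $s=0$, or rather examining $\frac{d}{dt}\big|_{t=0}$, shows the generator has ``no room'' to move in the discrete-type group $\Aut([\Sigma_\mu])$ — more carefully, since $t\mapsto e^{ita}$ is norm-continuous and $a$ is a bounded operator, for small $t$ we have $\|e^{ita}-\Id\|<1$, but any nontrivial spatial isometry whose underlying set automorphism is nontrivial has distance exactly (at least) a fixed positive constant, say $2^{1/p}$ (or $2$ for $C_0$), from the identity on the relevant ``moved'' set. (This is the point where I would invoke the standard estimate: if $\Phi$ moves a set $A$ off itself, then $\|U_\Phi 1_A - 1_A\|_p = 2^{1/p}\|1_A\|_p$.) Hence $\Phi_t=\id$ for $t$ near $0$, and by the group law for all $t$, so $e^{ita}=\omega_t$ is multiplication by a unimodular function, i.e.\ $\{e^{ita}\}$ is a one-parameter group inside the \emph{abelian} group $UL^\infty(\mu)$ (resp.\ $C_b(\Omega)^{(1)}$).

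Finally I would differentiate: a norm-continuous one-parameter group of multiplication operators by unimodular functions is of the form $t\mapsto M_{e^{itg}}$ for a fixed $g$ in $L^\infty(\mu,\R)$ (resp.\ $C_b(\Omega,\R)$); indeed $t\mapsto\omega_t$ is a continuous homomorphism $\R\to UL^\infty(\mu)$ for the norm topology, and since $UL^\infty(\mu)=U(L^\infty(\mu))$ is the unitary group of a commutative $C^*$-algebra, such a group has the form $e^{itg}$ with $g=g^*$, $g\in L^\infty(\mu,\R)$ — here one can appeal directly to $C^*$-theory since $L^\infty(\mu)$ is a commutative (von Neumann) algebra, localizability being used to ensure $L^\infty(\mu)$ is well-behaved. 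Then $a=\frac{1}{i}\frac{d}{dt}\big|_{t=0}e^{ita}=M_g$, the multiplication operator by $g$. Conversely it is immediate that multiplication by a bounded real-valued function is hermitian, since $e^{itM_g}=M_{e^{itg}}$ is an isometry. The main obstacle I expect is the second step: carefully justifying that the underlying set automorphism (or homeomorphism) must be trivial, i.e.\ controlling the ``non-multiplication part'' of a small isometry — this requires the quantitative lower bound on $\|U_\Phi 1_A-1_A\|_p$ when $\Phi$ genuinely moves $A$, and in the $C_0(\Omega)$ case a parallel argument using evaluation functionals to detect that $h_t$ moves a point. Everything else (Banach--Lamperti, Banach--Stone, and the commutative $C^*$-algebra computation of one-parameter unitary groups) is either cited from the excerpt or standard.
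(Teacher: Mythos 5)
Your argument is correct and is essentially the paper's own proof (which for $E=L^p(\mu)$, $p<\infty$, simply cites \cite[Proposition 2.7]{cgt} and for $E=C_0(\Omega)$ writes out exactly this scheme): decompose $e^{ita}$ via Banach--Lamperti resp.\ Banach--Stone, use norm continuity of $t\mapsto e^{ita}$ together with the uniform lower bound on $\|u-\Id\|$ for an isometry whose underlying set automorphism/homeomorphism is nontrivial to force that part to be trivial, and then recover $a$ as multiplication by a bounded real-valued function. The only cosmetic difference is in the last step, where the paper normalizes $\|a\|\le\pi/2$ and applies the holomorphic logarithm to $u_1=e^{ia}$, while you invoke the standard form $e^{itg}$ of a norm-continuous one-parameter unitary group in the commutative $C^*$-algebra $L^\infty(\mu)$ (resp.\ $C_b(\Omega)$) --- the same computation.
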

\begin{proof}
When $E = L^p(\mu)$ and $p < \infty$ this is proved in \cite[Proposition 2.7]{cgt}. 
The proof relies on the Banach--Lamperti Theorem and it also works for $p = \infty$, with the Banach--Lamperti theorem extended as in Proposition~\ref{prop:Banach_Lamperti}. 
We show it also works for $E = C_0(\Omega)$ with the Banach--Lamperti theorem replaced by the Banach--Stone theorem. 
We recall the argument for the `only if' part (the `if' part is straightforward).
Let $a \in B(C_0(\Omega))$ be hermitian. We may assume that $\| a \| \leq \frac{\pi}{2}$. 
The elements $u_t := e^{ita}$, $t\in \R$, form a continuous group of invertible isometries. 
By the Banach--Stone theorem for each $t\in \R$ there is $\omega_{t} \in C_b(\Omega)$ with $|\omega_t| = 1$ 
and a homeomorphism $\varphi_t : \Omega \to \Omega$ such that $[u_{t}\xi](x) = \omega_t(x) \xi(\varphi_t(x))$. 
Using this, a simple calculation,  for $s,t\in \R$, gives
$
    \| u_t - u_s \| = \max \big\{ \| \omega_t - \omega_s \|_{\infty}, 2(1-\delta_{\varphi_{t} , \varphi_s}) \big\}.
$
Since $t \mapsto u_t$ is norm continuous and $u_0 = 1$ is the identity operator, we get that $\varphi_t = \id$ for all $t\in\R$. 
Hence each $u_t$ is the operator of multiplication by $\omega_t$. 
Since $\| a \| \leq \frac{\pi}{2}$ the spectrum of $u_1 = e^{ia}$ is contained in the half circle $\{ e^{it} : t\in [-\pi/2,\pi/2] \}$ where the holomorphic inverse $\log$ of $\exp$ is defined. 
Applying analytic functional calculus to $u_1$ we get $i a = \log(u_1)$. 
Since $m: C_b(\Omega) \to B(C_0(\Omega))) ;\ m(h)\xi (x) := h(x)\xi(x)$ is a unital homomorphism, we obtain $a = -i \log(u_1) = -i \log(m(\omega_1))= m(-i \log(\omega_1))$.
Thus $a$ is a multiplication operator by $-i \log(\omega_1)$, and since $a$ is bounded and hermitian we must have $-i \log(\omega_1)\in C_b(\Omega,\R)$.
\end{proof}

Recall that $L^p$-spaces and $C_0$-spaces are naturally Banach lattices with the lattice structure given by positive (non-negative) functions. 
Operators preserving positive functions (and therefore the lattice structure) are called \emph{positive}. 

\begin{lem}\label{lem:representaion_complexification}
Let  $E$ be a real Banach space of the form $L^p(\mu,\R)$, $p\in [1,\infty]$ or $C_0(\Omega,\R)$, and let $E_\C$ be its Banach lattice complexification, that is either $L^p(\mu,\C)$  or $C_0(\Omega,\C)$.
A representation $\pi:C_0(X,\R) \to B(E)$ which is positive in the sense that it sends positive functions to positive operators, complexifies via the formula 
$
    \big( \pi_\C (a + i b) \big) (\xi +i \eta) := \pi(a)\xi - \pi(b)\eta + i \big( \pi(a)\eta +\pi(b) \xi \big),
$
$\xi, \eta\in L^p(\mu, \R)$, $a, b\in C_0(X,\R)$,
to a representation $\pi_\C : C_0(X,\C) \to B(E_{\C})$.
\end{lem}
\begin{proof} 
It is straightforward to see that $\pi_\C$ is a complex algebra homomorphism. 
Since $\pi$ is positive, both $\pi$ and $\pi_{\C}$ take values in the lattice of regular operators, see \cite[Section 3.2]{Abramovich_Aliptantis}. 
If $E=L^p(\mu, \R)$, then it is Dedekind complete and so by the Riesz--Kantorovich formula \cite[Corollary 3.25]{Abramovich_Aliptantis}, $| \pi_{\C}(a+ib) \xi | \leq \pi_{\C}(|a+ib|)|\xi|$ for all $\xi \in E_{\C}$.
This implies that $\|\pi_{\C}(a+ib)\|\leq \|\pi_{\C}(|a+ib|)\|=\|\pi(|a+ib|)\|\leq \||a+ib|\|_{\infty}=\|a+ib\|_{\infty}$.
Hence $\pi_{\C}$ is contractive. 
When $E=C_0(\Omega,\R)$ we may pass to duals to reduce to the $L^1$-case. 
Namely, $E'\cong L^1(\mu)$ for a measure $\mu$ and the formula $\pi'(a):=\pi(a)'$ defines a positive representation $\pi':C_0(X,\R) \to B(L^1(\mu))$. 
\end{proof}

\begin{thm} \label{thm:L^p_representations of C_0(X)} 
Let $\pi:C_0(X) \to B(E)$ be a non-degenerate representation where  $E = L^p(\mu)$ for a localizable $\mu$ and $p \in [1,\infty] \setminus \{ 2 \}$, or $E = C_0(\Omega)$ for a locally compact Hausdorff space $\Omega$.   
When $\F=\R$ we also assume that $\pi$ is  positive.  Then $\pi$ is given by multiplication operators: there is a representation $\pi_0 : C_0(X)\to L^{\infty}(\mu)$ if $E = L^p(\mu)$, or  $\pi_0 : C_0(X) \to C_b (\Omega)$ if $E = C_0(\Omega)$, such that 
 \[
   [\pi(a)\xi] (x) = \pi_0(a)(x) \xi (x), \qquad a\in C_0(X) ,\ \xi \in E.
\]
\end{thm}
\begin{proof} By Lemma \ref{lem:representaion_complexification} it sufices to consider the case $\F=\C$.
If $X$ is not compact we may extend $\pi : C_0(X) \to B(E)$ to a unital representation $\pi:C(X^+) \to B(E)$, where $X^+$ is the one-point compactification of $X$, see \cite[Theorem 4.1]{Gardella_Thiel1}. 
Then $\pi$ maps hermitian elements to hermitian ones by \cite[Lemma 2.4]{cgt}. 
Hermitian elements in $C(X^+)$ are real valued functions $C(X^+,\R)$. 
Hence $\pi(C(X^+,\R))$ consists of multiplication operators by Proposition~\ref{prop:hermitian_operators}.
Since $C(X^+) = C(X^+,\R) + i C(X^+,\R)$, we conclude that there is a desired $\pi_0$.
\end{proof}

When $\F=\R$ the positivity assumption in Lemma \ref{lem:representaion_complexification} and Theorem \ref{thm:L^p_representations of C_0(X)} is crucial:

\begin{ex}[Representations from involutive isometries] \label{ex:real_representation_from_bicontractive} 
Let $U\in B(E)$ be an involutive isometry on a Banach space $E$, i.e.\ $\|U\|=1$ and $U^2=1$.
Then $P := \frac{1+U}{2}$ and $1-P:=\frac{1-U}{2}$ are contractive projections, so  $P$ is a \emph{bicontractive projection} (cf. Remark~\ref{rem:types_of_projections} below). 
If $E$ is an $L^p$-space or a predual of an $L^1$-space, then every bicontractive projection is of this form, see \cite{Bernau_Lacey2}.
The formula 
$$
\pi(a)=a(1)P +a(2)(1-P)=\frac{a(1)+a(2)}{2} 1+ \frac{a(1)-a(2)}{2}U
$$
defines a homomorphism $\pi:C(\{1,2\})\to B(E)$ satisfying  $\|\pi(a)\|\leq \frac{|a(1)+a(2)|}{2} + \frac{|a(1)-a(2)|}{2}$.
If $\F=\R$, then $\frac{|a(1)+a(2)|}{2} + \frac{|a(1)-a(2)|}{2}=\max\{|a(1)|,|a(2)|\}=\|a\|_{\infty}$ and thus $\pi$ is contractive. For $\F=\C$ this is not true. 
In particular, let $\F=\R$, $E:=\ell^p(\{1,2\})\cong \R^2$, $p\in [1,\infty]$  and $U\xi:=(\xi(2), \xi(1))$. 
Then 
\[
    \pi(a)\xi = \left( \frac{a(1)+a(2)}{2}\xi(1) +\frac{a(1)-a(2)}{2}\xi(2), \frac{a(1)-a(2)}{2}\xi(1) +\frac{a(1)+a(2)}{2}\xi(2)\right)
\]
is an isometric unital representation $\pi : C(\{1,2\}) \to B(\ell^p(\{1,2\}))$ of the real algebra $C(\{1,2\})$ that does not act by multiplication operators.
For $p\neq 2$, $\pi$ is not even isometrically conjugate to a representation given by multiplication operators (there are only four isometries on $\ell^p(\{1,2\})$ by Proposition~\ref{prop:Banach_Lamperti}). For $p=2$, see Lemma~\ref{lem:Hilbert_space_representations of C_0(X)} below.
\end{ex}

\begin{cor} \label{cor:degeneracy_of_L_infty_representations}
The complex Banach algebra $C_0(X)$ admits an isometric non-degenerate representation on $L^{\infty}(\mu)$ for localizable $\mu$ if and only if $X$ is compact.
\end{cor}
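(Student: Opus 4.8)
The plan is to reduce everything, via Theorem~\ref{thm:L^p_representations of C_0(X)}, to multiplication operators on $L^{\infty}(\mu)$, and then to observe that such a representation is non-degenerate exactly when the underlying $*$-homomorphism into $L^{\infty}(\mu)$ is unital, which for an isometric copy of $C_0(X)$ forces $X$ to be compact. For the ``if'' direction, given $X$ compact, I would equip $X$ with the counting measure, which is localizable with $L^{\infty}$-space $\ell^{\infty}(X)$, and take the tautological inclusion $\iota\colon C(X)\hookrightarrow\ell^{\infty}(X)$: this is an isometric unital $*$-homomorphism since $\sup_{x\in X}|f(x)|=\|f\|_{\infty}$ for $X$ compact. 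Composing with the multiplication representation $\ell^{\infty}(X)\to B(\ell^{\infty}(X))$ gives $\pi\colon C(X)\to B(\ell^{\infty}(X))$; it is isometric because $\|m(h)\|=\|h\|_{\infty}$ for $h\in\ell^{\infty}(X)$ (using $1\in\ell^{\infty}(X)$), and non-degenerate because $\pi(1_X)=\mathrm{id}$.

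For the ``only if'' direction, let $\pi\colon C_0(X)\to B(L^{\infty}(\mu))$ be isometric and non-degenerate with $\mu$ localizable and $\F=\C$. Theorem~\ref{thm:L^p_representations of C_0(X)} (case $p=\infty$) provides a representation $\pi_0\colon C_0(X)\to L^{\infty}(\mu)$ with $[\pi(a)\xi](x)=\pi_0(a)(x)\xi(x)$; since $\|m(\pi_0(a))\|=\|\pi_0(a)\|_{\infty}$, the map $\pi_0$ is isometric, and by Remark~\ref{rem:complex_the_real_C*} it is an injective $*$-homomorphism of $C^{*}$-algebras. Assuming for contradiction that $X$ is not compact, so $C_0(X)$ is non-unital, I would extend $\pi_0$ to the unital $*$-homomorphism $\pi_0^{+}\colon C(X^{+})\to L^{\infty}(\mu)$, $\pi_0^{+}(g+c\,1)=\pi_0(g)+c\,1$, and note it stays injective: if $\pi_0(g)+c\,1=0$ with $c\neq0$ then $1\in\pi_0(C_0(X))$, forcing $C_0(X)$ to be unital. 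Hence $\pi_0^{+}$ is injective, and therefore isometric by Proposition~\ref{prop:minimality_of_sup_norm}.

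It then remains to extract a contradiction from non-degeneracy. Non-degeneracy says $L^{\infty}(\mu)$ is the closed linear span of the products $\pi_0(a)\xi$, so there is a finite sum $z=\sum_j\pi_0(a_j)\xi_j$ with $\|1-z\|<1$; then $z$ is invertible in $L^{\infty}(\mu)$ and, by commutativity, $1=\sum_j\pi_0(a_j)(z^{-1}\xi_j)$. Taking the canonical approximate unit $(e_\lambda)$ of $C_0(X)$ with $0\le e_\lambda\le1$, one gets $\pi_0(e_\lambda)\pi_0(a_j)\to\pi_0(a_j)$ for each $j$, hence $\pi_0(e_\lambda)=\pi_0(e_\lambda)\cdot1\to1$ in norm. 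But $1-e_\lambda\ge0$ in $C(X^{+})$ and equals $1$ at the point at infinity, so $\|1-\pi_0(e_\lambda)\|=\|\pi_0^{+}(1-e_\lambda)\|=\|1-e_\lambda\|_{C(X^{+})}\ge1$, a contradiction. The main obstacle is precisely this last step: upgrading the strong convergence $\pi(e_\lambda)\to\mathrm{id}$ (automatic for \emph{any} non-degenerate representation) to norm convergence $\pi_0(e_\lambda)\to1$ in $L^{\infty}(\mu)$ --- this is where one genuinely uses that the action is by multiplication operators on a unital Banach algebra, so that non-degeneracy produces the unit as an honest finite sum of products. The rest is routine once Theorem~\ref{thm:L^p_representations of C_0(X)} is available.
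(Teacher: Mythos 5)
Your proof is correct and follows essentially the same route as the paper: both directions reduce, via Theorem~\ref{thm:L^p_representations of C_0(X)}, to the multiplication picture $\pi_0:C_0(X)\to L^{\infty}(\mu)$, with the counting-measure embedding $C(X)\hookrightarrow\ell^{\infty}(X)$ for compact $X$, and non-unitality of $\pi_0(C_0(X))$ obstructing non-degeneracy otherwise. The only difference is one of detail: where the paper simply asserts that $\overline{\pi_0(C_0(X))L^{\infty}(\mu)}$ is a proper ideal, your approximate-unit argument (forcing $\pi_0(e_\lambda)\to 1$ in norm while $\|\pi_0^{+}(1-e_\lambda)\|=1$) supplies an explicit and valid justification of that claim.
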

\begin{proof}
If $X$ is not compact and $\pi:C_0(X) \to B(L^{\infty}(\mu))$ is isometric and non-degenerate then, in the notation of Theorem~\ref{thm:L^p_representations of C_0(X)}, $\pi_0(C_0(X))$ is a non-unital subalgebra of $L^{\infty}(\mu)$. 
Hence $\overline{\pi(C_0(X))L^{\infty}(\mu)} = \overline{\pi_0(C_0(X))L^{\infty}(\mu)}$ is a non-trivial  (non-unital) ideal in $L^{\infty}(\mu)$,
 which contradicts the non-degeneracy of $\pi$. If $X$ is compact, then the embedding   $\pi_0:C(X)\to\ell^{\infty}(X)$
gives an isometric unital representation of $C(X)$ on $\ell^{\infty}(X)$.
\end{proof}

\begin{rem}\label{rem:C_0_and_Lindenstrauss} 
Corollary~\ref{cor:degeneracy_of_L_infty_representations} is perhaps one of the reasons why Phillips~\cite[Page 3]{Phillips} suggests that instead of $L^\infty$-algebras ``it may well be more appropriate to consider'' algebras isometrically represented on \emph{$C_0$-spaces}, i.e.\ spaces $C_0(\Omega)$ for a locally compact Hausdorff space $\Omega$. 
In fact, it is also natural to consider pre-duals of $L^1$-spaces, i.e. \emph{Lindenstrauss spaces}, cf.\ Corollary~\ref{cor:L_infty_lindenstrauss_spaces_etc} below. 
\end{rem}

For $p=2$, Theorem~\ref{thm:L^p_representations of C_0(X)} holds up to a unitary conjugacy. 

\begin{lem} \label{lem:Hilbert_space_representations of C_0(X)} 
For any representation $\pi:C_0(X) \to B(H)$ on a Hilbert space $H$, there is a localizable measure $\mu$ and a unitary $U : H\to L^{2}(\mu)$ such that $U \pi(\cdot)U^* : C_0(X) \to B(L^{2}(\mu))$ is given by multiplication operators, i.e.\ there is  a representation $\pi_0 : C_0(X)\to L^{\infty}(\mu)$ such that $U\pi(a)U^*\xi = \pi_0(a)\cdot \xi$.
\end{lem}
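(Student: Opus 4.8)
The statement for $p=2$ is essentially the spectral theorem for a commutative $C^*$-algebra acting on a Hilbert space, presented in the concrete form needed later: a representation of $C_0(X)$ on $H$ is unitarily equivalent to one by multiplication operators on some $L^2(\mu)$ with $\mu$ localizable. The plan is to reduce to the well-known structure theory of representations of abelian $C^*$-algebras and then repackage the conclusion.

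\textbf{Step 1: Reduce to the unital, $*$-preserving case.} First I would note that a contractive homomorphism $\pi : C_0(X)\to B(H)$ is automatically $*$-preserving, since $C_0(X)$ and $B(H)$ are $C^*$-algebras and contractive homomorphisms between $C^*$-algebras are $*$-homomorphisms (Remark~\ref{rem:complex_the_real_C*}; in the real case one passes to complexifications as in the proof of Theorem~\ref{thm:L^p_representations of C_0(X)}, or invokes the real Banach--Stone/Gelfand picture). If $X$ is non-compact, extend $\pi$ to a unital representation $\pi : C(X^+)\to B(H)$ as in \cite[Theorem 4.1]{Gardella_Thiel1}; a multiplication-operator form for this extension restricts to one for the original $\pi$. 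So we may assume $X$ is compact and $\pi$ unital.

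\textbf{Step 2: Decompose into cyclic pieces and invoke the GNS/spectral description.} Using Zorn's lemma, decompose $H=\bigoplus_{i\in I} H_i$ into a direct sum of $\pi$-invariant subspaces on each of which $\pi$ admits a cyclic vector $\xi_i$. For each $i$, the positive functional $a\mapsto \langle \pi(a)\xi_i,\xi_i\rangle$ on $C(X)$ is, by the Riesz representation theorem, integration against a finite regular Borel measure $\mu_i$ on $X$, and the standard GNS argument gives a unitary $U_i : H_i \to L^2(X,\mu_i)$ intertwining $\pi|_{H_i}$ with multiplication by functions in $C(X)\subseteq L^\infty(\mu_i)$. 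Taking the disjoint-union measure space $(\Omega,\Sigma,\mu):=\bigsqcup_{i\in I}(X,\mu_i)$ and $U:=\bigoplus_i U_i$, we obtain a unitary $U:H\to L^2(\mu)$ with $U\pi(a)U^*$ equal to multiplication by the function on $\Omega$ whose restriction to the $i$-th copy of $X$ is $a$; this defines $\pi_0 : C_0(X)\to L^\infty(\mu)$ with $U\pi(a)U^*\xi=\pi_0(a)\cdot\xi$.

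\textbf{Step 3: Arrange localizability.} The measure $\mu$ constructed above is a (possibly uncountable) direct sum of finite measures, hence decomposable (strictly localizable), and in particular localizable; so no further modification is needed. (Alternatively, as in the preliminaries, one may replace $\mu$ by its localizable version $\overline\mu$ via Loomis--Sikorski, since $L^2(\mu)\cong L^2(\overline\mu)$ by an isometric isomorphism carrying characteristic functions to characteristic functions, and transport $\pi_0$ accordingly.)

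\textbf{Main obstacle.} There is no deep obstacle here; the content is classical. The only points requiring a little care are (i) handling the real case --- best done by complexifying as in Theorem~\ref{thm:L^p_representations of C_0(X)} and then descending, noting the multiplication form is stable under taking the real part; and (ii) making sure the measure one ends up with is genuinely localizable without assuming separability of $H$ --- which is why the direct-sum-of-finite-measures construction is preferable to a single cyclic-vector argument, as it yields a decomposable measure automatically.
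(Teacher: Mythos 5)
Your proposal is correct and follows essentially the same route as the paper: decompose into cyclic representations, apply GNS together with the Riesz--Markov theorem to realize each cyclic piece as multiplication on $L^2(X,\mu_i)$ for a regular (hence localizable) measure $\mu_i$, and take the direct sum of the resulting measure spaces. The only cosmetic difference is that you handle possible degeneracy by first unitizing to $C(X^+)$, whereas the paper simply observes that $\pi$ acts as zero on the orthogonal complement of $\overline{\pi(C_0(X))H}$; both devices work.
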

\begin{proof}
The algebra $\pi(C_0(X))$ acts on the orthogonal complement of $H_0:=\overline{\pi(C_0(X))H}$ as zero, and we may identify $H_0^{\bot}$ with  $\ell^2(J)$ for some $J$.
The compression of $\pi$ to $H_0$ decomposes into a direct sum of cyclic representations $\{\pi_i\}_{i\in I}$.
By the GNS construction and the Markov--Riesz Theorem each cyclic part $\pi_i:C_0(X)\to B(H_i)$ is equivalent to a representation of $C_0(X)$ given by multiplication on $L^2(\mu_i)$, where $\mu_i$ is a regular (and hence localizable) measure on $X$. 
Taking the direct sum of all the arising measure spaces we get a space $L^2(\mu)$ with localizable $\mu$ and a unitary $U$ with desired properties.
\end{proof}

\subsection{Inverse semigroups and \'etale groupoids}

Let $S$ be a semigroup. 
An element $t\in S$ is called \emph{partially invertible} if there is an element $t^*\in S$ such that $t = t t^* t$ and $t^* = t^* t t^*$. 
Then $t^*t$ and $tt^*$ are idempotents, and we call $t^*$ a \emph{generalized inverse} for $t$. 
The semigroup $S$ is called an \emph{inverse semigroup} if every element in $S$ has a unique generalized inverse. 
Assume this. Then the  map $t \mapsto t^*$ is an anti-multiplicative involution, and any semigroup homomorphism between two inverse semigroups is automatically $*$-preserving. 
The set of idempotents
\[
    E(S) := \{ e \in S : e^2 = e \} = \{ tt^* : t \in S \} = \{ t^*t : t \in S \}
\]
is an abelian semigroup (see for instance \cite[Proposition 2.1.1]{Paterson}). 
A partial order on $S$ is defined by $s \le t$ if and only if $s = t s^* s$, if and only if there is $e \in E(S)$ with $s = t e$, $s,t \in S$,  see \cite{Paterson}. 
If $S$ has a unit $1$ then $e\in E(S)$ if and only if $e \le 1$. 

A \emph{partial bijection} on a set $X$ is a bijection $h : U \to h(U)$ between subsets $U$, $h(U)$ of $X$. 
The set $\PBij(X)$ of all partial bijections on $X$ form an inverse semigroup with composition of $h : U \to h(U)$ and $f : V \to f(V)$ defined as the bijection $h\circ f : f^{-1}(U\cap f(V)) \to h(U\cap f(V))$. 
Then $h^* = h^{-1} : h(U) \to U$,  $h\in E(\PBij(X))$ if and only if $h = \id|_U$ for some $U \subseteq X$, and $h \leq f$ if and only if $f$ is an extension of $h$. 
Every inverse semigroup $S$ can be viewed as a subsemigroup of $\PBij(S)$, where $h_t : S_{t^*} \to S_{t}$ is given by $h_t(s) := ts$ and $S_{t} := \{ s\in S : ss^* \leq t^*t \}$ for all $t\in S$, see \cite[Proposition 2.1.3]{Paterson}.
A \emph{partial homeomorphism} on a topological space $X$ is a homeomorphism $h : U \to h(U)$ between two open subsets $U$ and $h(U)$ of $X$.
Partial homeomorphisms form an inverse subsemigroup of $\PBij(X)$ that we denote by $\PHomeo(X)$.

\begin{defn}\label{defn:inverse_semigroup_action}
An \emph{action of an inverse semigroup $S$ on a topological space $X$} is a non-degenerate semigroup homomorphism $h : S \to \PHomeo(X)$: a family  of partial homeomorphisms $h_t : X_{t^*} \to X_{t}$ such that $h_t \circ h_s = h_{t s}$ for all $s,t\in S$, and  $\bigcup_{t\in S} X_t = X$.
\end{defn}

A groupoid is a small category $\G$ in which every arrow is invertible. 
If $\G$ (identified with the set of arrows) is equipped with a topology that makes  the range, domain, composition, and inverse of arrows  continuous, we call $\G$ a \emph{topological groupoid}. 
We denote by $X \subseteq \G$  the \emph{unit space} of $\G$. 
Let $\G$ be an \emph{\'etale groupoid} with unit space $X$, i.e.\ $\G$ is a topological groupoid where the range and domain maps $r,d : \G \to X \subseteq \G$ are local homeomorphisms. 
In particular, $X$ is an open subset of $\G$. 
We will assume that $X$ is a locally compact Hausdorff space, in which case $\G$ is necessarily locally compact and locally Hausdorff, but $\G$ is Hausdorff if and only if $X$ is closed in $\G$, see for instance \cite[Lemma 2.3.2]{Sims}.
A \emph{bisection} of a groupoid $\G$ is an open set $U \subseteq \G$ such that the restrictions $r|_U$ and $d|_{U}$ are homeomorphisms onto open sets of $\G$. 
So a topological groupoid is \'etale if and only if it can be covered by bisections. 

\begin{ex}[Inverse semigroup actions from \'etale groupoids]\label{ex:actions_from_groupoids}
The set of bisections $\Bis(\G)$ of an \'etale groupoid $\G$ forms a unital inverse semigroup with multiplication and involution given by 
\[
    U \cdot V := \{ \gamma \eta : \text{$\gamma \in U,\ \eta\in V$ are composable} \},  \qquad  
    U^* = U^{-1} := \{ \gamma^{-1} : \gamma \in U \} .
\]
Then $X$ is a unit of $\Bis(\G)$. 
For each $U\in \Bis(\G)$ we have a homeomorphism 
\(
    h_U := r \circ d|_{U}^{-1} : d(U) \to r(U) ,
\)  
and these homeomorphisms define a \emph{canonical action} $h : \Bis(\G) \to \PHomeo(X)$ on the unit space $X$ \cite[Proposition 5.3]{Exel}. 
In fact we may view this action as a restriction of a partial action $\tilde{h} : \Bis(\G) \to \PHomeo(\G)$ on the groupoid $\G$, where $\tilde{h}_{U} : r^{-1}(d(U)) \to r^{-1}(r(U))$ is given by $\tilde{h}_U(\gamma)=d|_{U}^{-1}(r(\gamma))\gamma$. 
In other words, $\tilde{h}_U(\gamma) = \eta\gamma$, where $\eta$ is the unique element in $U$ that can be composed with $\gamma$. 
Using this interpretation one  sees that $\tilde{h}$ is indeed an inverse semigroup action. It restricts to the canonical action $h$ in the sense that  $h= r \circ\tilde{h}|_{X}$.
\end{ex}


\begin{ex}[\'Etale groupoids from inverse semigroup actions] \label{ex:groupoids_from_actions}
For any inverse semigroup action $h : S \to\PHomeo(X)$ the \emph{transformation groupoid} $S \ltimes_{h} X$ is defined as follows, see \cite[p.\ 140]{Paterson} or \cite[2.1]{Kwa-Meyer}. In  \cite[Section~4]{Exel} it is called the groupoid of germs, but we believe this term should be reserved for the different construction that is used for instance in \cite{Re}.
The arrows of $S\ltimes_{h} X$ are equivalence classes of pairs $(t,x)$ for $x\in X_{t^*} \subseteq X$; two pairs $(t,x)$ and $(t',x')$ are equivalent if $x = x'$ and there is $v\in S$ with $v\le t, t'$ and $x \in X_{v^*}$.
The range and domain maps $r,d: S\ltimes_{h} X \rightrightarrows X$ and the multiplication are defined by $r([t,x]) := h_t(x)$, $d([t,x]) := x$, and $[s,h_t(x)] \cdot [t,x] = [s\cdot t,x]$ for $x \in X_{(st)^*}$. 
We give $S\ltimes_{h} X$ the unique topology such that the sets $U_t := \{ [t,x] : x \in X_{t^*} \}$ are open bisections of $S\ltimes_{h} X$. 
Then $S\ltimes_{h} X$ is an \'etale groupoid and the map $S\ni t \mapsto U_t \in \Bis(S\ltimes_{h} X)$ is a semigroup homomorphism. 
Composing it with the canonical homomorphism $\Bis(S\ltimes_{h} X) \to \PHomeo(X)$ we recover the action $h$. 
Composing it with the canonical homomorphism $\Bis(S\ltimes_{h} X) \to \PHomeo(S\ltimes_{h} X)$ we
get an extension $\tilde{h}$ of $h$ where 
$
    \tilde{h}_{t} : \{ [s,x] : x\in h_{s}^{-1}(X_{t^*}) \} \to \{ [s,x] : x \in h_{s}^{-1}(X_{t}) \}$ is given by $\tilde{h}_t([s,x]) := [ts,x].
$
In this way every inverse semigroup action can be viewed as coming from an \'etale groupoid, and then it can be extended to an action on this groupoid.
Conversely, for any \'etale groupoid $\G$ and any inverse subsemigroup $S\subseteq \text{Bis}(\G)$, acting canonically on $X$, we have a continuous open groupoid homomorphism $S\ltimes_{h} X \ni [U,x] \mapsto (d|_U)^{-1}(x) \in \G$. 
This is an isomorphism $S\ltimes_{h} X\cong \G$ if and only if $S$ is \emph{wide} in the sense that $S$ covers $G$ and $U\cap V$ is a union of bisections in $S$ for all $U,V\in S$, see \cite[Proposition~5.4]{Exel}, \cite[Proposition 2.2]{Kwa-Meyer}. In particular, every \'etale groupoid is a transformation groupoid.
\end{ex}

\subsection{$L^p$-partial isometries on Banach spaces and spatial partial isometries} 
\label{subsect:Partial isometries}

Partial isometries acting on arbitrary Banach spaces were defined by Mbekhta~\cite[Definition 4.1]{Mbekhta}, see also \cite[8.1]{cgt}, and in Banach algebras the definition can be phrased as follows.

\begin{defn}\label{defn:partial_isometries}
A \emph{partial isometry in a Banach algebra} $B$ is a partially invertible element $t$ in the semigroup $B_1$ of contractive elements, so $t \in B_1$ and there exists $t^* \in B_1$ such that $t = t t^* t$ and $t^* = t^* t t^*$. 
If $B$ is unital and complex we say that $t \in B$ is an \emph{$MP$-partial isometry} if $t$ is contractive and admits a contractive Moore--Penrose generalized inverse $t^*\in B$, i.e.\ $t^*$ is a generalised inverse of $t$ and the idempotents $t t^*$ and $t^* t$ are hermitian. 
\end{defn}

If $B$ is a $C^*$-algebra (real or complex) then a contractive generalized inverse to a partial isometry $t\in B$ is necessarily the adjoint element $t^*\in B$, see \cite[3.1, 3.3]{Mbekhta} and Remark~\ref{rem:complex_the_real_C*}, so in this case both notions in Definition~\ref{defn:partial_isometries} coincide with the usual notion of partial isometry in a $C^*$-algebra. 
However, for many Banach spaces $E$, including $L^1(\mu)$, $c_0$, $\ell^{\infty}$, a partial isometry $t\in B(E)$ may have many contractive generalized inverses. 
On the other hand, if the Moore--Penrose generalized inverse exists it is necessarily unique. 
So the $MP$-partial isometries have uniquely determined generalized inverse $MP$-partial isometries.  
Partial isometries usually do not form a semigroup (the product of two partial isometries $s,t\in B$ in a $C^*$-algebra $B$ is a partial isometry if and only if the projections $s^*s$ and $tt^*$ commute, see \cite{Halmos}).
For $p\in[1,\infty]\setminus \{2\}$ and any Banach space $E$ we introduce a natural inverse semigroup of partial isometries in $B(E)$. 
Recall, see \cite{BDEGGMM}, \cite{Agniel}, that an \emph{$L^p$-projection on $E$} is an idempotent $P\in B(E)$ such that for all $\xi \in E$ we have 
$$
\|\xi\|^p = \| P\xi \|^p + \| (1-P)\xi \|^p
$$
 when $p<\infty$ and $\|\xi\|=\max\{\|P\xi\|,\|(1-P)\xi\|\}$ when $p=\infty$.  
The $L^2$-projections on a Hilbert space are simply the orthogonal projections. 
If $p\neq 2$, then all $L^p$-projections on $E$ commute and they form a Boolean algebra $\mathbb{P}_{p}(E)\subseteq B(E)$. 
The Banach algebra $C_p(E) := \clsp \mathbb{P}_{p}(E)$  generated by $\mathbb{P}_{p}(E)$ is called the \emph{$p$-Cunningham algebra} of $E$, and we have $C_p(E) \cong C_0(X)$ where $X$ is a totally disconnected space (the Stone dual to $\mathbb{P}_{p}(E)$), see \cite{BDEGGMM} for details.
We extend the notion of $L^p$-projection to partial isometries as follows:

\begin{defn} 
An \emph{$L^p$-partial isometry} on a Banach space $E$, $p\in [1,\infty]$, is a contraction $T\in B(E)$ that has a contractive generalized inverse $T^*\in B(E)$ such that $T T^*$ and $T^* T$ are $L^p$-projections. 
We denote by $L^p\mhyphen\PIso(E)$ the set of all $L^p$-partial isometries on $E$.
\end{defn}

\begin{prop}
For $p\in [1,\infty]\setminus \{2\}$ the set $L^p\mhyphen\PIso(E)$ is an inverse semigroup whose idempotents are $\mathbb{P}_{p}(E)$.
Passing to duals yields an injective antimultiplicative map  $L^{p}\mhyphen\PIso(E)\ni P \mapsto P'\in L^{q}\mhyphen\PIso(E')$ where $1/p+1/q=1$. 
\end{prop}
\begin{proof} 
Let $S, T\in L^p\mhyphen\PIso(E)$ and choose the corresponding generalized inverses  $S^*, T^* \in L^p\mhyphen\PIso(E)$, so that the corresponding idempotents $TT^*$, $T^*T$, $SS^*$, $SS^*$ are in  $\mathbb{P}_{p}(E)$. 
By commutativity of these idempotents, $S^*T^*$ is a generalized inverse of $TS$. Assume $p<\infty$.
Using the $L^p$-projection property of $SS^*$, $T^*T$, $TT^*$ and that $T^*$ is an isometry on the range of $T$ (and vice versa), for any $x\in E$, we get
\[
\begin{split}
    \|TSS^*T^*x\|^p &= \| T^*TSS^*T^*x \|^p = \|SS^*(T^*T)T^* x \|^p = \|SS^*T^*x\|^p \\
        &= \|T^*x\|^p - \|(1-SS^*)T^*x\|^p = \|TT^*x\|^p-\|T^*x - SS^* (T^*T)T^*x\|^p \\
        &= \|x\|^p - \|(1-TT^*)x\|^p - \|TT^*x - T SS^* T^*x\|^p \\
        &= \|x\|^p - \|(1-TSS^*T^*)x\|^p.
\end{split}
\]
Hence $TSS^*T^*\in \mathbb{P}_{p}(E)$. 
By symmetry we also get $S^*T^*TS\in \mathbb{P}_{p}(E)$. 
Hence $TS\in L^p\textup{-PIso}(E)$, and so $L^p\mhyphen\PIso(E)$ is a semigroup. 
Assume in addition that $T$ is the generalized inverse of $S$. 
Using that $S^*S$ is an $L^p$-projection and $STS=S$ we get
\[
    \| TSx \|^p = \| S^*S TSx \|^p + \| (1-S^*S)TSx \|^p = \|TSx\|^p + \|TSx-S^*S x\|^p .
\]
Hence $\|TSx-S^*S x\|^p=0$ and so $TS=S^*S$. 
Similarly one shows that $ST = SS^*$. 
This implies that $T=S^*$, (cf. \cite[Propositon 2.4]{kwa-leb}), and proves the uniqueness of the generalized inverse in $L^p\mhyphen\PIso(E)$. Thus $L^p\mhyphen\PIso(E)$ is an inverse semigroup. For $p=\infty$ one readily adapts the above proof to see that $L^{\infty}\mhyphen\PIso(E)$ is a semigroup. 
As we noted above every element in $L^{\infty}\mhyphen\PIso(E)$ has a generalized inverse, and it is unique as we claim that we may  embed $L^{\infty}\mhyphen\PIso(E)$ into the  inverse semigroup opposite to 
$L^{1}\mhyphen\PIso(E')$. Indeed, by \cite[Lemma  1.4]{BDEGGMM} an idempotent $P\in B(E)$ is $L^p$-projections if and only if 
$P'\in B(E')$ is an $L^q$-projection, where $1/p+1/q=1$ and $p,q\in [1,\infty]$.  This implies  that  $L^{p}\mhyphen\PIso(E)\ni P \mapsto P'\in L^{q}\mhyphen\PIso(E')$  is a well-defined injective anithomorphism of semigroups, where $1/p+1/q=1$, $p,q\in [1,\infty]\setminus 2$.
\end{proof}

If $E=L^p(\mu)$ for a localizable measure $\mu$ and $p \in [1,\infty]\setminus \{2\}$, then $P\in B(L^p(\mu))$ is an $L^p$-projection if and only if $P$ is an operator of multiplication by a characteristic function, see \cite[Proposition 4.9]{BDEGGMM} (when $\F=\C$ this follows  from Proposition~\ref{prop:hermitian_operators} applied to the identity representation of the $p$-Cunningham algebra on $L^p(\mu)$).
In particular, we have an isomorphism of Boolean algebras $\mathbb{P}_{p}(L^p(\mu)) \cong [\Sigma_{\mu}]$. 
We now discuss the relationship between $L^p\mhyphen\PIso(L^p(\mu))$ and spatial partial isometries, originally introduced (for $\sigma$-finite measures)   by Phillips ~\cite{PhLp1}, \cite{Phillips}.

 
\begin{defn}
A \emph{subspace} of a measure space $(\Omega,\Sigma_{\mu}, \mu)$ is $(D,\Sigma_D, \mu|_D)$ where $D\in \Sigma$, $\Sigma_{D} := \{ C\cap D : C\in \Sigma\}$ and $\mu|_D := \mu|_{\Sigma_D}$. 
A \emph{partial set automorphism} of $(\Omega,\Sigma, \mu)$ is a set isomorphism between two subspaces of $(\Omega,\Sigma, \mu)$. 
So a partial set automorphism is a  map $\Phi : \Sigma_{D_{\Phi^*}} \to \Sigma_{D_{\Phi}}$ ($D_{\Phi},D_{\Phi^*}\in \Sigma$) that descends to a Boolean isomorphism  $[\Phi] : [\Sigma_{D_{\Phi^*}}] \to [\Sigma_{D_{\Phi}}]$. 
We define $\PAut([\Sigma_{\mu}]) := \{ [\Phi] : \text{$\Phi$ is a partial set automorphism} \}$ for the set of partial automorphisms of the Boolean algebra $[\Sigma_{\mu}]$. 
\end{defn}

By definition $\PAut([\Sigma_{\mu}])$ is the set of isomorphisms between ideals in the Boolean algebra $[\Sigma_{\mu}]$. It forms an inverse semigroup that can be identified with an inverse semigroup of operators on $L_0(\mu)$.
Indeed, a partial automorphism $\Phi$ defines the composition operator isomorphism $T_{\Phi} : L_0(\mu|_{D_{\Phi^*}}) \to  L_0(\mu|_{D_{\Phi}})$ whose inverse is given by $T_{\Phi^*}$. 
Identifying $L_0(\mu|_{D_{\Phi^*}})$ and $L_0(\mu|_{D_{\Phi}})$ with subspaces of $L_0(\mu)$ we see that $\Phi$ determines uniquely a linear operator $T_{\Phi} : L_0(\mu)\to L_0(\mu)$ such that  $T_{\Phi}$ preserves (monotone) limits and
\[
    T_{\Phi} 1_{C} = 1_{\Phi(C\cap D_{\Phi^*})}, \qquad C \in \Sigma.
\]
We still call $T_{\Phi} : L_0(\mu) \to L_0(\mu)$ a (generalized) \emph{composition operator}.
For any two partial automorphisms $\Phi : \Sigma_{D_{\Phi^*}}\to \Sigma_{D_{\Phi}}$ and 
$\Psi : \Sigma_{D_{\Psi^*}} \to \Sigma_{D_{\Psi}}$ we have $T_{\Phi}\circ T_{\Psi} = T_{\Phi\circ \Psi}$, 
where $\Phi\circ \Psi : \Sigma_{\Psi^*(D_{\Phi^*} \cap D_{\Psi})}\to \Sigma_{\Phi(D_{\Phi^*} \cap D_{\Psi})}$ is a well defined partial automorphism of $(\Omega,\Sigma, \mu)$.
Also $T_{\Phi^*}$ is the unique generalized inverse for $T_{\Phi}$ among the composition operators.
In particular, composition operators on $L_0(\mu)$ form an inverse semigroup naturally isomorphic to $\PAut([\Sigma_{\mu}])$.
By the very definition of the partial automorphism $\Phi : \Sigma_{D_{\Phi^*}} \to \Sigma_{D_{\Phi}}$, the map $\mu\circ \Phi:\Sigma_{D_{\Phi^*}}\to [0,\infty]$ is a measure on $D_{\Phi^*}$ equivalent to $\mu|_{D_{\Phi^*}}$. 
Thus we may consider the Radon--Nikodym derivative $\frac{d\mu\circ\Phi}{d\mu|_{D_{\Phi^*}}}$ as a function on $\Omega$ which is zero outside $D_{\Phi^*}$. 
Similar remarks apply to the inverse partial automorphism $\Phi^*:\Sigma_{D_{\Phi}}\to \Sigma_{D_{\Phi^*}}$.

\begin{prop}\label{prop:spatial^partial_isometries}
Let $p\in [1,\infty]$ and $\mu$ be localizable. 
For any $[\Phi]\in \PAut([\Sigma_{\mu}])$ the weighted composition operator $U_{\Phi}   := \left( \frac{d\mu\circ\Phi^{*}}{d\mu|_{D_{\Phi}}} \right)^{\frac{1}{p}}T_{\Phi}$ is a well defined partial isometry on $L^p(\mu)$, and the map $\PAut([\Sigma_{\mu}])\ni [\Phi] \mapsto U_{\Phi}\in B(L^p(\mu))$ is a semigroup embedding. 
The semigroup generated by $U_{\Phi}\in B(L^p(\mu))$, $[\Phi] \in \PAut([\Sigma_{\mu}])$, and  $UL^{\infty}(\mu)\subseteq B(L^p(\mu))$ is an inverse semigroup of partial isometries of the form
\begin{equation}\label{eq:spatial^partial_isometry}
    \omega U_{\Phi}  := \omega \left( \frac{d\mu\circ\Phi^{*}}{d\mu|_{D_{\Phi}}} \right)^{\frac{1}{p}}T_{\Phi}
\end{equation}
where $\Phi : \Sigma_{D_{\Phi^*}} \to \Sigma_{D_{\Phi}}$ is a partial set automorphism and $\omega : D_{\Phi}\to \{ z\in \F: |z|=1 \}$ is measurable. 
The operator $\omega U_{\Phi}$ determines $\Phi$ and $\omega$ uniquely up to sets of measure zero. 
Moreover, 
\begin{equation}\label{eq:spatial^partial_isometries_relations}
    (\omega U_{\Phi})^* = T_{\Phi^*}(\overline{\omega}) U_{\Phi^*}, \qquad (\omega U_{\Phi})\circ  (\upsilon U_{\Psi})=\omega T_{\Phi}(\upsilon) U_{\Phi\circ\Psi} .
\end{equation}
\end{prop}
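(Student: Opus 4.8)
The plan is to reduce everything to the already-established group case (Lemma~\ref{le:SpatialIsometries} and Proposition~\ref{prop:group_of_spatial_isometries}) by restricting to the support projections, and then to verify the semigroup structure by a direct computation with composition operators. First I would check that $U_\Phi$ is well defined and is a partial isometry: the function $\bigl(\frac{d\mu\circ\Phi^*}{d\mu|_{D_\Phi}}\bigr)^{1/p}$ vanishes outside $D_\Phi$, so $U_\Phi$ factors through the multiplication projection $1_{D_{\Phi^*}}$ on the source and lands in $1_{D_\Phi}L^p(\mu)$; identifying $L^p(\mu|_{D_{\Phi^*}})$ and $L^p(\mu|_{D_\Phi})$ with the corresponding band subspaces of $L^p(\mu)$, the operator $U_\Phi$ is exactly the invertible isometry between them furnished by Lemma~\ref{le:SpatialIsometries} applied to the localizable measures $\mu|_{D_{\Phi^*}}$ and $\mu|_{D_\Phi}$ (these are localizable since ideals in a Dedekind-complete Boolean algebra are Dedekind complete). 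Hence $U_\Phi^* := T_{\Phi^*}U_{\Phi^*}$ — or more precisely the operator $U_{\Phi^*}$ built from the inverse partial automorphism — is a contractive generalized inverse, with $U_\Phi U_\Phi^* = 1_{D_\Phi}$ and $U_\Phi^* U_\Phi = 1_{D_{\Phi^*}}$, so $U_\Phi$ is a partial isometry; in fact an $L^p$-partial isometry since these support projections are $L^p$-projections. Multiplicativity $[\Phi]\mapsto U_\Phi$ then follows from $T_\Phi\circ T_\Psi = T_{\Phi\circ\Psi}$ together with the chain rule for Radon--Nikodym derivatives under composition of partial automorphisms, exactly as in the group case; injectivity follows because $U_\Phi$ recovers $[\Phi]$ via its action on characteristic functions (up to the modulus-one ambiguity, which here is pinned down since there is no unitary twist yet).

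Next I would identify the semigroup generated by the $U_\Phi$ and the multiplication operators $\omega\in UL^\infty(\mu)$. An element $\omega\in UL^\infty(\mu)$ is an invertible isometry of $L^p(\mu)$ (Lemma~\ref{le:SpatialIsometries}), and a product of such a multiplication operator with some $U_\Phi$ has the form~\eqref{eq:spatial^partial_isometry}: one uses the commutation rule $U_\Phi\circ \upsilon = T_\Phi(\upsilon)\circ U_\Phi$ (which is just $T_\Phi$ being an algebra homomorphism $L^\infty(\mu|_{D_{\Phi^*}})\to L^\infty(\mu|_{D_\Phi})$ on the relevant subspaces) to push all the unitary functions to the left, and the chain rule to combine the Radon--Nikodym weights into a single $U_{\Phi\circ\Psi}$. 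This gives the closure of the set of operators $\omega U_\Phi$ under composition, which is the second displayed relation in~\eqref{eq:spatial^partial_isometries_relations}; the first relation, the formula for $(\omega U_\Phi)^*$, is then obtained by checking directly that $T_{\Phi^*}(\overline\omega)U_{\Phi^*}$ is a contractive generalized inverse and that the two idempotents $(\omega U_\Phi)(\omega U_\Phi)^* = 1_{D_\Phi}$ and $(\omega U_\Phi)^*(\omega U_\Phi) = 1_{D_{\Phi^*}}$ are $L^p$-projections, so uniqueness of the $L^p$-partial isometry generalized inverse (established in the proposition preceding this one) forces $(\omega U_\Phi)^*$ to equal this operator. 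Uniqueness of the data $(\Phi,\omega)$ up to null sets: applying $\omega U_\Phi$ to characteristic functions $1_C$ recovers the set map $\Phi$ on $[\Sigma_\mu]$, hence $[\Phi]$; then dividing out the now-known weight $\bigl(\frac{d\mu\circ\Phi^*}{d\mu|_{D_\Phi}}\bigr)^{1/p}$ recovers $\omega$ a.e.

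The only genuinely delicate point is the careful bookkeeping of domains: because partial automorphisms compose with shrinking domains $\Phi\circ\Psi : \Sigma_{\Psi^*(D_{\Phi^*}\cap D_\Psi)}\to \Sigma_{\Phi(D_{\Phi^*}\cap D_\Psi)}$, one must check that the Radon--Nikodym derivatives restrict and multiply correctly under these restrictions — i.e. that $\frac{d\mu\circ(\Phi\circ\Psi)^*}{d\mu|_{D_{\Phi\circ\Psi}}}$ agrees $\mu$-a.e.\ with $\frac{d\mu\circ\Phi^*}{d\mu|_{D_\Phi}}\cdot T_\Phi\!\left(\frac{d\mu\circ\Psi^*}{d\mu|_{D_\Psi}}\right)$ on the (possibly smaller) common domain. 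This is the chain rule for Radon--Nikodym derivatives for partial automorphisms, and it reduces to the total case of Proposition~\ref{prop:group_of_spatial_isometries} after restricting to the relevant ideals of $[\Sigma_\mu]$; I expect this localization-to-ideals argument, rather than any new analytic difficulty, to be the main thing to get right, since everything else is a transcription of the invertible case cited from \cite{Gardella_Thiel2}.
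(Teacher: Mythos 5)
Your proposal is correct and is essentially the argument the paper has in mind: the paper's own proof is a one-line citation to Phillips's Lemmas 6.12 and 6.17 (stated for $\sigma$-finite measures), and the content of those lemmas is exactly your reduction to the invertible case of Lemma~\ref{le:SpatialIsometries} and Proposition~\ref{prop:group_of_spatial_isometries} by localizing to the support ideals $D_{\Phi^*}$, $D_{\Phi}$, together with the Radon--Nikodym chain rule and the commutation $U_{\Phi}\circ\upsilon=T_{\Phi}(\upsilon)\circ U_{\Phi}$. The only point to tidy is your appeal to uniqueness of generalized inverses in $L^p\mhyphen\PIso$, which the paper proves only for $p\neq 2$; for $p=2$ the idempotents $1_{D_{\Phi}}$, $1_{D_{\Phi^*}}$ are orthogonal projections and uniqueness of the Moore--Penrose inverse of a Hilbert-space partial isometry is classical (or one can verify the adjoint formula by direct computation), so nothing is lost.
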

\begin{proof}
This follows from \cite[Lemma 6.12, 6.17]{PhLp1} where $\sigma$-finite measures were considered, but the arguments work for localizable measures.
\end{proof}

\begin{defn}\label{def:spatial^partial_isos}
We denote by $\SPIso(L^p(\mu))$ the inverse subsemigroup of operators \eqref{eq:spatial^partial_isometry} and call them \emph{spatial partial isometries} on $L^p(\mu)$ \cite[Definition 6.4]{PhLp1},
\cite[Definition 6.2]{Gardella}. 
\end{defn}

\begin{rem}
The semilattice of idempotents in $\SPIso(L^p(\mu))$ is isomorphic to $[\Sigma_{\mu}]$, as it consists of multiplication operators of idempotent elements in $L^{\infty}(\mu)$.
The group of invertible elements in $\SPIso(L^p(\mu))$ is isomorphic to $UL^{\infty}(\mu) \rtimes \Aut([\Sigma_\mu])$, see Proposition~\ref{prop:group_of_spatial_isometries}. 
\end{rem}

\begin{thm}[Banach--Lamperti theorem for partial isometries] \label{thm:spatial^partial_isometries_description}
Let $p\in [1,\infty]\setminus \{2\}$. 
A contraction $T \in  B(L^p(\mu))$ is a spatial partial isometry if and only if it has a contractive generalized inverse $T^*\in B(L^p(\mu))$ such that both $TT^*$ and $T^*T$ are multiplication operators on $L^p(\mu)$.
Thus spatial partial isometries coincide with $L^p$-partial isometries on $L^p(\mu)$: 
\[
    \SPIso(L^p(\mu))= L^p\mhyphen\PIso(L^p(\mu)) , 
\]
and in the complex case also with $MP$-partial isometries in $B(L^p(\mu))$. 
\end{thm}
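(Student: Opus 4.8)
The plan is to prove the three claimed descriptions of spatial partial isometries by a chain of implications, using the already established structural results: Proposition~\ref{prop:spatial^partial_isometries} (which gives the formula \eqref{eq:spatial^partial_isometry} and the relations \eqref{eq:spatial^partial_isometries_relations}), the characterisation of $L^p$-projections on $L^p(\mu)$ as multiplication operators by characteristic functions (from \cite{BDEGGMM}, with the complex case also following from Proposition~\ref{prop:hermitian_operators}), and the Banach--Lamperti theorem in the form of Proposition~\ref{prop:Banach_Lamperti}. First I would dispose of the easy direction: if $T=\omega U_\Phi\in\SPIso(L^p(\mu))$, then by \eqref{eq:spatial^partial_isometries_relations} its generalized inverse $T^*=T_{\Phi^*}(\overline\omega)U_{\Phi^*}$ is again spatial, hence contractive, and the products $TT^*$, $T^*T$ are idempotent multiplication operators by characteristic functions — hence $L^p$-projections. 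This shows $\SPIso(L^p(\mu))\subseteq L^p\mhyphen\PIso(L^p(\mu))$, and since on $L^p(\mu)$ the $L^p$-projections are exactly the characteristic-function multiplication operators, it also shows $T$ satisfies the ``multiplication operator'' condition in the first sentence; in the complex case these idempotents are hermitian by Proposition~\ref{prop:hermitian_operators}, giving the $MP$ property.

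The substance is the converse: suppose $T\in B(L^p(\mu))$ is a contraction with contractive generalized inverse $T^*$ such that $TT^*=:Q$ and $T^*T=:P$ are multiplication operators by characteristic functions $1_F$, $1_G$ of sets $F,G\in\Sigma_\mu$. The idea is to restrict and invert. From $T=TT^*T=QT=TP$ one sees that $T$ maps $L^p(\mu|_G)$ into $L^p(\mu|_F)$ and kills the complementary band, and similarly for $T^*$; moreover $T^*T=P$ acts as the identity on $L^p(\mu|_G)$ and $TT^*=Q$ as the identity on $L^p(\mu|_F)$. Hence the restriction $T_0:=T|_{L^p(\mu|_G)}:L^p(\mu|_G)\to L^p(\mu|_F)$ is a bijective isometry with inverse $T^*|_{L^p(\mu|_F)}$ (it is isometric because $\|T_0\xi\|\le\|\xi\|=\|T^*T_0\xi\|\le\|T_0\xi\|$, forcing equality; surjectivity is immediate from $TT^*=\mathrm{id}$ on the range space). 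Now apply Proposition~\ref{prop:Banach_Lamperti} (Banach--Lamperti) to the localizable measures $\mu|_G$ and $\mu|_F$: $T_0$ is a spatial isometry, i.e. $T_0=\omega_0 U_{\Phi_0}$ for a set isomorphism $\Phi_0:\Sigma_{\mu|_F}\to\Sigma_{\mu|_G}$ — equivalently a partial set automorphism $\Phi$ of $(\Omega,\Sigma,\mu)$ with $D_\Phi=G$, $D_{\Phi^*}=F$ — and a unimodular $\omega_0$ on $G$. Extending $\omega_0$ and $U_{\Phi_0}$ by zero outside the relevant bands exactly produces the operator $\omega U_\Phi$ of \eqref{eq:spatial^partial_isometry}, and since $T=TP$ vanishes off $L^p(\mu|_G)$ we get $T=\omega U_\Phi\in\SPIso(L^p(\mu))$.

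This establishes the first sentence of the theorem and the equality $\SPIso(L^p(\mu))=L^p\mhyphen\PIso(L^p(\mu))$, once one notes the definition of $L^p\mhyphen\PIso$ requires precisely that $TT^*$ and $T^*T$ be $L^p$-projections, which on $L^p(\mu)$ are the characteristic-function multiplication operators. For the final clause about $MP$-partial isometries (complex case, $p\in(1,\infty)\setminus\{2\}$): an $MP$-partial isometry is a contraction with contractive generalized inverse $T^*$ such that $TT^*$, $T^*T$ are hermitian idempotents; by Proposition~\ref{prop:hermitian_operators} a hermitian idempotent in $B(L^p(\mu))$ is a multiplication operator by a real-valued bounded function, and being idempotent it must be multiplication by a characteristic function, so we are back in the situation just treated. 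Conversely a spatial partial isometry has $TT^*$, $T^*T$ equal to characteristic-function multiplications, which are hermitian, and its generalized inverse $T^*$ is contractive; uniqueness of the Moore--Penrose inverse then forces this $T^*$ to be \emph{the} $MP$-inverse, so $T$ is an $MP$-partial isometry. The main obstacle I anticipate is the bookkeeping around non-$\sigma$-finite, merely localizable measures — ensuring that restricting to the bands $L^p(\mu|_G)$, $L^p(\mu|_F)$ keeps us inside the class where Banach--Lamperti (Proposition~\ref{prop:Banach_Lamperti}) applies (it does, since a subspace of a localizable measure space is localizable), and that extending the resulting spatial isometry by zero genuinely lands in $\SPIso(L^p(\mu))$ as defined via $\PAut([\Sigma_\mu])$ in Proposition~\ref{prop:spatial^partial_isometries}; once the dictionary ``partial set automorphism with domains $F,G$'' $\leftrightarrow$ ``set isomorphism $\Sigma_{\mu|_F}\to\Sigma_{\mu|_G}$'' is spelled out, everything else is routine.
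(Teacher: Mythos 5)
Your proposal is correct and follows essentially the same route as the paper's proof: the forward direction via the composition relations \eqref{eq:spatial^partial_isometries_relations}, the converse by restricting $T$ to an invertible isometry $L^p(\mu|_{G})\to L^p(\mu|_{F})$ and invoking Proposition~\ref{prop:Banach_Lamperti}, and the $MP$ clause via Proposition~\ref{prop:hermitian_operators}. You simply spell out in more detail the isometry/surjectivity of the restriction and the extension-by-zero bookkeeping, which the paper leaves implicit.
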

\begin{proof} 
Using \eqref{eq:spatial^partial_isometries_relations} we see that 
\[
    (\omega U_{\Phi})^* \omega U_{\Phi} = T_{\Phi^*}(\overline{\omega}) U_{\Phi^*} \circ \omega U_{\Phi}= U_{\Phi^*\circ\Phi}=U_{\id_{\Sigma_{D_{\Phi^*}}}} = 1_{D_{\Phi^*}}
\]
is the operator of multiplication by the characteristic function of the domain $D_{\Phi^*}$ of $\Phi$.  
Conversely, if $T\in  B(L^p(\mu))$ is a partial isometry with a  contractive generalized inverse $T^*\in B(L^p(\mu))$ such that both $T^*T$  and $TT^*$ are multiplication operators on $L^p(\mu)$, say by functions $1_{D^{*}}$ and $1_{D}$ respectively, then $T$ restricts to an invertible isometry  $T : L^p(\mu|_{D^*}) \to L^p(\mu|_{D})$. 
Hence $T$ is of the form \eqref{eq:spatial^partial_isometry} by Proposition~\ref{prop:Banach_Lamperti}. 
In particular, $\SPIso(L^p(\mu)) = L^p\mhyphen\PIso(L^p(\mu))$. 
If $\F=\C$ and $T\in B(L^p(\mu))$ is an $MP$-partial isometry and $T^*$ is its Moore--Penrose generalized inverse, then the idempotents $T^*T$ and $T T^*$ are  operators of multiplication by characteristic functions of some measurable sets, by Proposition~\ref{prop:hermitian_operators}.
\end{proof}

\section{Twisted inverse semigroup actions and their crossed products}
\label{sec:Inverse_semigroup_crossed_products}

We introduce Banach algebra crossed products by twisted inverse semigroup actions, that generalize $C^*$-algebraic constructions introduced by Buss and Exel in~\cite{Buss_Exel}.

%
\subsection{The crossed product}

By an \emph{ideal} in a Banach algebra we  always mean a closed two-sided ideal. 
A \emph{partial automorphism} on a Banach algebra $A$ is an isometric isomorphism $\alpha : I \to J$ between two ideals $I,J$ of $A$. 
The partial automorphisms on $A$ form an inverse subsemigroup of $\PBij(A)$ that we denote by $\PAut(A)$. 

\begin{defn} \label{defn:inverse_semigroup_action_on_Banach_algebra} 
An \emph{action of a unital inverse semigroup $S$ on a Banach algebra $A$} is a unital semigroup homomorphism $\alpha : S \to \PAut(A)$.
Thus an action of $S$ on $A$ consists of (closed, two-sided) ideals $I_t$ of $A$ and isometric isomorphisms $\alpha_t : I_{t^*} \to I_{t}$ for all $t\in S$, such that $\alpha_1 = \id_A$ and $\alpha_s \circ \alpha_t = \alpha_{s t}$ for all $s,t\in S$. 
\end{defn}

\begin{rem} 
If $A$ is a $C^*$-algebra then all ideals $I_t$ are closed under involution, and all isometric isomorphisms $\alpha_t: I_{t^*}\to I_{t}$ are necessarily $*$-preserving. 
Hence Definition~\ref{defn:inverse_semigroup_action_on_Banach_algebra} is consistent with the well established $C^*$-algebraic version \cite{Sieben}.
\end{rem}

\begin{ex} \label{ex:commutative_example}
If $A=C_0(X)$ for a locally compact Hausdorff space $X$ then $\PAut(A) \cong \PHomeo(X)$. 
Indeed, every ideal $I$ in $A$ is of the form $I = C_0(U)$ for an open set $U \subseteq X$ (we treat elements in $C_0(U)$ as functions in $C_0(X)$ that vanish outside $U$), and any partial automorphism $\alpha : I \to J$ is given by the composition $\alpha(a) = a \circ h^{-1}$ with a partial homeomorphism $h : U \to V$, where $I = C_0(U)$ and $J = C_0(V)$.
Thus inverse semigroup actions on the Banach algebra $C_0(X)$ are equivalent to inverse semigroup actions on the topological space $X$.
\end{ex}

Twisted inverse semigroup actions on $C^*$-algebras were first introduced by Sieben~\cite{Sieben98}, but a more general definition given by Buss and Exel in \cite{Buss_Exel} is needed for a number of problems.
To generalize this to Banach algebras it seems reasonable to assume that all ideals involved in an action have \emph{(contractive, two-sided) approximate units}. 
This in particular guarantees that the corresponding multiplier algebras have good properties and various approaches to multipliers coincide. 
We refer to \cite{Dales}, see also \cite{Daws} and references therein, for more details. 
Following \cite[Section 2]{Daws}, for a Banach algebra $A$ we define the \emph{multiplier algebra} as 
\[
    \Mult(A) := \big\{ (L,R) : \text{$L , R : A \to A$ satisfy $a L(b) = R(a) b$ for all $a,b\in A$} \big\} .
\]
Then $\Mult(A)$ is automatically a unital closed subalgebra of the $\ell^\infty$-direct sum $B(A)\oplus B(A)^{\op}$, see  [24, Proposition 2.5].
In particular, the maps in the pair $(L,R)\in\Mult(A)$ are automatically bounded linear operators, and   $\Mult(A)$ is a Banach algebra with the norm $\|(L,R)\| = \max\{\|L\|,\|R\|\}$. 
Also $\Mult(A)$ is complete in the strict topology, which is defined by the seminorms $(L,R) \mapsto \|L(a)\| + \|R(a)\|$,  $a\in A$. 
Every $a\in A$ yields a multiplier $(L_a,R_a)$ where $L_a(b) := ab$, $R_a(b) := ba$, and if $A$ has a contractive approximate unit the resulting  map is isometric and allows us to treat $A$ as an ideal of $\Mult(A)$. 
Then any isometric automorphism $\alpha:A\to A$ extends uniquely to an isometric automorphism $\overline{\alpha}:\Mult(A)\to \Mult(A)$. 
We denote by 
\[
    \UMult(A) := \{u\in \Mult(A): \text{$u$ is invertible and $\|u\| = \| u^{-1} \| =1$} \} ,
\] 
the group of invertible isometries in $\Mult(A)$.
\begin{defn}[{cf. \cite[Definition 4.1]{Buss_Exel}}] \label{defn:twisted actions}  
A \emph{twisted action} of an inverse semigroup $S$ on a Banach algebra $A$ is a pair $(\alpha,u)$, where $\alpha = \{\alpha_t\}_{t\in S}$ is a family of partial automorphisms $\alpha_t : I_{t^*} \to I_{t}$ of $A$, such that each ideal $I_{t}$ contains an approximate unit and their union is linearly dense in $A$, 
and $u=\{u(s, t)\}_{s,t\in S}$ is a family of unitary multipliers $u(s, t)\in \UMult(I_{s t})$, $t,s \in S$, 
such that the following holds for all $r,s,t\in S$ and $e,f\in E(S)$:
\begin{enumerate}[label={(A\arabic*)}]
    \item\label{enu:twisted actions1} $\alpha_s\circ \alpha_t = \Ad_{u(s,t)}\alpha_{s t}$;
    \item\label{enu:twisted actions2} $\alpha_r \big( a u(s,t) \big) u(r,s t) = \alpha_r(a) u(r,s) u(rs,t)$ for $a\in I_{r^*} \cap I_{s t}$; 
    \item\label{enu:twisted actions3} $u(e,f) = 1_{e f}$ and $u(t,t^*t) = u(t t^*,t) = 1_t$, where $1_t$ is the unit of $\Mult(I_t)$; 
    \item\label{enu:twisted actions4} $u(t^*,e) u(t^*e,t) a = u(t^*,t) a$ for all \(a\in I_{t^* e t}\).
  \end{enumerate}
\end{defn}

\begin{rem}\label{rem:twisted_relations} 
All properties in \cite[Lemma~4.6]{Buss_Exel} hold for the maps and ideals involved in a twisted action on a Banach algebra as defined above. 
In particular, for all $s,t\in S$ and $e\in E(S)$ we have
\[
    I_t = I_{tt^*}, \quad \overline{\alpha}_t \big( u(t^*,t) \big) = u(t,t^*), \quad \alpha_e = \id|_{I_e}, \quad \alpha_s (I_{s^*}\cap I_t) = I_{st} ,
\] 
and $I_{s}\subseteq I_{t}$ whenever $s\leq t$. 
We will use these relations, often without warning.
\end{rem}

The authors of \cite{Sieben}, \cite{Sieben98}, \cite{Buss_Exel} consider covariant representations on Hilbert spaces. 
We propose a more general definition of representations in Banach algebras. 
Our definition works best when the target algebra has a predual, which we explain in detail in Subsection~\ref{ssec:CovariantRepsDualBanachAlgebras} below.
Therefore in general we will use the \emph{double dual} $B''$ of a Banach algebra $B$ (which has a predual $B'$ by construction). 
Recall that $B''$ is again naturally a Banach algebra with either of the Arens products, in which $B$ sits as a $B'$-weakly dense Banach algebra. 
We will consider $B''$ equipped with the \emph{second Arens product}, which for $a,b \in B''$ is determined by the formula $a \cdot b := B'\mhyphen\lim_{\beta} B'\mhyphen\lim_{\alpha} a_\alpha b_{\beta}$
where $\{a_\alpha\}_{\alpha}$, $\{b_\beta\}_{\beta}\subseteq B$ are nets that are convergent to $a$ and $b$ respectively, in the weak$^*$ topology on $B''$ induced by $B'$. 
The first Arens product is given by $a \square b :=  B'\mhyphen\lim_{\alpha} B'\mhyphen\lim_{\beta}  a_\alpha b_{\beta}$, and $B$ is called \emph{Arens regular} if the two products coincide, 
see \cite{Dales} for more details. 
Our decision to use the second Arens product is arbitrary, and in any case we will be primarily interested in products $a v\in B$ where $a\in B$ and $v\in B''$, 
in which case the first and second Arens  products always agree. 
Also recall that every $C^*$-algebra (complex or real) is Arens regular. 

For any representation $\pi : A \to B$ the double adjoint $\pi'' : A'' \to B''$ is a  $B'$-weakly continuous representation that extends $\pi$, and $\pi''$ is isometric whenever $\pi$ is.
In particular, for every Banach subalgebra $A\subseteq B$ we may identify $A''$ with a subalgebra of $B''$.
Also any contractive  approximate unit  $\{\mu_{i}\}$ in $A$ converges $A'$-weakly  to a left identity $1_A$ in $A''$ (and a right identity for the first Arens product in $A''$) and the map $(L,R)\mapsto L''(1_A)$ is an isometric embedding allowing us to assume that $\Mult(A) \subseteq \{ b \in A'' : \text{$ba, ab \in A$ for all $a\in A$} \} \subseteq A''$, see \cite[Proposition 2.9.16 and Theorem 2.9.49]{Dales}.
In particular, for each ideal $I$ in $A$, which has an approximate unit, we will adopt the identifications $I\subseteq \Mult(I)\subseteq I''\subseteq A''$. 
  
\begin{defn}\label{defn:covariant_representation_in_algebra}
Let $(\alpha,u)$ be a twisted action of $S$  on $A$. 
A \emph{covariant representation  of $(\alpha,u)$ in a Banach algebra $B$} is a pair $(\pi,v)$, where $\pi : A \to B$ is a representation and $v : S \to (B'')_{1}$ is a map into contractive elements in $B''$,  such that: 
\begin{enumerate}[label={(CR\arabic*)}]
    \item\label{item:covariant_representation1} $v_t \pi(a) = \pi(\alpha_{t}(a)) v_t \in B$ for all $a\in I_{t^*}$;
    \item\label{item:covariant_representation2} $\pi(a)v_s v_t =\pi(au(s,t)) v_{st}$ for all $a\in I_{st}$, $s,t\in S$;
    \item\label{item:covariant_representation3} $\pi(a)v_e = \pi(a)$ for all $a\in I_{e}$, $e\in E(S)$.
\end{enumerate} 
We call $B(\pi,v) := \clsp\{ \pi(a_t)v_t : a_t\in I_{t}, \, t\in S \}$ the \emph{range} of $(\pi,v)$.
We say that $(\pi,v)$ is injective, isometric, etc.\ if $\pi$ has that property. 
We also say that $(\pi,v)$ is \emph{$B'$-normalized} if 
\begin{enumerate}[label={(CR\arabic*)}] \setcounter{enumi}{3}
    \item\label{item:covariant_representation4} $v_{t}= B'\mhyphen\lim_{i} (\pi(\mu_{i}^t)v_{t})$, for all $t\in S$, where $\{\mu^t_i\}_{i}$ is an approximate unit in $I_t=I_{tt^*}$.
\end{enumerate}
\end{defn}

\begin{rem}\label{rem:general_covariant_rep}
Condition~\ref{item:covariant_representation1} is equivalent to $\pi(a)v_t=v_t\pi(\alpha_{t}^{-1}(a))\in B$ for all $a\in I_{t}$, $t\in S$. 
It implies that each $v_t$ can be viewed as a ``partial mutliplier'' of $B$, and  $B(\pi,v)\subseteq B$.
Conditions~\ref{item:covariant_representation1} and \ref{item:covariant_representation3} imply that $v_{e}\pi(a)=\pi(a)$ for all $a\in I_{e}$, $e\in E(S)$.
Employing also \ref{item:covariant_representation2} we get 
\begin{equation}\label{eq:CovarianceConditionAlternate}
    v_t\pi(a)v_{t^*} = \pi \big( \alpha_t(a) u(t,t^*) \big) 
\end{equation}
for all $a\in I_{t^*}$, $t\in S$. 
Also, \ref{item:covariant_representation3} and \ref{item:covariant_representation4} imply  that $v_{e}=\pi''(1_e)$ for every $e\in E(S)$. 
\end{rem} 

Notice that the elements $v_t$ of Definition~\ref{defn:covariant_representation_in_algebra} are not required to be partial isometries. The definition of the generalised inverse $v_t^*$ of $v_t$ in the proof below involves both $v_{t^*}$ and the twist $u$, so the covariance conditions \eqref{eq:CovarianceConditionAlternate} above and \ref{item:normalized_covariant_representation1} below look different. 

\begin{prop}\label{prop:normalized_twisted_rep}
A pair $(\pi,v)$ is a $B'$-normalized covariant representation of $(\alpha,u)$ if and only if $\pi : A \to B$ is a representation and $v : S\to (B'')_{1}$ takes values in partial isometries such that every $v_t$, $t\in S$, admits a contractive generalised inverse $v_t^*$, satisfying $\pi(a) v_t \in B$, for $a\in I_t$, $t\in S$, and
\begin{enumerate}
    \item\label{item:normalized_covariant_representation1} $v_t \pi(a) v_t^* = \pi(\alpha_{t}(a))$ for all $a\in I_{t^*}$, $t\in S$;
    \item\label{item:normalized_covariant_representation2} $v_s v_t = \pi''(u(s,t))v_{st}$ for all  $s,t\in S$;
    \item\label{item:normalized_covariant_representation3} $v_t v_t^* = \pi''(1_{t})$ and $v_t^* v_t  = \pi''(1_{t^*})$ for all $t\in S$.
\end{enumerate}
Moreover, for any covariant representation $(\pi,v)$ of $(\alpha,u)$ in $B$ there is a unique $B'$-normalized covariant representation $(\pi,\tilde{v})$ such that $\pi(a)v_t=\pi(a)\tilde{v}_{t}$ for all $a\in I_{t}$ and $t\in S$, namely $\tilde{v}_{t} = B'\mhyphen\lim_{i} (\pi(\mu_{i}^t)v_{t}) = v_t \pi''(1_{t^*})$, $t\in S$. 
\end{prop}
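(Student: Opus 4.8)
The plan is to prove the proposition in two stages: first establish the equivalence of the two descriptions of a $B'$-normalized covariant representation, and then construct the normalization $(\pi,\tilde v)$ from an arbitrary covariant representation. For the equivalence, the forward direction starts from $(\pi,v)$ satisfying \ref{item:covariant_representation1}--\ref{item:covariant_representation4}. I would define the candidate generalised inverse by $v_t^* := \pi''\big(u(t^*,t)^{-1}\big)\,v_{t^*} = \pi''\big(\overline{\alpha}_{t^*}(u(t,t^*))\big)v_{t^*}$, using the relation $\overline{\alpha}_t(u(t^*,t)) = u(t,t^*)$ from Remark~\ref{rem:twisted_relations} together with \ref{item:covariant_representation1}. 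The key computations are: $v_t v_t^* = \pi''(1_t)$ and $v_t^* v_t = \pi''(1_{t^*})$, which follow from \ref{item:covariant_representation2} applied to the pairs $(t,t^*)$ and $(t^*,t)$, the relations $u(t,t^*t) = 1_t = u(tt^*,t)$ in \ref{enu:twisted actions3}, and the normalization \ref{item:covariant_representation4} which gives $v_e = \pi''(1_e)$ for idempotents $e$ (noting $tt^*, t^*t \in E(S)$). Once these hold, $v_t$ is a partial isometry with contractive generalised inverse $v_t^*$ (contractivity of $v_t^*$ is clear since $\pi''$ is contractive and $u(t^*,t)$ is a unitary multiplier, and $\|v_{t^*}\| \le 1$). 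Then \ref{item:normalized_covariant_representation1} is exactly \eqref{eq:CovarianceConditionAlternate} rewritten using the definition of $v_t^*$, and \ref{item:normalized_covariant_representation2} is \ref{item:covariant_representation2} after multiplying by an approximate unit in $I_{st}$ and passing to the $B'$-weak limit (using \ref{item:covariant_representation4} for $v_{st}$). Conversely, given $(\pi,v)$ satisfying \ref{item:normalized_covariant_representation1}--\ref{item:normalized_covariant_representation3} one recovers \ref{item:covariant_representation1} by multiplying \ref{item:normalized_covariant_representation1} on the right by $v_t$ and using $v_t^* v_t = \pi''(1_{t^*})$ together with the fact that $\pi''(1_{t^*})$ acts as a unit on $\pi(I_{t^*})$; \ref{item:covariant_representation2} follows from \ref{item:normalized_covariant_representation2} after multiplying by $\pi(a)$, $a \in I_{st}$, since $\pi(a)\pi''(u(s,t)) = \pi(au(s,t))$; \ref{item:covariant_representation3} from $v_e = \pi''(1_e)$ (a consequence of \ref{item:normalized_covariant_representation3} with $t = e$ and uniqueness of generalised inverses of idempotents), and \ref{item:covariant_representation4} because $v_t = v_t v_t^* v_t = \pi''(1_t) v_t$ and $\pi''(1_t) = B'\mhyphen\lim_i \pi(\mu_i^t)$.

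For the second part, given an arbitrary covariant representation $(\pi,v)$, set $\tilde v_t := B'\mhyphen\lim_i \pi(\mu_i^t) v_t$; one must first check this limit exists. Here I would use that, by \ref{item:covariant_representation1}, $\pi(\mu_i^t) v_t = v_t \pi(\alpha_t^{-1}(\mu_i^t))$ and the net $\{\alpha_t^{-1}(\mu_i^t)\}$ is an approximate unit in $I_{t^*}$; more directly, since $\{\mu_i^t\}$ is an approximate unit in $I_t$ and $\pi''$ is $B'$-weakly continuous, $\pi(\mu_i^t) \to \pi''(1_t)$ $B'$-weakly, and multiplication by the fixed element $v_t \in B''$ on the right is $B'$-weakly continuous for the second Arens product when the left factor converges (this is precisely the regime where the Arens products agree), so $\tilde v_t = \pi''(1_t) v_t$ exists in $B''$. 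Then I would verify $(\pi,\tilde v)$ is covariant — \ref{item:covariant_representation1}, \ref{item:covariant_representation2}, \ref{item:covariant_representation3} for $\tilde v$ follow from those for $v$ together with $\pi(a)\pi''(1_t) = \pi(a)$ for $a \in I_t$ (since $1_t$ is the unit of $\Mult(I_t)$) — and that it is $B'$-normalized, i.e.\ satisfies \ref{item:covariant_representation4}, which amounts to $\pi''(1_t)\tilde v_t = \tilde v_t$, clear since $\pi''(1_t)$ is idempotent. The identity $\pi(a) v_t = \pi(a)\tilde v_t$ for $a \in I_t$ follows again from $\pi(a)\pi''(1_t) = \pi(a)$. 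Finally I would show $\tilde v_t = v_t \pi''(1_{t^*})$: using \ref{item:covariant_representation1} in the form $v_t \pi(\mu) = \pi(\alpha_t(\mu)) v_t$ and that $\{\alpha_t(\mu_i^{t^*})\}$ or rather an approximate unit of $I_{t^*}$ maps under $\alpha_t$ to one of $I_t$, one rewrites $\pi(\mu_i^t)v_t$ accordingly; alternatively $v_t \pi''(1_{t^*}) = B'\mhyphen\lim v_t \pi(\nu_j) = B'\mhyphen\lim \pi(\alpha_t(\nu_j)) v_t$ where $\{\nu_j\}$ is an approximate unit in $I_{t^*}$, and $\{\alpha_t(\nu_j)\}$ is an approximate unit in $I_t$, so this limit equals $\pi''(1_t) v_t = \tilde v_t$.

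For uniqueness of the normalized representation with the stated property: if $(\pi,w)$ is another $B'$-normalized covariant representation with $\pi(a) w_t = \pi(a) v_t$ for all $a \in I_t$, then multiplying by $\mu_i^t$ and passing to the $B'$-weak limit gives $\pi''(1_t) w_t = \pi''(1_t) v_t$, i.e.\ $w_t = \tilde v_t$ by \ref{item:covariant_representation4} applied to $w$.

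I expect the main obstacle to be the careful handling of the $B'$-weak limits and the Arens product bookkeeping: one must repeatedly justify that a limit exists in $B''$, that it equals the claimed product (e.g.\ $\pi''(1_t)v_t$), and that manipulations like ``multiply \ref{item:covariant_representation2} by an approximate unit and take the limit'' are legitimate — this hinges on the observation, already flagged in the excerpt, that the first and second Arens products agree on products $av$ with $a \in B$, $v \in B''$, and on the $B'$-weak continuity of $\pi''$. The algebraic identities themselves (computing $v_t v_t^*$, $v_t^* v_t$, deriving \ref{item:normalized_covariant_representation1}--\ref{item:normalized_covariant_representation3}) are routine given the twisted-action axioms \ref{enu:twisted actions1}--\ref{enu:twisted actions4} and Remark~\ref{rem:twisted_relations}, essentially transcribing the $C^*$-algebra argument of Buss--Exel into this setting; the genuinely new care is ensuring no step secretly uses an involution on $B$ that is not available for general Banach algebras.
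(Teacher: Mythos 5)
Your proposal is correct and follows essentially the same route as the paper's proof: the same candidate generalised inverse (your $\pi''(u(t^*,t)^{-1})v_{t^*}$ agrees with the paper's $v_{t^*}\pi''(u(t,t^*)^{-1})$ by covariance), the same computations of $v_tv_t^*$ and $v_t^*v_t$ via \ref{item:covariant_representation2} and \ref{enu:twisted actions3}, and the same normalization $\tilde v_t = B'\mhyphen\lim_i \pi(\mu_i^t)v_t = v_t\pi''(1_{t^*})$ with the same uniqueness argument. One caution: your parenthetical claim that left multiplication $a\mapsto a\cdot v_t$ is $B'$-weakly continuous for the second Arens product with $v_t\in B''$ fixed is not true in general (that is the continuity enjoyed by the \emph{first} Arens product), so the existence and identification of the limit should rest on the route you state first — rewriting $\pi(\mu_i^t)v_t = v_t\pi(\alpha_t^{-1}(\mu_i^t))$ via \ref{item:covariant_representation1} and using $B'$-continuity in the second variable — which is exactly what the paper does.
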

\begin{proof} 
Assume $(\pi,v)$ is a pair with the properties described in the assertion. 
For every $t\in S$ and $a\in I_{t^*}$, using \ref{item:normalized_covariant_representation3} and \ref{item:normalized_covariant_representation1} we have 
\[
    v_t \pi(a) = v_t \pi(a) \pi''(1_{t^*}) = v_t \pi(a) v_{t}^*v_t = \pi \big( \alpha_t(a) \big) v_t  ,
\] 
which is \ref{item:covariant_representation1}. 
Condition~\ref{item:normalized_covariant_representation2} immediately gives \ref{item:covariant_representation2}.
By \ref{item:normalized_covariant_representation3} we have $v_t = v_t v_t^*v_t = \pi''(1_{t}) v_t = v_t\pi''(1_{t^*})$. 
This implies \ref{item:covariant_representation4} and that for $e\in E(S)$ we get $v_e v_e= \pi''(u(e,e))v_e=\pi''(1_e)v_e=v_e$. 
Therefore $\pi''(1_e)= v_e^*v_e=v_e^*v_e v_e=\pi''(1_e) v_e=v_e$, 
and using this one gets \ref{item:covariant_representation3}.
Hence $(\pi,v)$ is a $B'$-normalized covariant representation of $(\alpha,u)$.

For the converse let $(\pi,v)$ be any $B'$-normalized covariant representation. 
For each $t\in S$ choose an approximate unit $\{\mu^t_i\}_{i}$ in $I_t = I_{tt^*}$. 
Using \ref{item:covariant_representation1} and $B'$-continuity of multiplication in $B''$ in the second variable we get
\[
    B' \mhyphen\lim_{i} \big( \pi(\mu^t_i) v_t \big) = B'\mhyphen\lim_{i} \big( v_t \pi(\alpha_{t}^{-1}(\mu^t_i)) \big) = v_t B'\mhyphen\lim_{i} \pi \big( \alpha_{t}^{-1}(\mu^t_i) \big) = v_{t}\pi''(1_{t^*}),
\]
because $\{\alpha_{t}^{-1}(\mu^t_i)\}_{i}$ is an approximate unit in $I_{t^*}$ which is $B'$-convergent to $1_{t^*}$. 
Thus $v_t = B'\mhyphen\lim_{i} (\pi(\mu^t_i) v_t) = v_{t} \pi''(1_{t^*})$. 
Also \ref{item:covariant_representation3} and \ref{item:covariant_representation4} imply that $v_e = \pi''(1_e)$ for all $e\in E(S)$. 
We put $v_t^*:=v_{t^*}\pi''(u(t,t^*)^{-1})$. 
Using \ref{item:covariant_representation1}, \ref{enu:twisted actions1} and \ref{item:covariant_representation2}, and being careful 
to use only $B'$-continuity of multiplication in $B''$ in the second variable, we get
\[
\begin{split}
    v_t v_t^* &= B'\mhyphen\lim_{i} v_t v_{t^*}\pi \big( \mu^{t}_i u(t,t^*)^{-1} \big)= B'\mhyphen\lim_{i} \pi \big( \alpha_t \circ \alpha_{t^*} \big( \mu^t_i u(t , t^*)^{-1} \big) \big) v_t v_{t^*} \\
                &=  B'\mhyphen\lim_{i} \pi \big( u(t,t^*)  \mu^t_i u(t , t^*)^{-1} u(t , t^*)^{-1} \big) v_t v_{t^*} \\
        &= B'\mhyphen\lim_{i} \pi \big( u(t,t^*) \mu^t_i u(t , t^*)^{-1} u(t , t^*)^{-1} u(t , t^*) \big) v_{t t^*} \\
        &= B'\mhyphen\lim_{i} \pi''\big( u(t , t^*) \mu^t_i u(t , t^*)^{-1} \big) v_{t t^*}= B'\mhyphen\lim_{i} v_{t t^*} \pi''\big( u(t , t^*) \mu^t_i u(t , t^*)^{-1} \big)\\
				&=v_{t t^*}\pi''(1_{t}) = \pi''(1_{t}).
\end{split}
\]
By Remark~\ref{rem:twisted_relations} we have $\overline{\alpha}_{t^*}(u(t,t^*)^{-1}) = u(t^*,t)^{-1}$, so a similar calculation gives
\[
\begin{split}
    v_t^* v_t &= B'\mhyphen\lim_{i} v_{t^*} \pi \big(  u(t , t^*)^{-1} \mu^{t}_i\big) v_t 
        = B'\mhyphen\lim_{i} \pi \big(  u(t^* , t)^{-1} \alpha_{t^*}(\mu^t_i) \big) v_{t^*} v_t \\
        &= B'\mhyphen\lim_{i} \pi \big( u(t^* , t)^{-1} \alpha_{t^*}(\mu^t_i) u(t^* , t) \big) v_{t^* t} 
				= B'\mhyphen\lim_{i} v_{t^* t}  \pi \big(u(t^* , t)^{-1}  \alpha_{t^*}(\mu^t_i) u(t^* , t) \big) \\
        &=v_{t^* t}\pi''(1_{t^*})=  \pi''(1_{t^*}) .
\end{split}
\]
This finishes the proof of \ref{item:normalized_covariant_representation3} and shows that $v_t$ and $v_t^*$ are mutual generalized inverses. 
For $a \in I_{t^*}$ we get 
\[
\begin{split}
    v_t \pi(a) v_t^* &= v_t \pi(a) v_{t^*} \pi'' \big( u(t,t^*)^{-1} \big) = \pi \big( \alpha_t(a) \big) v_{t}v_{t^*} \pi'' \big( u(t,t^*)^{-1} \big) \\
        &= \pi \big( \alpha_t(a) \big) \pi'' \big( u(t,t^*) \big) v_{tt^*} \pi''(u(t,t^*)^{-1}) \\
        &= \pi \big( \alpha_t(a) \big) \pi'' \big( u(t,t^*) u(t,t^*)^{-1} \big) = \pi \big( \alpha_t(a) \big) ,
\end{split}
\]
which is \ref{item:normalized_covariant_representation1}.
Furthermore, $\alpha_s''(1_{s^*}1_{t})=1_{st}$ because $\alpha_s(I_{s^*}\cap I_t)=I_{st}$ and therefore
\[
    v_s v_t = v_s \pi''(1_{s^*}) \pi''(1_{t}) v_t = \pi''(1_{st}) v_s v_t = \pi''\big( u(s,t) \big)v_{st},
\]
which proves \ref{item:normalized_covariant_representation2}. 
Hence $(\pi,v)$ satisfies all properties in the assertion.

Now let $(\pi,v)$ be any covariant representation of $(\alpha,u)$ in $B$. 
As above we see that the limit $\tilde{v}_{t} := B'\mhyphen\lim_{i} ( \pi ( \mu_{i}^t ) v_{t} ) = v_t \pi''( 1_{t^*} )$ exists and does not depend on the choice of an approximate unit $\{\mu^t_i\}_{i}$ in $I_t$, $t\in S$. 
Clearly if $(\pi,\tilde{v})$ is a $B'$-normalized covariant representation satisfying $\pi(a) v_t = \pi(a) \tilde{v}_{t}$ for all $a\in I_{t}$ and $t\in S$, then each $\tilde{v}_{t}$ must be given by such a limit, giving \ref{item:covariant_representation4}.
For $a\in I_{t^*}$ we get  
\[
    \tilde{v}_t \pi(a) = v_t \pi''(1_{t^*}) \pi(a) = v_t\pi(a) = \pi \big( \alpha_{t}(a) \big) v_t= \lim_{i} \pi \big( \alpha_{t}(a) \big) \pi(\mu_i^{t}) v_{t} = \pi \big( \alpha_{t}(a) \big) \tilde{v}_t.
\]
This shows that $(\pi,\tilde{v})$ satisfies \ref{item:covariant_representation1} and  $\pi(a)v_t=\pi(a)\tilde{v}_{t}$ for all $a\in I_{t}$ and $t\in S$.
Using this we get $\pi(a)\tilde{v}_e =\pi(a)v_e = \pi(a)$ for all $a\in I_{e}$, $e\in E(S)$, so \ref{item:covariant_representation3} holds. 
Finally, for all $s,t\in S$ and $a\in I_{st}$ the calculation 
\[
    \pi(a) \tilde{v}_s \tilde{v}_t = v_s \pi \big( \alpha_{s^*}(a) \big) \tilde{v}_t = \pi(a)v_s v_t = \pi \big( a u(s,t) \big) v_{st} = \pi \big( a u(s,t) \big) \tilde{v}_{st} 
\]
gives \ref{item:covariant_representation2}. 
Thus $(\pi,\tilde{v})$ is a $B'$-normalized covariant representation of $(\alpha,u)$.
\end{proof}

\begin{rem}\label{rem:unital_actions}
 If  each $I_t$, $t\in S$, is unital then a covariant representation $(\pi,v)$ is $B'$-normalized if and only if $v_t = \pi(1_t)v_t \in B$ for each $t\in S$, and Proposition~\ref{prop:normalized_twisted_rep} 
could be formulated without the use of the bidual algebra $B''$ and the extended representation $\pi''$. 
\end{rem}

\begin{lem}\label{lem:range_of_covariant_rep}
Let $(\pi,v)$ be a covariant representation of $\alpha$ in a Banach algebra $B$.
The range $B(\pi,v)$ is a Banach subalgebra of $B$. 
The spaces  $A_t = \{ \pi(a_t)v_t: a_t\in I_{t} \}$, $t\in S$, form a grading of $B(\pi,v)$ over the inverse semigroup $S$ in the sense that 
\[
    B(\pi,v) = \clsp \{ A_t : t\in S \}, \qquad A_s A_t \subseteq A_{st}, \qquad \text{$s\leq t$ implies $A_s\subseteq A_t$} ,
\]
for all $s,t \in S$. 
In fact, we have
\begin{enumerate}
    \item\label{enu:range_of_covariant_rep1} $\pi(a_s)v_s \cdot \pi(a_t)v_t = \pi \big( \alpha_s(\alpha_{s}^{-1}(a_s)a_t) u(s,t) \big)v_{st}$ and $\alpha_s \big( \alpha_{s}^{-1} (a_s) a_t \big) \in I_{st}$ for all $a_t\in I_{t}, a_s\in I_{s}$, $s,t\in S$;
    \item\label{enu:range_of_covariant_rep2} $s\leq t$ implies $I_{s}\subseteq I_{t}$ and $\pi(a)v_s = \pi \big( a u(ss^*,t)^{-1} \big) v_t$ for any $a \in I_{s}$;
\end{enumerate}
If in addition $A$ and $B$ are $C^*$-algebras then $(\pi(a_t) v_t)^* = \pi \big( \alpha_{t}^{-1}(a_{t}^*) u(t^*,t)^{-1} \big)v_{t^*}$ for all $a_t\in I_{t}$, $t\in S$, so $A_t^* = A_{t^*}$, and $\{A_t\}_{t\in S}$ is a saturated grading of $B(\pi,v)$ over the inverse semigroup of $S$ in the sense of \cite[Definition 6.15]{Kwa-Meyer0}, see also \cite[Definition 7.1]{Exel}.
\end{lem}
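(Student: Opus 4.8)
The plan is to prove the two explicit identities~\ref{enu:range_of_covariant_rep1} and~\ref{enu:range_of_covariant_rep2} first; everything else then falls out formally. Indeed,~\ref{enu:range_of_covariant_rep1} shows $A_s A_t\subseteq A_{st}$, and since $B(\pi,v)=\clsp\{A_t:t\in S\}$ by definition, this forces $B(\pi,v)$ to be closed under multiplication, hence a Banach subalgebra of $B$; similarly~\ref{enu:range_of_covariant_rep2} gives $A_s\subseteq A_t$ when $s\le t$. So the whole ``grading'' claim reduces to~\ref{enu:range_of_covariant_rep1}--\ref{enu:range_of_covariant_rep2}, and the $C^*$-addendum is a separate, more delicate computation.

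For~\ref{enu:range_of_covariant_rep1} I would argue as follows. Since $a_s\in I_s=I_{ss^*}$, we have $\alpha_s^{-1}(a_s)\in I_{s^*}$, so $c:=\alpha_s^{-1}(a_s)\,a_t$ lies in $I_{s^*}\cap I_t$ (both ideals), whence $\alpha_s(c)\in\alpha_s(I_{s^*}\cap I_t)=I_{st}$ by Remark~\ref{rem:twisted_relations}; this is the membership statement, and it also gives $\alpha_s(c)u(s,t)\in I_{st}$ as $u(s,t)\in\UMult(I_{st})$. For the product, I would rewrite $\pi(a_s)v_s=v_s\pi(\alpha_s^{-1}(a_s))$ using~\ref{item:covariant_representation1} (with $a=\alpha_s^{-1}(a_s)\in I_{s^*}$), regroup inside $B''$ to get $\pi(a_s)v_s\cdot\pi(a_t)v_t=v_s\pi(c)v_t$, apply~\ref{item:covariant_representation1} once more (now for $c\in I_{s^*}$) to pull out $\pi(\alpha_s(c))v_s v_t$, and finish with~\ref{item:covariant_representation2} (legitimate since $\alpha_s(c)\in I_{st}$) to land on $\pi(\alpha_s(c)u(s,t))v_{st}\in A_{st}$. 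The only thing to be careful about is which intermediate products live in $B$ and which in $B''$, but \ref{item:covariant_representation1}--\ref{item:covariant_representation2} are designed precisely so that each step is meaningful.

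For~\ref{enu:range_of_covariant_rep2}: the inclusion $I_s\subseteq I_t$ is in Remark~\ref{rem:twisted_relations}. Write $s=(ss^*)\,t$ and put $e:=ss^*\in E(S)$, so $s=et$ and $I_e=I_{ss^*}=I_s$. I would first note $\pi(b)v_e=v_e\pi(\alpha_e^{-1}(b))=v_e\pi(b)=\pi(b)$ for $b\in I_e$, using~\ref{item:covariant_representation1}, the relation $\alpha_e=\id|_{I_e}$ (Remark~\ref{rem:twisted_relations}), and the identity $v_e\pi(b)=\pi(b)$ from Remark~\ref{rem:general_covariant_rep}; hence $\pi(b)v_e v_t=\pi(b)v_t$, and combining this with~\ref{item:covariant_representation2} for the pair $(e,t)$ (note $I_{et}=I_s$) gives $\pi(b)v_t=\pi(bu(e,t))v_s$ for all $b\in I_s$. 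Substituting $b=a\,u(e,t)^{-1}\in I_s$ (allowed as $u(e,t)\in\UMult(I_{et})=\UMult(I_s)$) produces $\pi(a)v_s=\pi(a\,u(ss^*,t)^{-1})v_t$, and in particular $A_s\subseteq A_t$.

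For the $C^*$-addendum I would first pass to the associated $B'$-normalized covariant representation $(\pi,\tilde v)$ of Proposition~\ref{prop:normalized_twisted_rep} (this leaves every $A_t$ unchanged, since $\pi(a_t)v_t=\pi(a_t)\tilde v_t$ for $a_t\in I_t$); then $\pi$ is a $*$-homomorphism (Remark~\ref{rem:complex_the_real_C*}), $B''$ is a $C^*$-algebra, and the contractive generalised inverse $v_t^*=v_{t^*}\pi''(u(t,t^*)^{-1})$ from Proposition~\ref{prop:normalized_twisted_rep} is simultaneously the $C^*$-adjoint of $v_t$. Computing $(\pi(a_t)v_t)^*=v_t^*\pi(a_t^*)=v_{t^*}\pi(u(t,t^*)^{-1}a_t^*)$ and pushing $v_{t^*}$ through via~\ref{item:covariant_representation1} for $t^*$ reduces the claim to the identity $\alpha_{t^*}(u(t,t^*)^{-1}a_t^*)=\alpha_t^{-1}(a_t^*)\,u(t^*,t)^{-1}$, which follows from~\ref{enu:twisted actions1} together with $\alpha_{tt^*}=\id|_{I_t}$ (these give $\alpha_{t^*}=\alpha_t^{-1}\circ\Ad_{u(t,t^*)}$ on $I_t$) and $\overline{\alpha}_t^{-1}(u(t,t^*))=u(t^*,t)$ from Remark~\ref{rem:twisted_relations}. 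Since $a_t\mapsto\alpha_t^{-1}(a_t^*)u(t^*,t)^{-1}$ is a bijection of $I_t$ onto $I_{t^*}$, this gives $A_t^*=A_{t^*}$. For saturation, $A_sA_t\subseteq A_{st}$ is~\ref{enu:range_of_covariant_rep1}, and for the reverse inclusion, given $d\in I_{st}$ I would set $c:=\alpha_s^{-1}(d\,u(s,t)^{-1})\in I_{s^*}\cap I_t$, pick an approximate unit $\{e_i\}$ of $I_{s^*}$, and apply~\ref{enu:range_of_covariant_rep1} with $a_s=\alpha_s(e_i)$, $a_t=c$ to obtain $\pi(\alpha_s(e_i c)u(s,t))v_{st}\to\pi(d)v_{st}$ in $B$; since $x\mapsto x\tilde v_{st}$ is isometric on the closed subspace $\pi(I_{st})$, $A_{st}$ is itself closed, so $\clsp(A_sA_t)=A_{st}$, i.e.\ the grading $\{A_t\}$ is saturated. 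I expect the main obstacle to be the cocycle bookkeeping in this $C^*$-adjoint computation---controlling $u(t,t^*)$, $u(t^*,t)$ and the discrepancy between $\alpha_{t^*}$ and $\alpha_t^{-1}$---and, to a lesser extent, the routine but careful verification in~\ref{enu:range_of_covariant_rep1} that every product written down is well defined when some factors live only in $B''$.
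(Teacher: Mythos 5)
Your proof is correct and follows essentially the same route as the paper: items \ref{enu:range_of_covariant_rep1} and \ref{enu:range_of_covariant_rep2} are proved by the identical applications of \ref{item:covariant_representation1}--\ref{item:covariant_representation3} together with Remark~\ref{rem:twisted_relations}, and the $C^*$-adjoint formula rests on the same facts (the generalised inverse $v_{t^*}\pi''(u(t,t^*)^{-1})$ from Proposition~\ref{prop:normalized_twisted_rep} being the adjoint of the normalized $v_t\pi''(1_{t^*})$, plus $\overline{\alpha}_t(u(t^*,t))=u(t,t^*)$), merely with the cocycle bookkeeping carried out in the opposite order. The only genuine addition is your explicit approximate-unit argument for saturation, $A_{st}\subseteq\overline{A_sA_t}$, which the paper asserts by reference without proof; that argument is sound.
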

\begin{proof} 
\ref{enu:range_of_covariant_rep1} For any 
$a_t\in I_{t}, a_s\in I_{s}$, $s,t\in S$, we have $\alpha_{s}^{-1}(a_s) a_t\in I_{s^*} I_{t}\subseteq I_{s^*}\cap I_{t}$ and so
$\alpha_{s}(\alpha_{s}^{-1}(a_s)a_t)\in I_{st}$ because $\alpha_s(I_{s^*}\cap I_{t})=I_{st}$, see Remark \ref{rem:twisted_relations}.
Using \ref{item:covariant_representation1} we get
\[
    \pi(a_s)v_s \cdot \pi(a_t)v_t = v_s \pi \big( \alpha_{s}^{-1}(a_s) \big) \cdot \pi(a_t) v_t = v_s \pi \big( \alpha_{s}^{-1}(a_s) a_t \big) v_t = \pi \Big( \alpha_s \big( \alpha_{s}^{-1}(a_s) a_t \big) u(s,t) \Big) v_{st}. 
\]

\ref{enu:range_of_covariant_rep2} If $s\leq t$ then $I_{s}\subseteq I_{t}$ by Remark~\ref{rem:twisted_relations}. 
Using \ref{item:covariant_representation3} and \ref{item:covariant_representation2}, for any $a \in I_{s}$ we get  $ \pi(a)v_t=\pi(a)v_{ss^*} v_t= \pi(au(ss^*,t))v_{ss^*t} = \pi(au(ss^*,t))v_{s}$. 

If $B$ is a $C^*$-algebra then $B''$ is again a $C^*$-algebra -- the enveloping $W^*$-algebra of $B$ (this also holds in the real case, see \cite[Theorem 5.5.3]{Li_book}). 
In particular, $\pi''(1_{t})v_t = v_t\pi''(1_{t^*})$ is a partial isometry whose adjoint is $\pi''(u(t^*,t)^{-1})v_{t^*}$, see (the proof of) Proposition~\ref{prop:normalized_twisted_rep}. 
Thus for $a_t\in I_{t}$, $t\in S$, we get
$
    \big( \pi(a_t) v_t \big)^* = \Big( v_t \pi''(1_{t^*}) \pi \big( \alpha_{t}^{-1}(a_t) \big) \Big)^* = \pi \big( \alpha_{t}^{-1}(a_t) \big)^* \big( \pi''(1_{t^*}) v_t \big)^* = \pi \big( \alpha_{t}^{-1}(a_t) \big)^* \pi'' \big( u(t^*,t)^{-1} \big) v_{t^*}.
$
If in addition $A$ is a $C^*$-algebra then both $\pi$ and $\alpha_{t}$ are necessarily $*$-preserving, and thus $(\pi(a_t)v_t)^* = \pi(\alpha_{t}^{-1}(a_{t}^*)u(t^*,t)^{-1})v_{t^*}$.
\end{proof}

For any twisted action $(\alpha,u)$ the subspace $\ell^1(\alpha,u) := \{ f \in \ell^1(S,A) : \text{$f(t) \in I_{t}$ for all $t\in S$} \}$ of $\ell^1(S,A)$ is a Banach algebra with multiplication
\[
    (f * g)(r) := \sum_{st = r} \alpha_s \Big( \alpha_{s^*} \big( f(s) \big) g(t) \Big) u(s,t),
\]
see (the proof of) \cite[Proposition 3.1]{Sieben98}. 
If $A$ is a $C^*$-algebra then in fact $\ell^1(\alpha,u)$ is a Banach $*$-algebra with involution
given by $f^*(t) := \alpha_{t}^{-1}(f(t^*)^*) u(t^*,t)^{-1}$, $t\in S$.
Lemma~\ref{lem:range_of_covariant_rep} (see also \cite[Proposition 3.7]{Sieben98}) implies that every covariant representation $(\pi,v)$ of $\alpha$ in a Banach algebra $B$ `integrates' to a representation  $\pi\times v : \ell^1(\alpha,u) \to B$ given by
\[
    \pi\times v(f) := \sum_{t\in S} \pi \big( f(t) \big) v_t, \qquad f \in \ell^1(\alpha,u),
\]
and $\overline{\pi\times v(\ell^1(\alpha,u))} = B(\pi,v)$ is a Banach algebra. 

\begin{defn}
For a fixed class $\RR$ of covariant representations of $(\alpha,u)$  we define the \emph{$\RR$-crossed product} for $(\alpha,u)$, denoted $A\rtimes_{(\alpha,u),\RR} S$, as the Hausdorff completion of $\ell^1(\alpha,u)$ in the seminorm
\[
    \| f \|_{\RR} := \sup \{ \| \pi\times v(f)\| : (\pi,v)\in \RR \} .
\]
The \emph{universal Banach algebra crossed product} $A\rtimes_{(\alpha,u)} S$ is the crossed product associated to the class of all covariant representations of $(\alpha,u)$.
\end{defn}

\begin{rem}\label{rem:group_crossed^products}
If $S = G$ is a group then we necessarily have  $I_t = A$ for all $t\in G$, so   $\alpha : G \to \Aut(A)$, and $A \rtimes_{(\alpha,u)}G=\ell^{1}(\alpha,u)$ is $\ell^1(G,A)$ as a Banach space. 
When $A=C_0(X)$  such crossed products are studied in  \cite{BK}, see also \cite{BKM}.
For general $A$, but when the twist $u\equiv 1$ is trivial,  $(A,G,\alpha)$ is a Banach algebra dynamical system in the sense of \cite{DDW}, and the algebras $A\rtimes_{\alpha, \RR} G$ are examples of crossed products considered in \cite{DDW}. 
\end{rem}

\subsection{Covariant representations in dual Banach algebras and on Banach spaces}
\label{ssec:CovariantRepsDualBanachAlgebras}

We fix a twisted action $(\alpha,u)$ of an inverse semigroup $S$ on $A$. 
We will now discuss situations, beyond the one described in Remark~\ref{rem:unital_actions}, when in the study of algebras generated by covariant representations we do not need to pass through the bidual algebra $B''$. 

\begin{defn} \label{de:PredualNormalizedRepresentation}
Let $(\pi,v)$ be a covariant representation of $(\alpha,u)$ in a Banach algebra $B$. 
If $B$ has a predual Banach space $B_*$, we say that $(\pi,v)$ is \emph{$B_*$-normalized} if, for every $t \in S$, we have $v_{t} = B_*\mhyphen\lim_{i} ( \pi(\mu_{i}^t) v_{t} )$, for an approximate unit $\{ \mu^t_i \}_i$ in $I_t = I_{tt^*}$, where $B_*\mhyphen\lim$ denotes the limit in the weak$^*$ topology on $B$ induced by $B_*$. 
In particular, then $v : S \to B$ takes values in $B$ (rather than in $B''$).
\end{defn}

A \emph{dual Banach algebra} is a pair $(B, B_*)$ where $B$ is a Banach algebra, $B_*$ is a predual of $B$, and in addition multiplication in $B$ is separately $B_*$-continuous, see \cite{Runde}, \cite{Daws}. Examples of dual Banach algebras include all $W^*$-algebras (complex or real) with their unique preduals, and all algebras $B(E)$ of all bounded operators on a reflexive Banach space $E$, equipped with the canonical predual Banach space $E' \widehat{\otimes} E$.
By \cite[Theorem 5.6]{Ilie_Stokke}, every representation $\pi : A \to B$ into a dual Banach algebra $(B, B_*)$ has a unique extension to a representation $\overline{\pi} : \Mult(A) \to B$ which is strictly-$B_*$-continuous; it is given by  $\overline{\pi}(m) := B_*\mhyphen\lim_{i} \pi(m \mu_{i})$ for an approximate unit $\{\mu_{i}\}_i$ in $A$. 

\begin{prop}\label{prop:B_*_normalized_twisted_rep}
Assume $(B,B_*)$ is a dual Banach algebra. 
For any covariant representation $(\pi,v)$ of $(\alpha,u)$ in $B$ there is a unique $B_*$-normalized covariant representation $(\pi,\tilde{v})$ such that $\pi(a) v_t = \pi(a) \tilde{v}_{t}$ for all $a\in I_{t}$ and $t\in S$.
A pair $(\pi,v)$ is a $B_*$-normalized covariant representation of $(\alpha,u)$ in $B$ if and only if $\pi : A \to B$ is a representation and $v : S\to B_{1}$ is a map such that every $v_t$, $t\in S$, admits a contractive generalised inverse $v_t^*$, satisfying
\[
    v_t \pi(a) v_t^* = \pi \big( \alpha_{t}(a) \big), \quad v_s v_t = \pi_{st} \big( u(s,t) \big) v_{st} , \quad \text{$v_t v_t^* =\pi_t(1_{t})$ and $v_t^* v_t  =\pi_{t^*}(1_{t^*})$},
\]
where $a\in I_{t^*}$, $s,t\in S$ and $\pi_t : \Mult(I_t) \to B$ is the unique strictly-$B_*$-continuous extension of $\pi|_{I_t}$.
\end{prop}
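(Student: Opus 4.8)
**Proof plan for Proposition~\ref{prop:B_*_normalized_twisted_rep}.**

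The plan is to mirror the structure of the proof of Proposition~\ref{prop:normalized_twisted_rep}, but to systematically replace every occurrence of the bidual $B''$, the second Arens product, and the double adjoint $\pi''$ by the genuinely smaller objects available in a dual Banach algebra: the Banach algebra $B$ itself (via Lemma~\ref{lem:B_0-normalized_is_in_B}), and the strictly-$B_*$-continuous extensions $\pi_t : \Mult(I_t)\to B$ supplied by \cite[Theorem 5.6]{Ilie_Stokke}. The first thing I would record is a compatibility observation: since $B\subseteq B''$ isometrically and the inclusion is weak$^*$-to-$B'$-continuous on bounded sets, a $B_*$-normalized covariant representation is in particular a $B'$-normalized covariant representation in the sense of Definition~\ref{defn:covariant_representation_in_algebra}, because $B_*$-convergence of a bounded net implies $B'$-convergence (the predual functionals restrict to a subfamily of the functionals in $B'$, but both limits of the bounded net $\{\pi(\mu^t_i)v_t\}_i$, which is $B_*$-convergent, must agree). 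Moreover $\pi''(m)=\pi_t(m)$ for $m\in\Mult(I_t)$: both are obtained as limits of $\pi(m\mu^t_i)$, one in the $B'$-weak topology on $B''$ and one in the $B_*$-weak topology on $B$, and these limits coincide under the identification $B\hookrightarrow B''$. Once this dictionary is in place, everything below can be deduced from Proposition~\ref{prop:normalized_twisted_rep}, but it is cleaner (and the referee will want it) to re-run the computations directly.

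The heart of the argument is therefore the following three steps, carried out essentially as in the proof of Proposition~\ref{prop:normalized_twisted_rep}. First, given a covariant representation $(\pi,v)$ in $B$, I would show that $\tilde v_t := B_*\mhyphen\lim_i(\pi(\mu^t_i)v_t)$ exists in $B$: the net $\{\pi(\mu^t_i)v_t\}_i$ lies in the $B_*$-compact set $B_1$, and by condition~\ref{item:covariant_representation1} it equals $\{v_t\pi(\alpha_t^{-1}(\mu^t_i))\}_i$; since multiplication is separately $B_*$-continuous, the limit is $v_t\cdot(B_*\mhyphen\lim_i\pi(\alpha_t^{-1}(\mu^t_i)))=v_t\,\overline\pi(1_{t^*})$, which is well-defined and independent of the approximate unit because $\overline\pi$ is the unique strictly-$B_*$-continuous extension of $\pi$ to $\Mult(A)$ (equivalently $\pi_{t^*}(1_{t^*})$). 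Checking that $(\pi,\tilde v)$ satisfies \ref{item:covariant_representation1}--\ref{item:covariant_representation4} and $\pi(a)v_t=\pi(a)\tilde v_t$ for $a\in I_t$ is the same chain of short computations as in Proposition~\ref{prop:normalized_twisted_rep}, using only separate $B_*$-continuity of multiplication and never a two-sided limit. Uniqueness is immediate since \ref{item:covariant_representation4} forces the formula for $\tilde v_t$.

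Second, for the ``if'' direction of the characterization, assume $\pi$ is a representation and $v:S\to B_1$ admits contractive generalized inverses $v_t^*$ with the three displayed identities. One reads off covariance \ref{item:covariant_representation1} from $v_t\pi(a)=v_t\pi(a)(v_t^*v_t)=v_t\pi(a)\pi_{t^*}(1_{t^*})=\pi(\alpha_t(a))v_t$ (using $v_t^*v_t=\pi_{t^*}(1_{t^*})$ and $\alpha_t(a)\in I_t$), \ref{item:covariant_representation2} from $v_sv_t=\pi_{st}(u(s,t))v_{st}$ together with Lemma~\ref{lem:range_of_covariant_rep}-type manipulations, \ref{item:covariant_representation3} from the idempotent relations on $E(S)$ exactly as in the earlier proof ($v_ev_e=v_e$, hence $\pi_e(1_e)=v_e$), and normalization \ref{item:covariant_representation4} from $v_t=v_t(v_t^*v_t)=v_t\pi_{t^*}(1_{t^*})=B_*\mhyphen\lim_i\pi(\mu^t_i)v_t$. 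Third, for the ``only if'' direction, given a $B_*$-normalized $(\pi,v)$ I would define $v_t^*:=v_{t^*}\pi_{tt^*}(u(t,t^*)^{-1})$ and verify $v_tv_t^*=\pi_t(1_t)$, $v_t^*v_t=\pi_{t^*}(1_{t^*})$, $v_t\pi(a)v_t^*=\pi(\alpha_t(a))$ and $v_sv_t=\pi_{st}(u(s,t))v_{st}$; these are the identical computations as in Proposition~\ref{prop:normalized_twisted_rep} with $\pi''$ replaced by the appropriate $\pi_t$, again using only separate $B_*$-continuity and the axioms \ref{enu:twisted actions1}--\ref{enu:twisted actions4} of the twisted action (via Remark~\ref{rem:twisted_relations}).

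The main obstacle — really the only subtle point — is the bookkeeping around which extension $\pi_t$ applies to which multiplier, and making sure every limit taken is a \emph{one-sided} weak$^*$ limit so that separate $B_*$-continuity of multiplication suffices: the products $u(t,t^*)^{-1}\mu^t_iu(t,t^*)^{-1}$ etc.\ must be manipulated so that the convergent approximate-unit factor always sits on the appropriate side of the product, and the identity $\overline\alpha_{t^*}(u(t,t^*)^{-1})=u(t^*,t)^{-1}$ from Remark~\ref{rem:twisted_relations} is used to move $\alpha_{t^*}$ past the twists. One should also note that $\pi_t$ restricted to $I_t$ is just $\pi$, that $\pi_t(1_t)$ acts as a two-sided identity for $\pi(I_t)$, and that $\pi_t$ is multiplicative on $\Mult(I_t)$, all of which follow from \cite[Theorem 5.6]{Ilie_Stokke}; with these facts in hand the computations are routine and identical in form to the bidual case.
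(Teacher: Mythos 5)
Your proposal is correct and follows essentially the same route as the paper, whose entire proof is the one-line instruction to rerun the argument of Proposition~\ref{prop:normalized_twisted_rep} with $B''$, $\pi''$ replaced by $B$, $\pi_t$, noting that the computations simplify because multiplication in a dual Banach algebra is separately $B_*$-continuous in \emph{both} variables. The only remark worth adding is that your careful one-sided-limit bookkeeping, while harmless, is precisely what becomes unnecessary in this setting — that is the simplification the paper alludes to.
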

\begin{proof} 
Follow the proof of Proposition~\ref{prop:normalized_twisted_rep} (some arguments can be simplified as the  multiplication in $B$ is $B_*$-continuous in both variables). 
\end{proof}

\begin{rem}
For any Banach algebra $B$ the pair $(B'',B')$ is a dual Banach algebra if and only if $B$ is Arens regular. 
Thus Proposition~\ref{prop:normalized_twisted_rep} cannot be deduced from Proposition~\ref{prop:B_*_normalized_twisted_rep}. 
We were not able to find a nice common generalization of these two statements.
\end{rem}

\begin{rem} 
Proposition~\ref{prop:B_*_normalized_twisted_rep} implies that a $B_*$-normalized representation of 
an untwisted action $\alpha$ (an action with a trivial twist) in a dual Banach algebra $B$ is a pair $(\pi,v)$, where $\pi : A \to B$ is a representation and $v : S \to B_{1}$ is a semigroup homomorphism, such that $v_t \pi(a) v_{t^*} = \pi(\alpha_{t}(a))$ and $v_e = \pi_e (1_{e})$ for $a\in I_{t^*}$, $s,t\in S$, $e\in E(S)$.
\end{rem}

We now relate the above discussion to representations on Banach spaces. 
Note that if $\pi : A \to B(E)$ is a representation then, for each $t\in S$, $\pi$ yields a non-degenerate representation $I_t \to B(\overline{\pi(I_{t})E})$ which thus uniquely extends to a representation 
\[
    \overline{\pi}_t : \Mult(I_t) \to B(\overline{\pi(I_{t})E}) ;\ \overline{\pi}_t(m) \big( \pi(a)x \big) := \pi ( ma ) x , \qquad m\in \Mult(I_t),\ a\in I_{t},\ x\in E , 
\]
see for instance \cite[Theorem 4.1]{Gardella_Thiel1}.

\begin{defn}\label{defn:covariant_representation_on_space}
A \emph{covariant representation of $(\alpha, u)$ on a Banach space $E$} is a pair $(\pi,v)$ where $\pi : A \to B(E)$ is a representation and $v : S \to B(E)_{1}$ has the property that every $v_t$, $t\in S$, admits a contractive generalised inverse $v_t^*$, satisfying: 
\begin{enumerate}[labelindent=40pt,label={(SCR\arabic*)},itemindent=1em] 
    \item\label{item:covariant_representation1'} $v_t \pi(a) v_{t}^* = \pi(\alpha_t(a))$ for all $a\in I_{t^*}$, $t\in S$;
    \item \label{item:covariant_representation2'} the ranges of $v_t $ and $v_t^*$ are  $\overline{\pi(I_{t})E}$ and $\overline{\pi(I_{t^*})E}$ respectively, for all $t\in S$;
    \item\label{item:covariant_representation3'} $v_s v_t = \overline{\pi}_{st}(u(s,t)) v_{st}$ for all $s,t\in S$.
\end{enumerate}
Equality in \ref{item:covariant_representation3'} holds pointwise and it makes sense due to \ref{item:covariant_representation2'}.
We  say that $(\pi,v)$ is non-degenerate, injective, etc.\ if $\pi$ has that property.
\end{defn}

\begin{rem}\label{rem:space_covariant_reps} 
If $\F = \C$, $E = H$ is a Hilbert space and $A$ is a $C^*$-algebra then Definition~\ref{defn:covariant_representation_on_space} is equivalent to \cite[Definition 3.2]{Sieben98} and \cite[Definition 6.2]{Buss_Exel}. 
\end{rem}

\begin{lem}\label{lem:covariant_reps_implies_MP^partial_isos}
Let $(\pi,v)$ be a covariant representation of $(\alpha,u)$ on a Banach space $E$. 
Then $(\pi,v)$ is a covariant representation in the Banach algebra $B(E)$ such that each $v_e\in B(E)$, $e\in E(S)$, is a  strong limit of $\{\pi(\mu_i^e)\}_{i}$, where  $\{\mu_{i}^e\}_{i}$ is a contractive approximate unit in $I_{e}$.
\end{lem}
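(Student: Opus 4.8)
The plan is to verify the three defining conditions \ref{item:covariant_representation1}--\ref{item:covariant_representation3} of Definition~\ref{defn:covariant_representation_on_space} (as a covariant representation in the Banach algebra $B = B(E)$) directly from the conditions \ref{item:covariant_representation1'}--\ref{item:covariant_representation3'} of Definition~\ref{defn:covariant_representation_on_space}, and to identify the $v_e$ with the promised strong limits. The key observation is that each $v_t$ is a \emph{genuine} operator on $E$ (not merely an element of $B''$), and, crucially, that the maps $\overline{\pi}_t : \Mult(I_t) \to B(\overline{\pi(I_t)E})$ of the last displayed formula before the statement, when composed with the inclusion $B(\overline{\pi(I_t)E}) \hookrightarrow B(E)$ via the decomposition $E \cong \overline{\pi(I_t)E} \oplus (\ker$-part$)$ — or more carefully, extended by zero off the range — play the role of $\pi''|_{\Mult(I_t)}$. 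Here one must be slightly careful: $\overline{\pi}_t$ is defined only on $\overline{\pi(I_t)E}$, so condition \ref{item:covariant_representation2'} (that $v_t$ has range exactly $\overline{\pi(I_t)E}$ and $v_t^*$ has range exactly $\overline{\pi(I_{t^*})E}$) is what makes the products $v_sv_t$ land where $\overline{\pi}_{st}(u(s,t))$ is defined, and makes $v_tv_t^*$ and $v_t^*v_t$ the idempotent projections onto $\overline{\pi(I_t)E}$ and $\overline{\pi(I_{t^*})E}$ respectively (this last point uses that $v_t$ is a partial isometry with $v_t^*$ a contractive generalized inverse, so $v_tv_t^*$ and $v_t^*v_t$ are idempotent contractions, hence projections onto their ranges).

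First I would establish \ref{item:covariant_representation1}: for $a \in I_{t^*}$, from \ref{item:covariant_representation1'} we have $v_t\pi(a)v_t^* = \pi(\alpha_t(a))$; multiplying on the right by $v_t$ and using that $v_t^*v_t$ is the projection onto $\overline{\pi(I_{t^*})E} \supseteq \overline{\pi(aA)E}$ — more precisely, that $\pi(a) = \pi(a)\cdot v_t^*v_t$ because $a \in I_{t^*}$ and $v_t^*v_t$ fixes $\overline{\pi(I_{t^*})E}$ pointwise (an approximate-unit argument: $\pi(a)x = \lim \pi(a\mu_i^{t^*})x$ and each $\pi(a\mu_i^{t^*})x \in \pi(I_{t^*})E$) — we get $v_t\pi(a) = v_t\pi(a)v_t^*v_t = \pi(\alpha_t(a))v_t$, which is \ref{item:covariant_representation1}; it lies in $B(E) = B$ trivially. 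Next, \ref{item:covariant_representation2} follows from \ref{item:covariant_representation3'}: for $a \in I_{st}$, apply $\pi(a)$ on the left of $v_sv_t = \overline{\pi}_{st}(u(s,t))v_{st}$ and use that $\pi(a)\overline{\pi}_{st}(u(s,t)) = \pi(au(s,t))$ on $\overline{\pi(I_{st})E}$ (by the defining formula for $\overline{\pi}_{st}$ and density), noting the range of $v_{st}$ is $\overline{\pi(I_{st})E}$ so the identity makes sense. For \ref{item:covariant_representation3}: for $e \in E(S)$ and $a \in I_e$, note $v_e$ is a contractive idempotent (from $v_ev_e = \overline{\pi}_e(u(e,e))v_e = \overline{\pi}_e(1_e)v_e = v_e$ using \ref{enu:twisted actions3} and \ref{item:covariant_representation2'}, since $\overline{\pi}_e(1_e)$ is the identity on the range $\overline{\pi(I_e)E}$ of $v_e$), hence the projection onto $\overline{\pi(I_e)E}$; then $\pi(a)v_e = \pi(a)$ because $\pi(a)$ already has range inside $\overline{\pi(I_e)E}$ and $v_e$ fixes that subspace, giving \ref{item:covariant_representation3}.

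Finally, for the claim about $v_e$ being a strong limit of $\{\pi(\mu_i^e)\}_i$: since $v_e$ is the projection onto $\overline{\pi(I_e)E}$ with $\ker v_e$ complementary, and $\{\pi(\mu_i^e)\}_i$ is a contractive net that converges strongly to the identity on $\overline{\pi(I_e)E}$ (standard for a non-degenerate action of an approximate unit) while $\overline{\pi}_e(1_e) = v_e$ is exactly this strong-limit extension-by-zero, I would argue $\pi(\mu_i^e)x \to v_e x$ for $x \in \overline{\pi(I_e)E}$ and $\pi(\mu_i^e)x \to 0 = v_e x$ for $x$ in a complement — here one uses $E = \overline{\pi(I_e)E} \oplus \overline{(1-v_e)E}$ from $v_e$ being a bounded idempotent, and that $\overline{\pi}_e(1_e)$, by its very construction in \cite[Theorem 4.1]{Gardella_Thiel1}, \emph{is} the strong limit of $\pi(\mu_i^e)$ composed with the projection $v_e$.

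\textbf{Main obstacle.} The delicate point is matching the abstract bidual picture of Definition~\ref{defn:covariant_representation_in_algebra}/Proposition~\ref{prop:normalized_twisted_rep} — where normalization is phrased via $\pi''(1_t)$ in $B''$ — with the concrete $B(E)$ picture, i.e. checking that $\overline{\pi}_{st}(u(s,t))v_{st}$ really does coincide with $\pi''(u(s,t))v_{st}$ restricted to $B(E)$, and more basically that $v_sv_t$ (a product of honest operators) lands in the range where $\overline{\pi}_{st}(u(s,t))$ is defined so that \ref{item:covariant_representation3'} is even meaningful; this is exactly what the range conditions \ref{item:covariant_representation2'} are designed to secure, and the bookkeeping of which projection fixes which subspace (using approximate units inside the relevant ideals) is where all the care is needed. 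I do not expect any genuinely hard point, only the need to be scrupulous about domains of the extended maps $\overline{\pi}_t$.
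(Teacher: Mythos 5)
Your strategy (derive \ref{item:covariant_representation1}--\ref{item:covariant_representation3} directly from \ref{item:covariant_representation1'}--\ref{item:covariant_representation3'} and then identify $v_e$ with a strong limit) is the natural one, and your ``main obstacle'' paragraph correctly locates the delicate point; but the step you use to resolve it is wrong, and the same error recurs three times. You assert $\pi(a)=\pi(a)\,v_t^*v_t$ for $a\in I_{t^*}$, justified by the observation that $\pi(a)x=\lim_i\pi(a\mu_i^{t^*})x$ lies in $\overline{\pi(I_{t^*})E}$, which $v_t^*v_t$ fixes pointwise. That observation controls the \emph{range} of $\pi(a)$ and therefore proves $v_t^*v_t\,\pi(a)=\pi(a)$ (projection applied \emph{after} $\pi(a)$); what your computation of \ref{item:covariant_representation1} actually needs is $\pi(a)\,v_t^*v_t=\pi(a)$, i.e.\ that $\pi(a)$ annihilates $\ker(v_t^*v_t)$ --- a statement about the \emph{kernel} of that idempotent, about which condition \ref{item:covariant_representation2'} says nothing. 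The identical left/right slip appears in your \ref{item:covariant_representation3} step (``$\pi(a)v_e=\pi(a)$ because $\pi(a)$ has range inside $\overline{\pi(I_e)E}$ and $v_e$ fixes that subspace'' gives $v_e\pi(a)=\pi(a)$, not $\pi(a)v_e=\pi(a)$) and in the final strong-limit step, where ``$\pi(\mu_i^e)x\to 0$ for $x$ in a complement'' is asserted although nothing forces $\pi(\mu_i^e)$ to vanish on $\ker v_e$. (Your treatment of \ref{item:covariant_representation2} is fine, since there the relevant vectors already lie in the range of $v_{st}$.)

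This is not a cosmetic gap that more care will close: on a Banach space a contractive idempotent is not determined by its range, and the identity you need can fail under the literal hypotheses. Take $E=\ell^1(\{1,2\})$, $A=\F$, $S$ the trivial group, fix $0<|c|\le 1$, and set $\pi(a)(x,y)=(a(x+cy),0)$, $v_1(x,y)=(x,0)$, $v_1^*(x,y)=(x+cy,0)$. These are contractive mutual generalized inverses, both with range $\overline{\pi(A)E}=\F\times\{0\}$, and $v_1\pi(a)v_1^*=\pi(a)$, so \ref{item:covariant_representation1'}--\ref{item:covariant_representation3'} all hold; yet $\pi(a)v_1\neq\pi(a)$ and $v_1$ is not the strong limit of the (constant) net $\pi(\mu_i)$. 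The bidual argument of Proposition~\ref{prop:normalized_twisted_rep}, which you are transplanting, works because its hypothesis $v_t^*v_t=\pi''(1_{t^*})$ pins down the idempotent exactly, kernel included ($\pi(a)\pi''(1_{t^*})=\pi''(a1_{t^*})=\pi(a)$ is then an algebraic identity in $B''$); condition \ref{item:covariant_representation2'} pins down only the range. A correct argument must supply the missing kernel control from somewhere --- on a Hilbert space it comes from self-adjointness of contractive idempotents, and in the dual/reflexive settings of Lemma~\ref{lem:reflexive_covariant_representations} one should extract it from the weak$^*$ limits defining the normalization --- and your write-up does not do this.
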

\begin{proof} 
The arguments in the beginning of the proof of Proposition~\ref{prop:normalized_twisted_rep} show that $(\pi,v)$ is a covariant representation in the Banach algebra $B(E)$. 
Then each $v_e\in B(E)$, $e\in E(S)$, is a projection onto $\overline{\pi(I_e)E}$ which commutes with elements of $\pi(I_e)$, and thus $v_e$ is a  strong limit of $\{\pi(\mu_i^e)\}_{i}$.
\end{proof}

\begin{cor} \label{cor:covariant_reps_implies_MP^partial_isos}
If $A$ is a complex $C^*$-algebra and $(\pi,v)$ is a non-degenerate covariant representation of $(\alpha,u)$ on a complex Banach space $E$ then each $v_t$, $t \in S$, is an $MP$-partial isometry on  $E$.
\end{cor}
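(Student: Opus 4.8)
The goal is to show that for a complex $C^*$-algebra $A$ and a non-degenerate covariant representation $(\pi,v)$ of $(\alpha,u)$ on a complex Banach space $E$, each $v_t$ is an $MP$-partial isometry, i.e.\ a contraction with a contractive generalised inverse $v_t^*$ for which $v_tv_t^*$ and $v_t^*v_t$ are \emph{hermitian} idempotents. By Lemma~\ref{lem:covariant_reps_implies_MP^partial_isos} we already know $(\pi,v)$ is a covariant representation in the Banach algebra $B(E)$, that $v_t$ has the contractive generalised inverse $v_t^*$ supplied by Definition~\ref{defn:covariant_representation_on_space} (equivalently the one from Proposition~\ref{prop:normalized_twisted_rep}), and by \ref{item:normalized_covariant_representation3} (or directly from \ref{item:covariant_representation2'}--\ref{item:covariant_representation3'}) that
\[
    v_t v_t^* = \overline{\pi}_t(1_t) = \overline{\pi}_{t^*}, \qquad v_t^* v_t = \overline{\pi}_{t^*}(1_{t^*}) = \overline{\pi}_{t},
\]
where $\overline{\pi}_t$ denotes the extension of $\pi|_{I_t}$ to $\Mult(I_t)$ acting on $\overline{\pi(I_t)E}$, so that these two idempotents are precisely the projections onto $\overline{\pi(I_t)E}$ and $\overline{\pi(I_{t^*})E}$ respectively. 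Thus the only thing left to prove is that these two projections are hermitian operators on $E$.

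First I would reduce to a statement purely about $C^*$-algebras acting on Banach spaces: the idempotents $v_tv_t^*$ and $v_t^*v_t$ both lie in $\overline{\pi}_e(\Mult(I_e))$ for the relevant idempotents $e=tt^*$, $e=t^*t$ of $E(S)$ (indeed $v_tv_t^*=\overline{\pi}_{tt^*}(1_{tt^*})$ is the unit of that algebra), and each $\Mult(I_e)$ is again a complex $C^*$-algebra. So it suffices to show: if $B$ is a complex $C^*$-algebra and $\rho:B\to B(E)$ is a representation (contractive homomorphism), then $\rho$ sends self-adjoint elements of $B$ to hermitian operators on $E$; in particular $\rho$ of a projection is a hermitian idempotent. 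This is the key step, and it is not hard: for self-adjoint $b\in B$ the element $e^{itb}$ is a unitary of $B$, hence $\|e^{itb}\|_B=1$, and contractivity of $\rho$ together with $\rho(e^{itb})=e^{it\rho(b)}$ (continuity of $\rho$ and the power series) gives $\|e^{it\rho(b)}\|\le 1$ for all $t\in\R$; applying the same to $-b$, or using that $e^{it\rho(b)}$ has the inverse $e^{-it\rho(b)}$ which is also a contraction, yields $\|e^{it\rho(b)}\|=1$, i.e.\ $\rho(b)$ is hermitian in $B(E)$ by definition. Applying this with $\rho=\overline{\pi}_{tt^*}$ and $b=1_{tt^*}$ (a projection, hence self-adjoint) shows $v_tv_t^*$ is hermitian on $\overline{\pi(I_t)E}$; but one must be slightly careful, since the claim is hermiticity on all of $E$, not just on the invariant subspace $\overline{\pi(I_t)E}$.

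To handle the passage from $\overline{\pi(I_{tt^*})E}$ to $E$ I would use non-degeneracy. Write $P=v_tv_t^*$, a contractive idempotent with range $\overline{\pi(I_{tt^*})E}$; I must show $\|e^{isP}\|_{B(E)}=1$ for all $s\in\R$. Since $P$ is idempotent, $e^{isP}=1+(e^{is}-1)P$, so $e^{isP}$ acts as $e^{is}$ on the range of $P$ and as the identity on a complement; the subtlety is that $P$ need not be a \emph{bicontractive} or lattice projection a priori. Here I would invoke that $A$ is a $C^*$-algebra more globally: by Proposition~\ref{prop:normalized_twisted_rep} and the fact (recalled in the excerpt, from \cite{Li_book}) that for a $C^*$-algebra $B(E)=B(E)$ has no relevant obstruction --- more precisely, I would argue that $\overline{\pi}_{tt^*}$ extends to a representation of the \emph{unitization} or of $A$ itself restricted appropriately, and that $P=v_tv_t^*$ equals $\overline{\pi}(1_{tt^*})$ computed inside the representation of $A$ on $E$ obtained by extending $\pi$ to the multiplier algebra $\Mult(A)\supseteq\Mult(I_{tt^*})$. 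Concretely: by non-degeneracy $\pi$ extends to a unital representation $\overline\pi:\Mult(A)\to B(E)$ (via \cite[Theorem 4.1]{Gardella_Thiel1}), $1_{tt^*}$ is a projection in the $C^*$-algebra $\Mult(A)$, and then $e^{is\,1_{tt^*}}$ is a unitary multiplier of $A$ whose norm in $\Mult(A)$ is $1$; since $\overline\pi$ is contractive and multiplicative, $e^{is\overline\pi(1_{tt^*})}=\overline\pi(e^{is1_{tt^*}})$ has norm $\le 1$, and with $s\mapsto -s$ we get norm exactly $1$. As $\overline\pi(1_{tt^*})=v_tv_t^*=P$ on $E$, this gives that $P$ is hermitian on $E$; symmetrically $v_t^*v_t$ is hermitian. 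Therefore $v_t$ is an $MP$-partial isometry.

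The main obstacle, as indicated, is bookkeeping the identification $v_tv_t^*=\overline\pi(1_{tt^*})$ as an operator on all of $E$ (not merely on $\overline{\pi(I_{tt^*})E}$) and making sure that $e^{is1_{tt^*}}$ is genuinely a \emph{unitary} in the multiplier $C^*$-algebra so that its operator norm is $1$ before applying contractivity of $\overline\pi$; everything else is the standard ``unitaries are contractions $\Rightarrow$ self-adjoints map to hermitians'' argument. One should also note where complexity of $A$ is used: the functional calculus $e^{isb}$ and its unitarity require the complex $C^*$-algebra structure, which is exactly why the statement is restricted to $\F=\C$.
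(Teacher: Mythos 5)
Your overall strategy --- reduce to ``a contractive homomorphism of a complex $C^*$-algebra sends self-adjoint elements to hermitian operators'' via the unitary exponential $e^{itb}$, and then apply this to the projections $v_tv_t^*=v_{tt^*}$ and $v_t^*v_t=v_{t^*t}$ --- is exactly the right idea, and you correctly isolate the genuine difficulty: hermitian-ness must hold on all of $E$, not merely on the invariant subspace $\overline{\pi(I_{tt^*})E}$ where $\overline{\pi}_{tt^*}$ lives. However, your proposed resolution of that difficulty contains a real gap: you assert $\Mult(A)\supseteq\Mult(I_{tt^*})$ and in particular that the unit $1_{tt^*}$ of $\Mult(I_{tt^*})$ is a projection in $\Mult(A)$. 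This is false for a non-complemented ideal. For instance, with $A=C[0,1]$ and $I=C_0((0,1))$ the unit of $\Mult(I)$ corresponds to $1_{(0,1)}$, which does not multiply $C[0,1]$ into itself and hence is not a multiplier of $A$. Consequently $e^{is\,1_{tt^*}}$ is not available as a unitary in any $C^*$-algebra that $\pi$ extends over as a map into $B(E)$, and your computation $\|e^{isP}\|=\|\overline{\pi}(e^{is1_{tt^*}})\|\le 1$ does not get off the ground. (Passing instead to $A''$ and $\pi''$ does not repair this, since $\pi''$ lands in $B(E)''$ and the weak$^*$ limit of $\pi(\mu_i^e)$ there need not be identified with the strong limit $v_e\in B(E)$.)

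The paper closes this gap differently, staying inside $A$ itself: after unitizing $A$ and $\pi$ (legitimate by non-degeneracy), choose an approximate unit $\{\mu_i^e\}_i$ of $I_e$ consisting of \emph{self-adjoint} elements of $A$; each $\pi(\mu_i^e)$ is then a hermitian operator on all of $E$ by exactly your exponential argument (this is \cite[Lemma 2.4]{cgt}). By Lemma~\ref{lem:covariant_reps_implies_MP^partial_isos}, $v_e$ is the \emph{strong} limit of $\{\pi(\mu_i^e)\}_i$, and hermitian-ness survives strong limits of bounded nets because it is characterized by reality of the numerical range $V(a)=\{f(ax): \|f\|=\|x\|=1=f(x)\}$: for each such pair $(f,x)$ one has $f(v_ex)=\lim_i f(\pi(\mu_i^e)x)\in\R$. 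This numerical-range stability under strong limits is the ingredient your argument is missing; with it, $v_{tt^*}$ and $v_{t^*t}$ are hermitian idempotents on $E$ and $v_t$ is an $MP$-partial isometry.
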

\begin{proof} 
Since $\pi$ is non-degenerate we may extend it to the unitization of $A$ and so we may assume that $A$ and $\pi$ are in fact unital. 
Since $A$ is a $C^*$-algebra, for each $e\in E(S)$ we may choose an approximate unit $\{\mu_{i}^e\}_{i}$ in $I_e$ consisting of hermitian operators. 
Then $\{\pi(\mu_{i}^e)\}_{i}$ are hermitian operators by \cite[Lemma 2.4]{cgt}. 
As $v_e$ is a strong limit of $\{\pi(\mu_{i}^e)\}_{i}$, this implies that $v_e$ is also hermitian. 
Indeed, recall that $a\in B(E)$ is hermitian if and only if its numerical range $V(a) = \{ f(ax): x\in E,\ f\in E',\ \| f \| = \| x \| = 1 = f(x) \}$ is real. 
Hence for any $x\in E,\ f\in E'$ with $\| f \| = \| x \| = 1 = f(x)$ we get $f(v_ex) = f(\lim_i \pi( \mu_{i}^e) x)\in \R$, and so $v_e$ is hermitian. 
As $v_t v_t^* = v_{tt^*}$ and $v_t^* v_t = v_{t^*t}$, where $tt^*, t^*t\in E(S)$, for all $t\in S$, we conclude that $\{v_t\}_{t\in S}\subseteq B(E)_{1}$ are $MP$-partial isometries.
\end{proof}

\begin{lem}\label{lem:reflexive_covariant_representations}
Assume that $E$ is a reflexive Banach space, and recall that the projective tensor product Banach space $B_* := E' \mathbin{\widehat{\otimes}} E$ is naturally a predual of $B := B(E)$ making it a dual Banach algebra.
A covariant representation $(\pi,v)$ of $(\alpha,u)$ in $B(E)$ is $B_*$-normalized if and only if $(\pi,v)$ is a covariant representation on the Banach space $E$.
\end{lem}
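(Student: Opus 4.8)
The plan is to establish the equivalence by showing that the two notions differ only in how the normalization/range conditions are phrased, and that for a reflexive $E$ with the canonical predual $B_* = E' \mathbin{\widehat{\otimes}} E$ these conditions coincide. First I would recall the key point: for reflexive $E$, the weak$^*$ topology on $B(E)$ induced by $B_* = E' \mathbin{\widehat{\otimes}} E$ coincides with the weak operator topology, and on bounded sets it agrees with the strong operator topology on the closed unit ball only up to passing to convex combinations — but crucially, a net $\pi(\mu_i^t) v_t$ with $\{\mu_i^t\}$ an approximate unit converges \emph{strongly} to a projection onto $\overline{\pi(I_t)E}$ composed with $v_t$, hence also $B_*$-weakly. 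So $B_*$-normalization of $(\pi,v)$ amounts to $v_t = (\text{projection onto } \overline{\pi(I_t)E})\, v_t$, i.e.\ $v_t v_t^* = \overline{\pi}_t(1_t)$ is that projection, and dually $v_t^* v_t = \overline{\pi}_{t^*}(1_{t^*})$.

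For the forward direction, suppose $(\pi,v)$ is a covariant representation on the Banach space $E$ in the sense of Definition~\ref{defn:covariant_representation_on_space}. By Lemma~\ref{lem:covariant_reps_implies_MP^partial_isos}, $(\pi,v)$ is already a covariant representation in the Banach algebra $B(E)$ in the sense of Definition~\ref{defn:covariant_representation_in_algebra}, and each $v_e$ for $e \in E(S)$ is the strong limit of $\{\pi(\mu_i^e)\}_i$. Applying this to $e = tt^*$ and using \ref{item:covariant_representation2'}, which says the range of $v_t$ is $\overline{\pi(I_t)E}$ and the range of $v_t^*$ is $\overline{\pi(I_{t^*})E}$, together with $v_t v_t^* v_t = v_t$, one checks $v_t v_t^* = v_{tt^*}$ equals the projection onto $\overline{\pi(I_t)E}$, which is exactly the strong (hence $B_*$-weak) limit of $\pi(\mu_i^t)v_t$. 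Thus condition~\ref{item:covariant_representation4} (in its $B_*$-version from Definition~\ref{de:PredualNormalizedRepresentation}) holds, and $(\pi,v)$ is $B_*$-normalized.

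For the converse, suppose $(\pi,v)$ is a $B_*$-normalized covariant representation of $(\alpha,u)$ in $B(E)$. By Lemma~\ref{lem:B_0-normalized_is_in_B} (applicable since $E$ reflexive gives $B_*' \cong B(E)$), each $v_t$ genuinely lies in $B(E)$, not merely in $B(E)''$. By Proposition~\ref{prop:B_*_normalized_twisted_rep}, $v$ takes values in $B(E)_1$, each $v_t$ has a contractive generalized inverse $v_t^*$ with $v_t \pi(a) v_t^* = \pi(\alpha_t(a))$ for $a \in I_{t^*}$, the multiplicativity-up-to-twist $v_s v_t = \overline{\pi}_{st}(u(s,t)) v_{st}$ holds, and $v_t v_t^* = \overline{\pi}_t(1_t)$, $v_t^* v_t = \overline{\pi}_{t^*}(1_{t^*})$. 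It remains to identify $\overline{\pi}_t(1_t)$ with the projection onto $\overline{\pi(I_t)E}$ and to read off the range conditions \ref{item:covariant_representation2'}: since $\overline{\pi}_t$ is the strict-$B_*$-continuous extension of $\pi|_{I_t}$ to $\Mult(I_t)$, the element $\overline{\pi}_t(1_t)$ is the $B_*$-limit of $\pi(\mu_i^t)$, which is the (idempotent) projection onto $\overline{\pi(I_t)E}$; hence $v_t v_t^* = \overline{\pi}_t(1_t)$ forces $\operatorname{ran}(v_t) = \overline{\pi(I_t)E}$, and symmetrically $\operatorname{ran}(v_t^*) = \overline{\pi(I_{t^*})E}$. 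Conditions \ref{item:covariant_representation1'} and \ref{item:covariant_representation3'} are then immediate from Proposition~\ref{prop:B_*_normalized_twisted_rep}, so $(\pi,v)$ is a covariant representation on $E$.

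The main obstacle I anticipate is the careful identification of $\overline{\pi}_t(1_t)$ — the abstract multiplier-extension evaluated at the unit of $\Mult(I_t)$ — with the concrete operator projecting onto $\overline{\pi(I_t)E}$, and verifying that $B_*$-convergence of $\pi(\mu_i^t)$ really delivers this projection (not just some contraction); this uses that on bounded sets the $B_*$-weak topology is determined by the action on $E$ and that the limit of an approximate unit's image is idempotent. Everything else is bookkeeping translating between the two sets of axioms via Propositions~\ref{prop:normalized_twisted_rep} and \ref{prop:B_*_normalized_twisted_rep} and Lemmas~\ref{lem:B_0-normalized_is_in_B}, \ref{lem:covariant_reps_implies_MP^partial_isos}.
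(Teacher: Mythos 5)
Your proof is correct and follows essentially the same route as the paper: one direction via Lemma~\ref{lem:covariant_reps_implies_MP^partial_isos} (strong convergence of $\pi(\mu_i^t)$ on the range of $v_t$ gives $B_*$-convergence), the other via Lemma~\ref{lem:B_0-normalized_is_in_B} and Proposition~\ref{prop:B_*_normalized_twisted_rep}, with the key point being the identification of $\pi_t(1_t)$ as the projection onto $\overline{\pi(I_t)E}$ using that weak and norm closures of the convex set $\pi(I_t)E$ coincide. Only a notational caution: the operator you call $\overline{\pi}_t(1_t)$ is really $\pi_t(1_t)$ in the paper's notation ($\overline{\pi}_t$ there denotes the multiplier extension acting on $\overline{\pi(I_t)E}$, and one needs $\pi_t(m)=\overline{\pi}_t(m)\pi_t(1_t)$ to pass between \ref{item:covariant_representation3'} and the relation $v_sv_t=\pi_{st}(u(s,t))v_{st}$).
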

\begin{proof}
We identify $f \otimes \xi \in E' \mathbin{\widehat{\otimes}} E = B_*$ with the functional map $B\ni b\mapsto f \otimes \xi (b) := f(b\xi)\in \F$.
Assume $(\pi,v)$ is $B_*$-normalized, and so $\{v_t\}_{t\in S}\subseteq B(E)$ are partial isometries with the associated generalized inverses $\{v_t^*\}_{t\in S}\subseteq B(E)$, satisfying the relations described in Proposition~\ref{prop:B_*_normalized_twisted_rep}, which include \ref{item:covariant_representation1'}. 
For $e\in E(S)$ we have $v_e = \pi_e(1_e) = B_*\mhyphen\lim\pi(\mu_i^e)$, where $\{\mu_{i}^e\}_{i}$ is an approximate unit in $I_{e}$.
Hence $f(v_e\xi) = \lim_{i} f(\pi(\mu_i^e)\xi)$ for all $(f,\xi)\in E'\times E$, which implies that for any $\xi\in E$ we have $v_e\xi\in \overline{\pi(I_{e})E}^{E'}$. 
As the weak and norm closures of convex sets coincide this means that the range of $v_e$ is $\overline{\pi(I_{e})E}$.
Thus, for each $t\in S$, we have $v_{tt^*} = \pi_t(1_t)$ is a projection onto the space $\overline{\pi(I_{t})E} = \overline{\pi(I_{tt^*})E}$. 
This  implies \ref{item:covariant_representation2'} because $v_t = \pi_t(1_t) v_t$ and  $v_t^* = \pi_t(1_{t^*})v_{t}^*$. 
We also have $\pi_t(m) = \overline{\pi}_t(m) \pi_t(1_t)$ for $m\in \Mult(I_t)$, which immediately gives \ref{item:covariant_representation3'} from $v_s v_t = \pi_{st}(u(s,t))v_{st}$. 

Conversely, assume $(\pi,v)$ is a covariant representation in the sense of Definition~\ref{defn:covariant_representation_on_space}.
By Lemma~\ref{lem:covariant_reps_implies_MP^partial_isos} each $v_{tt^*}$ is a strong limit of $\{\pi(\mu_i^t)\}_{i}$, where  $\{\mu_{i}^t\}_{i}$ is an approximate unit in $I_t = I_{tt^*}$. 
Using this, one sees that $v_{tt^*} = \pi_t(1_t)$ and $\pi_t(m) = \overline{\pi}_t(m)\pi_t(1_t)$ for $m \in \Mult(I_t)$. 
Now one readily gets the relations in Proposition~\ref{prop:B_*_normalized_twisted_rep}.
\end{proof}

\begin{cor} \label{cor:normalization_to_spatial_cov_rep}
For any covariant representation $(\pi,v)$ of $(\alpha,u)$ in $B(E)$, where $E$ is a reflexive Banach space, there is a unique covariant representation $(\pi,\tilde{v})$ on $E$ such that $\pi(a) v_t = \pi(a) \tilde{v}_{t}$ for all $a\in I_{t}$ and $t\in S$ (in fact $(\pi,\tilde{v})$ is $E' \mathbin{\widehat{\otimes}} E$-normalized).
\end{cor}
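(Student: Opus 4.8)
The plan is to deduce this directly from Lemma~\ref{lem:reflexive_covariant_representations} together with Proposition~\ref{prop:B_*_normalized_twisted_rep}. Since $E$ is reflexive, the projective tensor product $B_* := E' \mathbin{\widehat{\otimes}} E$ is a predual of $B := B(E)$ making it a dual Banach algebra, as recalled in Lemma~\ref{lem:reflexive_covariant_representations}. Thus I would start with the given covariant representation $(\pi,v)$ of $(\alpha,u)$ in the dual Banach algebra $B(E)$ and apply Proposition~\ref{prop:B_*_normalized_twisted_rep} to obtain a $B_*$-normalized covariant representation $(\pi,\tilde v)$, namely $\tilde v_t = B_*\mhyphen\lim_i \pi(\mu_i^t) v_t$ for an approximate unit $\{\mu_i^t\}_i$ in $I_t$, which satisfies $\pi(a) v_t = \pi(a)\tilde v_t$ for all $a \in I_t$ and $t \in S$, and which is moreover the \emph{unique} $B_*$-normalized covariant representation with that property.

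Next I would invoke Lemma~\ref{lem:reflexive_covariant_representations}, which identifies $B_*$-normalized covariant representations of $(\alpha,u)$ in $B(E)$ with covariant representations of $(\alpha,u)$ on the Banach space $E$ in the sense of Definition~\ref{defn:covariant_representation_on_space}. Hence the $(\pi,\tilde v)$ just produced is a covariant representation on $E$, which gives existence, and the parenthetical claim that $(\pi,\tilde v)$ is $E' \mathbin{\widehat{\otimes}} E$-normalized is built into the construction.

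For uniqueness, suppose $(\pi,\tilde w)$ is another covariant representation on $E$ with $\pi(a) v_t = \pi(a)\tilde w_t$ for all $a \in I_t$, $t \in S$. By Lemma~\ref{lem:reflexive_covariant_representations} again, $(\pi,\tilde w)$ is then $B_*$-normalized, so the uniqueness clause of Proposition~\ref{prop:B_*_normalized_twisted_rep} forces $\tilde w = \tilde v$.

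The only point requiring a little care — and the mild obstacle here — is matching hypotheses across the two cited results: one must check that the $(\pi,\tilde v)$ delivered by Proposition~\ref{prop:B_*_normalized_twisted_rep} really falls under Lemma~\ref{lem:reflexive_covariant_representations} (it does, being by construction a $B_*$-normalized covariant representation in $B(E)$), and that the uniqueness in Proposition~\ref{prop:B_*_normalized_twisted_rep} is phrased precisely among $B_*$-normalized covariant representations satisfying $\pi(a)v_t=\pi(a)\tilde v_t$ — which is exactly the class that Lemma~\ref{lem:reflexive_covariant_representations} matches bijectively with covariant representations on $E$. Once these identifications are in place the corollary is immediate, and no new estimates are needed.
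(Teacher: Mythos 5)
Your proposal is correct and is exactly the paper's argument: the paper's proof is the one-line "Combine Lemma~\ref{lem:reflexive_covariant_representations} and Proposition~\ref{prop:B_*_normalized_twisted_rep}," and you have simply spelled out the same combination, including the correct handling of uniqueness via the bijection between $B_*$-normalized covariant representations in $B(E)$ and covariant representations on $E$. No gaps.
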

\begin{proof} 
Combine Lemma~\ref{lem:reflexive_covariant_representations} and Proposition~\ref{prop:B_*_normalized_twisted_rep}.
\end{proof}

\begin{cor}\label{cor:Cstar_algebraic_inverse_semigroup_crossed^product}
Let $(\alpha,u)$ be a twisted inverse semigroup action on a $C^*$-algebra $A$. 
Let $\RR_{C^*}$ be the class of all covariant representations of $(\alpha,u)$ in some $C^*$-algebra, and let $\RR_{\textup{Hil}}$ be the class of all covariant representations of $(\alpha,u)$ on some Hilbert space. 
Then 
\(
    A \rtimes_{(\alpha,u),\RR_{C^*}} S = A \rtimes_{(\alpha,u),\RR_{\textup{Hil}}} S ,
\)
and  this coincides with the crossed product $A \rtimes_{(\alpha,u)}S$ introduced in \cite{Buss_Exel}, \cite{Sieben98}. 
\end{cor}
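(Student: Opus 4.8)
The plan is to prove that the two defining seminorms on $\ell^1(\alpha,u)$ coincide; the two Hausdorff completions are then literally the same Banach algebra. Denote these seminorms by $\|\cdot\|_{\RR_{C^*}}$ and $\|\cdot\|_{\RR_{\textup{Hil}}}$. One inequality is immediate from Lemma~\ref{lem:covariant_reps_implies_MP^partial_isos}: every covariant representation $(\pi,v)$ of $(\alpha,u)$ on a Hilbert space $H$ is also a covariant representation of $(\alpha,u)$ in the $C^*$-algebra $B(H)$, with the very same integrated form $\pi\times v$. Hence $\{\|\pi\times v(f)\| : (\pi,v)\in\RR_{\textup{Hil}}\}\subseteq\{\|\pi\times v(f)\| : (\pi,v)\in\RR_{C^*}\}$, so $\|f\|_{\RR_{\textup{Hil}}}\le\|f\|_{\RR_{C^*}}$ for all $f\in\ell^1(\alpha,u)$.

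For the reverse inequality, fix $(\pi,v)\in\RR_{C^*}$, say a covariant representation in a $C^*$-algebra $B$. The subtlety is that, by Definition~\ref{defn:covariant_representation_in_algebra}, the operators $v_t$ a priori lie in the bidual $B''$ rather than in $B$, so one cannot feed $B$ directly into the Gelfand--Naimark theorem. Instead I would use that $B''$ is again a $C^*$-algebra --- the enveloping $W^*$-algebra of $B$, in the real case as well, cf.\ the proof of Lemma~\ref{lem:range_of_covariant_rep} --- and fix a faithful $*$-representation $\rho:B''\to B(H)$ on a Hilbert space $H$ (Remark~\ref{rem:complex_the_real_C*}); it is automatically isometric (Proposition~\ref{prop:minimality_of_sup_norm}). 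Since $\rho$ is a $*$-homomorphism, applying it to the relations \ref{item:covariant_representation1}--\ref{item:covariant_representation3} shows that $(\rho\circ\pi,\,\rho\circ v)$, with $(\rho\circ\pi)(a):=\rho(\pi(a))$ and $(\rho\circ v)_t:=\rho(v_t)$, is a covariant representation of $(\alpha,u)$ in the Banach algebra $B(H)$, and its integrated form is $(\rho\circ\pi)\times(\rho\circ v)=\rho\circ(\pi\times v)$ on $\ell^1(\alpha,u)$.

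Now $H$ is reflexive, so Corollary~\ref{cor:normalization_to_spatial_cov_rep} supplies a covariant representation $(\rho\circ\pi,\,w)$ \emph{on the Banach space} $H$ --- thus $(\rho\circ\pi,w)\in\RR_{\textup{Hil}}$ --- satisfying $\rho(\pi(a))\,\rho(v_t)=\rho(\pi(a))\,w_t$ for all $a\in I_t$ and $t\in S$. Because $f(t)\in I_t$ for $f\in\ell^1(\alpha,u)$, this forces $(\rho\circ\pi)\times w=(\rho\circ\pi)\times(\rho\circ v)=\rho\circ(\pi\times v)$, and therefore, using that $\rho$ is isometric,
\[
    \|\pi\times v(f)\| = \|\rho(\pi\times v(f))\| = \|(\rho\circ\pi)\times w(f)\| \le \|f\|_{\RR_{\textup{Hil}}} .
\]
Taking the supremum over $(\pi,v)\in\RR_{C^*}$ gives $\|f\|_{\RR_{C^*}}\le\|f\|_{\RR_{\textup{Hil}}}$, so the two seminorms agree and $A\rtimes_{(\alpha,u),\RR_{C^*}}S=A\rtimes_{(\alpha,u),\RR_{\textup{Hil}}}S$. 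When $\F=\C$, Remark~\ref{rem:space_covariant_reps} identifies our covariant representations of $(\alpha,u)$ on Hilbert spaces with those of \cite{Sieben98}, \cite{Buss_Exel}; as the underlying Banach $*$-algebra $\ell^1(\alpha,u)$ and its involution also match those references, the completion $A\rtimes_{(\alpha,u),\RR_{\textup{Hil}}}S$ is precisely their crossed product.

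The only real work is in the reverse inequality, and it is essentially bookkeeping: one must route through the enveloping $W^*$-algebra because the $v_t$ are merely bidual elements, and then check that repairing the normalization via Corollary~\ref{cor:normalization_to_spatial_cov_rep} (which is where reflexivity of Hilbert space enters) does not change the integrated form, hence not the norm in question. The translation of the covariance conditions \ref{item:covariant_representation1}--\ref{item:covariant_representation3} through the $*$-homomorphism $\rho$ is routine.
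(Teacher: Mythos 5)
Your proof is correct and follows essentially the same route as the paper: the inclusion $\RR_{\textup{Hil}}\subseteq\RR_{C^*}$ gives one inequality, and the reverse is obtained by faithfully representing the enveloping $W^*$-algebra $B''$ on a Hilbert space and invoking Corollary~\ref{cor:normalization_to_spatial_cov_rep} to repair the normalization without changing the integrated form. You merely spell out the composition with the isometric $*$-representation $\rho$ more explicitly than the paper does.
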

\begin{proof}
Since $\RR_{\textup{Hil}} \subseteq \RR_{C^*}$ we get $\|\cdot\|_{\RR_{\textup{Hil}}} \leq \|\cdot\|_{\RR_{C^*}}$. 
This is equality, because for any covariant representation $(\pi,v)\in \RR_{C^*}$ in a $C^*$-algebra $B$, we may assume that the enveloping $W^*$-algebra $B''$ is faithfully represented on a Hilbert space $H$. 
Then, by Corollary~\ref{cor:normalization_to_spatial_cov_rep}, the renormalized covariant representation  $(\pi,\tilde{v})$ is a  covariant representation of $\alpha$ on $H$, so $(\pi,\tilde{v})\in \RR_{\textup{Hil}}$. 
The claim follows because Corollary~\ref{cor:normalization_to_spatial_cov_rep} implies $\pi\times v(f) = \pi\times \tilde{v}(f)$ for all $f \in \ell^1(\alpha,u)$.
\end{proof}

\section{Groupoid Banach algebras  and inverse semigroup disintegration}
\label{sec:GroupoidBanachAlgebras}

Let $\G$ be an \'etale groupoid with locally compact Hausdorff unit space $X$. 
By a twist over $\G$ we  mean a Fell line bundle \cite{Kumjian}.
This is equivalent to twists introduced by Kumjian in \cite{Kumjian0}, see \cite[2.5.iv]{Kumjian}.
More specifically, let $\LL$ be a  line bundle over $\G$. 
Equivalently, $\LL = \bigsqcup_{\gamma\in \G } L_\gamma$ is a topological space such that the canonical projection $\LL \onto \G$ is continuous and open, each fiber $L_\gamma \cong \F$ is a one dimensional Banach space with a structure consistent with the topology of $\LL$, in the sense that the maps 
\[
   \bigsqcup_{\gamma\in \G} L_\gamma \times L_\gamma \ni (z, w) \mapsto z + w\in \LL, \quad 
    \F\times \LL\ni (\lambda,z) \mapsto \lambda z\in \LL , \quad 
    \LL\ni z\mapsto  |z|\in \R
\]
are continuous. 
Then $\LL$ is necessarily locally trivial. 
For any open $U \subseteq \G$ we write $C(U,\LL)$ for the linear space of continuous sections in the restricted bundle 
$\LL|_U := \bigsqcup_{\gamma\in U } L_\gamma$  (i.e. continuous maps $f : U \to \LL$ with $f(\gamma)\in L_\gamma$ for $\gamma \in U$).
Similarly we denote by $C_c(U,\LL)$, $C_0(U,\LL)$, $C_b(U,\LL)$, $C_u(U,\LL)$ the spaces of continuous sections with compact support, vanishing at infinity, bounded and unitary, respectively. 
Then $C_0(U,\LL)$ is a Banach space with the norm $\|f\|_{\infty} := \max_{\gamma\in U}| f(\gamma)|$. 
Since $\LL$ is locally trivial, every point in $\G$ has an open neighbourhood $U$ and unitary section $f \in C(U,\LL)$, i.e. $|f(\gamma)| = 1$, for all $\gamma \in U$.
This section induces an isometric isomorphism $C_0(U,\LL)\cong C_0(U)$, as for every $g\in C_0(U,\LL)$ there is a unique element $g/f \in C_0(U)$ such that $g(\gamma) = (g/f)(\gamma) f(\gamma)$, $\gamma \in U$.

\begin{defn} 
A \emph{twist over the groupoid}  $\G$ is a  line bundle $\LL=\bigsqcup_{\gamma\in \G } L_\gamma$ over $\G$ together with a continuous \emph{multiplication} ${\cdot}:\bigsqcup_{d(\gamma)=r(\eta)\in \G } L_{\gamma}\times L_{\eta} \to \LL$, which is associative (whenever the product makes sense), makes the modulus  multiplicative, and induces bilinear maps $L_{\gamma}\times L_{\eta}\longrightarrow L_{\gamma\eta}$; 
and a continuous \emph{involution} $^{*}: \LL\to \LL$ that restricts to conjugate linear maps $ L_{\gamma}\to L_{\gamma^{-1}}$, preserves the modulus (norm), is anti-multiplicative and $z\cdot z^*=|z|^2\in L_{r(\gamma)}\cong \F$ for every $z\in L_{\gamma}$, $\gamma \in \G$. 
See \cite{Kumjian} for more details.
\end{defn}

\begin{lem}\label{lem:inverse_semigroup_of_trivialtwist_bisections} 
Let $(\G,\LL)$ be a twisted groupoid. 
The set $S(\LL)$ of bisections $U$ for which the restricted line bundle $\LL|_{U}$ is trivial forms a unital wide inverse subsemigroup $S(\LL)\subseteq \Bis(\G)$.
\end{lem}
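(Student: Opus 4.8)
The plan is to verify, one at a time, the four ingredients of the assertion: that the unit $X$ of $\Bis(\G)$ lies in $S(\LL)$; that $S(\LL)$ is closed under the product and the inversion of $\Bis(\G)$, which—since $\Bis(\G)$ is an inverse semigroup—makes $S(\LL)$ an inverse subsemigroup; that $S(\LL)$ covers $\G$; and that $S(\LL)$ is closed under intersections, whence the remaining half of the \emph{wide} condition is automatic. Throughout I will use that a subset of $\G$ is trivializing precisely when it admits a global unitary section, i.e.\ lies in $C_u(\cdot,\LL)$, and that the product $U\cdot V$ and inverse $U^{-1}$ are already in $\Bis(\G)$ by Example~\ref{ex:actions_from_groupoids}, so only triviality of $\LL$ over them needs checking.

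For the unit I would exhibit the continuous unitary section $x\mapsto 1_x$ of $\LL|_X$, where $1_x\in L_x$ is the unit of the one-dimensional $C^*$-algebra $L_x$. Continuity is local: by local triviality of $\LL$, near a given unit there is an open $W$ and $h\in C_u(W,\LL)$, and for units $x'\in W$ one has $h(x')h(x')^{*}=|h(x')|^{2}1_{x'}=1_{x'}$, the left-hand side being continuous in $x'$ by continuity of $h$, of the involution, and of the multiplication of the twist; alternatively one may simply quote \cite{Kumjian}. Hence $\LL|_X$ is trivial and $X\in S(\LL)$.

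For inversion, given $U\in S(\LL)$ with trivializing section $f\in C_u(U,\LL)$, I set $f^{*}(\gamma^{-1}):=f(\gamma)^{*}$ on $U^{-1}$; since the involution of $\LL$ is continuous, preserves the modulus, and carries $L_\gamma$ into $L_{\gamma^{-1}}$, while $\gamma\mapsto\gamma^{-1}$ is a homeomorphism $U\to U^{-1}$, this is a unitary section over $U^{-1}$, so $U^{-1}\in S(\LL)$. For the product, let $U,V\in S(\LL)$ with trivializing sections $f$ and $g$. Since $U$ and $V$ are bisections, every $\zeta\in U\cdot V$ factors uniquely as $\zeta=\gamma\eta$ with $\gamma\in U$, $\eta\in V$; here $\gamma=(r|_U)^{-1}(r(\zeta))$ and $\eta=\gamma^{-1}\zeta$, so the maps $\zeta\mapsto\gamma$ and $\zeta\mapsto\eta$ are continuous on $U\cdot V$, being assembled from $(r|_U)^{-1}$ and the structure maps of $\G$. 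Then $\zeta\mapsto f(\gamma)\cdot g(\eta)\in L_{\gamma\eta}=L_\zeta$ is a continuous section of $\LL$ over $U\cdot V$, and it is unitary because the modulus of the twist is multiplicative; hence $U\cdot V\in S(\LL)$. Thus $S(\LL)$ is a subsemigroup of $\Bis(\G)$ closed under $t\mapsto t^{*}$, and is therefore an inverse subsemigroup, the generalized inverse in $\Bis(\G)$ being the unique one and lying in $S(\LL)$.

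For wideness, each $\gamma\in\G$ lies in some bisection $W$ (as $\G$ is \'etale) and in some open set $U'$ with $\LL|_{U'}$ trivial (local triviality of $\LL$); then $W\cap U'$ is a bisection on which $\LL$ is trivial, so $\gamma$ lies in a member of $S(\LL)$ and $S(\LL)$ covers $\G$. Finally, if $U,V\in S(\LL)$ then $U\cap V$ is an open subset of the bisection $U$, hence itself a bisection, and $\LL|_{U\cap V}$ is trivial by restriction of the section over $U$; so $U\cap V\in S(\LL)$, and in particular $U\cap V$ is a union of members of $S(\LL)$, which is exactly the remaining requirement for wideness. The only step needing genuine care is the product step—identifying the factorization maps $\zeta\mapsto(\gamma,\eta)$ on $U\cdot V$ and verifying their continuity, so that continuity of the twist multiplication applies—whereas continuity of $x\mapsto 1_x$ in the unit step may be quoted from \cite{Kumjian}.
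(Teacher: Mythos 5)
Your proof is correct and follows essentially the same route as the paper: the unit section $x\mapsto 1_x$ for $X$, the involuted section for $U^{-1}$, the (convolution) product of unitary sections together with multiplicativity of the modulus for $U\cdot V$, and local triviality of $\LL$ for wideness. You merely spell out details the paper leaves implicit (continuity of $x\mapsto 1_x$ and of the factorization maps $\zeta\mapsto(\gamma,\eta)$ on $U\cdot V$), which is fine.
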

\begin{proof}  
The restricted bundle $\LL|_U$ is trivial if and only if there is a continuous unitary section $c_U\in C_u(U,\LL)$. 
If $\LL|_{U}$ and $\LL|_{V}$ are trivial, and $c_U \in C_u(U,\LL),\ c_V \in C_u(V,\LL)$, then $\LL|_{UV}$ and $\LL|_{U^{-1}}$ are trivial because the convolution product $c_{U}* c_V \in C_u(UV,\LL)$ and $c_U^*\in C_u(U^{-1},\LL)$ (in a Fell line bundle the norm $| \cdot |$ is multiplicative, rather than just submultiplicative). 
Thus $S(\LL)\subseteq \Bis(\G)$ is an inverse semigroup. 
Since $\LL$ is locally trivial, $S(\LL)$ is wide.
Moreover, $\LL$ is trivial over $X$ as each fiber $L_x\cong \F$ is a one-dimensional algebra with unit $1_x$, and the unit section defined by $1(x) := 1_x$ is unitary and continuous. 
\end{proof}

According to Lemma~\ref{lem:inverse_semigroup_of_trivialtwist_bisections} we may and we shall always  assume that
$$
\text{\emph{ $\LL|_X = X \times \F$ is trivial}.}
$$
\begin{ex}[Twist by a cocycle]\label{ex:groupoid_cocycle}
The trivial bundle $\LL = \G\times \F$ with pointwise operations is a \emph{trivial twist}.
More generally, if $\sigma$ is a \emph{continuous normalised $2$-cocycle} on $\G$, that is a continuous map $\sigma : \G^{(2)} \to \{ z\in \F:  |z| = 1 \}$ satisfying 
\[
    \sigma \big( r(\gamma),\gamma \big) = 1 = \sigma \big( \gamma , d(\gamma) \big) , \qquad \sigma(\alpha,\beta)\sigma(\alpha\beta,\gamma) = \sigma(\beta,\gamma)\sigma(\alpha,\beta\gamma)
\]
for every composable triple $\alpha,\beta,\gamma \in \G$, 
then we may consider a twist $\LL_{\sigma}$ given by  the trivial bundle $\G\times \F$ with operations given by 
\[
    (\alpha,w) \cdot (\beta,z) := \big( \alpha \beta , \sigma(\alpha,\beta) wz \big) , \qquad  
    (\alpha,w)^{*} := \big( \alpha^{-1} , \overline{\sigma(\alpha^{-1},\alpha) w } \big) .
\]
Every twist based on a trivial line bundle is of this form.
\end{ex}

\subsection{Algebras of quasi-continuous sections}

Let us fix a twisted groupoid $(\G,\LL)$.
Recall our convention that $\G$ is locally compact and only locally Hausdorff. 
Thus the associated $*$-algebra will be defined on the set of quasi-continuous compactly supported sections:
\[
    \mathfrak{C}_c(\G,\LL) := \spane \{ f \in C_c(U,\LL): U\in  \Bis(\G) \} , 
\]
where we treat sections of $\LL|_U$ as sections of $\LL$ that vanish outside $U$. 
Note that $\mathfrak{C}_c(\G,\LL) = \spane\{f\in C_c(U,\LL): U\in \mathcal{C}\}$ for any cover  $\mathcal{C}\subseteq \Bis(\G)$ of $\G$, cf. \cite[Proposition 3.10]{Exel}.
In particular, we may always take $\mathcal{C} = S(\LL)$. If the groupoid $\G$ is Hausdorff, then $\mathfrak{C}_c(\G,\LL) = C_c(\G,\LL)$ is the usual space of continuous compactly supported sections. 
We define a $*$-algebra structure on $\mathfrak{C}_c(\G,\LL)$ by
\begin{equation}\label{eq:convolution_and_involution}
    (f*g)(\gamma) := \sum_{r(\eta) = r(\gamma)} f(\eta) \cdot g(\eta^{-1} \gamma), \qquad (f^*)(\gamma) := f(\gamma^{-1})^* ,
\end{equation}
where $f,g \in \mathfrak{C}_c(\G, \LL)$, cf. \cite[Proposition 3.11]{Exel}. 
Since we assume that $\LL|_X$ is trivial, we get that $C_c(X)=C_c(X,\LL)$ is a $*$-subalgebra of $\mathfrak{C}_c(\G,\LL)$. 

The domain and range maps $d,r : \G\to X$ induce the following three submultiplicative norms on $\mathfrak{C}_c(\G,\LL)$:
\begin{equation}\label{eq:Hahn-norms}
    \| f \|_{d_*} := \sup_{x\in X} \sum_{d(\gamma)=x} |f(\gamma)|, \qquad 
    \| f \|_{r_*} := \sup_{x\in X} \sum_{r(\gamma)=x} |f(\gamma)|, 
\end{equation}
\begin{equation}\label{eq:Hahn-norm}
			\|f\|_{I} := \max\{ \|f\|_{d_*} , \|f\|_{r_*} \}, 
\end{equation}
see \cite[the proof of Theorem 2.2.1]{Paterson} or \cite[II, 1.4]{Renault_book}. 
The  norm $\|\cdot \|_{I}$ is called the \emph{$I$-norm} and was introduced by Hahn~\cite{Hahn}. 
It has the advantage that it is preserved by the involution $*$, while for the remaining norms we only have $\| f^* \|_{d_*} = \|f\|_{r_*}$. 
If $\|\cdot\|$ is a $C^*$-norm on the $*$-algebra $\mathfrak{C}_c(\G,\LL)$ then minimality of the $C^*$-norm on a $C^*$-algebra implies that $\|f\|=\|f\|_{\infty}$ for $f\in C_c(X)$ (as we have this for every $f\in C_0(U)\subseteq C_c(X)$ where $U$ is precompact open), and then the $C^*$-equality gives that $\|f\|=\|f\|_{\infty}$ for every $f\in C_c(U,\LL)$ and $U\in \Bis(\G)$.
Using this and the triangle inequality one infers that there is a largest $C^*$-norm $\|\cdot\|_{C^*}$ on $\mathfrak{C}_c(\G,\LL)$. 
We prove  below (see Corollary \ref{cor:Cstar_boundedness}) that  $\|\cdot\|_{C^*}$ does not exceed $\|\cdot\|_{I}$.
This is well known when $\G$ is Hausdorff ($\F=\C$ and there is no twist), see \cite[Lemma 3.2.3]{Sims}, cf. \cite[Lemma 4.3]{BHM}. 
In the non-Hausdorff case this is much more delicate, see \cite[Corollary 6.6]{Clark_Zimmerman}.

\begin{defn}
We write $F_I(\G,\LL) := \overline{\mathfrak{C}_c(\G,\LL)}^{\|\cdot\|_{I}}$, $F_{d_*}(\G,\LL) := \overline{\mathfrak{C}_c(\G,\LL)}^{\|\cdot\|_{d_*}}$, $F_{r_*}(\G,\LL) := \overline{\mathfrak{C}_c(\G,\LL)}^{\|\cdot\|_{r_*}}$, and $C^*(\G,\LL) := \overline{\mathfrak{C}_c(\G,\LL)}^{\|\cdot\|_{C^*}}$ 
for the completions in the corresponding norms. When twist is trivial we omit writing it.
\end{defn}

The important feature of the above norms is that when restricted to any subspace $C_c(U,\LL) \subseteq \mathfrak{C}_c(\G,\LL)$, where $U\in \Bis(\G)$, they coincide with the supremum norm $\| \cdot \|_\infty$.
Hence each space $C_0(U,\LL)$ embeds naturally into the completions of $\mathfrak{C}_c(\G , \LL)$ in each of these norms. 
We introduce yet another norm which is  maximal with respect to this property.
In fact we will associate norms to subsemigroups of $\Bis(\G)$. 

\begin{lem} Let $S\subseteq \Bis(\G)$ be a semigroup covering $\G$. 
There is a submultiplicative norm on $\mathfrak{C}_c(\G, \LL)$ given by 
\begin{equation}\label{eq:projective_norm}
    \| f \|_{\max}^{S} := \inf \left\{ \sum_{k=1}^{n} \| f_k \|_\infty : f = \sum_{k=1}^{n} f_k, \ f_k \in C_c(U_k, \LL), \ U_k \in S \right\}.
\end{equation}
This is the largest submultiplicative norm on $\mathfrak{C}_c(\G,\LL)$ which agrees with $\|\cdot\|_\infty$  on each  $C_c(U,\LL) \subseteq \mathfrak{C}_c(\G,\LL)$, for $U\in S$. 
If $S$ is an inverse semigroup, then the involution is isometric in  $\|\cdot \|_{\max}^{S}$.
\end{lem}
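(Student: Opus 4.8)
The plan is to verify the three assertions in turn: that formula \eqref{eq:projective_norm} defines a submultiplicative norm, that it is the largest norm agreeing with $\|\cdot\|_\infty$ on each $C_c(U,\LL)$, $U\in S$, and that involution is isometric when $S$ is an inverse semigroup. First I would check that $\|\cdot\|_{\max}^S$ is well defined and finite on all of $\mathfrak{C}_c(\G,\LL)$: since $S$ covers $\G$ and every $f\in\mathfrak{C}_c(\G,\LL)$ has compact support, a partition of unity argument (using that $\G$ is locally compact and covered by the bisections in $S$, cf.\ \cite[Proposition 3.10]{Exel}) produces at least one decomposition $f=\sum_k f_k$ with $f_k\in C_c(U_k,\LL)$, $U_k\in S$, so the infimum is over a nonempty set and is bounded above. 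Subadditivity and homogeneity are immediate from the definition of the infimum over decompositions (concatenate decompositions of $f$ and $g$; scale a decomposition of $f$ by $\lambda$). For submultiplicativity, given decompositions $f=\sum_j f_j$ and $g=\sum_k g_k$ with $f_j\in C_c(U_j,\LL)$, $g_k\in C_c(V_k,\LL)$, $U_j,V_k\in S$, the product $f*g=\sum_{j,k} f_j*g_k$ is a decomposition over $S$ since $f_j*g_k\in C_c(U_jV_k,\LL)$ and $U_jV_k\in S$; because $\|\cdot\|_\infty$ is submultiplicative on sections supported in bisections (for sections in a bisection, convolution is essentially pointwise multiplication composed with a homeomorphism, and the modulus is multiplicative in a Fell line bundle), we get $\|f_j*g_k\|_\infty\le\|f_j\|_\infty\|g_k\|_\infty$, and taking infima gives $\|f*g\|_{\max}^S\le\|f\|_{\max}^S\|g\|_{\max}^S$.

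Next I would prove positive-definiteness, i.e.\ that $\|f\|_{\max}^S=0$ forces $f=0$; this is exactly the statement that $\|\cdot\|_{\max}^S$ restricts to $\|\cdot\|_\infty$ on each $C_c(U,\LL)$, so I would establish that restriction property directly, which simultaneously handles the second assertion. That $\|f\|_{\max}^S\le\|f\|_\infty$ for $f\in C_c(U,\LL)$, $U\in S$, is trivial (take the one-term decomposition). For the reverse inequality I would fix $\gamma\in U$ and use evaluation at $\gamma$: given any decomposition $f=\sum_k f_k$ with $f_k\in C_c(U_k,\LL)$, we have $f(\gamma)=\sum_{k:\,\gamma\in U_k} f_k(\gamma)$ in the one-dimensional fibre $L_\gamma$, hence $|f(\gamma)|\le\sum_k|f_k(\gamma)|\le\sum_k\|f_k\|_\infty$; taking the supremum over $\gamma\in U$ gives $\|f\|_\infty\le\sum_k\|f_k\|_\infty$, and then the infimum over decompositions yields $\|f\|_\infty\le\|f\|_{\max}^S$. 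The same evaluation-at-a-point argument shows maximality: if $\|\cdot\|'$ is any seminorm on $\mathfrak{C}_c(\G,\LL)$ agreeing with $\|\cdot\|_\infty$ on each $C_c(U,\LL)$ with $U\in S$, then for $f=\sum_k f_k$ with $f_k\in C_c(U_k,\LL)$ we get $\|f\|'\le\sum_k\|f_k\|'=\sum_k\|f_k\|_\infty$ by the triangle inequality, so $\|f\|'\le\|f\|_{\max}^S$.

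Finally, suppose $S$ is an inverse semigroup. Then for $U\in S$ we also have $U^{-1}\in S$, and the involution in $\mathfrak{C}_c(\G,\LL)$ maps $C_c(U,\LL)$ onto $C_c(U^{-1},\LL)$ with $\|f^*\|_\infty=\|f\|_\infty$ (the involution on a Fell line bundle preserves the modulus). Hence if $f=\sum_k f_k$ is a decomposition of $f$ over $S$, then $f^*=\sum_k f_k^*$ is a decomposition of $f^*$ over $S$ with $\sum_k\|f_k^*\|_\infty=\sum_k\|f_k\|_\infty$; taking infima gives $\|f^*\|_{\max}^S\le\|f\|_{\max}^S$, and applying this to $f^*$ in place of $f$ gives equality. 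I expect the only genuinely delicate point to be the first step --- making sure the infimum is over a nonempty set, i.e.\ that every element of $\mathfrak{C}_c(\G,\LL)$ admits \emph{some} decomposition into sections supported in bisections from $S$ --- which in the non-Hausdorff case requires the quasi-continuous partition-of-unity machinery of \cite[Proposition 3.10]{Exel} rather than an elementary argument; everything else is bookkeeping with the triangle inequality and the multiplicativity of the modulus in a Fell line bundle.
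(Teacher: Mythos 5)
Your proposal is correct and follows essentially the same route as the paper's proof: concatenating decompositions for submultiplicativity (using $f_j*g_k\in C_c(U_jV_k,\LL)$ and multiplicativity of the modulus), pointwise evaluation to get $\|f\|_\infty\le\|f\|_{\max}^S$ on $C_c(U,\LL)$, the triangle-inequality argument for maximality, and $f\mapsto f^*$ permuting decompositions for the involution. You are in fact slightly more careful than the paper in addressing nonemptiness of the set of decompositions and positive-definiteness; just note that definiteness for a general $f$ (not supported in a single $U\in S$) is not literally the restriction statement, but your evaluation argument applied at an arbitrary $\gamma\in\G$ gives $\sup_{\gamma}|f(\gamma)|\le\|f\|_{\max}^S$ and hence handles it.
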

\begin{proof}
Let $f,g \in \mathfrak{C}_c(\G, \LL)$.
For any $\varepsilon>0$ there are $f_k \in C_c(U_k, \LL)$ and $g_l\in C_c(V_k, \LL)$, where $U_k, V_l\in S$, such that $f = \sum_{k=1}^{n} f_k$, $g = \sum_{l=1}^{m} g_l$ and $\| f \|_{\max}^{S} + \varepsilon > \sum_{k=1}^{n} \| f_k \|_\infty$ and $\| g \|_{\max}^{S} + \varepsilon > \sum_{l=1}^{m} \| g_l \|_\infty$. 
Then $f * g = \sum_{k,l} f_k * g_l$, and $f_k * g_l\in C_c(U_k V_l,  \LL)$, $U_k V_l\in S$. Therefore
\[
    \| f * g \|_{\max}^{S} \leq \sum_{k,l} \| f_k * g_l \|_\infty \leq \sum_{k,l} \| f_k \|_\infty \| g_l \|_\infty < \big( \|f\|_{\max}^{S} +\varepsilon \big) \big( \|g\|_{\max}^{S}  + \varepsilon \big) .
\]
This shows that $\|\cdot\|_{\max}^S$ is submultiplicative. 
The proof of the triangle inequality is even simpler.  
If in addition $f \in C_c(U,\LL)$ for some $U\in S$, then there is $\gamma_0 \in U$ where $|f|$ attains its maximum, and then (retaining the above choice of $f_k$'s) 
$
    \| f \|_\infty = | f(\gamma_0) | \leq \sum_{k=1}^{n} |f_k(\gamma_0)| \leq  \sum_{k=1}^{n} \| f_k \|_\infty < \| f \|_{\max}^{S} + \varepsilon ,
$
which shows that $\|f\|_\infty\leq \|f\|^{S}_{\max}$. 
The converse inequality $\|f\|_{\max}^{S}\leq \|f\|_\infty$ holds by \eqref{eq:projective_norm}. 
Hence $\|\cdot\|^{S}_{\max}$ coincides with $\|\cdot\|_\infty$ on  $C_c(U,\LL)$ for $U\in S$. 
If $\|\cdot \|$ is any norm on $\mathfrak{C}_c(\G,\LL)$ that agrees with $\|\cdot\|_\infty$ on $C_c(U,\LL) \subseteq \mathfrak{C}_c(\G,\LL)$, for  $ U\in S$, then for any $f=\sum_{k=1}^{n} f_k$, $f_k\in C_{c}(U_k,\LL)$, $U_k\in S$, we have $\|f\|\leq \sum_{k=1}^{n} \|f_k\|= \sum_{k=1}^{n} \|f_k\|_\infty$, which implies $\|f\|\leq \|f\|_{\max}^{S}$. 
If $S$ is an inverse semigroup then $f^{*}=\sum_{k=1}^{n} f_k^{*}$, $f_k^{*}\in C_{c}(U_k^{*},\LL)$, where $U_k^{*}\in S$. 
Thus  $\| f^* \|_{\max}^{S}=\| f \|_{\max}^{S}$ by \eqref{eq:projective_norm}.
\end{proof}

\begin{defn}
For any unital inverse subsemigroup $S\subseteq \Bis(\G)$ covering $\G$, we denote by  $F^S(\G,\LL) := \overline{\mathfrak{C}_c(\G,\LL)}^{\|\cdot\|_{\max}^S}$ the associated Banach $*$-algebra. 
We also denote by $\|\cdot \|_{\max}$ the norm $\|\cdot \|_{\max}^{ \Bis(\G)}$ and  put $F(\G,\LL):=F^{ \Bis(\G)}(\G,\LL)$.
\end{defn}

\begin{rem}\label{rem:various_norms_and^projectivness}
The assumption that $S\subseteq \Bis(\G)$ is a unital means that  $S$ contains $X$, which is equivalent to that
$C_0(X)$ isometrically embeds into $F^S(\G,\LL)$.
We have 
\[
    \| \cdot \|_\infty \leq \| \cdot \|_{d_*},\, \|\cdot\|_{C^*},\,  \| \cdot \|_{r_*} \leq \| \cdot \|_{I} \leq \|\cdot \|_{\max} \leq \|\cdot \|_{\max}^S 
\]
on $\mathfrak{C}_c(\G, \LL)$. In particular, $\|\cdot \|_{\max}$ is  minimal  
amongst all norms $\|\cdot \|_{\max}^S$ on $\mathfrak{C}_c(\G, \LL)$ ($
S\subseteq T\subseteq \Bis(\G)\,\, \Longrightarrow\,\,\|\cdot \|_{\max}^T \leq \|\cdot \|_{\max}^S
$).
In general there is no maximal amongst the  norms $\|\cdot \|_{\max}^S$. 
In our analysis we may restrict only to wide inverse semigroups as  the `saturation' $\widetilde{S}:=\{U: U\subseteq V\in S\}$ of $S$ is a wide inverse subsemigroup of $\Bis(\G)$ and 
$\|\cdot\|_{\max}^S=\|\cdot\|_{\max}^{\widetilde{S}}$.
Moreover $\|\cdot \|_{\max}^S$ is majorized by a norm coming from an  inverse subsemigroup of $S(\L)$ from Lemma \ref{lem:inverse_semigroup_of_trivialtwist_bisections}.
Namely,   $\widetilde{S}\cap S(\LL)$ is a wide  inverse semigroup  and
\begin{equation}\label{eq:majorization_by_S(L)_semigroup}
\|\cdot\|_{\max}^S\leq \|\cdot\|_{\max}^{\widetilde{S}\cap S(\LL)}.
\end{equation}
\end{rem}

\begin{ex}
Let  $\G= G\times X$ be the transformation groupoid for a (discrete) group action $h:G\to \text{Homeo}(X)$. The natural 
 $G$-grading of $\G$ is given by 
$S:=\{\{g\}\times X\}_{g\in G}\cong G$.
For every  $f\in C_c(\G)$ we have
$$
 \|f\|_{d_*}=\max_{x\in X} \sum_{g\in G}|f(g,x)|,\,\,  \|f\|_{r_*}=\max_{x\in X} \sum_{g\in G}|f(g,\varphi_{g}(x))|\,\,\text{ and }\,\, \|f\|_{\max}^S=\sum_{g\in G} \max_{x\in X}|f(g,x)|.
$$
Monod \cite{Monod} denotes the algebra $F_{d_*}(\G) := \overline{\mathfrak{C}_c(\G)}^{\|\cdot\|_{d_*}}$  by $C(X,\ell^1 G)$ and  calls it the Banach algebra crossed product.  It plays crucial role in the study of amenability in \cite{Monod}.
However, it is much more common to reserve the name Banach algebra crossed product to the $*$-algebra   $\ell^1(G,C_0(X))\cong F^S(\G)$, see \cite{BK}, \cite{Jeu_Tomiyama},  and references therein. Note that the Hahn norm $\|\cdot\|_{I}$ is in general strictly smaller than $\|\cdot\|_{\ell^1(G,C_0(X))}= \|\cdot\|_{\max}^S$. 
Obviously, when $X$ is singleton, so that  $\G = G$ is a  group, then $S=\Bis(\G)$  and  all the  norms $\|\cdot\|_{d_*}, \|\cdot\|_{r_*}, \|\cdot\|_{I}, \|\cdot\|_{\max}^S$ coincide with the $\ell^{1}$-norm in $\ell^{1}(G)$.
\end{ex}


We fix a unital inverse subsemigroup $S\subseteq \Bis(\G)$ covering $\G$.
By \eqref{eq:majorization_by_S(L)_semigroup},  in the following definition we may always assume that $S$ is a wide inverse subsemigroup of $S(\LL)$. 

\begin{defn} 
For a class $\RR$ of representations of $F^{S}(\G, \LL)$ we denote by $F_{\RR}^S(\G, \LL) := \overline{\mathfrak{C}_c(\G, \LL)}^{\|\cdot\|_{\RR}}$ 
the Hausdorff completion  in the seminorm
$
    \|f\|_{\RR} := \sup \{ \| \psi(f) \|: \psi\in \RR \}. 
$ 
If $\|\cdot\|_{\RR}\leq \|\cdot\|_{\max}$, then  we will write $F_{\RR}(\G, \LL)$ for $F_{\RR}^S(\G, \LL)$.
\end{defn}
\begin{rem}  
If $\|\cdot\|_{\RR}\leq \|\cdot\|_{\max}$, then the algebra $F_{\RR}^S(\G, \LL)$ does not depend on the choice of $S$, as it
can be viewed as a Hausdorff completion of $F(\G, \LL)$.
In this paper we will be mainly interested in the case when  $\|\cdot\|_{\RR}\leq \|\cdot\|_{I}$ (and hence all the more  $\|\cdot\|_{\RR}\leq \|\cdot\|_{\max}$).
\end{rem}

All the aforementioned Banach algebras have approximate units.

\begin{lem}\label{lem:approximate_units} 
Let $S\subseteq \Bis(\G)$ be a unital inverse semigroup covering $\G$. An approximate unit $\{\mu_i\}_{i}$ in $C_0(X)$ 
is also an approximate unit in $F^{S}(\G,\LL)$, and for any representation $\psi : F^{S}(\G,\LL)\to B$ the net $\{\psi(\mu_i)\}_{i}$ is an approximate unit in $\overline{\psi(F^{S}(\G,\LL))}$. 
In particular, if $X$ is compact then $F^{S}(\G,\LL)$ and $\overline{\psi(F^{S}(\G,\LL))}$ are unital.
\end{lem}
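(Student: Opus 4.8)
The plan is to reduce everything to one elementary observation: convolving with a function supported on the unit space $X$ is the same as multiplying sections by a composition with $r$ or $d$. Since $\LL|_X$ is trivial, $C_0(X)=C_0(X,\LL)$ sits inside $\mathfrak{C}_c(\G,\LL)$ as the sections supported on $X$, and for $\mu\in C_0(X)$ and $f\in C_c(U,\LL)$ with $U\in S$ the convolution formula \eqref{eq:convolution_and_involution} has exactly one surviving term, so
\[
    (\mu*f)(\gamma)=\mu\big(r(\gamma)\big)\,f(\gamma),\qquad (f*\mu)(\gamma)=\mu\big(d(\gamma)\big)\,f(\gamma),\qquad \gamma\in\G.
\]
In particular $\mu*f,\,f*\mu\in C_c(U,\LL)$ with $\|\mu*f\|_\infty,\|f*\mu\|_\infty\le\|\mu\|_\infty\|f\|_\infty$; applying this piecewise to a decomposition $f=\sum_k f_k$, $f_k\in C_c(U_k,\LL)$, $U_k\in S$, and taking the infimum over such decompositions gives $\|\mu*g\|_{\max}^S,\|g*\mu\|_{\max}^S\le\|\mu\|_\infty\|g\|_{\max}^S$ for all $g\in F^S(\G,\LL)$. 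Thus, for an approximate unit $\{\mu_i\}_i$ of $C_0(X)$ (so $\|\mu_i\|_\infty\le1$), left and right convolution by $\mu_i$ are contractions on $F^S(\G,\LL)$.

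First I would verify convergence on the dense subspace $\mathfrak{C}_c(\G,\LL)=\spane\{f\in C_c(U,\LL):U\in S\}$ (density uses that $S$ covers $\G$). For $f\in C_c(U,\LL)$, $U\in S$, the sets $r(\supp f)$ and $d(\supp f)$ are compact in $X$, so $\mu_i\equiv1$ there for all large $i$, and then the displayed identities give $\mu_i*f=f=f*\mu_i$. Together with the uniform contractivity from the first step, a standard $3\varepsilon$-argument yields $\mu_i*g\to g$ and $g*\mu_i\to g$ for all $g\in F^S(\G,\LL)$, so $\{\mu_i\}_i$ is a two-sided approximate unit of $F^S(\G,\LL)$ (acting contractively).

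For a representation $\psi\colon F^S(\G,\LL)\to B$ I would transport this along $\psi$: being a contractive homomorphism, $\psi$ gives $\psi(\mu_i)\psi(g)=\psi(\mu_i*g)$ and $\psi(g)\psi(\mu_i)=\psi(g*\mu_i)$, and $\|\psi(\mu_i*g)-\psi(g)\|\le\|\mu_i*g-g\|_{\max}^S\to0$ (symmetrically on the other side), so $\{\psi(\mu_i)\}_i$ is a two-sided approximate unit on the dense subalgebra $\psi(F^S(\G,\LL))\subseteq\overline{\psi(F^S(\G,\LL))}$ and hence on its closure. Finally, when $X$ is compact, $1_X\in C(X)=C_0(X)\subseteq F^S(\G,\LL)$, and using the constant function $1$ in place of $\mu_i$ in the displayed identities gives $1_X*g=g=g*1_X$ on $\mathfrak{C}_c(\G,\LL)$, hence by continuity on $F^S(\G,\LL)$; thus $F^S(\G,\LL)$ is unital and $\psi(1_X)$ is a unit for $\overline{\psi(F^S(\G,\LL))}$.

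I expect the only genuinely delicate point to be the mismatch between the supremum norm on $C_0(X)$ and the norm $\|\cdot\|_{\max}^S$ on $F^S(\G,\LL)$: the $\mu_i$ need not be $\|\cdot\|_{\max}^S$-contractive (their $\|\cdot\|_{\max}^S$-norms can even fail to be bounded for pathological $S$, cf.\ Example~\ref{ex:no_maximal_norm}), so one must phrase the argument throughout in terms of the contractive multiplication operators the $\mu_i$ induce rather than in terms of $\|\mu_i\|$, and correspondingly control $\psi(\mu_i)$ through those operators when passing to the closure. Everything else is routine bookkeeping; in particular the sums in \eqref{eq:convolution_and_involution} stay finite in the non-Hausdorff case because multiplying by $\mu\circ r$ or $\mu\circ d$ does not enlarge supports.
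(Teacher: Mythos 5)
Your overall route is the same as the paper's: reduce to $f\in C_c(U,\LL)$ with $U\in S$, observe that $\mu_i*f-f$ stays in $C_c(U,\LL)$ so that $\|\cdot\|_{\max}^S$ collapses to $\|\cdot\|_\infty$ there, and transport the result along $\psi$. Your explicit verification that left and right convolution by $\mu_i$ are \emph{contractions} on $F^S(\G,\LL)$ (rather than that $\|\mu_i\|_{\max}^S\le 1$, which can fail) is a point the paper leaves implicit, and you are right that it is needed for the $3\varepsilon$-argument; that part is fine and slightly more careful than the printed proof.

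There is, however, one step that is false as stated: the claim that since $r(\supp f)$ and $d(\supp f)$ are compact, ``$\mu_i\equiv 1$ there for all large $i$''. The paper's convention is that an approximate unit is merely a contractive two-sided approximate unit, and such a net need not ever equal $1$ on any given compact set (take $X=\N$ and $\mu_n=(1-\tfrac1n)1_{\{1,\dots,n\}}$ in $c_0(\N)$, for instance). Consequently you do not get $\mu_i*f=f$ for large $i$, only convergence. The repair is immediate and is what the paper's ``which obviously tends to zero'' amounts to: choosing $a\in C_c(X)$ with $a\equiv 1$ on the compact set $K=r(\supp f)$, the estimate $\sup_{x\in K}|\mu_i(x)-1|\le\|\mu_i a-a\|_\infty\to 0$ shows $\mu_i\to 1$ uniformly on $K$, whence
\[
\|\mu_i*f-f\|_\infty=\max_{\gamma\in U}\bigl|\bigl(\mu_i(r(\gamma))-1\bigr)f(\gamma)\bigr|\le\|f\|_\infty\sup_{x\in K}|\mu_i(x)-1|\longrightarrow 0,
\]
and symmetrically for $f*\mu_i$. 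With this substitution the rest of your argument, including the transport along $\psi$ and the unital case, goes through verbatim.
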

\begin{proof}
It suffices to prove that $\{\mu_i * f\}_{i}$ converges to $f$ in $\|\cdot \|_{\max}^{S}$ for $f\in C_c(U, \LL)$, $U\in S$. 
But then $\mu_i * f - f\in C_c(U,\LL)$ and therefore $\|\mu_i * f - f\|_{\max}^{S} = \|\mu_i * f- f\|_{\infty} = \max_{\gamma \in U}| \mu_i(r(\gamma))f(\gamma) - f(\gamma)|$, which obviously tends to zero.
\end{proof}

We write $A \cong B$ to indicate that two Banach algebras $A$ and $B$ are isometrically isomorphic. 
We write $A \anti B$ if there is an isometry from $A$ onto $B$ which is anti-linear and anti-multiplicative. 
We write $A \donto B$ if there is a representation of $A$ in $B$ with dense range. 
By Remark~\ref{rem:various_norms_and^projectivness} the identity on $\mathfrak{C}_c(\G,\LL)$
extends to canonical representations 
$F^S(\G,\LL) \donto F_I(\G,\LL) \donto  F_{d_*}(\G,\LL), F_{r_*}(\G,\LL) .
$

\subsection{Opposite algebras and twists}

We denote by $A^{\op}$ the \emph{opposite algebra} to an  algebra $A$, meaning that the linear structure of $A^{\op}$ is the same as that of $A$ but the multiplication is performed in the reverse order. 
When $A$ is a Banach algebra then $A^{\op}$ is a Banach algebra with the same norm.
Fix a twisted groupoid $(\G,\LL)$ and an inverse semigroup $S\subseteq \Bis(\G)$ covering $\G$. 
An \emph{opposite twist} $\LL^{\op}$ on $\G$ is defined as follows (see \cite{Buss_Sims}). 
The fibers are $L_{\gamma}^{\op} := L_{\gamma^{-1}}$, $\gamma \in \G$, and the topology is induced by the sections
\begin{equation}\label{eq:widecheck_map}
    (\widecheck{f})(\gamma) := f(\gamma^{-1}), 
\end{equation}
where $f\in C_c(U,\LL)$, $U\in \Bis(\G)$. 
The involution on $\LL^{\op}$ is given by the involution maps $L_{\gamma^{-1}}\to L_{\gamma}$ from $\LL$,
 and the 
multiplication maps $ L_{\gamma}^{\op}\times L_{\eta}^{\op}\longrightarrow L_{\gamma\eta}^{\op}$  are given by $(z,w)\mapsto w \cdot z$ where $z\in L_{\gamma}^{\op}=\LL_{\gamma^{-1}}$, $w\in L_{\eta}^{\op}=\LL_{\eta^{-1}}$ and $d(\gamma)=r(\eta)$.
The map $f \mapsto \widecheck{f}$ yields a $*$-isomorphism
\[
    \mathfrak{C}_c(\G,\LL)^{\op} \cong \mathfrak{C}_c(\G,\LL^{\op}).
\]
In particular, the untwisted algebra $\mathfrak{C}_c(\G)^{\op}\cong \mathfrak{C}_c(\G)$ is self-opposite. 
It follows from the formulas for the norms $\|\cdot\|_{d_*}, \|\cdot\|_{r_*}, \|\cdot\|_{I}, \|\cdot\|_{\max}^{S}$ that the above isomorphism extends to isometric isomorphisms 
\[ 
    F^S(\G,\LL)^{\op} \cong F^S(\G,\LL^{\op}), \qquad  F_I(\G,\LL)^{\op} \cong F_I(\G,\LL^{\op}), \qquad  F_{d_*}(\G,\LL)^{\op} \cong F_{r_*}(\G,\LL^{\op}). 
\]
Thus the algebras $F^S(\G)$ and $F_I(\G)$ are self-opposite, while for the remaining two algebras we have only $F_{d_*}(\G)^{\op}\cong F_{r_*}(\G)$.
Let us consider a slightly more general situation.

\begin{defn} 
For a class $\EE$ of Banach spaces we  write $F_{\EE}^S(\G, \LL):=F_{\RR}^S(\G, \LL)$  and $\|\cdot\|_{\EE}:=\|\cdot\|_{\RR}$
for $\RR$ consisting of all  representations of $F^{S}(\G, \LL)$ in $B(E)$ for $E\in\EE$.
\end{defn}


If $\EE$ is a class of Banach spaces we denote by $\EE'$ the class of dual spaces to spaces in $\EE$. 
For any other class $\FF$ of Banach spaces we write $\EE\subseteq_{\iso}\FF$ if for every $E\in \EE$ there is $F\in \FF$ such that $E\cong F$. 

\begin{prop}\label{prop:duality} 
If $\EE'\subseteq_{\iso}\FF$ then the map $f \mapsto \widecheck{f}$ induces a contractive homomorphism $F_{\FF}^{S}(\G, \LL)^{\op} \donto F_{\EE}^{S}(\G, \LL^{\op})$.
If $\EE'\subseteq_{\iso} \FF$ and $\FF' \subseteq_{\iso}\EE$ then this homomorphism is an isometric isomorphism $F_{\FF}^{S}(\G, \LL)^{\op} \cong F_{\EE}^{S}(\G, \LL^{\op})$.
\end{prop}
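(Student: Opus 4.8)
The plan is to exploit the isometric $*$-isomorphism $\widecheck{\,\cdot\,}\colon \mathfrak{C}_c(\G,\LL)^{\op}\cong\mathfrak{C}_c(\G,\LL^{\op})$, which by the discussion preceding the proposition already extends to an isometric isomorphism $F^S(\G,\LL)^{\op}\cong F^S(\G,\LL^{\op})$, and to compare the $\EE$- and $\FF$-seminorms on the two dense subalgebras $\mathfrak{C}_c(\G,\LL)$ and $\mathfrak{C}_c(\G,\LL^{\op})$ by transposing operators. The first step is to show that, assuming $\EE'\subseteq_{\iso}\FF$, one has $\|\widecheck f\|_{\EE}\le\|f\|_{\FF}$ for every $f\in\mathfrak{C}_c(\G,\LL)$. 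To this end, fix a representation $\rho\colon F^S(\G,\LL^{\op})\to B(E)$ with $E\in\EE$. Composing with $\widecheck{\,\cdot\,}$ gives a contractive anti-homomorphism $\sigma:=\rho\circ\widecheck{\,\cdot\,}\colon F^S(\G,\LL)\to B(E)$ (here we use that $\widecheck{\,\cdot\,}$ is isometric and that the opposite algebra carries the same norm). Since transposition $T\mapsto T'$ is an isometric anti-homomorphism $B(E)\to B(E')$, the map $\sigma'\colon F^S(\G,\LL)\to B(E')$, $\sigma'(a):=\sigma(a)'$, is an honest contractive homomorphism, i.e.\ a representation of $F^S(\G,\LL)$ on $E'$, with $\|\sigma'(a)\|=\|\sigma(a)\|=\|\rho(\widecheck a)\|$. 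As $E'\in\EE'\subseteq_{\iso}\FF$, conjugating $\sigma'$ by a suitable isometric isomorphism $E'\cong F$ with $F\in\FF$ produces a representation $\pi$ of $F^S(\G,\LL)$ on a space in $\FF$ with $\|\pi(f)\|=\|\rho(\widecheck f)\|$; hence $\|\rho(\widecheck f)\|\le\|f\|_{\FF}$, and taking the supremum over all such $\rho$ yields $\|\widecheck f\|_{\EE}\le\|f\|_{\FF}$.

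With this estimate in hand, the map $f\mapsto\widecheck f$ is a contraction from $(\mathfrak{C}_c(\G,\LL),\|\cdot\|_{\FF})$ to $(\mathfrak{C}_c(\G,\LL^{\op}),\|\cdot\|_{\EE})$ which is an algebra homomorphism $\mathfrak{C}_c(\G,\LL)^{\op}\to\mathfrak{C}_c(\G,\LL^{\op})$; since its image is dense in $F_{\EE}^S(\G,\LL^{\op})$, it extends uniquely to a contractive homomorphism $F_{\FF}^S(\G,\LL)^{\op}\to F_{\EE}^S(\G,\LL^{\op})$ with dense range, which is the asserted $\donto$.

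For the second assertion, observe that $(\LL^{\op})^{\op}=\LL$ and $\widecheck{\widecheck f}=f$. Applying the inequality just proved with the roles of $\EE$, $\FF$, $\LL$ played by $\FF$, $\EE$, $\LL^{\op}$, and invoking the hypothesis $\FF'\subseteq_{\iso}\EE$, we obtain $\|\widecheck g\|_{\FF}\le\|g\|_{\EE}$ for all $g\in\mathfrak{C}_c(\G,\LL^{\op})$; substituting $g=\widecheck f$ gives $\|f\|_{\FF}\le\|\widecheck f\|_{\EE}$. Combined with the first estimate this shows $f\mapsto\widecheck f$ is isometric on $\mathfrak{C}_c(\G,\LL)$, so the induced homomorphism $F_{\FF}^S(\G,\LL)^{\op}\to F_{\EE}^S(\G,\LL^{\op})$ is an isometry with dense range; an isometry with dense range into a complete space is a surjective isometric isomorphism, so $F_{\FF}^S(\G,\LL)^{\op}\cong F_{\EE}^S(\G,\LL^{\op})$.

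I do not anticipate a genuine obstacle here: the only substantive input is the standard fact that transposition $B(E)\to B(E')$ is an isometric anti-homomorphism, and the rest is bookkeeping — keeping straight the directions of the (anti-)homomorphisms, matching the two $\subseteq_{\iso}$ hypotheses with the two inequalities, and the routine observation that a contractive algebra homomorphism defined on a dense subalgebra extends to a contractive homomorphism of the completions.
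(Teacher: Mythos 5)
Your proof is correct and follows essentially the same route as the paper: given a representation on a space in one class, transpose it to obtain a representation of the opposite/$\widecheck{\phantom{f}}$-twisted algebra on the dual space, use $\EE'\subseteq_{\iso}\FF$ to land in the other class and deduce $\|\widecheck f\|_{\EE}\le\|f\|_{\FF}$, then extend by density and run the symmetric argument with $\FF'\subseteq_{\iso}\EE$ for the isometric isomorphism. Your write-up is in fact somewhat more explicit than the paper's about the anti-homomorphism bookkeeping, which is a virtue rather than a deviation.
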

\begin{proof}
For any representation $\pi : F^{S}(\G, \LL)^{\op} \to B(E)$ the formula $\pi'(f) := \pi(\widecheck{f})'$,  $f\in \mathfrak{C}_c(\G, \LL^{\op})$,
 defines a representation $\pi' :F^{S}(\G, \LL^{\op}) \to B(E')$. 
If $\EE'\subseteq_{\iso}\FF$ and $E\in \EE$, then we may replace $E'$ by $F\in \FF$ such that $E'\cong F$, so that we get a representation  $\pi' : F^{S}(\G, \LL^{\op}) \to B(F)$. 
This implies that $\| \widecheck{f} \|_{\EE}^{S}\leq \| f \|_{\FF}^{S}$ for $f \in \mathfrak{C}_c(\G, \LL^{\op})$.
Thus the isomorphism $\mathfrak{C}_c(\G, \LL)^{\op} \cong \mathfrak{C}_c(\G, \LL^{\op})$ extends to a representation $F_{\FF}^{S}(\G, \LL)^{\op} \donto  F_{\EE}^{S}(\G, \LL^{\op})$. 
Similarly, $\FF'\subseteq_{\iso}\EE$ implies $\|\cdot\|_{\FF}^{S}\leq \|\cdot\|_{\EE}^{S}$.
Hence  $F_{\FF}^{S}(\G, \LL)^{\op} \cong  F_{\EE}^{S}(\G, \LL^{\op})$ if $\EE'\subseteq_{\iso}\FF$ and $\FF'\subseteq_{\iso}\EE$. 
\end{proof}

\begin{cor}\label{cor:reflexive_spaces_oppositie_iso}
If $\EE$ consists of reflexive Banach spaces, then $ F_{\EE}^{S}(\G,\LL)^{\op} \cong F_{\EE'}^{S}(\G,\LL^{\op})$.
\end{cor}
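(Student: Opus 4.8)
The plan is to read this off directly from Proposition~\ref{prop:duality}, with no new construction required: the only candidate map is the one already produced there, namely the extension of the $*$-isomorphism $\mathfrak{C}_c(\G,\LL)^{\op}\cong\mathfrak{C}_c(\G,\LL^{\op})$, $f\mapsto\widecheck f$. So the entire content is a bookkeeping check that both hypotheses of the ``isometric isomorphism'' clause of Proposition~\ref{prop:duality} hold for the appropriate choice of classes.

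First I would invoke Proposition~\ref{prop:duality} with the class denoted $\FF$ there taken to be our $\EE$, and the class denoted $\EE$ there taken to be our $\EE'$; this is precisely the pairing that turns its conclusion into $F_{\EE}^{S}(\G,\LL)^{\op}\cong F_{\EE'}^{S}(\G,\LL^{\op})$. Under this substitution the two hypotheses read $(\EE')'\subseteq_{\iso}\EE$ and $\EE'\subseteq_{\iso}\EE'$.

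The second is a tautology. For the first, recall that for any Banach space $E$ the canonical map $E\to E''$ is an isometric embedding, and it is surjective (hence an isometric isomorphism $E\cong E''$) exactly when $E$ is reflexive. Since every member of $\EE$ is reflexive by hypothesis, and $(\EE')'$ is by definition the class of biduals $\{E'':E\in\EE\}$, we get $(\EE')'\subseteq_{\iso}\EE$. Feeding these two observations into Proposition~\ref{prop:duality} yields the asserted isometric isomorphism induced by $f\mapsto\widecheck f$.

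I do not expect a genuine obstacle here; the only point requiring care is not to reverse the direction of $\subseteq_{\iso}$, and to remember that reflexivity is used solely to upgrade the always-available isometric embedding $E\hookrightarrow E''$ to a surjection. One could also note in passing that $\EE'$ then automatically consists of reflexive spaces as well, so the statement is in fact symmetric in the pairs $(\EE,\LL)$ and $(\EE',\LL^{\op})$.
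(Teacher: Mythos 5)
Your proof is correct and is essentially the paper's own argument: the paper simply applies Proposition~\ref{prop:duality} with the two classes chosen so that the hypotheses reduce to the tautology $\EE'\subseteq_{\iso}\EE'$ and to $(\EE')'\subseteq_{\iso}\EE$, the latter being exactly reflexivity of the members of $\EE$. Your bookkeeping of the substitution and of the direction of $\subseteq_{\iso}$ is accurate, so there is nothing to add.
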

\begin{proof}
Take $\FF = \EE'$ in Proposition~\ref{prop:duality}. 
\end{proof}

\begin{cor} \label{cor:Hilbert_spaces_oppositie_iso} 
We have $C^*(\G,\LL)^{\op}\cong C^*(\G,\LL^{\op})$ and $C^*(\G)^{\op}\cong C^*(\G)$.
\end{cor}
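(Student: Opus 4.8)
The plan is to combine the $*$-isomorphism $f\mapsto\widecheck{f}$ of $\mathfrak{C}_c(\G,\LL)^{\op}$ onto $\mathfrak{C}_c(\G,\LL^{\op})$ with the universal property of the maximal $C^*$-norm. First I would record the routine fact that the opposite algebra $A^{\op}$ of a $C^*$-algebra $A$ (real or complex) is again a $C^*$-algebra with the same norm: the involution $a\mapsto a^*$ of $A$ remains an involution of $A^{\op}$ since $*$ is anti-multiplicative, and because $a^*\cdot_{\op}a = aa^* = (a^*)^*a^*$ the identity $\|a^*\cdot_{\op}a\| = \|a\|^2$ — and, when $\F=\R$, the positivity axiom $\|a^*\cdot_{\op}a\|\le\|a^*\cdot_{\op}a + b^*\cdot_{\op}b\|$ of Definition~\ref{def:C*-algebra} — follow by applying the corresponding properties of $A$ to $a^*$ and $b^*$. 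Hence $C^*(\G,\LL)^{\op}$ is a $C^*$-algebra, and transporting it along the $*$-isomorphism $\widecheck{\cdot}$ exhibits it as a completion of $\mathfrak{C}_c(\G,\LL^{\op})$ in some $C^*$-norm, namely $\widecheck f\mapsto\|f\|_{C^*\mathrm{max}}$ (the norm of $C^*(\G,\LL)$).

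Next, being a $C^*$-norm, this induced norm on $\mathfrak{C}_c(\G,\LL^{\op})$ is dominated by the maximal one; equivalently $\widecheck{\cdot}$ extends to a contractive $*$-homomorphism $C^*(\G,\LL^{\op})\donto C^*(\G,\LL)^{\op}$ with dense range. Running the same argument with $\LL$ and $\LL^{\op}$ interchanged — using the canonical identification $(\LL^{\op})^{\op}=\LL$ and $\widecheck{\widecheck{f}}=f$, and passing to opposite algebras — yields a contractive $*$-homomorphism $C^*(\G,\LL)^{\op}\donto C^*(\G,\LL^{\op})$ with dense range, which is inverse to the first on the dense subalgebras $\mathfrak{C}_c(\cdot)$. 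Their composites are thus $*$-endomorphisms of $C^*(\G,\LL^{\op})$ and of $C^*(\G,\LL)^{\op}$ restricting to the identity on a dense subalgebra, hence equal to the identity by continuity; so both maps are mutually inverse isometric $*$-isomorphisms, giving $C^*(\G,\LL)^{\op}\cong C^*(\G,\LL^{\op})$. (One can phrase the last step numerically instead: the two surjections give $\|\widecheck f\|_{C^*\mathrm{max}} \le \|f\|_{C^*\mathrm{max}} \le \|\widecheck f\|_{C^*\mathrm{max}}$ via the symmetry, using also that an injective representation of a $C^*$-algebra is isometric, Proposition~\ref{prop:minimality_of_sup_norm}.) For the untwisted statement it then suffices to take $\LL=\G\times\F$: since $\LL^{\op}\cong\LL$ and $\widecheck{\cdot}$ identifies $\mathfrak{C}_c(\G)^{\op}$ with $\mathfrak{C}_c(\G)$, the first part yields $C^*(\G)^{\op}\cong C^*(\G)$.

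I do not expect a genuine obstacle here; the only points needing care are checking that the opposite of a \emph{real} $C^*$-algebra still satisfies the positivity axiom of Definition~\ref{def:C*-algebra}, and keeping the direction of the two norm inequalities straight — both handled cleanly by the symmetry between $(\G,\LL)$ and $(\G,\LL^{\op})$. Note also that, unlike for the $I$-norm and the norms $\|\cdot\|_{\max}^{S}$, one cannot read this off from an explicit formula, since $\|\cdot\|_{C^*\mathrm{max}}$ is defined by a maximality (universality) property; this is why the argument goes through the universal property rather than through \eqref{eq:widecheck_map} directly.
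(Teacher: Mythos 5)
Your proof is correct, but it takes a genuinely different route from the paper's. The paper disposes of this corollary in one line: it applies Corollary~\ref{cor:reflexive_spaces_oppositie_iso} (itself an instance of the representation-duality statement, Proposition~\ref{prop:duality}, with $\FF=\EE'$) to the class $\EE$ of Hilbert spaces; since Hilbert spaces are self-dual this gives $F_{\EE}^{S}(\G,\LL)^{\op}\cong F_{\EE'}^{S}(\G,\LL^{\op})=F_{\EE}^{S}(\G,\LL^{\op})$, and these completions are (implicitly, via Corollary~\ref{cor:representations into C*-algebras}) the maximal $C^*$-completions. You instead argue intrinsically: the opposite of a real or complex $C^*$-algebra is again a $C^*$-algebra, so transporting $\|\cdot\|_{C^*\mathrm{max}}$ along the $*$-isomorphism $f\mapsto\widecheck f$ yields a $C^*$-norm on $\mathfrak{C}_c(\G,\LL^{\op})$; maximality gives one inequality, and the symmetry $(\LL^{\op})^{\op}=\LL$, $\widecheck{\widecheck f}=f$ gives the other, so the two maximal norms correspond and $\widecheck{\,\cdot\,}$ extends to mutually inverse isometric $*$-isomorphisms. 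Both arguments are sound. Yours is more elementary and purely $C^*$-algebraic: it avoids Hilbert-space representations and the Banach-space duality machinery entirely, at the cost of needing the existence of the largest $C^*$-norm on $\mathfrak{C}_c(\G,\LL)$ (which the paper only fully justifies later, via Theorem~\ref{thm:L_p_norm_estimates} and Theorem~\ref{thm:initial_on_L^p_full}; so, like the paper's own placement of this corollary, your argument leans on a forward reference). What the paper's route buys is uniformity: the single duality mechanism of Proposition~\ref{prop:duality} covers all the classes $\EE$ at once — the $L^p$/$L^q$ and Lindenstrauss-space statements of Corollaries~\ref{cor:reflexive_spaces_oppositie_iso} and~\ref{cor:L_infty_lindenstrauss_spaces_etc} — with the Hilbert-space case falling out for free. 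The one step in your argument that genuinely needs checking, namely that the real positivity axiom of Definition~\ref{def:C*-algebra} survives passage to the opposite algebra, you handle correctly by applying the axiom to $a^*$ and $b^*$.
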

\begin{proof} 
Apply Corollary~\ref{cor:reflexive_spaces_oppositie_iso} to the case where $\EE$ consists of Hilbert spaces.
\end{proof}

The untwisted version of Corollary~\ref{cor:reflexive_spaces_oppositie_iso} is known (see \cite{Gardella_Thiel15}, \cite{Gardella_Lupini17}) and Corollary~\ref{cor:Hilbert_spaces_oppositie_iso} is a special case of the main result in \cite{Buss_Sims}. 
We can apply Proposition~\ref{prop:duality} beyond reflexive spaces.

\begin{cor}\label{cor:L_infty_lindenstrauss_spaces_etc}
Let $\EE$ be the class of $L^1$-spaces, $\FF_{1}$ the class of $L^{\infty}$-spaces, $\FF_{2}$ the class of $C_0$-spaces and $\FF_{3}$ the class of Lindenstrauss spaces, cf. Remark~\ref{rem:C_0_and_Lindenstrauss}.
Then
\[ 
    F_{\FF_1}^{S}(\G,\LL) \cong F_{\FF_2}^{S}(\G,\LL) \cong  F_{\FF_3}^{S}(\G,\LL) \cong F_{\EE}^{S}(\G,\LL^{\op})^{\op}.
\]
\end{cor}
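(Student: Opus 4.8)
The plan is to deduce the whole chain of isomorphisms from Proposition~\ref{prop:duality}, applied three times. Concretely, I would take the class ``$\EE$'' of that proposition to be the class $\EE$ of $L^1$-spaces, and its class ``$\FF$'' to be, successively, $\FF_1$, $\FF_2$ and $\FF_3$. For each $i\in\{1,2,3\}$ it then suffices to check the two hypotheses $\EE'\subseteq_{\iso}\FF_i$ and $\FF_i'\subseteq_{\iso}\EE$; Proposition~\ref{prop:duality} will give an isometric isomorphism $F_{\FF_i}^{S}(\G,\LL)^{\op}\cong F_{\EE}^{S}(\G,\LL^{\op})$ implemented by $f\mapsto\widecheck f$, and, passing to opposite algebras (using $(A^{\op})^{\op}=A$ and that $A\cong B$ iff $A^{\op}\cong B^{\op}$), this yields $F_{\FF_i}^{S}(\G,\LL)\cong F_{\EE}^{S}(\G,\LL^{\op})^{\op}$ for every $i$ --- which is exactly the asserted chain, since all three algebras are then isometrically isomorphic to the single algebra $F_{\EE}^{S}(\G,\LL^{\op})^{\op}$.

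The second step is to verify the duality hypotheses, all of which are classical Banach-lattice facts. For $\EE'\subseteq_{\iso}\FF_i$: every $L^1$-space may be taken to be $L^1(\mu)$ with $\mu$ localizable (the reductions in Section~\ref{sec:Preliminaries}), and then $L^1(\mu)'\cong L^{\infty}(\mu)$, which already lies in $\FF_1$; by the Gelfand theorem $L^{\infty}(\mu)\cong C(X)$ for a compact Hausdorff $X$, so it also lies in $\FF_2$; and $C(X)$ is a Lindenstrauss space because $C(X)'=M(X)$ is an $L^1$-space (an $AL$-space, hence $\cong L^1(\nu)$ by Kakutani's representation theorem), so it lies in $\FF_3$. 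For $\FF_i'\subseteq_{\iso}\EE$: the dual of a Lindenstrauss space is an $L^1$-space by definition, while the dual of an $L^{\infty}$-space ($AM$-space with unit) and the dual of a $C_0$-space ($AM$-space) are $AL$-spaces, hence isometrically isomorphic to $L^1(\nu)$ for some measure $\nu$, again by Kakutani's theorem. Thus both hypotheses hold for $i=1,2,3$, and the proof is complete.

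I do not anticipate any real obstacle: the substance has already been isolated in Proposition~\ref{prop:duality}, and what remains is only the bookkeeping of which standard spaces are duals of which. The one point to be careful about is to work throughout with the one-sided relation $\subseteq_{\iso}$ (we never need, e.g., that every $C(X)$ is an $L^{\infty}$-space or that every $AL$-space is an honest concrete $L^1(\mu)$ without invoking the normalisations of Section~\ref{sec:Preliminaries}), and to make sure the Gelfand/Kakutani identifications are isometric, not merely isomorphic, so that Proposition~\ref{prop:duality} genuinely applies; the inclusions $\FF_1\subseteq_{\iso}\FF_2\subseteq_{\iso}\FF_3$ are not needed, since each $\FF_i$ is handled directly.
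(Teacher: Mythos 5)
Your proof is correct and follows essentially the same route as the paper: both reduce the statement to the second part of Proposition~\ref{prop:duality} by checking $\EE'\subseteq_{\iso}\FF_i$ and $\FF_i'\subseteq_{\iso}\EE$ via the standard identifications $L^1(\mu)'\cong L^\infty(\mu)$ (for localizable $\mu$), Gelfand, and Kakutani's representation of $AL$-spaces. The only difference is organizational --- the paper chains $\EE'\subseteq_{\iso}\FF_1\subseteq_{\iso}\FF_2\subseteq\FF_3$ and $\FF_3'\subseteq_{\iso}\EE$ while you verify each pair directly --- which changes nothing of substance.
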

\begin{proof}
We clearly have $\EE' \subseteq_{\iso}\FF_1 \subseteq_{\iso} \FF_2$ and $\FF_3' \subseteq_{\iso} \EE$. 
We have $\FF_2\subseteq \FF_3$ as $C_0$-spaces are abstract $M$-spaces and so their duals are (abstract) $L^1$-spaces, see for instance \cite[Theorem 7 on page 25 and Theorem 4 on page 135]{Lacey}. 
Thus $\FF_2'\subseteq \FF_3' \subseteq_{\iso} \EE$ and the assertion follows from the second part of Proposition \ref{prop:duality}. 
\end{proof}

\subsection{From twisted groupoids to twisted actions and back}
\label{subsec:twisted_groupoids_vs_twisted_actions}
We describe the correspondence between twisted groupoids and twisted actions of inverse semigroups on $C_0(X)$, which implicitly can be found in \cite{Buss_Exel}, \cite{Buss_Exel2}.
Let $(\G,\LL)$ be a twisted groupoid. 
Take any wide inverse subsemigroup $S\subseteq S(\LL)\subseteq \Bis(\G)$ and let $h : S\to \PHomeo(X)$ be the canonical action. 
Then we have the inverse semigroup action $\alpha : S \to \PAut(C_0(X))$  given by $\alpha_U(a) := a\circ h_{U^*}$ for $a\in  I_{d(U)} := C_0(d(U))$, $U\in S$.
For each $U\in S(\LL)$ take any continuous unitary section $c_U\in C_u(U,\LL)$, but if $U\subseteq X$ let $c_U(x) := (x,1) \subseteq X \times \F$, $x\in U$.  
For any $U,V\in S(\LL)$ there is a unique unitary function $u(U,V)\in C_u(r(UV))$ satisfying $u(U,V) \big( r(\gamma) \big) c_{UV}(\gamma\eta) = c_U(\gamma) \cdot c_V(\eta)$,  $\gamma \in U,\ \eta \in V,\ d(\gamma) = r(\eta)$.
In fact we have
\begin{equation}\label{eq:crucial_relations_for_twisted}
    \alpha_{U}(a) = c_U* a * c_U^*, \qquad u(U,V) = c_U* c_V * c_{UV}^*, \qquad  a\in C_0(d(U)),\ U, V\in S,
\end{equation} 
with operations given by \eqref{eq:convolution_and_involution}.

\begin{lem}\label{lem:twisted_action_from_twisted_groupoid} 
For any twisted groupoid $(\G,\LL)$ the pair $(\alpha,u)$ defined above is a twisted action of $S$ on $C_0(X)$.
\end{lem}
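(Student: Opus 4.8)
The statement asserts that the pair $(\alpha,u)$ built from $(\G,\LL)$ via the canonical action $h$ and a choice of unitary sections $\{c_U\}_{U\in S}$ satisfies Definition~\ref{defn:twisted actions}, i.e.\ the axioms \ref{enu:twisted actions1}--\ref{enu:twisted actions4}. The plan is to verify each axiom by a direct computation inside the $*$-algebra $\mathfrak{C}_c(\G,\LL)$, exploiting the key identities \eqref{eq:crucial_relations_for_twisted}, namely $\alpha_U(a) = c_U * a * c_U^*$ and $u(U,V) = c_U * c_V * c_{UV}^*$, together with the Fell-bundle relations $c_U^* * c_U = 1_{d(U)}$ and $c_U * c_U^* = 1_{r(U)}$ (since each $c_U$ is a unitary section, these are the characteristic functions of $d(U)$ and $r(U)$ respectively, viewed in $C_0(X) \subseteq \mathfrak{C}_c(\G,\LL)$). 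First I would record the structural facts we need: that $\alpha$ is already an inverse semigroup action on $C_0(X)$ in the untwisted sense (this follows from Example~\ref{ex:commutative_example} applied to the canonical action $h$ on $X$, which is a genuine action because $S$ is wide and covers $\G$), so the ideals $I_{d(U)} = C_0(d(U))$ have approximate units and their union is dense in $C_0(X)$; and that each $u(U,V) = c_U * c_V * c_{UV}^*$ is a unitary multiplier of $I_{r(UV)} = C_0(r(UV))$, which one checks by computing $u(U,V)^* * u(U,V) = u(U,V) * u(U,V)^* = 1_{r(UV)}$ using associativity in $\mathfrak{C}_c(\G,\LL)$ and the unitarity of the sections.

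\textbf{Verifying the axioms.} For \ref{enu:twisted actions1}, given $a \in I_{d(V)}$ compute $\alpha_U(\alpha_V(a)) = c_U * (c_V * a * c_V^*) * c_U^*$ and insert $1 = c_{UV}^* * c_{UV}$ in the appropriate spot (which is legitimate on the relevant ideal); reassociating gives $(c_U * c_V * c_{UV}^*) * (c_{UV} * a * c_{UV}^*) * (c_{UV} * c_V^* * c_U^*) = u(U,V) * \alpha_{UV}(a) * u(U,V)^* = \Ad_{u(U,V)}\alpha_{UV}(a)$, as required. Axiom \ref{enu:twisted actions2} (the cocycle-type identity $\alpha_r(a\, u(s,t))\, u(r,st) = \alpha_r(a)\, u(r,s)\, u(rs,t)$ for $a \in I_{r^*}\cap I_{st}$) follows from the same kind of bookkeeping: both sides, when all the $c$'s are substituted and reassociated, collapse to $c_r * a * c_r^* * c_r * c_s * c_t * c_{rst}^*$ after cancelling $c_{rs}^* * c_{rs}$ and $c_{rst}^* * c_{rst}$; I would write this as a short chain of equalities. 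Axiom \ref{enu:twisted actions3} is immediate from the normalization of the sections: for $e,f\in E(S)$ the bisections lie in $X$ and $c_e, c_f$ are the unit sections, so $u(e,f) = 1_e * 1_f * 1_{ef}^* = 1_{ef}$; and $u(t,t^*t) = c_t * c_{t^*t} * c_t^* = c_t * 1_{d(t)} * c_t^* = c_t * c_t^* = 1_t$, using $c_{t^*t} = 1_{d(t)}$ since $t^*t \subseteq X$, and symmetrically $u(tt^*,t)=1_t$. For \ref{enu:twisted actions4}, given $a\in I_{t^*et}$ one expands $u(t^*,e)*u(t^*e,t)*a$ and $u(t^*,t)*a$; both reduce to $c_{t^*} * c_e * c_t * c_{t^*t}^* * a = c_{t^*} * c_t * c_{t^*t}^* * a$ because $c_e$ acts as the identity (multiplication by $1_e$) on the range of the relevant elements once $a$ is supported in $I_{t^*et}$, and the claimed equality follows; the only care needed is to check the supports genuinely allow the cancellations.

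\textbf{Main obstacle.} The computations themselves are routine reassociations in $\mathfrak{C}_c(\G,\LL)$, so the real work — and the place I would be most careful — is bookkeeping the \emph{domains}: every cancellation of the form "$c_W^* * c_W = 1_{d(W)}$, which acts as a unit" is only valid after checking that the element it multiplies is supported over $d(W)$ (or $r(W)$), and when $a$ ranges over an intersection of ideals like $I_{r^*}\cap I_{st}$ in \ref{enu:twisted actions2} or $I_{t^*et}$ in \ref{enu:twisted actions4} one must confirm the support conditions are exactly those that make the insertions legal. In other words the identities \eqref{eq:crucial_relations_for_twisted} do almost all the heavy lifting, and what remains is to confirm that the ideal-theoretic hypotheses of Definition~\ref{defn:twisted actions} are the natural ones under which these convolution identities hold — together with verifying (as noted above, from Remark~\ref{rem:twisted_relations}-type reasoning or directly) that each $u(U,V)$ does land in $\UMult(I_{UV})$ rather than merely in $C_b$ of a larger set. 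Once these support checks are in place, the four axioms drop out mechanically, and I would present the argument as: "all the identities follow by direct computation using \eqref{eq:crucial_relations_for_twisted} and $c_U^* * c_U = 1_{d(U)}$, $c_U * c_U^* = 1_{r(U)}$; we spell out \ref{enu:twisted actions1} and \ref{enu:twisted actions4} and leave the analogous verifications of \ref{enu:twisted actions2} and \ref{enu:twisted actions3} to the reader."
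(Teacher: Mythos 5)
Your proposal is correct and matches the paper's proof, which likewise verifies the axioms by direct convolution computations using \eqref{eq:crucial_relations_for_twisted} (delegating the details of \ref{enu:twisted actions2}--\ref{enu:twisted actions4} to the analogous argument in Buss--Exel). The only shortcut you miss is that for \ref{enu:twisted actions1} commutativity of $C_0(X)$ makes $\Ad_{u(s,t)}$ trivial, so that axiom reduces immediately to the fact that $\alpha$ (equivalently, the canonical action $h$) is an inverse semigroup action, with no insertion of $c_{UV}^**c_{UV}$ needed.
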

\begin{proof}
Since $C_0(X)$ is commutative \ref{enu:twisted actions1} is equivalent to $\alpha$ being an action. 
Axioms \ref{enu:twisted actions2}, \ref{enu:twisted actions3}, \ref{enu:twisted actions4} can be checked exactly as in the proof of \cite[Proposition 3.6]{Buss_Exel} using \eqref{eq:crucial_relations_for_twisted}. 
\end{proof}

\begin{ex} 
If the twist $\LL = \LL_{\sigma}$ comes from a $2$-cocycle $\sigma$ then $S(\LL_\sigma) = \Bis(\G)$ and for any $\eta\in U$,$\gamma\in V$, we have $u(U,V)(\eta\gamma) = \sigma(\eta, \gamma) \overline{\sigma((\eta\gamma)^{-1}, \eta\gamma)}$.
\end{ex}

Now let $(\alpha,u)$ be any twisted action of $S$ on $C_0(X)$. 
Since $C_0(X)$ is commutative $\alpha : S \to \PAut(C_0(X))$ is a semigroup homomorphism, and hence there is a semigroup homomorphism $h : S \to \PHomeo(X)$ with $\alpha_t(a) = a\circ h_{t^*}$ for $a\in I_{t^*}=C_0(X_{t^*})$, $t\in S$. 
Let $\G = S\ltimes_{h} X$ be the associated transformation groupoid.  
We define a  bundle $\LL$ over $\G$ whose elements are equivalence classes of triples $(a,t,x)$ for
$a\in C_0(X_{t})$, $x\in X_{t^*}\subseteq X$, $t\in S$, and two triples $(a,t,x)$ and $(a',t',x')$ are
equivalent if $x=x'$ and there is $v\in S$ with $v\leq t,t'$, where $x\in X_{v^*}$, and $(a \cdot u(vv^*,t))(h_{v}(x)) = (a' \cdot u(vv^*,t'))(h_{v}(x))$.  
In particular, we have a canonical surjection 
\[
    \LL \to \G ;\ [a,t,x] \mapsto [t,x] ,
\]
and we denote by $L_{[t,x]}$ the corresponding fibers. 
This makes $\LL$ a line bundle with operations defined by
$
    [a,t,x] + [b,t,x] := [a+b,t,x]$, $\lambda \cdot [a,t,x] := [\lambda a , t , x]$, $\big| [a,t,x] \big| := \big| a \big( h_{t}(x) \big) \big|$,
and the unique topology on $\LL$ such that the local sections $[t,x] \mapsto [a,t,x]$, for $x\in X_{t^*}$, $a\in C_0(X_t)$, $t\in S$, are continuous. 
Define the partial multiplication and involution on $\LL$ by
\begin{align*}
    [a,s,h_t(x)] \cdot [b,t,x]& := [\alpha_{s}(\alpha_{s}^{-1}(a) b) u(s,t),st,x] = [a (b\circ h_{s^*}) u(s,t),st,x], \\
    [b,t,x]^* &:= [\overline{b\circ h_{t}}\cdot \overline{u(t,t^*)} , t^* , h_{t}(x)] 
\end{align*}
for all $a\in C_0(X_s)$, $b\in C_0(X_t)$, $ x\in X_{(st)^*}$, $s,t\in S$. 

\begin{prop}\label{prop:twisted_groupoid_from_twisted_action}
The pair $(\G,\LL)$ described above is a well-defined twisted groupoid. 
\end{prop}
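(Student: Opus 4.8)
The plan is to build on Example~\ref{ex:groupoids_from_actions}, which already gives that $\G=S\ltimes_h X$ is an \'etale groupoid with unit space $X$; hence all the work concerns the bundle $\LL$. I would split the verification into four stages: (i) the relation defining $\LL$ is an equivalence relation and each fibre $L_{[t,x]}$ is a well-defined one-dimensional $\F$-Banach space; (ii) the stated local sections generate a topology making $\LL\onto\G$ a line bundle; (iii) the partial multiplication and the involution descend to the equivalence classes; (iv) the twist axioms hold. Throughout, the computations run parallel to the $C^*$-algebraic treatment of Buss--Exel \cite{Buss_Exel} (and to the inverse construction in Lemma~\ref{lem:twisted_action_from_twisted_groupoid}), the one genuine subtlety being that the $u(s,t)$ are unitary \emph{multipliers} of the possibly non-unital ideals $I_{st}$, so each identity must be read as an identity in a multiplier algebra, or after multiplying by an element of the relevant ideal, exactly as in Definition~\ref{defn:twisted actions} and Remark~\ref{rem:twisted_relations}.

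For stage (i): reflexivity of the relation is trivial (take $v=t$), and symmetry follows by swapping $t,t'$. For transitivity I would use the elementary fact that two elements $v,w$ of $S$ below a common element $t'$ are compatible and hence have a meet $v\wedge w=vw^*w\le v,w$; commutativity of $E(S)$ gives $(v\wedge w)^*(v\wedge w)=(v^*v)(w^*w)$, so the relevant point $x$ stays in all the domains, and the required equality reduces to a cocycle-type identity derived from \ref{enu:twisted actions2}, \ref{enu:twisted actions4} and Remark~\ref{rem:twisted_relations}. Taking $v=t$ in the defining relation shows that, for a fixed $t$, the class $[a,t,x]$ depends only on the scalar $a(h_t(x))$, so $L_{[t,x]}$ is canonically $\cong\F$; passing from a representative over $t$ to one over $t'$ (with $[t,x]=[t',x]$ in $\G$) rescales this scalar by the unit-modulus number $u(vv^*,t')(h_v(x))\,\overline{u(vv^*,t)(h_v(x))}$, which is independent of the auxiliary $v\le t,t'$ by the same derived identity. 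Hence $+$, scalar multiplication and $|\cdot|$ descend to the quotient, each fibre becomes a one-dimensional $\F$-Banach space, and (for idempotents $e$) one sees $[c,e,x']$ depends canonically only on $c(x')$, so that $\LL|_X$ is canonically the trivial bundle $X\times\F$.

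For stages (ii) and (iii): I would give $\LL$ the smallest topology for which all the local sections $[t,x]\mapsto[a,t,x]$ on the basic bisections $U_t=\{[t,x]:x\in X_{t^*}\}$ (for $a\in C_0(X_t)$, $t\in S$) are continuous; the transition rule from stage (i), together with continuity of $h$ and of the functions $u(\cdot,\cdot)$, shows these charts agree on overlaps $U_s\cap U_t$, so that $\LL\onto\G$ is continuous and open. Continuity of fibrewise addition, scalar multiplication and modulus is then routine, and $\LL$ is locally trivial because over each $U_t$ it is trivialised by the unitary section with constant $t$-coordinate $1$ (locally $[t,x]\mapsto[a,t,x]$ with $a\in C_0(X_t)$, $a\equiv 1$ near the point). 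That the partial multiplication $[a,s,h_t(x)]\cdot[b,t,x]=[\alpha_s(\alpha_s^{-1}(a)b)u(s,t),st,x]$ and the involution $[b,t,x]^*=[\overline{b\circ h_t}\cdot\overline{u(t,t^*)},t^*,h_t(x)]$ are independent of the chosen representatives is checked by the same bookkeeping as in \cite[Lemma~4.6]{Buss_Exel}, using $\alpha_s\circ\alpha_t=\Ad_{u(s,t)}\alpha_{st}$ and $\overline{\alpha}_t(u(t^*,t))=u(t,t^*)$.

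For stage (iv): the multiplication is $\F$-bilinear on fibres by inspection; its associativity is equivalent to the $2$-cocycle identity \ref{enu:twisted actions2} together with \ref{enu:twisted actions1}, and the modulus is multiplicative because $|u(s,t)|\equiv 1$. The involution is conjugate-linear and modulus-preserving on fibres, and anti-multiplicative by a further computation with \ref{enu:twisted actions1}, \ref{enu:twisted actions2} and $\overline{\alpha}_t(u(t^*,t))=u(t,t^*)$; finally, a direct calculation (using that $u(t,t^*)$ is unitary and $u(tt^*,t)=1_t$) gives $[b,t,x]\cdot[b,t,x]^*=|b(h_t(x))|^2$ in the canonical copy $\F\cong L_{[tt^*,h_t(x)]}=L_{r([t,x])}$. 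Continuity of the multiplication and of the involution follows by evaluating on the generating sections and using continuity of $\alpha$, $u$ and $h$. I expect stages (i) and (iii) to be the main obstacle: there is nothing conceptually new beyond \cite{Buss_Exel}, but faithfully transcribing the cocycle bookkeeping while keeping track of which equalities hold only as identities of multipliers on the non-unital $I_t$ is the bulk of the verification, and it is exactly what makes $(\G,\LL)$ the twisted groupoid that is inverse to the construction of Lemma~\ref{lem:twisted_action_from_twisted_groupoid}.
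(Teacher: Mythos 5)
Your outline is mathematically sound, but it takes a genuinely different route from the paper. The paper proves Proposition~\ref{prop:twisted_groupoid_from_twisted_action} in one line, by observing that the construction is a special case of the twisted groupoid attached to a Fell bundle over an inverse semigroup in \cite[Section 3]{Buss_Exel2} (applied to the Fell bundle over $S$ associated to $(\alpha,u)$ as in \cite[p.~250]{Buss_Exel}) and citing \cite[Theorem 3.22]{Buss_Exel2}. You instead carry out a direct verification: equivalence relation, fibres, topology, descent of the operations, twist axioms. Your skeleton is correct and you identify the genuine pressure points accurately --- that $[a,t,x]$ depends only on the scalar $a(h_t(x))$ because $u(tt^*,t)=1_t$ and $u(vv^*,t)$ is unitary; that transitivity and the independence of the transition scalar $u(vv^*,t')\overline{u(vv^*,t)}$ from the auxiliary $v\le t,t'$ rest on the derived cocycle identities from \ref{enu:twisted actions2}, \ref{enu:twisted actions4} and Remark~\ref{rem:twisted_relations}; that associativity is exactly \ref{enu:twisted actions1}--\ref{enu:twisted actions2}; and that the multiplier subtlety for non-unital $I_t$ is where the care is needed. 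What each approach buys: yours is self-contained and makes visible which axiom is used where, at the cost of the cocycle bookkeeping you explicitly defer to ``the same computations as \cite[Lemma 4.6]{Buss_Exel}'' (so as written it is a detailed plan rather than a complete proof); the paper's citation is short but asks the reader to unpack the Fell-bundle formalism of \cite{Buss_Exel2} and match the two constructions. Either is acceptable; if you write yours out in full, the only step I would insist you not wave at is the independence-of-$v$ computation, since that is the actual content of well-definedness of $\LL$.
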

\begin{proof}
This follows from \cite[Theorem 3.22]{Buss_Exel2}, as the above construction is a special case of the construction in \cite[Section 3]{Buss_Exel2} applied to a Fell bundle over $S$ associated to $(\alpha,u)$, as described in \cite[p.\ 250]{Buss_Exel}.
\end{proof}

\begin{defn}
We say that a twisted inverse semigroup action $(\alpha,u)$ \emph{models a twisted \'etale groupoid}  $(\G,\L)$ if $(\G,\L)$ is isomorphic to the twisted groupoid associated to $(\alpha,u)$ as in Proposition~\ref{prop:twisted_groupoid_from_twisted_action}.
\end{defn}

\begin{lem}\label{lem:twisted_groupoids_come_from_twisted_actions}
Let $(\G,\LL)$ be a twisted \'etale groupoid. 
Let $S\subseteq S(\LL)\subseteq \Bis(\G)$ be any wide inverse semigroup and $(\alpha, u)$ a twisted action of $S$ as in Lemma~\ref{lem:twisted_action_from_twisted_groupoid}, obtained from a choice of unitary sections $c_U\in C_u(U,\LL)$, $U\in S$. 
Let $(S\ltimes X, \LL_{u})$ be the twisted groupoid associated to $(\alpha, u)$, as in Proposition~\ref{prop:twisted_groupoid_from_twisted_action}. 
Then
\[
    [a,U,x] \mapsto a \big( h_{U}(x) \big) c_U \big( d|_U^{-1}(x) \big), \qquad a\in C_0(r(U)),\ U\in S,\ x\in d(U),
\]
is an isomorphism of twisted groupoids $(S\ltimes X, \LL_{u}) \cong (\G,\LL)$. 
Thus $(\alpha, u)$ models $(\G,\LL)$.
\end{lem}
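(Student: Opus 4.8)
The underlying groupoid isomorphism is already available: since $S$ is wide, Example~\ref{ex:groupoids_from_actions} gives an isomorphism $S\ltimes_h X\cong\G$, $[U,x]\mapsto(d|_U)^{-1}(x)$, and the map $\Psi$ in the statement manifestly covers it and is fibrewise $\F$-linear. So what remains is to show that $\Psi:\LL_u\to\LL$ is a well-defined bijective homeomorphism that respects the twisted structure. The conceptual point guiding the argument is that $\Psi$ is the unique line-bundle map over $[U,x]\mapsto(d|_U)^{-1}(x)$ sending the tautological unitary sections $[U,x]\mapsto[1,U,x]$ of $\LL_u$ to the chosen sections $c_U$ of $\LL$; since both families obey the relations \eqref{eq:crucial_relations_for_twisted}, such a map is forced to intertwine all of the structure.

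I would first isolate the one computation that does the work, namely the transformation of the sections $c_U$ under passage to sub-bisections. For $V\le U$ in $S$ one has $VV^*=r(V)$ and $VV^*\cdot U=V$, so from the identity $u(VV^*,U)=c_{VV^*}*c_U*c_V^*$ of \eqref{eq:crucial_relations_for_twisted}, the convolution formula \eqref{eq:convolution_and_involution}, and the convention $c_{VV^*}(x)=1_x$, a short calculation gives
\[
  c_U(\gamma)=u(VV^*,U)\big(r(\gamma)\big)\,c_V(\gamma),\qquad \gamma\in V .
\]
Granting this, well-definedness is immediate: if $[a,U,x]=[a',U',x]$ is witnessed by a common $V\le U,U'$ with $x\in d(V)$, then $(d|_U)^{-1}(x)=(d|_V)^{-1}(x)=(d|_{U'})^{-1}(x)=:\gamma$ and $h_V(x)=r(\gamma)$, and multiplying the defining identity $(a\cdot u(VV^*,U))(h_V(x))=(a'\cdot u(VV^*,U'))(h_V(x))$ by $c_V(\gamma)$ turns it, via the displayed relation, into $a(r(\gamma))c_U(\gamma)=a'(r(\gamma))c_{U'}(\gamma)$, i.e.\ $\Psi([a,U,x])=\Psi([a',U',x])$. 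Reading this backwards gives injectivity; surjectivity follows because $S$ covers $\G$, so every $z\in L_\gamma$ is $\mu\,c_U(\gamma)$ for some $U\in S$ with $\gamma\in U$ and some $\mu\in\F$, and an Urysohn function $a\in C_0(r(U))$ with $a(r(\gamma))=\mu$ gives $\Psi([a,U,d(\gamma)])=z$. For the topology I would work over a single $U\in S$: there $c_U$ trivializes $\LL|_U$ and the sections $[U,x]\mapsto[a,U,x]$ trivialize $\LL_u$ over $\{[U,x]:x\in d(U)\}$, and in these coordinates $\Psi$ is the identity, so it is a homeomorphism locally, hence globally.

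The last and longest step is compatibility with the partial multiplications and involutions of Proposition~\ref{prop:twisted_groupoid_from_twisted_action}. The analogue of the displayed relation for composition, proved the same way from \eqref{eq:crucial_relations_for_twisted}, is $c_U(\xi)\cdot c_V(\eta)=(c_U*c_V)(\xi\eta)=u(U,V)\big(r(\xi\eta)\big)\,c_{UV}(\xi\eta)$ for composable $\xi\in U$, $\eta\in V$. Substituting it, together with $h_U\circ h_V=h_{UV}$ and $h_{U^*}\circ h_{UV}=h_V$ on $d(UV)$, into the formula $[a,U,h_V(x)]\cdot[b,V,x]=[a\,(b\circ h_{U^*})\,u(U,V),UV,x]$ shows that $\Psi$ applied to either side of this equation equals $a(h_{UV}(x))\,b(h_V(x))\,u(U,V)(h_{UV}(x))\,c_{UV}\big((d|_{UV})^{-1}(x)\big)$, so $\Psi$ is multiplicative; the involution $[b,V,x]^*=[\overline{b\circ h_V}\cdot\overline{u(V,V^*)},V^*,h_V(x)]$ is handled identically, using $c_V^*\in C_u(V^{-1},\LL)$ and the bundle involution. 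This makes $\Psi$ an isomorphism of twisted groupoids $(S\ltimes X,\LL_u)\cong(\G,\LL)$, which is exactly the statement that $(\alpha,u)$ models $(\G,\LL)$.

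I expect the only genuine difficulty to be organizational: keeping the web of maps $h_U$, $h_{U^*}$, $(d|_U)^{-1}$, $c_U$ and their domains straight, and checking the two displayed identities carefully. Once those are in place everything is forced by \eqref{eq:crucial_relations_for_twisted}, and the individual pieces could alternatively be quoted from the Fell-bundle picture in \cite{Buss_Exel}, \cite{Buss_Exel2}.
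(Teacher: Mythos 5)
Your proposal is correct and follows essentially the same route as the paper, which simply notes that the groupoid isomorphism comes from Example~\ref{ex:groupoids_from_actions}, that the two families of sections determine the respective topologies (so the map is a homeomorphism), and that preservation of the operations is a straightforward check. You have merely written out in full the computations the paper leaves implicit, including the key identity $c_U(\gamma)=u(VV^*,U)(r(\gamma))\,c_V(\gamma)$ for $V\le U$, which is exactly what makes well-definedness and compatibility with \eqref{eq:crucial_relations_for_twisted} work.
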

\begin{proof}
The map $S\ltimes X\ni [U,x]\mapsto d|_U^{-1}(x)\in \G$ is an isomorphism of topological groupoids. 
The sections $[U,x] \mapsto [a,U,x]$ and $\gamma\mapsto  a(r(\gamma)) c_U (\gamma)$,  $\gamma \in U$, $x\in d(U)$, $a\in C_0(r(U))$, $U\in S$, determine the topology of $\LL_{u}$ and $\LL$ respectively.
Thus the map described in the assertion is a homeomorphism. 
It is straightforward to check that it preserves the algebraic operations. 
\end{proof}

\subsection{Integration and disintegration in Banach algebras}
\label{subsec:integration}

We fix a  twisted action $(\alpha,u)$ of an inverse semigroup $S$ on $C_0(X)$ and let $(\G,\LL)$ be the associated twisted groupoid as described in the previous subsection. 
By Lemma~\ref{lem:twisted_groupoids_come_from_twisted_actions} every twisted \'etale groupoid arises in this way. 
The sets $U_t = \{ [t,x] : x \in X_t \}$, $t\in S$, together with $X$ form a unital wide inverse subsemigroup of $ S(\LL)\subseteq \Bis(\G)$. We denote it by $\overline{S}$. For each $t\in S$ we have a linear isometric isomorphism
\begin{equation}\label{eq:coeffcients_isomorphisms}
    C_c(X_{t})\ni a_t\mapsto a_t\delta_t\in C_c(U_t, \LL) , \qquad a_t\delta_t [t,x] := [a_t , t , x],\qquad x\in X_t,
\end{equation}
and we may view $(C_c(U_t, \LL), \|\cdot\|_{\infty})$ as a subspace of $(\mathfrak{C}_c(\G,\LL), \|\cdot\|_{\max}^S)$.

\begin{lem}\label{lem:grading_of_transformation_groupoid_algebra}  
 $\mathfrak{C}_c(\G,\LL)$ is spanned by elements $a_t\delta_t$, $a_t\in C_c(X_t)$, $t\in S$, and
\begin{enumerate}
    \item\label{enu:grading_of_transformation_groupoid_algebra2} $a_s \delta_s \cdot  a_t\delta_t
    = a_s (a_t\circ h_{s^*}) u(s,t) \delta_{st}$ and $(a_t\delta_t)^*=\overline{a_{t}\circ h_{t}} \cdot \overline{u(t,t^*)}\delta_{t^*}$;
    \item\label{enu:grading_of_transformation_groupoid_algebra3} $s\leq t$ implies $X_{s}\subseteq X_{t}$ and $a \delta_s = a \overline{u(ss^*,t)}\delta_t$ for any $a \in C_c(X_{s})$.
\end{enumerate}
\end{lem}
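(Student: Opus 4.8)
The statement is essentially a bookkeeping lemma identifying the convolution algebra $\mathfrak{C}_c(\G,\LL)$ of the groupoid built from $(\alpha,u)$ with the algebraic crossed product $\ell^1(\alpha,u)$ (restricted to finitely supported functions). The plan is to unwind the definitions of the line bundle $\LL_u$ from Proposition~\ref{prop:twisted_groupoid_from_twisted_action} together with the convolution and involution on $\mathfrak{C}_c(\G,\LL)$ from \eqref{eq:convolution_and_involution}, and to verify that the isometric isomorphisms \eqref{eq:coeffcients_isomorphisms} intertwine them with the operations listed in \ref{enu:grading_of_transformation_groupoid_algebra2} and \ref{enu:grading_of_transformation_groupoid_algebra3}.

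First I would establish the spanning statement. Since $S = \{U_t\}_{t\in S}$ is a wide inverse subsemigroup of $\Bis(\G)$ covering $\G$, the general fact recalled after the definition of $\mathfrak{C}_c(\G,\LL)$ (that $\mathfrak{C}_c(\G,\LL)=\spane\{f\in C_c(U,\LL):U\in\mathcal{C}\}$ for any cover $\mathcal{C}\subseteq\Bis(\G)$) gives $\mathfrak{C}_c(\G,\LL)=\spane\{C_c(U_t,\LL):t\in S\}$. Combined with the linear isomorphisms \eqref{eq:coeffcients_isomorphisms}, $C_c(U_t,\LL)=\{a_t\delta_t:a_t\in C_c(X_t)\}$, so $\mathfrak{C}_c(\G,\LL)$ is spanned by the $a_t\delta_t$ as claimed.

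Next I would compute the product. Fix $a_s\in C_c(X_s)$, $a_t\in C_c(X_t)$ and an arrow $\gamma=[st,x]\in U_{st}$ with $x\in X_{(st)^*}$. The only decomposition $\gamma=\eta\cdot\zeta$ with $\eta\in U_s$, $\zeta\in U_t$ and $r(\eta)=r(\gamma)$ is $\eta=[s,h_t(x)]$, $\zeta=[t,x]$, because $U_s$ is a bisection. Hence the convolution sum in \eqref{eq:convolution_and_involution} reduces to the single term $(a_s\delta_s)(\eta)\cdot(a_t\delta_t)(\zeta)=[a_s,s,h_t(x)]\cdot[a_t,t,x]$, which by the partial multiplication on $\LL_u$ equals $[a_s(a_t\circ h_{s^*})u(s,t),st,x]$; this is precisely $\big(a_s(a_t\circ h_{s^*})u(s,t)\,\delta_{st}\big)(\gamma)$, giving the product formula in \ref{enu:grading_of_transformation_groupoid_algebra2}. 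The involution formula is obtained the same way: $(a_t\delta_t)^*(\gamma)=(a_t\delta_t)(\gamma^{-1})^*$; writing $\gamma=[t^*,h_t(x)]$ so that $\gamma^{-1}=[t,x]$, one gets $(a_t\delta_t)(\gamma^{-1})^*=[a_t,t,x]^*=[\overline{a_t\circ h_t}\cdot\overline{u(t,t^*)},t^*,h_t(x)]$, which is the value of $\overline{a_t\circ h_t}\cdot\overline{u(t,t^*)}\,\delta_{t^*}$ at $\gamma$. For \ref{enu:grading_of_transformation_groupoid_algebra3}, $s\le t$ gives $X_s\subseteq X_t$ and $U_s\subseteq U_t$ directly from the construction of the transformation groupoid; for $a\in C_c(X_s)$ and $x\in X_s$ one has $[a,s,x]=[a\,\overline{u(ss^*,t)},t,x]$ by the very equivalence relation defining $\LL_u$ (applied with the common lower bound $ss^*\le s,t$, noting $h_{ss^*}=\id$), which reads $a\delta_s=a\,\overline{u(ss^*,t)}\,\delta_t$.

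None of the steps presents a serious obstacle; the lemma is a direct translation between the two pictures, and everything needed is already set up in Proposition~\ref{prop:twisted_groupoid_from_twisted_action}, the isomorphisms \eqref{eq:coeffcients_isomorphisms}, and the convolution/involution formulas \eqref{eq:convolution_and_involution}. The only point requiring a little care is the bookkeeping for the involution: one must track that $u(t,t^*)$ appears (as opposed to $u(t^*,t)$) and that the conjugation is composed with $h_t$ rather than $h_{t^*}$, which is exactly the content of the involution on $\LL_u$ fixed in Proposition~\ref{prop:twisted_groupoid_from_twisted_action}; comparison with the $C^*$-algebraic formula $f^*(t)=\alpha_t^{-1}(f(t^*)^*)u(t^*,t)^{-1}$ recalled before Lemma~\ref{lem:range_of_covariant_rep} provides a useful consistency check. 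I would also remark, if needed, that since all norms in question agree with $\|\cdot\|_\infty$ on each $C_c(U_t,\LL)$, the maps \eqref{eq:coeffcients_isomorphisms} are genuinely isometric onto their images, so the identification is compatible with the Banach algebra structure of $F^S(\G,\LL)$.
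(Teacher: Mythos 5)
Your overall strategy is exactly the paper's: the spanning statement and the formulas in \ref{enu:grading_of_transformation_groupoid_algebra2} follow by unwinding \eqref{eq:convolution_and_involution} and the operations on $\LL_u$ from Proposition~\ref{prop:twisted_groupoid_from_twisted_action} (the paper dismisses these as ``straightforward''), and your computations there are correct — the bisection property of $U_s$ does reduce the convolution sum to a single term, and the product/involution formulas match the fibrewise operations verbatim. The paper's proof concentrates entirely on item \ref{enu:grading_of_transformation_groupoid_algebra3}, and it is precisely there that your argument has two genuine problems.

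First, $ss^*\le s$ is false unless $s$ is an idempotent: by the order on $S$, $ss^*\le s$ would mean $ss^*=s(ss^*)^*(ss^*)=sss^*$, which forces $s=ss$. So $ss^*$ is not a common lower bound of $s$ and $t$, and the equivalence relation defining $\LL_u$ cannot be invoked with $v=ss^*$; moreover, with that choice the evaluation point would be $h_{ss^*}(x)=x\in X_{s^*}$, where the function $a\,u(ss^*,s)\in C_c(X_s)$ is not even defined. The correct choice is $v=s$ (one has $s\le s$ and $s\le t$), for which $vv^*=ss^*$, the evaluation point is $h_s(x)\in X_s$, and the condition $(a\,u(ss^*,s))(h_s(x))=(a\,\overline{u(ss^*,t)}\,u(ss^*,t))(h_s(x))$ reduces to $a(h_s(x))=a(h_s(x))$ using \ref{enu:twisted actions3}. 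Second, you only compare the two sections at points $[t,x]$ with $x$ in the domain of $s$ (which, incidentally, is $X_{s^*}$, not $X_s$ as you wrote). Since $a\delta_s$ and $a\,\overline{u(ss^*,t)}\,\delta_t$ are to be equal as sections over all of $U_t$, you must also check the points $[t,x]$ with $x\in X_{t^*}\setminus X_{s^*}$: there $a\delta_s$ vanishes by convention, and $a\,\overline{u(ss^*,t)}\,\delta_t[t,x]=[a\,\overline{u(ss^*,t)},t,x]=0$ because $h_t$ is injective with $h_t(X_{s^*})=X_s$, so $h_t(x)\notin X_s\supseteq\supp(a)$. Both fixes are short, but as written the key step of \ref{enu:grading_of_transformation_groupoid_algebra3} does not go through.
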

\begin{proof} 
We only explain \ref{enu:grading_of_transformation_groupoid_algebra3} as the rest is straightforward. 
Let $s\leq t$. 
Then $X_{s}\subseteq X_{t}$ by the composition law (Remark~\ref{rem:twisted_relations}) and so also $U_{s} \subseteq U_{t}$. 
For $a \in C_c(X_{s})$ we have $a \overline{u(ss^*,t)}\in C_c(X_{s})\subseteq C_c(X_{t})$ and so  $a\delta_s \in C_c(U_s,\LL)$ and $a \overline{u(ss^*,t)}\delta_t \in C_c(U_t,\LL)$.
Take $x\in X_{t^*}$. 
If $x\in X_{s^*}$ then $[t,x]= [s,x]$ and
$ 
    a \overline{u(ss^*,t)} \delta_t [t,x] = [a\overline{u(ss^*,t)},t,x] = [a,s,x] = a \delta_s  [t,x]
$
by the equivalence relations defining $\G$ and $\LL$. 
If $x\notin X_{s^*}$ then $a \delta_s [t,x] = 0$ by convention, and $a \overline{u(ss^*,t)}\delta_t [t,x] = [a\overline{u(ss^*,t)},t,x]=[0,t,x]=0$ because $a(h_t(x))=0$. 
Hence $a \delta_s  = a \overline{u(ss^*,t)}\delta_t$.
\end{proof}

\begin{prop}\label{prop:integration_of_rep}
Every covariant representation $(\pi,v)$ of $(\alpha,u)$ in a Banach algebra $B$ integrates to 
a representation $\pi\rtimes v : F^{\overline{S}}(\G,\LL) \to B $ such that
\[
    \pi\rtimes v (a_t\delta_t) = \pi(a_t)v_t,\qquad  a_t\in C_c(X_{t}),\ t\in S.
\]
If $B$ is a  $C^*$-algebra then $\pi\rtimes v$ is $*$-preserving.
If $(\pi,\tilde{v})$ is a $B'$-normalization (or $B_*$-normalization) of $(\pi,v)$ as in Proposition~\ref{prop:normalized_twisted_rep} (or Proposition~\ref{prop:B_*_normalized_twisted_rep}) then $\pi\rtimes v = \pi\rtimes \tilde{v}$.
\end{prop}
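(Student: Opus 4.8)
The plan is to construct $\pi \rtimes v$ by hand on the dense subalgebra $\mathfrak{C}_c(\G,\LL)$, check that it extends to the completion $F^S(\G,\LL)$, and then verify uniqueness. First I would recall that by Lemma~\ref{lem:twisted_groupoids_come_from_twisted_actions} we may identify $(\G,\LL)$ with $(S\ltimes X, \LL_u)$, so that $\mathfrak{C}_c(\G,\LL)$ is spanned by the elements $a_t\delta_t$, $a_t\in C_c(X_t)$, $t\in S$, with product and involution given by Lemma~\ref{lem:grading_of_transformation_groupoid_algebra}. Given a covariant representation $(\pi,v)$ of $(\alpha,u)$ in $B$, I would first pass to its $B'$-normalization $(\pi,\tilde v)$ using Proposition~\ref{prop:normalized_twisted_rep}; since $\pi(a)v_t = \pi(a)\tilde v_t$ for all $a\in I_t$, the formula $\pi\rtimes v(a_t\delta_t) := \pi(a_t)v_t = \pi(a_t)\tilde v_t\in B$ is unchanged, which already takes care of the last sentence of the statement. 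Note also that by Lemma~\ref{lem:range_of_covariant_rep} each element $\pi(a_t)v_t$ genuinely lies in $B$ (not merely $B''$), and $B(\pi,v)$ is a Banach subalgebra of $B$.

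The next step is well-definedness of the linear map on $\mathfrak{C}_c(\G,\LL)$. The subtlety is that the expression $\sum_t a_t\delta_t$ for an element of $\mathfrak{C}_c(\G,\LL)$ is not unique: when $s\le t$ one has $a\delta_s = a\,\overline{u(ss^*,t)}\,\delta_t$ by Lemma~\ref{lem:grading_of_transformation_groupoid_algebra}\ref{enu:grading_of_transformation_groupoid_algebra3}. So I must check that $\pi(a)v_s = \pi(a\,\overline{u(ss^*,t)})v_t$, which is exactly Lemma~\ref{lem:range_of_covariant_rep}\ref{enu:range_of_covariant_rep2} (together with $\overline{u(ss^*,t)} = u(ss^*,t)^{-1}$ since $u$ is unitary-valued). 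More generally, the only relations among the generators $a_t\delta_t$ are generated by this refinement relation and by linearity within each fixed $C_c(X_t)$ (the latter being respected since $\pi$ is linear and $v_t$ is fixed); a standard partition-of-unity argument on compact supports, or appeal to the fact that $\mathfrak{C}_c(\G,\LL)$ is the algebraic "twisted inverse semigroup algebra" quotient, shows these suffice. Multiplicativity and the $*$-preserving property (when $B$ is a $C^*$-algebra) then follow by comparing Lemma~\ref{lem:grading_of_transformation_groupoid_algebra}\ref{enu:grading_of_transformation_groupoid_algebra2} with Lemma~\ref{lem:range_of_covariant_rep}\ref{enu:range_of_covariant_rep1} and the adjoint formula in Lemma~\ref{lem:range_of_covariant_rep}.

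It remains to show $\pi\rtimes v$ is contractive for $\|\cdot\|_{\max}^S$, hence extends to $F^S(\G,\LL)$. By definition of $\|\cdot\|_{\max}^S$ in \eqref{eq:projective_norm} it is enough to bound $\|\pi\rtimes v(f)\|$ by $\|f\|_\infty$ for a single $f\in C_c(U_t,\LL)$, $t\in S$: indeed if $f = \sum_k f_k$ with $f_k\in C_c(U_{t_k},\LL)$ then $\|\pi\rtimes v(f)\| \le \sum_k \|\pi\rtimes v(f_k)\| \le \sum_k \|f_k\|_\infty$, and taking the infimum gives $\|\pi\rtimes v(f)\|\le \|f\|_{\max}^S$. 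For $f = a_t\delta_t$ with $a_t\in C_c(X_t)$, using the normalized picture we have $\pi\rtimes v(a_t\delta_t) = \pi(a_t)v_t = \pi(a_t)\tilde v_t$, and since $\tilde v_t$ is contractive in $B''$ with $\|\pi(a_t)\|\le \|a_t\|_\infty = \|a_t\delta_t\|_\infty$ (the norm on $C_c(U_t,\LL)$ is the sup norm and $\pi$ is contractive on $C_0(X)$), we get $\|\pi(a_t)\tilde v_t\|_{B''}\le \|a_t\|_\infty$; but this product lies in $B$ and the inclusion $B\hookrightarrow B''$ is isometric, so $\|\pi\rtimes v(a_t\delta_t)\|_B \le \|a_t\delta_t\|_\infty$. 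Finally, uniqueness: any representation $F^S(\G,\LL)\to B$ agreeing with $a_t\delta_t\mapsto \pi(a_t)v_t$ is determined on the dense subalgebra $\mathfrak{C}_c(\G,\LL)$, hence everywhere.

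The main obstacle is the well-definedness step --- making sure that the presentation of $\mathfrak{C}_c(\G,\LL)$ in terms of the spanning set $\{a_t\delta_t\}$ has \emph{no} relations beyond those recorded in Lemma~\ref{lem:grading_of_transformation_groupoid_algebra}, so that the assignment on generators actually descends to a well-defined linear map; everything else is a routine estimate or a formal comparison of the two sets of structure constants.
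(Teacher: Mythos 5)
Your overall architecture matches the paper's: define the map on the spanning set $\{a_t\delta_t\}$, check well-definedness, get multiplicativity from Lemma~\ref{lem:grading_of_transformation_groupoid_algebra}\ref{enu:grading_of_transformation_groupoid_algebra2} versus Lemma~\ref{lem:range_of_covariant_rep}\ref{enu:range_of_covariant_rep1}, get $\|\cdot\|_{\max}^S$-contractivity from $\|\pi(a_t)v_t\|\le\|a_t\|_\infty$ together with the infimum definition \eqref{eq:projective_norm}, and conclude by density. All of that, the $*$-preservation, and the normalization remark are handled correctly and agree with the paper.

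However, the one step that carries the real mathematical content — well-definedness — is not actually proved. You correctly identify it as ``the main obstacle,'' but then dispose of it by asserting that ``the only relations among the generators are generated by the refinement relation and linearity'' and that ``a standard partition-of-unity argument \ldots{} or appeal to the fact that $\mathfrak{C}_c(\G,\LL)$ is the algebraic twisted inverse semigroup algebra quotient, shows these suffice.'' That assertion is essentially equivalent to the statement you need to prove, and the appeal to a presentation of $\mathfrak{C}_c(\G,\LL)$ as a quotient is unsupported (the paper establishes no such presentation; proving one would require the very argument being omitted). This is not an off-the-shelf fact: the paper notes it was previously known only for second countable untwisted groupoids via measure-theoretic methods (\cite[Lemma 8.4]{Exel}). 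What is actually needed, and what the paper supplies, is an induction on $|F|$: assuming $\sum_{t\in F}a_t\delta_t=0$, fix $t_0\in F$ and observe that the closed support $K$ of $a_{t_0}\delta_{t_0}$ is compact and covered by $\{U_t\cap U_{t_0}\}_{t\in F\setminus\{t_0\}}$; since $U_t\cap U_{t_0}=\bigcup_{s\le t,t_0}U_s$, one extracts a finite subcover $\{U_{s_{i,t}}\}$ with $s_{i,t}\le t,t_0$ and a subordinate partition of unity $\{\varrho_{s_{i,t}}\}$, and then applies the refinement relation twice (down from $t_0$ to $s_{i,t}$, then up from $s_{i,t}$ to $t$) — simultaneously on the $\delta$'s via Lemma~\ref{lem:grading_of_transformation_groupoid_algebra}\ref{enu:grading_of_transformation_groupoid_algebra3} and on the operators via Lemma~\ref{lem:range_of_covariant_rep}\ref{enu:range_of_covariant_rep2} — to absorb $a_{t_0}\delta_{t_0}$ and $\pi(a_{t_0})v_{t_0}$ into the remaining $|F|-1$ terms, closing the induction. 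Without this (or an equivalent argument) your map is not known to exist, so the proof as written has a genuine gap.
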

\begin{proof}
We claim that the formula $\pi\rtimes v(\sum_{t\in F} a_t\delta_t) = \sum_{t\in F}\pi(a_t)v_t $, where $F\subseteq S$ is  finite, yields a well-defined map $\pi\rtimes v : \mathfrak{C}_c(\G,\LL) \to B$, or equivalently that $\sum_{t\in F} a_t\delta_t = 0$ implies $\sum_{t\in F} \pi(a_t) v_t = 0$. 
This is proved in \cite[Lemma 8.4]{Exel} in the untwisted case, under the assumption that $\G$ is second countable, using measure theoretical methods. 
We prove it in general using topological tools and induction on the cardinality of $F$. 
Suppose that $\sum_{t\in F} a_t\delta_t = 0$. 
If $|F|=1$ then $\sum_{t\in F}\pi(a_t)v_t = 0$ (because $a_t\delta_t=0$ if and only if $a_t=0$). 
Now assume the claim is true for all sets with cardinality smaller than that of $F$. 
Pick any $t_0\in F$ and put $F_0 = F\setminus \{t_0\}$.
Then $\sum_{t\in F_0} a_t\delta_t = -a_{t_0}\delta_{t_0}$, so the closed support of $a_{t_0}\delta_{t_0}$, which we denote by $K$, is a compact set covered by $\{U_{t}\cap U_{t_0}\}_{t\in F_0}$. 
Since $\overline{S}$ is wide, $U_{t}\cap U_{t_0} = \bigcup_{s\leq t, t_0} U_{s}$.
So for each $t\in F_0$ we may find $s_{1,t},...,s_{n_t,t}\leq t,t_0$ such that $K\subseteq \bigcup_{t\in F} \bigcup_{i=1}^{n_t} U_{s_{i,t}}$.
Let $\{ \varrho_{s_{i,t}} \}_{i=1...,n_t ,t\in F_0}$ be a partition of unity on $K$ subordinate to this open cover (which exists as everything happens in the Hausdorff space $U_{t_0}$).
Write $\widetilde{\varrho}_{s_{i,t}}:=\varrho_{s_{i,t}}\circ r|_{U_{s_{i,t}}}\in C_c(X_{s_{i,t}})\subseteq  C_c(X_{t}\cap X_{t_0})$ for each $i$ and $t$.
Then by Lemma~\ref{lem:grading_of_transformation_groupoid_algebra}\ref{enu:grading_of_transformation_groupoid_algebra3} and Lemma~\ref{lem:range_of_covariant_rep}\ref{enu:range_of_covariant_rep2} (applied to each $t$, $2 n_t$-times) we get 
\[ 
    a_{t_0}\delta_{t_0} = \sum_{t\in F_0} \left(\sum_{i=1}^{n_t} \widetilde{\varrho}_{s_{i,t}} a_{t_0} \frac{u(s_{i,t} s_{i,t}^*,t_0)}{u(s_{i,t} s_{i,t}^*,t) } \right) \delta_t, \qquad 
    \pi(a_{t_0}) v_{t_0} = \sum_{t\in F_0} \pi \left( \sum_{i=1}^{n_t} \widetilde{\varrho}_{s_{i,t}} a_{t_0} \frac{u(s_{i,t} s_{i,t}^*,t_0)}{u(s_{i,t} s_{i,t}^*,t) } \right) v_t .
\]
Thus $\sum_{t\in F} a_t\delta_t = \sum_{t\in F_0} b_t \delta_t$ and $\sum_{t\in F} \pi(a_t)v_t = \sum_{t\in F_0} \pi(b_t)v_t$ for some $b_t\in C_c(X_t)$, $t\in F_0$. 
Hence the claim follows by the inductive hypothesis.
Now Lemma~\ref{lem:range_of_covariant_rep}\ref{enu:range_of_covariant_rep1} and Lemma~\ref{lem:grading_of_transformation_groupoid_algebra}\ref{enu:grading_of_transformation_groupoid_algebra2} readily imply that $\pi\rtimes v:\mathfrak{C}_c(\G,\mathcal{L}) \to B$ is an algebra homomorphism, which is $*$-preserving if $B$ is a $C^*$-algebra.
It is $\|\cdot\|_{\max}^{S}$ contractive because $\| \pi\rtimes v(a_t\delta_t) \| = \| \pi(a_t)v_t \| \leq \|a_t\|_{\infty}=\|a_t\delta_t\|_{\infty}$ for every $a_t\delta_t\in C_c(U_t, \LL)$, $t\in S$.
Hence it extends to a representation $\pi\rtimes v : F^{\overline{S}}(\G,\LL) \to B$, which is $*$-preserving if $B$ is a $C^*$-algebra.
The last statement is clear. 
\end{proof}
The following lemma will allow us to show the converse to Proposition~\ref{prop:integration_of_rep}. 

\begin{lem}\label{lem:C*_unit_rep}
Let $\psi : A \to E$ be a contractive linear map from a $C^*$-algebra into a Banach space $E$ which has a predual Banach space $E_*$. 
There is a contractive element $\psi(1_{A})\in E$ such that for every  approximate unit $\{\mu_i\}_{i}\subseteq A$ we have $\psi(1_A) = E_*\mhyphen\lim_{i} \psi(\mu_i)\in E$.
\end{lem}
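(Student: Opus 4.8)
The plan is to show that the net $\{\psi(\mu_i)\}_i$ is weak$^*$-Cauchy in $E$, so that it has a weak$^*$-limit which we call $\psi(1_A)$, and then to verify this limit is independent of the chosen approximate unit. The main technical tool is the well-known structure of approximate units in a $C^*$-algebra: for any two approximate units $\{\mu_i\}_i$ and $\{\nu_j\}_j$ in a $C^*$-algebra $A$ (real or complex), their ``difference'' becomes small in a suitable sense, and more precisely any approximate unit is cofinal, in the order coming from positivity, with the canonical approximate unit. I will use the standard fact that for a fixed $a \in A$ one has $\|\mu_i a - a\| \to 0$ and $\|a \mu_i - a\| \to 0$; by the Cohen factorization theorem every $a \in A$ can be written $a = bc$ with $b,c \in A$, which then gives $\|\mu_i a - \nu_j a\| = \|(\mu_i b)c - (\nu_j b)c\| \le \|\mu_i b - b\|\,\|c\| + \|b - \nu_j b\|\,\|c\| \to 0$ along the product directed set. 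Hence $\{\psi(\mu_i) - \psi(\nu_j)\}_{(i,j)}$ is norm-null when tested against any $a \in A$ through right multiplication.

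The next step converts this into weak$^*$-convergence in $E$. Since each $\psi(\mu_i)$ lies in the unit ball $E_1$ of $E$, and $E_1$ is weak$^*$-compact by Banach--Alaoglu (using $E = (E_*)'$), the net $\{\psi(\mu_i)\}_i$ has weak$^*$-cluster points in $E_1$. I would argue that there is exactly one such cluster point. Suppose $\eta$ and $\eta'$ are two weak$^*$-cluster points, obtained as weak$^*$-limits of subnets $\{\psi(\mu_{i_\lambda})\}_\lambda$ and $\{\psi(\mu_{i'_\kappa})\}_\kappa$. Running both subnets simultaneously over the product directed set and using the $C^*$-algebra fact above (applied with $\{\mu_{i_\lambda}\}$ and $\{\mu_{i'_\kappa}\}$, both of which are again approximate units for $A$), one gets $\psi(\mu_{i_\lambda}) - \psi(\mu_{i'_\kappa}) \to 0$ in norm along this product, and passing to the weak$^*$-limit gives $\eta = \eta'$. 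Therefore the whole net $\{\psi(\mu_i)\}_i$ weak$^*$-converges to a single element, which is contractive since $E_1$ is weak$^*$-closed; call it $\psi(1_A)$. The same argument applied to the concatenation of two given approximate units (which is again an approximate unit) shows the limit does not depend on the choice.

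The main obstacle I anticipate is purely a matter of bookkeeping with directed sets: approximate units are indexed by directed sets that need not be sequences, and $\psi$ is only assumed linear and contractive (not multiplicative), so I cannot manipulate $\psi(\mu_i \mu_j)$ directly --- all the ``Cauchy'' behaviour must be extracted at the level of $A$ itself before applying $\psi$. The clean way around this is exactly the Cohen factorization step above: it reduces everything to statements of the form $\|x \mu_i - x\| \to 0$ for fixed $x \in A$, which hold for any approximate unit, and then contractivity of $\psi$ transports the estimate. I should also note that Cohen factorization applies since $A$, being a $C^*$-algebra, has a bounded (indeed contractive) approximate unit; in the real case this is equally valid, cf.\ Remark~\ref{rem:complex_the_real_C*} and the references therein. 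With these observations the proof is short, and the displayed conclusion $\psi(1_A) = E_*\text{-}\lim_i \psi(\mu_i)$ holds for \emph{every} approximate unit $\{\mu_i\}_i \subseteq A$ by construction.
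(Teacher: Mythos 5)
Your argument has a genuine gap at the point where you pass from the algebraic statement about approximate units to a statement about $\psi$. What your Cohen-factorization step proves is that $\|(\mu_{i}-\nu_{j})a\|\to 0$ for each \emph{fixed} $a\in A$, i.e.\ $\mu_i-\nu_j\to 0$ strictly (as multipliers). From this you conclude that ``$\psi(\mu_{i_\lambda})-\psi(\mu_{i'_\kappa})\to 0$ in norm along the product,'' but that does not follow: strict convergence is not norm convergence (already for a single approximate unit in $c_0$ one has $\|\mu_n-\mu_m\|_\infty=1$ for $n\neq m$), and a map $\psi$ that is merely linear and contractive transports nothing from the hypothesis $\|(\mu_i-\nu_j)a\|\to 0$; it only tells you about $\psi\bigl((\mu_i-\nu_j)a\bigr)$, not about $\psi(\mu_i)-\psi(\nu_j)$ or even about $f(\psi(\mu_i)-\psi(\nu_j))$ for $f\in E_*$. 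So the uniqueness of the weak$^*$ cluster point — which is the entire content of the lemma — is not established.

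This is not a bookkeeping issue that can be patched within your framework, because the only properties of $A$ you invoke (existence of a contractive approximate unit, Cohen factorization, $\|\mu_i a-a\|\to 0$) hold in any Banach algebra with a contractive approximate unit, and at that level of generality the lemma is false. Take $A=L^1(\R)$ with convolution, $E=A''$ with predual $A'=L^\infty(\R)$, and $\psi\colon A\to A''$ the canonical embedding. Normalized positive bumps $\mu_i$ supported in $(0,1/i]$ and $\nu_i$ supported in $[-1/i,0)$ are both contractive approximate units, yet testing against $f=1_{[0,\infty)}\in L^\infty(\R)$ gives $f(\psi(\mu_i))\to 1$ while $f(\psi(\nu_i))\to 0$. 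The $C^*$-structure must therefore enter in an essential way. The paper's proof supplies exactly the missing ingredient: for $f\in E_*$ the functional $f\circ\psi\in A'$ decomposes into positive functionals $\tau_k$, and each positive functional satisfies $\tau_k(\mu_i)\to\|\tau_k\|$ for \emph{every} approximate unit (seen via GNS, since $\pi_k(\mu_i)$ converges strongly to the identity). This pins down $\lim_i f(\psi(\mu_i))$ independently of the approximate unit, after which your Banach--Alaoglu step does finish the proof.
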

\begin{proof}
For simplicity we assume $\F = \C$ (the real case can be reduced to the complex case by complexification). 
We identify $E_*$ with a subspace of $E'$. 
Let $f\in E_*$. 
Then $\psi' (f) = f\circ\psi : A\to \C$ is a bounded functional. 
Hence it decomposes to $\psi'(f) = \sum_{k=0}^3 i^{k}\tau_k$ where $\tau_k : A\to \C$, $k=0,...,3$, are positive functionals.
Applying the GNS-construction to each $\tau_k$, we get representations $\pi_{k} : A\to B(H_k)$ and cyclic vectors $\omega_k\in H_k$ such that $\psi'(f)(a) = \sum_{k=0}^3 i^{k} \langle \pi_k(a) \omega_k , \omega_k \rangle$.
Since $\{\pi_k(\mu_i)\}_{i}$ is strongly convergent to the identity on  $H_k$ we conclude that  $\{ f(\psi(\mu_i)) \}_{i}$ is convergent in $\C$ to the number $c_{f,\psi} := \sum_{k=0}^3 i^{k}\|\omega_k\|^2 = \sum_{k=0}^3 i^{k} \|\tau_k\|$ that depends only on $\psi$ and $f$ (it does not depend on $\{\mu_i\}_{i}$). 
Now, since the net $\{\psi(\mu_i)\}_{i}$ is bounded in $E$, the Banach--Alaoglu Theorem says that there is a subnet $\{\psi(\mu_{i_j})\}_{j}$ with an $E_*$-limit $\psi(1_A) := E_*\mhyphen\lim_{j} \psi(\mu_{i_j}) \in E$.  
So in particular $f(\psi(1_A))=\lim_{j} f(\psi(\mu_{i_j})) = c_{f,\psi}$ for every $f\in E_*$. 
This implies that every net $\{\psi(\mu_i)\}_{i}$, where $\{\mu_i\}_{i}$ is an approximate unit, is $E_*$-convergent to $\psi(1_A)$. 
\end{proof}

\begin{thm}\label{thm:disintegration}
Let $(\G,\LL)$ be a twisted \'etale groupoid.  Let $(\alpha,u)$ be any twisted inverse semigroup action of $S$ that models $(\G,\LL)$, and denote by   $\overline{S}$ be the unitization of the image of $S$ in $S(\L)$ (one can always take  a wide unital inverse subsemigroup $S=\overline{S}\subseteq S(\L)$). 
Then $\psi=\pi\rtimes v$ yields a bijective correspondence between representations $\psi : F^{\overline{S}}(\G,\LL) \to B$ in a Banach algebra $B$ and
\begin{enumerate} 
    \item \label{enu:disintegration1} $B'$-normalized covariant representations $(\pi,v)$ of $(\alpha,u)$ in $B$;
    \item \label{enu:disintegration2} $B_*$-normalized covariant representations $(\pi,v)$ of $(\alpha,u)$ in $B$, if $(B, B_*)$ is  a dual Banach algebra;
    \item \label{enu:disintegration3} covariant representations $(\pi,v)$ of $(\alpha,u)$ on $E$, if $B=B(E)$ and $E$ is a reflexive Banach space.
\end{enumerate}
If each $X_t$, $t\in S$, is compact  then the pairs $(\pi,v)$ in \ref{enu:disintegration1}--\ref{enu:disintegration3} coincide with covariant representations $(\pi,v)$ of $(\alpha,u)$ in $B$ such that $v_t = \pi(1_{X_t})v_{t}\in B$ for all $t\in S$.
\end{thm}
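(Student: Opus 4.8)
The theorem has two halves: establishing the bijection $(\pi,v)\leftrightarrow\pi\rtimes v$ in each of the three settings, and then identifying the relevant class of covariant representations with a simpler one in the unital-ideals case. Proposition~\ref{prop:integration_of_rep} already gives us one direction — every suitable covariant representation $(\pi,v)$ integrates to a representation $\pi\rtimes v$ of $F^S(\G,\LL)$ — and the normalization lemmas (Propositions~\ref{prop:normalized_twisted_rep}, \ref{prop:B_*_normalized_twisted_rep}, Corollary~\ref{cor:normalization_to_spatial_cov_rep}, and Lemma~\ref{lem:reflexive_covariant_representations}) tell us that in each of cases \ref{enu:disintegration1}--\ref{enu:disintegration3} a given covariant representation has a canonical normalization that integrates to the \emph{same} map, so injectivity of the assignment will follow once we know how to recover $(\pi,v)$ from $\psi$.

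\textbf{Step 1: disintegration (the inverse map).} Given $\psi:F^S(\G,\LL)\to B$, I would first set $\pi:=\psi|_{C_0(X)}$, where $C_0(X)$ is embedded via $a\mapsto a\delta_1$; since $\|\cdot\|_{\max}^S$ restricts to $\|\cdot\|_\infty$ on $C_c(X)$, this $\pi$ is a genuine representation of $C_0(X)$. The serious point is to produce the partial isometries $v_t\in B''$. For $a_t\in C_c(X_t)$ we have $\psi(a_t\delta_t)\in B$, and the covariance relations force $v_t$ to be, informally, the ``weak limit of $\psi(\mu_i^t\delta_t)$'' for an approximate unit $\{\mu_i^t\}$ of $I_t=C_0(X_t)$. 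To make this rigorous in case \ref{enu:disintegration1} I would apply Lemma~\ref{lem:C*_unit_rep} to the contractive linear map $C_0(X_t)\ni a\mapsto \psi(a\delta_t)\in B\subseteq B''$ (with predual $B'$), obtaining $v_t:=\psi(1_{X_t}\delta_t)\in B''$ as the $B'$-limit $\lim_i\psi(\mu_i^t\delta_t)$, contractive and independent of the approximate unit. For case \ref{enu:disintegration2} the same construction works verbatim with $E=B$, $E_*=B_*$; for case \ref{enu:disintegration3} one applies Lemma~\ref{lem:reflexive_covariant_representations} / Corollary~\ref{cor:normalization_to_spatial_cov_rep} to land in the reflexive picture, so $v_t\in B(E)$ already.

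\textbf{Step 2: checking the covariance axioms and that the two maps are mutually inverse.} With $v_t$ in hand I would verify \ref{item:covariant_representation1}--\ref{item:covariant_representation4} (resp.\ the Definition~\ref{defn:covariant_representation_on_space} axioms) by approximating: e.g.\ $v_t\pi(a)=\lim_i\psi(\mu_i^t\delta_t)\psi(a\delta_1)=\lim_i\psi((\mu_i^t(a\circ h_{t^*}))\delta_t)=\psi((a\circ h_{t^*})1_{X_t}\delta_t)=\pi(\alpha_t(a))v_t$ using Lemma~\ref{lem:grading_of_transformation_groupoid_algebra}\ref{enu:grading_of_transformation_groupoid_algebra2}, $B'$-continuity of multiplication in the second variable, and that $\psi$ is a homomorphism into $B\subseteq B''$; relations \ref{item:covariant_representation2}, \ref{item:covariant_representation3} come out the same way from the product and idempotent formulas in Lemma~\ref{lem:grading_of_transformation_groupoid_algebra}, and \ref{item:covariant_representation4} is automatic by construction. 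Then $\psi\mapsto(\pi,v)\mapsto\pi\rtimes v$ returns $\psi$ because both sides agree on the dense set of elements $a_t\delta_t$, and $(\pi,v)\mapsto\pi\rtimes v\mapsto(\pi',v')$ returns the normalized $(\pi,v)$ because $\pi'=\pi$ on $C_0(X)$ and $v'_t=\lim_i(\pi\rtimes v)(\mu_i^t\delta_t)=\lim_i\pi(\mu_i^t)v_t=v_t$ by \ref{item:covariant_representation4}. Uniqueness of $\pi\rtimes v$ was already noted in Proposition~\ref{prop:integration_of_rep}.

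\textbf{Step 3: the unital case.} If every $X_t$ is compact then $I_t=C_0(X_t)$ is unital with unit $1_{X_t}$, and one may take the constant approximate unit $\mu_i^t\equiv 1_{X_t}$. Then $v_t=\psi(1_{X_t}\delta_t)\in B$ directly, so no bidual is needed, and Remark~\ref{rem:unital_actions} (together with Lemma~\ref{lem:B_0-normalized_is_in_B} / Lemma~\ref{lem:reflexive_covariant_representations} in the respective settings) says that the $B'$-, $B_*$-, and $E$-normalized covariant representations all coincide with the covariant representations $(\pi,v)$ in $B$ satisfying $v_t=\pi(1_{X_t})v_t\in B$. This is essentially a bookkeeping step once Steps 1--2 are done.

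\textbf{Main obstacle.} The crux is Step 1 in case \ref{enu:disintegration1}: showing that the bidual elements $v_t$ \emph{exist} as weak limits and are \emph{independent of the chosen approximate unit}, which is exactly what Lemma~\ref{lem:C*_unit_rep} is designed to supply — the $C^*$-structure of $C_0(X_t)$ is used there in an essential way via a GNS decomposition of functionals. The second delicate point is being disciplined about Arens products: multiplication in $B''$ is only separately $B'$-continuous, so throughout Step 2 every limit must be taken in the variable where continuity holds (the same care already exercised in the proof of Proposition~\ref{prop:normalized_twisted_rep}). Everything else — the algebraic identities and the density argument — is routine given Lemma~\ref{lem:grading_of_transformation_groupoid_algebra} and Proposition~\ref{prop:integration_of_rep}.
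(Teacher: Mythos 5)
Your proposal is correct and follows essentially the same route as the paper's proof: defining $\pi:=\psi|_{C_0(X)}$, invoking Lemma~\ref{lem:C*_unit_rep} to obtain $v_t$ as the $B'$-limit of $\psi(\mu_i^t\delta_t)$, verifying the covariance axioms via the multiplication formulas for the elements $a_t\delta_t$ with careful one-sided Arens continuity, and reducing cases \ref{enu:disintegration2}--\ref{enu:disintegration3} to \ref{enu:disintegration1} through the normalization results. The only cosmetic difference is that you run Lemma~\ref{lem:C*_unit_rep} directly with $(B,B_*)$ in case \ref{enu:disintegration2}, whereas the paper first builds the $B'$-normalized representation and then renormalizes via Proposition~\ref{prop:B_*_normalized_twisted_rep} — both are fine.
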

\begin{proof}
Let $\psi : F^{\overline{S}}(\G,\LL) \to B$ be a representation. 
Then $\pi := \psi|_{C_0(X)}$ is a representation of $C_0(X)$. 
For every $t\in S$, the map $C_c(X_{t})\ni a_t \mapsto \psi(a_t\delta_t) \in B$ extends to a linear contraction $C_0(X_{t}) \to B\subseteq B''$. 
Let $v_t\in B''$ be the  element associated to this map in Lemma~\ref{lem:C*_unit_rep}, where $E=B''$ and $E_*=B'$. 
Namely, for any approximate unit $\{\mu_i^t\}_{i} \subseteq C_0(X_t)$ we have  $v_t = B'\mhyphen\lim_{i} \psi(\mu_i^t\delta_t)$. 
Recall that we consider multiplication in $B''$ which is $B'$-continuous in the second variable.
For $a\in C_c(X_t) = C_c(X_{tt^*})$ we have $(a \delta_{tt^*}) * (\mu_i^t \delta_t) \to a\delta_t$ in $C_0(U_t,\LL)$. 
Thus
\[
    \pi(a)v_t = \psi( a\delta_{tt^*} ) B'\mhyphen\lim_{i} \psi (\mu_i^t \delta_t) = B'\mhyphen\lim_{i} \psi (a\delta_{tt^*} * \mu_i^t \delta_t) = \psi(a\delta_t) .
\]
For $a\in C_c(X_{t^*})$ we have $(\mu_i^t \delta_t) * (a \delta_{t^*t}) = (\alpha_t(a) \mu_i^t) \delta_t \to  \alpha_t(a) \delta_{t}$ in $C_0(U_{t^*},\LL)$. 
Multiplication in $B''$ is $B'$-continuous in the first variable if the second variable is in $B$, so
\[
    v_t \pi(a) = B'\mhyphen\lim_{i} \psi (\mu_i^t \delta_t * a \delta_{t^*t}) = \psi \big( \alpha_t(a) \delta_{t} \big) = \pi(\alpha_t(a)) v_t .
\]
If in addition $t = e \in E(S)$ then $v_e \pi(a) = \psi(\alpha_e(a) \delta_{e}) = \psi(a \delta_{e}) = \pi(a)$.
If $a\in C_c(X_{st})$ for $s,t\in S$ then $\alpha_{s}^{-1}(a) = a\circ h_{s} \in C_c(X_{s^*}\cap X_t)$ and therefore $(a\delta_s) * (\mu_i^t \delta_t) = a (\mu_i^t \circ h_{s^*}) u(s,t) \delta_{st} = \alpha_{s}(\alpha_{s}^{-1}(a) \mu_i^t) u(s,t)\delta_{st} \to a u(s,t) \delta_{t}$ in $C_0(U_{st},\LL)$. 
Thus
\[
    \pi(a)v_s v_t = B'\mhyphen\lim_{i}\psi(a \delta) \psi (\mu_i^t \delta_t) = \psi \big( au(s,t) \delta_{t} \big) = \pi \big( a u(s,t) \big) v_t. 
\]
By construction $v_t=B'\mhyphen\lim_{i} \p(\mu_i^t) v_t)$. 
Hence we see that $(\pi, v)$ is a $B'$-normalized covariant representation of $(\alpha,u)$ with $\psi = \pi\rtimes v$. 

In view of Proposition \ref{prop:integration_of_rep}, this gives \ref{enu:disintegration1} as a $B'$-normalized a covariant representation $(\pi, v)$ of $(\alpha,u)$ with $\psi=\pi\rtimes v$ has to be the one we constructed above.
Similarly, we get \ref{enu:disintegration2} and \ref{enu:disintegration3} by passing to an appropriate normalization as described in Proposition~\ref{prop:B_*_normalized_twisted_rep} and Corollary~\ref{cor:normalization_to_spatial_cov_rep}.
The last part of the assertion follows from Remark~\ref{rem:unital_actions}.
\end{proof}

\begin{cor}\label{cor:disintegration} 
Under the notation of Theorem~\ref{thm:disintegration}, we have a natural isometric isomorphism $F^{\overline{S}}(\G,\LL)\cong C_0(X)\rtimes_{(\alpha,u)} S$.  
Moreover, if $\EE$ is a class of Banach spaces then denoting by $\EE_{\alg}$ the class of all covariant representations of $(\alpha,u)$ in Banach algebras $B(E)$, where $E\in \EE$, and  by $\EE_{\spa}$ 
the class of all covariant representations of $(\alpha,u)$ on Banach spaces $E\in\EE$, then
\[
    F_{\EE}^{\overline{S}}(\G,\LL) \cong C_0(X) \rtimes_{(\alpha,u),\EE_{\alg}} S \donto C_0(X)\rtimes_{(\alpha,u),\EE_{\spa}}S .
\]
The last homomorphism is isometric, for instance, if all spaces in $\EE$ are reflexive, or if each  $X_t$, $t\in S$, is compact.
\end{cor}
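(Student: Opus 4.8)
\textbf{Proof proposal for Corollary~\ref{cor:disintegration}.}

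The plan is to read off everything from the Disintegration Theorem (Theorem~\ref{thm:disintegration}) together with the universal properties of the two sides. First I would establish the isometric isomorphism $F^{S}(\G,\LL)\cong C_0(X)\rtimes_{(\alpha,u)} S$. By Proposition~\ref{prop:integration_of_rep} every covariant representation $(\pi,v)$ of $(\alpha,u)$ in a Banach algebra $B$ integrates to a representation $\pi\rtimes v : F^{S}(\G,\LL)\to B$, and by Theorem~\ref{thm:disintegration}\ref{enu:disintegration1} every representation of $F^{S}(\G,\LL)$ disintegrates to a $B'$-normalized covariant representation, the two processes being mutually inverse; moreover by Proposition~\ref{prop:normalized_twisted_rep} the normalization $(\pi,\tilde v)$ of an arbitrary covariant representation $(\pi,v)$ satisfies $\pi\rtimes v=\pi\rtimes\tilde v$ (Proposition~\ref{prop:integration_of_rep}, last sentence), so normalized and general covariant representations give the same collection of integrated representations. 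Hence the seminorm defining $C_0(X)\rtimes_{(\alpha,u)}S$ on $\ell^1(\alpha,u)$ coincides, under the identification $\ell^1(\alpha,u)\ni f\mapsto\sum_t f(t)\delta_t\in\mathfrak{C}_c(\G,\LL)$ furnished by \eqref{eq:coeffcients_isomorphisms} and Lemma~\ref{lem:grading_of_transformation_groupoid_algebra}, with the norm $\|\cdot\|_{\max}^{S}$; since both algebras are completions of the same dense $*$-algebra in the same norm, the identity extends to the desired isometric isomorphism. (One should note the image of $\ell^1(\alpha,u)$ is $\mathfrak{C}_c(\G,\LL)$ because $S$ is wide and covers $\G$.)

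Next, for a class $\EE$ of Banach spaces, the algebra $F_{\EE}^{S}(\G,\LL)$ is by definition the Hausdorff completion of $\mathfrak{C}_c(\G,\LL)$ in $\|f\|_{\EE}=\sup\{\|\psi(f)\|:\psi\in\RR\}$ where $\RR$ ranges over representations of $F^{S}(\G,\LL)$ on spaces in $\EE$. By the bijective correspondence of Theorem~\ref{thm:disintegration}\ref{enu:disintegration1}, such $\psi$ are exactly the $\pi\rtimes v$ for $(\pi,v)$ a $B(E)'$-normalized covariant representation of $(\alpha,u)$, $E\in\EE$; and again by Propositions~\ref{prop:normalized_twisted_rep} and \ref{prop:integration_of_rep} these yield the same integrated norms as the class $\EE_{\alg}$ of \emph{all} covariant representations of $(\alpha,u)$ in $B(E)$, $E\in\EE$. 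Thus $\|\cdot\|_{\EE}$ on $\mathfrak{C}_c(\G,\LL)$ agrees with the seminorm $\|\cdot\|_{\EE_{\alg}}$ defining $C_0(X)\rtimes_{(\alpha,u),\EE_{\alg}}S$ on $\ell^1(\alpha,u)$, giving $F_{\EE}^{S}(\G,\LL)\cong C_0(X)\rtimes_{(\alpha,u),\EE_{\alg}}S$. Since $\EE_{\spa}\subseteq\EE_{\alg}$ we trivially have $\|\cdot\|_{\EE_{\spa}}\le\|\cdot\|_{\EE_{\alg}}$, which yields the canonical contractive homomorphism with dense range $C_0(X)\rtimes_{(\alpha,u),\EE_{\alg}}S\donto C_0(X)\rtimes_{(\alpha,u),\EE_{\spa}}S$.

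Finally, for the isometry statement of the last homomorphism I would invoke the two renormalization results. If every $E\in\EE$ is reflexive, then Corollary~\ref{cor:normalization_to_spatial_cov_rep} (built on Lemma~\ref{lem:reflexive_covariant_representations} and Proposition~\ref{prop:B_*_normalized_twisted_rep}) shows that any covariant representation $(\pi,v)$ in $B(E)$ admits a covariant representation $(\pi,\tilde v)$ \emph{on} $E$ with $\pi(a)v_t=\pi(a)\tilde v_t$, hence $\pi\rtimes v=\pi\rtimes\tilde v$; therefore $\EE_{\alg}$ and $\EE_{\spa}$ define the same seminorm and the homomorphism is isometric. If instead each $X_t$ is compact, then by Remark~\ref{rem:unital_actions} a covariant representation is normalized precisely when $v_t=\pi(1_{X_t})v_t\in B$, and Theorem~\ref{thm:disintegration} (last sentence) identifies the pairs in \ref{enu:disintegration1}--\ref{enu:disintegration3} with such honest covariant representations in $B=B(E)$; chasing this through, a covariant representation on $E$ is just a normalized covariant representation in $B(E)$, so again $\|\cdot\|_{\EE_{\alg}}=\|\cdot\|_{\EE_{\spa}}$. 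The only mild subtlety---and the point I would be most careful about---is keeping the normalization bookkeeping straight: one must consistently pass an arbitrary covariant representation to its (unique) normalized version before integrating, and verify that reflexivity or compactness of the $X_t$ is exactly what upgrades a $B(E)_*$-normalized (equivalently $E'\widehat\otimes E$-normalized) covariant representation to a genuine covariant representation on the space $E$; all of this is already packaged in Corollary~\ref{cor:normalization_to_spatial_cov_rep} and Remark~\ref{rem:unital_actions}, so no new estimates are needed.
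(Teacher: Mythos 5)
Your proof is correct and follows exactly the route the paper intends: the paper's own proof is the single line ``Apply Theorem~\ref{thm:disintegration}'', and your argument is precisely the expansion of that — matching the universal seminorms via integration (Proposition~\ref{prop:integration_of_rep}), disintegration (Theorem~\ref{thm:disintegration}), and the normalization results (Proposition~\ref{prop:normalized_twisted_rep}, Corollary~\ref{cor:normalization_to_spatial_cov_rep}, Remark~\ref{rem:unital_actions}). No gaps; the bookkeeping you flag at the end is handled correctly.
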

\begin{proof}
Apply Theorem~\ref{thm:disintegration}.
\end{proof}

\begin{cor}\label{cor:representations into C*-algebras}
For any  unital inverse subsemigroup $S\subseteq S(\L)$ covering $\G$ and  any $C^*$-algebra $B$, a homomorphism $F^S(\G,\LL)\to B$ is contractive (i.e.\ is a representation) if and only if it is $*$-preserving, and then it factors through a  $*$-homomorphism
$C^*(\G,\LL)\to B$.
\end{cor}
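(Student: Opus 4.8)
Work in the running setting of the disintegration theorem: $(\alpha,u)$ is a twisted inverse semigroup action of $S$ modelling $(\G,\LL)$, so that $F^S(\G,\LL)\cong C_0(X)\rtimes_{(\alpha,u)}S$. The plan is to get the two implications from Theorem~\ref{thm:disintegration} together with Proposition~\ref{prop:integration_of_rep}, and from the isometry of the involution on $F^S(\G,\LL)$, respectively, and then to read off the factorisation through $C^*(\G,\LL)$ from the maximality of $\|\cdot\|_{C^*\mathrm{max}}$.

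For ``contractive $\Rightarrow$ $*$-preserving'': given a representation $\psi\colon F^S(\G,\LL)\to B$, Theorem~\ref{thm:disintegration}\ref{enu:disintegration1} writes $\psi=\pi\rtimes v$ for a $B'$-normalized covariant representation $(\pi,v)$ of $(\alpha,u)$ in $B$, and since $B$ is a $C^*$-algebra Proposition~\ref{prop:integration_of_rep} gives at once that $\pi\rtimes v=\psi$ is $*$-preserving. (An alternative not using disintegration: $\pi:=\psi|_{C_0(X)}$ is a contractive homomorphism between $C^*$-algebras, hence $*$-preserving by Remark~\ref{rem:complex_the_real_C*}, and combining the formula for $(\pi(a_t)v_t)^*$ from Lemma~\ref{lem:range_of_covariant_rep} with the spanning and grading relations of Lemma~\ref{lem:grading_of_transformation_groupoid_algebra} shows $\psi$ preserves $*$ on the dense subalgebra $\mathfrak{C}_c(\G,\LL)$.)

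For ``$*$-preserving $\Rightarrow$ contractive'': I would use that the involution on $F^S(\G,\LL)$ is isometric (the unnumbered lemma preceding the definition of $F^S(\G,\LL)$, which proves exactly this for $\|\cdot\|_{\max}^S$ when $S$ is an inverse semigroup, cf.\ \eqref{eq:projective_norm}), together with the classical fact that a $*$-homomorphism $\psi$ from a Banach $*$-algebra $A$ with isometric involution into a (real or complex) $C^*$-algebra $B$ is automatically contractive. Indeed, passing to the $\ell^1$-normed unitisations, on which the involution stays isometric, one has for $a\in A$
\[ \|\psi(a)\|^2=\|\psi(a)^*\psi(a)\|=\|\psi(a^*a)\|=\rho_B\big(\psi(a^*a)\big)\le\rho(a^*a)\le\|a^*a\|\le\|a^*\|\,\|a\|=\|a\|^2, \]
where $\rho$ is the spectral radius, $\|\psi(a^*a)\|=\rho_B(\psi(a^*a))$ since $\psi(a^*a)=\psi(a)^*\psi(a)$ is self-adjoint in $B$, and $\rho_B(\psi(x))\le\rho(x)$ because the unital extension of $\psi$ sends invertibles to invertibles. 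Hence $\psi$ is a representation.

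Once $\psi$ is known to be a contractive $*$-homomorphism, its restriction to the dense $*$-subalgebra $\mathfrak{C}_c(\G,\LL)$ makes $f\mapsto\|\psi(f)\|_B$ a $C^*$-seminorm (submultiplicative, satisfying the $C^*$-identity, and satisfying the extra real-case axiom because the closure of the range is a real $C^*$-algebra), so $\|\psi(f)\|_B\le\|f\|_{C^*\mathrm{max}}$ for all $f\in\mathfrak{C}_c(\G,\LL)$ by maximality of $\|\cdot\|_{C^*\mathrm{max}}$ (established in Theorem~\ref{thm:L_p_norm_estimates}). Thus $\psi|_{\mathfrak{C}_c(\G,\LL)}$ extends by continuity to a contractive $*$-homomorphism $\overline{\psi}\colon C^*(\G,\LL)\to B$, and $\psi=\overline{\psi}\circ\iota$ for the canonical representation $\iota\colon F^S(\G,\LL)\to C^*(\G,\LL)$ of Remark~\ref{rem:various_norms_and^projectivness}, the two homomorphisms agreeing on the dense subalgebra $\mathfrak{C}_c(\G,\LL)$. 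I do not expect a serious obstacle: the argument is essentially an assembly of Theorem~\ref{thm:disintegration}, Proposition~\ref{prop:integration_of_rep} and the defining property of $\|\cdot\|_{C^*\mathrm{max}}$, the only ingredient needing genuine care being the automatic contractivity of $*$-homomorphisms into $C^*$-algebras over $\R$, which rests on $\|x\|=\rho_B(x)$ for self-adjoint $x$ in a real $C^*$-algebra (iterate $\|x^2\|=\|x^*x\|=\|x\|^2$) and the stability of spectra under complexification; I would cite \cite{Li_book} for these.
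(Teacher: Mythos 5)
Your proof is correct and follows essentially the same route as the paper's: the forward direction via disintegration (Theorem~\ref{thm:disintegration}) and the fact that integrated representations into $C^*$-algebras are $*$-preserving (Proposition~\ref{prop:integration_of_rep}), and the converse via the maximality of $\|\cdot\|_{C^*\mathrm{max}}$ among $C^*$-(semi)norms together with the canonical contraction $F^S(\G,\LL)\to C^*(\G,\LL)$. The only difference is one of emphasis: you make explicit the spectral-radius argument showing that a $*$-homomorphism out of a Banach $*$-algebra with isometric involution into a (real or complex) $C^*$-algebra is automatically contractive, a standard fact the paper leaves implicit in its one-line assertion that a $*$-homomorphism is $\|\cdot\|_{C^*\mathrm{max}}$-contractive on $\mathfrak{C}_c(\G,\LL)$.
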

\begin{proof} By Remark \ref{rem:various_norms_and^projectivness} we may assume $S$ is wide and so Theorem \ref{thm:disintegration} applies.
Any representation $\psi : F^S(\G,\LL) \to B$ is $*$-preserving because any integrated representation is, by Proposition~\ref{prop:integration_of_rep}.
Conversely, if $\psi$ is a $*$-homomorphism it is $\|\cdot\|_{C^*\max}$-contractive on $\mathfrak{C}_c(\G, \LL)$ and thus it defines a representation $\overline{\psi}:C^*(\G,\LL)\to B$.
Since $\psi$ is the composition of the canonical representation $F^S(\G,\LL)\to C^*(\G,\LL)$ and $\overline{\psi}$, it is a representation itself.
\end{proof}

\subsection{Borel extension-disintegration}
\label{sec:Borel_disintegration}

We obtain yet another disintegration theorem by extending representations to a certain algebra of Borel sections, using weak integrals and the extension of functionals via the Riesz--Markov--Kakutani representation theorem. 
This works for representations in the algebra $B(E)$ where $E$ is a Banach space with  a predual (but is not necessarily reflexive).

Let us fix a twisted \'etale groupoid $(\G,\LL)$ and a unital  inverse subsemigroup $S\subseteq \Bis(\G)$ covering $\G$. 
 For any open set $U\subseteq \G$ we denote by 
$\B(U,\LL)$ the linear space of  bounded Borel sections of  $\LL|_{U}$. We view it as a subspace of $\B(\G,\LL)$ in an obvious way. Then the space
$$
\B^S(\G,\LL):=\spane \{ f \in \B(U,\LL): U\in  S \}
$$
is  a $*$-algebra with the  convolution and involution given by standard formulas \eqref{eq:convolution_and_involution}.
In particular, we have $\B(U,\LL)\cdot \B(V,\LL)=\B(UV,\LL)$ and $\B(U,\LL)^*=\B(U^*,\LL)$ for all $U,V\in S$. 
For any  open set $U\subseteq \G$  denote by $\B_c(U,\LL)$ the set of $f\in \B(U,\LL)$ whose strict support $\supp(f):=\{\gamma \in U: f(\gamma)\neq 0\}$  is contained in a compact subset of $U$.

\begin{lem}
$ \B_c(\G,\LL) = \spane \{ f \in \B_c(U,\LL): U\in  S \}$ for any $S\subseteq \Bis(\G)$ covering $\G$. 
\end{lem}
\begin{proof}
Let $f\in \B_c(\G,\LL)$ and pick a compact set $K\subseteq \G$  containing $\supp(f)$. 
By considering a family of bisections that are compactly contained in elements of $S$, we may cover $K$ by $\{U_i\}_{i=1}^n\in S$ which has a  refinement $\{V_i\}_{i=1}^n\in \Bis(\G)$  
such that  for each $i$ the closure of $V_i$ in $U_i$ is compact, for all $i$. 
Then the closure $K_i$ of each $K\cap V_i$ in $U_i$ is compact.  Thus $f_i:=f|_{V_{i}\setminus\bigcup_{l=1}^{i-1}V_i}\in \B_c(U_i,\LL)$, $i=1,\dots,n$,
and $f=\sum_{i=1}^n f_i$. 
\end{proof}
The above lemma implies that  $\B_c(\G,\LL)$ is a $*$-subalgebra of $\B^S(\G,\LL)$.
 In the untwisted (usually second countable) case the algebra $\B_c(\G)$  appears in a number of sources, see for instance in
\cite[page 42]{Paterson}, \cite{Gardella_Lupini17},  \cite{BussMartinez}, \cite{BGHL}. Here  we find $\B^S(\G,\LL)$  more useful, as it contains the indicator functions $1_{U}$, $U\in S$.

From now on we assume that  $S\subseteq S(\LL)\subseteq \Bis(\G)$. Without changing  $\B^S(\G,\LL)$
 we may also assume that $S$ is closed under inclusions, and so it is wide. 
Let us fix unitary sections $c_U\in C_u(U,\LL)$, $U\in S$, and denote by $(\alpha,u)$ 
the associated twisted action of $S$ on $C_0(X)$, see Subsection \ref{subsec:twisted_groupoids_vs_twisted_actions}. 
Thus $t\in S$ is a bisection that we will denote also by $U_t$, when we view it as a set. 
Since $\alpha$ is given by the topological action $h : S\to \PHomeo(X)$ it  extends to an action $\tilde{\alpha}$ on $\B(X)$ in the obvious way.
As $u(s, t)\in C_u(X_{st})\subseteq \B(X_{st})$, $s,t\in S$, we may view $(\tilde{\alpha},u)$ as a twisted inverse semigroup action on the the $C^*$-algebra $\B(X)$.
Adapting arguments from Subsection \ref{subsec:integration}, one sees that  covariant representations of $(\tilde{\alpha},u)$ integrate to representations of  $\B^S(\G,\LL)$.
More specifically, we extend \eqref{eq:coeffcients_isomorphisms}, to a linear  isomorphism
 \begin{equation}\label{eq:coeffcients_isomorphisms2}
    \B(X_{t})\ni b_t\mapsto b_t\delta_t\in \B(U_t, \LL) , \quad\text{where}\quad 
		b_t\delta_t (\gamma):= 	
		\begin{cases}
			b_t(r(\gamma))c_{U_t}(\gamma),
		& \gamma\in U_t,		\\ 	0&  \gamma\not\in U_t. 		\end{cases}
\end{equation}
The Borel analogue of Lemma \ref{lem:grading_of_transformation_groupoid_algebra} holds, with the same formulas but with  $a_t,a\in\B(X_t)$.
Similarly, we have the following Borel version of Proposition \ref{prop:integration_of_rep}.

\begin{prop}\label{prop:Borel_integration_of_rep} 
Consider the extended action $(\widetilde{\alpha},u)$ of $S$ on $\B(X)$ described above. 
Let $B$ be a Banach algebra.
The relations $\widetilde{\psi} (a_t\delta_t) = \pi(a_t)v_t$, $a_t\in \B(X_{t})$, $t\in S$, establish a bijective correspondence 
between covariant representations $(\widetilde{\pi},v)$ of $(\widetilde{\alpha},u)$ in  $B$ satisfying  $v_t = \pi(1_{X_t})v_{t}\in B$ for all $t\in S$,
and  homomorphisms $\widetilde{\psi}:\B^S(\G,\LL) \to B$ 
such that $\|\widetilde{\psi}(a_t\delta_t)\|\leq \|a_t\|_{\infty}$ for all $a_t\in \B(X)$, $t\in S$.
\end{prop}
\begin{proof} 
If $\widetilde{\psi}:\B^S(\G,\LL) \to B $  is a homomorphism with  $\|\widetilde{\psi}(a_t\delta_t)\|\leq \|a_t\|_{\infty}$ for  $a_t\in \B(X)$, $t\in S$, 
then  $\widetilde{\pi}:=\widetilde{\psi}|_{\B(X)}$  is a representation of $\B(X)$ in $B$, and each $v_{t}:=\widetilde{\psi}(1_{X_t}\delta_t)=\widetilde{\psi}(1_{U_t})$ is a contractive operator. 
The pair $(\widetilde{\pi},v)$ is the desired covariant representation  of $(\widetilde{\alpha},u)$ (the  calculations in the proof of Theorem~\ref{thm:disintegration} work and simplify 
as we replace approximate units by characteristic functions).
Conversely, if  $(\widetilde{\pi},v)$ is a covariant representation  of $(\widetilde{\alpha},u)$, then as in the proof of Proposition \ref{prop:integration_of_rep}, one sees that  $\widetilde{\psi}(\sum_{t\in F} a_t\delta_t) = \sum_{t\in F}\widetilde{\pi}(a_t)v_t $,  where $a_t\in \B(X_t)$, $t\in S$, and $F\subseteq S$ is  finite, yields the desired homomorphism $\widetilde{\psi} :\B^S(\G,\LL) \to B$. 
Here the proof of the inductive step in the proof of Proposition~\ref{prop:integration_of_rep} simplifies as we do not need to use continuous partitions of unity.
Namely,  assume that $\sum_{t\in F} a_t\delta_t = 0$. 
Then picking any $t_0\in F$, putting $F_0 := F\setminus \{t_0\}$ and denoting by $s_t$ the element corresponding to $U_{t}\cap U_{t_0}$, the formulas in Lemma~\ref{lem:grading_of_transformation_groupoid_algebra}\ref{enu:grading_of_transformation_groupoid_algebra3} and Lemma~\ref{lem:range_of_covariant_rep}\ref{enu:range_of_covariant_rep2} imply that 
$\sum_{t\in F} a_t\delta_t =\sum_{t\in F_0} (a_t+ 1_{X_{s_t}}a_{t_0}\frac{u(s_{t} s_{t}^*,t_0)}{u(s_{t} s_{t}^*,t) })\delta_t$ and 
$\sum_{t\in F} \widetilde{\pi}(a_t)v_t=\sum_{t\in F_0}\widetilde{\pi}(a_t+ 1_{X_{s_t}}a_{t_0}\frac{u(s_{t} s_{t}^*,t_0)}{u(s_{t} s_{t}^*,t) })v_t$ where 
$a_t+ 1_{X_{s_t}}a_{t_0}\frac{u(s_{t} s_{t}^*,t_0)}{u(s_{t} s_{t}^*,t) }\in \B(X_{t})$, $t\in F_0$. 
\end{proof}

The following proposition is inspired by \cite[Lemma 4.9]{BussMartinez} and \cite[Lemma 6.7]{BGHL}, that we generalize in Corollary~\ref{cor:Borel_for_C*-algebras} below. 
The main idea goes back at least to \cite[II.1.17]{Renault_book}, see also the proof of \cite[Theorem 3.1.1]{Paterson}. 

\begin{prop}\label{prop:Borel_extensions_of_represenations}
Let $E$ be a Banach space with a predual $E_*$. 
Let $S\subseteq S(\LL)\subseteq \Bis(\G)$ be a unital inverse semigroup that covers $\G$.
Every $\|\cdot\|_{\max}^S$-contractive homomorphism $\psi : \mathfrak{C}_c(\G,\LL) \to B(E)$ extends to a  homomorphism $\widetilde{\psi}: \B^S(\G,\LL) \to B(E)$, which is determined by 
the formula 
\begin{equation}\label{eq:extended representation}
    \langle \xi, \widetilde{\psi}(b\delta_{U})\eta\rangle = \int_{r(U)} b \,d\mu^U_{\xi,\eta}, \qquad b\in \B(r(U)),\ U\in S,\ \xi\in E_{*},\ \eta\in E,
\end{equation}
where $\mu^U_{\xi,\eta}$ is the unique (complex or signed) Radon measure corresponding to the functional 
$C_c(r(U))\ni a \mapsto \langle \xi,\psi(a\delta_{U})\rangle$. 
\end{prop}
\begin{proof}
Without loss of generality we may assume that $S$ is closed under taking open subsets and so it is a wide inverse semigroup.
We denote by $(\alpha,u)$ the associated twisted inverse semigroup action of $S$. 
For every $\xi\in E'$, $\eta\in E$, $t\in S$, we denote by $\mu_{\xi,\eta}^t$ the unique  Radon measure such that $\int_{X_t} a \, d\mu_{\xi,\eta}^t=\langle \xi, \psi(a\delta_t)\eta\rangle$ for all $a\in C_c(X_t)$ (which is given by the Riesz--Markov--Kakutani representation theorem). 
Thus we have a bounded linear functional $\psi^t_{\xi,\eta}:\B(U_t,\LL)\to \F$ where $\psi^t_{\xi,\eta}(b\delta_t):=\int_{X_t} b \, d\mu_{\xi,\eta}^t$,
for $b\in \B(X_t)$, and $\|\psi^t_{\xi,\eta}\|\leq  \|\xi\|\cdot \|\eta\|$. 
In fact for a fixed $t\in S$ and $b\in \B(X_t)$, the map $E'\times E\ni (\xi,\eta)\mapsto \psi^t_{\xi,\eta}(b\delta_t)\in \F$ is a bounded bilinear form (for every Borel $U\subseteq X_t$ the map $E'\times E\ni (\xi,\eta)\mapsto \mu_{\xi,\eta}^t(U)$ is bilinear) with norm not exceeding $\|b\|_{\infty}$. 
Restricting this form to $E_*\times E\subseteq E'\times E$ we conclude that for each $b\in \B(X_t)$ there is an operator $\psi_{t}(b \delta_t)\in B(E)=B(E_*')$ with $\|\psi_{t}(b\delta_t)\|\leq \|b\|_{\infty}$ given by
$$
    \langle \xi, \psi_{t}(b\delta_t)\eta\rangle =\psi^t_{\xi,\eta}(b\delta_t)=\int_{X_t} b \, d\mu_{\xi,\eta}^t,\qquad t\in S,\ \xi\in E_{*},\ \eta\in E. 
$$
This is essentially \eqref{eq:extended representation}, written in different notation, and clearly 
$\psi_{t}(b\delta_t)=\psi(b\delta_t)$ when $b\in C_c(X_t)$.
Now take $s,t\in S$. 
We claim that $\psi_{s}(b_s\delta_s)\psi_{t}(b_t\delta_t)=\psi_{st}(b_s\delta_s \cdot b_t\delta_{t})$ for all $b_s\in \B(X_s)$, $b_t\in \B(X_t)$.
Recall that $b_s \delta_s \cdot b_t\delta_t = b_s (b_t\circ h_{s^*}) u(s,t) \delta_{st}$.
Assume first that $b_s\in C_c(X_s)$, $b_t\in C_c(X_t)$.		
Then, for every $(\xi,\eta)\in E'\times E$,
$$
    \int_{X_s} b_s \, d\mu_{\xi,\psi(b_t\delta_t)\eta}^s=\langle \xi, \psi(b_s\delta_s)\psi(b_t\delta_t)\eta\rangle=
        \langle \xi, \psi(b_s\delta_s \cdot b_t\delta_t)\eta\rangle=\int_{X_{st}} b_s (b_t\circ h_{s^*}) u(s,t) \, d\mu_{\xi,\eta}^{st}.
$$
By uniqueness of the Radon measure corresponding to a linear functional, this implies that $d\mu_{\xi,\psi(b_t\delta_t)\eta}^s=(b_t\circ h_{s^*}) u(s,t) \, d\mu_{\xi,\eta}^{st}$ as Borel measures supported on $X_{st}\subseteq X_{s}$. 
Thus fixing $b_t\in C_c(X_t)$ we conclude that $\psi_{s}(b_s\delta_t)(b_t\delta_t)=\psi_{st}(b_s\delta_s \cdot b_t\delta_{t})$  for every $b_s\in \B(X_s)$. 
Hence for any $b_s\in \B(X_s)$ and $b_t\in C_c(X_t)$ we get
\[
\begin{split}
    \int_{X_t} b_t \, d\mu_{\psi_s(b_s\delta_s)'\xi,\eta}^t&=\langle \psi_s(b_s\delta_s)'\xi, \psi(b_t\delta_t)\eta\rangle
        =\langle \xi, \psi_s(b_s\delta_s)\psi(b_t\delta_t)\eta\rangle=\langle \xi,\psi_{st}(b_s\delta_s \cdot b_t\delta_{t})\eta\rangle \\
        &=\int_{X_{st}} b_s (b_t\circ h_{s^*}) u(s,t) \, d\mu_{\xi,\eta}^{st}=\int_{X_{t}} b_t  (b_s u(s,t))\circ h_{s}  \, d\mu_{\xi,\eta}^{st}\circ h_{s}.
\end{split}
\]
Therefore, similarly to above, $d\mu_{\psi_s(b_s\delta_s)'\xi,\eta}^t=(b_s u(s,t))\circ h_{s}  \, d\mu_{\xi,\eta}^{st}\circ h_{s}$. 
Using this for for all $b_s\in \B(X_s)$, $b_t\in \B(X_t)$ we get
\[
\begin{split}
    \langle \xi, \psi_{s}(b_s\delta_s)\psi_{t}(b_t\delta_t)\eta\rangle &=\int_{X_t} b_t \, d\mu_{\psi_s(b_s\delta_s)'\xi,\eta}^t
        =\int_{X_{t}} b_t  (b_s u(s,t))\circ h_{s}  \, d\mu_{\xi,\eta}^{st}\circ h_{s} \\
        &=\int_{X_{st}} b_s (b_t\circ h_{s^*}) u(s,t) \, d\mu_{\xi,\eta}^{st}=\langle \xi,\psi_{st}(b_s\delta_s \cdot b_t\delta_{t})\eta\rangle.
\end{split}
\]
Thus $\psi_{s}(b_s\delta_s)\psi_{t}(b_t\delta_t)=\psi_{st}(b_s\delta_s \cdot b_t\delta_{t})$ as claimed.

The above identity now  implies that the pair $(\widetilde{\pi},v)$, where  $\widetilde{\pi}:=\psi_e$ and  $v_t:=\psi_t(1_{X_t})$, for $t\in S$, is a covariant representation of the associated  action $(\widetilde{\alpha},u)$ of $S$ on  $\B(X)$. 
By Proposition~\ref{prop:Borel_integration_of_rep}, $(\widetilde{\pi},v)$ yields a homomorphism $\widetilde{\psi}:\B^S(\G,\LL) \to B(E)$, which  is the desired extension. 
\end{proof}

\begin{thm}\label{thm:Borel_extension_of_representations}
Let $(\G,\LL)$ be a twisted \'etale groupoid.
Let $S\subseteq S(\LL)\subseteq \Bis(\G)$ be a unital wide inverse semigroup, and  let $(\alpha,u)$  and $(\widetilde{\alpha},u)$ be the associated twisted actions of $S$ on $C_0(X)$ and $\B(X)$, respectively. 
We have natural isometric $*$-isomorphisms and an inclusion
$$
    C_0(X)\rtimes_{(\alpha,u)} S \cong F^S(\G,\LL) \subseteq \overline{\B^S(\G,\LL)}^{\| \cdot \|_{\max}^{S}}\cong\B(X)\rtimes_{(\widetilde{\alpha},u)} S
$$
where $\| \cdot \|_{\max}^{S}$ is the largest submultiplicative norm on $\B^S(\G,\LL)$ which agrees with $\|\cdot\|_{\infty}$ on each slice $\B(U,\LL)$, $U\in S$ (it  agrees with the norm of $F^S(\G,\LL)$). 
Moreover, every representation $\psi : F^{S}(\G,\LL) \to B(E)$ where $E$ has a predual,  is  the restriction of  $\widetilde{\pi}\rtimes v$ for some  covariant representation $(\widetilde{\pi},v)$ of $(\widetilde{\alpha},u)$ on $E$. 
\end{thm}
\begin{proof}
We have $C_0(X)\rtimes_{(\alpha,u)} S \cong F^S(\G,\LL)$ by Corollary~\ref{cor:disintegration}. 
Let $\| \cdot \|_{\B}^{S}$ be the largest submultiplicative norm on $\B^S(\G,\LL)$ which does not exceed $\|\cdot\|_{\infty}$ on each slice $\B(U,\LL)$, $U\in S$.
Proposition~\ref{prop:Borel_integration_of_rep} implies the natural isometric isomorphism $\overline{\B^S(\G,\LL)}^{\| \cdot \|_{\B}^{S}}\cong\B(X)\rtimes_{(\widetilde{\alpha},u)} S$. 
Clearly, $\|f\|_{\B}^{S}\leq \|f \|_{\max}^{S}$ for $f\in \mathfrak{C}_c(\G,\LL)$. On the other hand if we view $E=F^S(\G,\LL)''$ as a Banach space with a predual, then multiplication yields a $\|\cdot\|_{\max}^S$-isometric homorphism $\psi:\mathfrak{C}_c(\G,\LL)\to B(E)$ which by Proposition~\ref{prop:Borel_extensions_of_represenations} extends to a $\| \cdot \|_{\B}^{S}$-contractive homomorphism  $\widetilde{\psi}:\B^S(\G,\LL) \to B(E)$. 
Hence the norms $\|\cdot\|_{\max}^{S}$ and  $\|\cdot\|_{\B}^{S}$ coincide on $\mathfrak{C}_c(\G,\LL)\subseteq \B^S(\G,\LL)$.
Thus without confusion we may denote $\|\cdot\|_{\B}^{S}$ by $\|\cdot\|_{\max}^{S}$. 
This gives the inclusion $F^S(\G,\LL)=\overline{\mathfrak{C}_c(\G,\LL)}^{\| \cdot \|_{\max}^{S}}\subseteq   \overline{\B^S(\G,\LL)}^{\| \cdot \|_{\max}^{S}}$.  
The last part of the assertion is now basically Proposition~\ref{prop:Borel_extensions_of_represenations}, modulo Proposition~\ref{prop:Borel_integration_of_rep}. 
\end{proof}

\begin{rem} 
It follows that the representation $\psi : F^{S}(\G,\LL) \to B(E)$ in the last part of the above theorem is of the form $\pi\rtimes v$ for a covariant representation $(\pi,v)$ of $(\alpha,u)$ in $B(E)$ with $v$ taking values in $B(E)$ (rather than in $B(E)''$). 
However, unless $E$ is reflexive (as then Thereom~\ref{thm:disintegration}\ref{enu:disintegration3} applies), it is not clear whether we can make $(\pi,v)$ a covariant representation on $E$. 
The problem is we do not know whether \ref{item:covariant_representation2'} in Definition~\ref{defn:covariant_representation_on_space} holds.
\end{rem}

\begin{cor}\label{cor:Borel_for_C*-algebras} 
Let $S\subseteq S(\LL)\subseteq \Bis(\G)$ be a unital wide inverse semigroup. 
The inclusion $\mathfrak{C}_c(\G,\LL)\subseteq \B^S(\G,\LL)$ induces the embedding of $C^*$-algebras $C^*(\G,\LL)\subseteq \overline{\B^S(\G,\LL)}^{ \|\cdot\|_{C^*}}$ where $\|\cdot\|_{C^*}$ is the largest $C^*$-norm on the $*$-algebra $\B^S(\G,\LL)$. 
Moreover, every representation of $C^*(\G,\LL)$ on a Hilbert space $H$ extends to a representation of $\overline{\B^S(\G,\LL)}^{ \|\cdot\|_{C^*}}$ on $H$.
\end{cor}
\begin{proof} 
If $\psi:\mathfrak{C}_c(\G,\LL)\to B(H)$ is a $*$-homomorphism  it is $\|\cdot\|_{\max}^S$-contractive and so by Theorem~\ref{thm:Borel_extension_of_representations}, 
it extends to $\|\cdot\|_{\max}^S$-contractive homomorphism $\widetilde{\psi}:\B^S(\G,\LL)\to B(H)$, which by Corollary~\ref{cor:representations into C*-algebras} has to be a $*$-homomorphism.
This implies that the maximal $C^*$-norm on $\B^S(\G,\LL)$ agrees with the maximal $C^*$-norm on $\mathfrak{C}_c(\G,\LL)$.
Hence the inclusion $C^*(\G,\LL)\subseteq \overline{\B^S(\G,\LL)}^{ \|\cdot\|_{C^*}}$. Now our argument also implies the second part of the assertion.
\end{proof}


%
%
\section{Representations on $L^p$-spaces}
\label{sec:Groupoid Lp-operator algebras}

Fix $p\in [1,\infty]$ and let $q\in [1,\infty]$ be the number satisfying $1/p + 1/q=1$, with the convention that $1/\infty=0$. 
Take a twisted \'etale groupoid $(\G,\LL)$ with a locally compact Hausdorff unit space $X$. 
Regular representations of $\G$ on $\ell^p$-spaces (with $p<\infty$) were considered in \cite{cgt}, and of $(\G,\LL)$ (but using a different picture of the twist) in \cite{Hetland_Ortega}. 
We spell out some details using our notation. 
We denote by $\ell^p(\G,\LL)$ the Banach space of all sections of $\LL$ for which the norm $\|f\|_{p} = (\sum_{\gamma\in \G} |f(\gamma)|^{p})^{1/p}$ when $p<\infty$ and
$\|f\|_{\infty} = \sup_{\gamma\in \G}|f(\gamma)|$ when $p=\infty$, is finite.
We have an isometric isomorphism $\ell^p(\G,\LL)\cong \ell^p(\G)$. 

\begin{prop}\label{prop:regular_disintegrated} 
For each $p\in [1,\infty]$ the formula
\[
    \Lambda_p(f) \xi (\gamma) := (f * \xi)(\gamma) = \sum_{r(\eta) = r(\gamma)} f(\eta)\cdot \xi(\eta^{-1} \gamma), \qquad f \in \mathfrak{C}_c(\G,\LL),\ \xi \in \ell^{p}(\G,\LL) ,
\]
defines a representation $\Lambda_p : F(\G,\LL) \to B(\ell^{p}(\G,\LL))$, which is injective on $\mathfrak{C}_c(\G,\LL)$, and  
\begin{enumerate}
    \item\label{enu:regular_disintegrated1} for $p \in (1 , \infty)$ we have $\| \Lambda_p(f) \| \leq \|f\|_{*d}^{1/p} \cdot \|f\|_{*r}^{1/q} \leq \|f\|_{I}$, while $\|\Lambda_1(f)\| = \|f\|_{*d}$ and $\|\Lambda_{\infty}(f)\|= \|f\|_{*r}$, for all $f \in \mathfrak{C}_c(\G , \LL)$; 
    \item\label{enu:regular_disintegrated2} putting $F^p_{\red}(\G,\LL) := \overline{\Lambda_p(\mathfrak{C}_c(\G,\LL))}$, we have $F^p_{\red}(\G,\LL)^{\op} \cong F^{q}_{\red}(\G,\LL^{\op})$, with the isometric isomorphism induced by the map $f \mapsto \widecheck{f}$. 
    In particular, $F^p_{\red}(\G)^{\op}\cong F^{q}_{\red}(\G)$.
\end{enumerate}
\end{prop}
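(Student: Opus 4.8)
The plan is to identify the regular representation $\Lambda_p$ as the integrated form of an explicit covariant representation of the modelling twisted inverse semigroup action, and then extract the norm estimates from the disintegration picture together with the elementary (Schur-type) bounds on convolution operators. First I would fix a wide inverse subsemigroup $S \subseteq S(\LL) \subseteq \Bis(\G)$ covering $\G$ and the twisted action $(\alpha,u)$ on $C_0(X)$ from Lemma~\ref{lem:twisted_action_from_twisted_groupoid}. On the Banach space $\ell^p(\G,\LL)$ one has the natural representation $\pi_0$ of $C_0(X)$ by multiplication operators $(\pi_0(a)\xi)(\gamma) := a(r(\gamma))\xi(\gamma)$, and for each $U \in S$ a spatial partial isometry $v_U$ built from the unitary section $c_U$: concretely $(v_U\xi)(\gamma) := c_U(\eta)\cdot\xi(\eta^{-1}\gamma)$ where $\eta$ is the unique element of $U$ composable with $\gamma$ (and $0$ if there is none). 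A direct check shows $(\pi_0,v)$ satisfies \ref{item:covariant_representation1'}--\ref{item:covariant_representation3'}, i.e.\ it is a covariant representation of $(\alpha,u)$ on $\ell^p(\G,\LL)$, and that its integrated form $\pi_0 \rtimes v$ agrees with $\Lambda_p$ on $\mathfrak C_c(\G,\LL)$; in particular Proposition~\ref{prop:integration_of_rep} (or directly the estimate $\|\Lambda_p(a_t\delta_t)\| = \|\pi_0(a_t)v_t\| \le \|a_t\|_\infty$) shows $\Lambda_p$ extends to a representation of $F^S(\G,\LL)$, hence of $F(\G,\LL)$ since $\|\cdot\|_\RR \le \|\cdot\|_{\max}$.

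For \ref{enu:regular_disintegrated1}, the bounds $\|\Lambda_1(f)\| \le \|f\|_{d_*}$ and $\|\Lambda_\infty(f)\| \le \|f\|_{r_*}$ are the classical Schur test applied to the convolution kernel $K(\gamma,\gamma') = f(\gamma(\gamma')^{-1})$ on $\G$ (summing along $d$-fibres and $r$-fibres respectively), and the intermediate estimate $\|\Lambda_p(f)\| \le \|f\|_{d_*}^{1/p}\|f\|_{r_*}^{1/q}$ for $p \in (1,\infty)$ then follows by Riesz--Thorin interpolation between $\Lambda_1$ on $\ell^1(\G,\LL)$ and $\Lambda_\infty$ on $\ell^\infty(\G,\LL)$, noting these are the same algebraic operator on the common dense subspace. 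The final bound $\le \|f\|_I$ is immediate from the definition of the $I$-norm. To get the equalities $\|\Lambda_1(f)\| = \|f\|_{d_*}$ and $\|\Lambda_\infty(f)\| = \|f\|_{r_*}$ one tests against point masses: if $x \in X$ realises the maximum in $\|f\|_{d_*}$ then $\|\Lambda_1(f)\delta_x\|_1 = \sum_{d(\gamma)=x}|f(\gamma)| = \|f\|_{d_*}\|\delta_x\|_1$ (supports in distinct $r$-fibres, so no cancellation when $\G$ is Hausdorff; in the non-Hausdorff case one works locally on a bisection containing a chosen $\gamma$), and dually for $p=\infty$. Injectivity of $\Lambda_p$ on $\mathfrak C_c(\G,\LL)$ follows because for $f\in C_c(U,\LL)$, $U\in\Bis(\G)$, the operator $\Lambda_p(f)$ restricted to the obvious copy of $\ell^p(d(U))$ recovers $f$ pointwise, exactly as $\|f\|_\infty = \|\Lambda_p(f)|_{\ell^p(d(U))}\|$; a partition-of-unity argument as in the proof of Proposition~\ref{prop:integration_of_rep} reduces the general case to this one.

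For \ref{enu:regular_disintegrated2}, I would use that the pairing $\ell^q(\G,\LL^{\op}) \cong \ell^p(\G,\LL)'$ (via $\langle \xi,\eta\rangle = \sum_\gamma \eta(\gamma^{-1})^*\cdot\xi(\gamma)$, using the involution on $\LL$ and triviality of $\LL|_X$) intertwines, for $f \in \mathfrak C_c(\G,\LL)$, the adjoint operator $\Lambda_p(f)' \in B(\ell^p(\G,\LL)')$ with $\Lambda_q^{\LL^{\op}}(\widecheck f)$ acting on $\ell^q(\G,\LL^{\op})$. Since taking adjoints is an isometric anti-isomorphism $B(\ell^p(\G,\LL)) \to B(\ell^p(\G,\LL)')^{\op}$, this gives $\|\Lambda_p(f)\| = \|\Lambda_q^{\LL^{\op}}(\widecheck f)\|$, so $f \mapsto \widecheck f$ extends to an isometric isomorphism $F^p_{\red}(\G,\LL)^{\op} \cong F^q_{\red}(\G,\LL^{\op})$; the untwisted statement is the special case $\LL = \G\times\F$, where $\widecheck{\phantom f}$ is the classical $f(\gamma)\mapsto f(\gamma^{-1})$ inversion. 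Alternatively this is a direct consequence of Corollary~\ref{cor:reflexive_spaces_oppositie_iso}/Proposition~\ref{prop:duality} applied to the singleton class $\EE = \{\ell^p(\G,\LL)\}$, once one checks $\Lambda_q^{\LL^{\op}}$ is the reflection of $\Lambda_p$ in the sense there.

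The main obstacle I anticipate is the non-Hausdorff case: quasi-continuous sections in $\mathfrak C_c(\G,\LL)$ need not be genuinely continuous, so the convolution formula for $\Lambda_p$ and the lower bounds $\|\Lambda_1(f)\| = \|f\|_{d_*}$ etc.\ must be argued carefully, localising to open bisections and checking that the sum $\sum_{r(\eta)=r(\gamma)} f(\eta)\cdot\xi(\eta^{-1}\gamma)$ is finite and behaves well pointwise; the rest is essentially the classical theory of $I$-norm estimates (cf.\ \cite[II.1.4]{Renault_book}, \cite[Lemma 3.2.3]{Sims}) transported through the twist via the trivialising sections $c_U$.
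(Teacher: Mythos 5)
Your proposal is correct and follows essentially the same route as the paper: contractivity on each $C_c(U,\LL)$ gives $\|\cdot\|_{\max}$-contractivity (the covariant-representation picture you describe is exactly the paper's Remark~\ref{rem:not_important_remark}), the $p=1,\infty$ norms and injectivity come from evaluating on the point masses $1_\gamma$, the intermediate $p$ from Riesz--Thorin, and part~\ref{enu:regular_disintegrated2} from the pairing $\ell^p(\G,\LL)'\cong\ell^q(\G,\LL^{\op})$ with $\Lambda_p(f)'=\Lambda_q(\widecheck f)$. One small remark: the partition-of-unity reduction you sketch for injectivity is unnecessary (and not really a valid reduction, since vanishing of $\Lambda_p$ on a sum does not follow from its behaviour on the individual summands) --- the point-mass evaluation already recovers $f(\gamma)$ for every $\gamma$ and arbitrary $f\in\mathfrak{C}_c(\G,\LL)$, Hausdorff or not; likewise the appeal to Proposition~\ref{prop:duality} does not directly apply to the reduced algebras, so your direct duality computation is the one to keep.
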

\begin{proof} 
It is routine to check that the formula in the assertion gives a well defined homomorphism $\Lambda_p : \mathfrak{C}_c(\G,\LL) \to B(\ell^{p}(\G,\LL))$, and $\|\Lambda_p(f)\| \leq \|f\|_{\infty}$ for any $f\in C_c(U,\LL)$, $U\in \Bis(\G)$. 
Hence $\Lambda_p$ is $\|\cdot\|_{\max}$-contractive and thus it uniquely extends to a representation of $F(\G,\LL)$, see also Remark \ref{rem:not_important_remark} below. 
For each  $\gamma \in \G$ choose a norm one element $1_{\gamma}\in L_\gamma$ and treat it as a section of $\LL$ which is zero at $\eta \neq \gamma$.
Then $\{1_\gamma\}_{\gamma\in \G}$ is a Schauder basis for $\ell^{p}(\G,\LL)$ and for any $f\in  \mathfrak{C}_c(\G)$ we have 
\[
    \| \Lambda_p(f) 1_{\gamma} \|_p = \begin{cases} 
        \left(\sum_{d(\eta) = d(\gamma)} |f(\eta \gamma^{-1})|^p\right)^{1/p}, &  \text{ if $p<\infty$} \\
        \sup_{d(\eta)=d(\gamma)}|f(\eta \gamma^{-1})|, & \text{ if $p=\infty$}.
    \end{cases}
\]
This implies that $\Lambda_p$ is injective on $\mathfrak{C}_c(\G,\LL)$. 
Also using this one sees that  $\|\Lambda_1(f)\| = \|f\|_{*d}$ and $\|\Lambda_{\infty}(f)\| = \|f\|_{*r}$.
Then the inequalities $\|\Lambda_p(f)\|\leq \|f\|_{*d}^{1/p} \cdot \|f\|_{*r}^{1/q} \leq \|f\|_{I}$ follow from the Riesz--Thorin interpolation theorem.
This proves \ref{enu:regular_disintegrated1}. 
To see \ref{enu:regular_disintegrated2} it suffices to note that under the standard isomorphism $\ell^{p}(\G,\LL)' \cong \ell^{q}(\G,\LL^{\op})$ given by the pairing $\langle \xi, \eta\rangle :=\sum_{\gamma\in \G}  \xi(\gamma) \eta(\gamma)$, for $\xi\in \ell^{p}(\G,\LL)$, $ \eta\in \ell^{q}(\G,\LL^{\op})$, we have  $\Lambda^{p}(f)' = \Lambda^{q}(\widecheck{f})$ for all $f \in \mathfrak{C}_c(\G)$ 
(note that $\xi(\gamma) \eta(\gamma)\in L_{r(\gamma)}=\F$ because the bundle $\LL$ is trivial over $X$). 
\end{proof}

\begin{rem}\label{rem:not_important_remark}
Let $\tilde{h}:\Bis(\G)\to \PHomeo(\G)$ be the extension of the canonical action described in Example~\ref{ex:actions_from_groupoids}. 
The formulas 
\[
    \pi(a)\xi(\gamma) := a(r(\gamma))\xi(\gamma) , \qquad 
    v_U\xi(\gamma) :=   \begin{cases}
                            \xi(\tilde{h}_{U^*}(\gamma)), &  \gamma\in r^{-1}(r(U)), \\
                            0 , & \text{ otherwise}, 
                        \end{cases}
\]
where $a\in C_0(X)$, $\xi \in \ell^p(\G)$, $\gamma \in \G$, $U\in \Bis(\G)$, define a covariant representation $(\pi, v)$ of the corresponding inverse semigroup action on $C_0(X)$ on $\ell^p(\G)$, and $\Lambda_p = \pi\rtimes v : F(\G)\to B(\ell^p(\G))$. 
More generally, if $(\alpha, u)$ is a twisted inverse semigroup action of $S$ on $C_0(X)$ and $(\G,\LL)$ is the associated twisted groupoid, then the formulas
\[
    \pi(a) \xi[s,x] = a \big( h_s(x) \big) \xi([s,x]), \qquad 
    v_t\xi[s,x] =   \begin{cases}
                        u(t,t^*s) \big( h_{s}(x) \big) \xi \big( [t^*s,x] \big), &  x\in h_{s}^{-1}(X_{t}), \\
                        0 , & \text{otherwise}, 
                    \end{cases}
\]
where $a\in C_0(X)$, $\xi \in \ell^p(\G)$, $x\in X_{s^*}$, $t,s \in S$, define a covariant representation of $(\alpha,u)$ such that $\pi\rtimes v : F(\G,\LL) \to B(\ell^p(\G))$ is equivalent to $\Lambda_p : F(\G,\LL) \to B(\ell^p(\G,\LL))$.
\end{rem}

\begin{defn}
We call $F^{p}_{\red}(\G,\LL)$ the \emph{reduced $L^p$-operator algebra of $(\G,\LL)$}.
\end{defn} 

\begin{rem}
This definition is consistent with those in \cite{cgt}, \cite{Gardella_Lupini17} and \cite{Hetland_Ortega} (where it is assumed that $p<\infty$). 
Moreover, $F^{2}_{\red}(\G,\LL) = C_{\red}^*(\G,\LL)$ is the reduced $C^*$-algebra of $(\G , \LL)$, and we have $F^{1}_{\red}(\G,\LL) \cong F_{d_*}(\G,\LL)$ and $F^{\infty}_{\red}(\G,\LL)\cong F_{r_*}(\G,\LL)$ by Proposition~\ref{prop:regular_disintegrated}\ref{enu:regular_disintegrated1}.
\end{rem}

We now use our main results to prove a general theorem for representations of groupoid Banach algebras in $L^p$-operator algebras.

\begin{thm}\label{thm:L_p_norm_estimates} Let $(\G,\LL)$ be a twisted \'etale groupoid and $p,q\in [1,\infty]$ with $1/p+1/q=1$. Let $\psi:F^{S}(\G,\LL)\to B(L^p(\mu))$ be a representation for some measure $\mu$ and some unital inverse semigroup $S\subseteq \Bis(\G)$  covering $\G$.
If $\F=\C$, or $p=2$, or if $\psi(C_0(X)^+)$ consists of positive operators, then 
\[
    \| \psi(f)\| \leq \| f \|_{d_*}^{1/p} \, \| f \|_{r_*}^{1/q} \leq \|f\|_{I} , \qquad f\in  \mathfrak{C}_c(\G,\LL).
\]
If $p=\infty$, the estimate $\| \psi(f)\| \leq  \| f \|_{r_*}$  holds also when $L^\infty(\mu)$ is replaced by $C_0(\Omega)$ for a locally compact Hausdorff space $\Omega$.
\end{thm}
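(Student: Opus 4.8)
The plan is to reduce, in each of the three situations covered by the hypotheses, to the case where $\psi(C_0(X))$ acts by multiplication operators, then to disintegrate $\psi$ by Theorem~\ref{thm:disintegration} into a covariant representation $(\pi_0,v)$ of a twisted inverse semigroup action modelling $(\G,\LL)$, and finally to estimate $\psi(f)=\sum_t\pi_0(a_t)v_t$ for $f=\sum_t a_t\delta_t\in\mathfrak{C}_c(\G,\LL)$. By Remark~\ref{rem:various_norms_and^projectivness} we may replace $S$ from the outset by a wide inverse subsemigroup of $S(\LL)$ (a representation of $F^S(\G,\LL)$ restricts on $\mathfrak{C}_c(\G,\LL)$ to one of $F^{\widetilde S\cap S(\LL)}(\G,\LL)$), so that Lemma~\ref{lem:twisted_groupoids_come_from_twisted_actions} and Theorem~\ref{thm:disintegration} apply. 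For the reduction to multiplication operators, after passing to the non-degenerate part of $\psi|_{C_0(X)}$, which by Lemma~\ref{lem:approximate_units} carries all of $\psi(\mathfrak{C}_c(\G,\LL))$: when $\F=\C$ and $p\ne 2$, Theorem~\ref{thm:L^p_representations of C_0(X)} shows $\psi(C_0(X))$ consists of multiplication operators, and likewise for $E=C_0(\Omega)$ with $p=\infty$; when $p=2$, Lemma~\ref{lem:Hilbert_space_representations of C_0(X)} lets us conjugate $\psi$ by a unitary so that $\psi(C_0(X))$ acts by multiplication on some $L^2(\mu)$ with $\mu$ localizable, without changing any operator norm; in the remaining case multiplication operators are assumed. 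So from now on $\psi|_{C_0(X)}$ is given by a $*$-homomorphism $\pi_0\colon C_0(X)\to L^\infty(\mu)$ (or $\to C_b(\Omega)$).

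The case $p=2$ I would dispatch separately. Here $\psi$ is $*$-preserving and factors through $C^*(\G,\LL)$ by Corollary~\ref{cor:representations into C*-algebras}, and the estimate $\|\psi(f)\|\le\|f\|_{*d}^{1/2}\|f\|_{*r}^{1/2}\le\|f\|_I$ is the refined Hahn estimate for $*$-representations of twisted \'etale groupoid $C^*$-algebras: disintegrating $\psi$ over the unit space in the sense of Renault \cite{Renault_book} realizes $\psi(f)$ as a decomposable ``groupoid kernel'' operator of pointwise modulus $|f(\gamma)|$, and applying Cauchy--Schwarz fibrewise, splitting the weight as $|f(\gamma)|^{1/2}\cdot|f(\gamma)|^{1/2}$, bounding one sum by $\|f\|_{*r}$ and, after the change of variables $\gamma\mapsto\gamma^{-1}$ using quasi-invariance of the base measure, the other by $\|f\|_{*d}$, yields the claim; this refines \cite[Lemma 3.2.3]{Sims}.

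For $p\in[1,\infty]\setminus\{2\}$ the key point is that each $v_t$ is now a spatial partial isometry: by Theorem~\ref{thm:disintegration}, $v_tv_t^*=\pi_0(1_{X_t})$ and $v_t^*v_t=\pi_0(1_{X_{t^*}})$ are multiplication operators, so Theorem~\ref{thm:spatial^partial_isometries_description} gives $v_t=\omega_t U_{\Phi_t}$ as in \eqref{eq:spatial^partial_isometry}. Using the relations \eqref{eq:spatial^partial_isometries_relations} together with the covariance identities of Proposition~\ref{prop:normalized_twisted_rep}, replacing the exponent $1/p$ by $1/r$ in every Radon--Nikodym weight yields, for each $r\in[1,\infty]$, a covariant representation $(\pi_0,v^{(r)})$ on $L^r(\mu)$, hence by Theorem~\ref{thm:disintegration} a representation $\psi_r\colon F^S(\G,\LL)\to B(L^r(\mu))$ with $\psi_p=\psi$, all $\psi_r$ acting on $\mathfrak{C}_c(\G,\LL)$ as ``weighted convolution'' operators built from the single set of combinatorial data $(\Phi_t,\omega_t)_{t\in S}$. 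I would then establish the two endpoint estimates $\|\psi_1(f)\|\le\|f\|_{*d}$ and $\|\psi_\infty(f)\|\le\|f\|_{*r}$ by the change-of-variables/Schur-test computation used for Proposition~\ref{prop:regular_disintegrated}, realizing $\pi_0$ as pullback along a set morphism $\Omega\to X$ so that $\psi_r(f)$ has a kernel supported on (the pullback of) $\G$ of pointwise modulus $|f(\gamma)|$; since the composition operators $T_{\Phi_t}$ are also isometric on $C_0(\Omega)$, the $p=\infty$ endpoint simultaneously covers the $C_0(\Omega)$-statement. Finally, Riesz--Thorin interpolation for the compatible family $\{\psi_r\}_{r\in[1,\infty]}$ between the endpoints gives $\|\psi_r(f)\|\le\|f\|_{*d}^{1/r}\|f\|_{*r}^{1/q'}$ whenever $1/r+1/q'=1$, and specializing to $r=p$ completes the proof.

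The main obstacle is the pair of endpoint estimates: turning the abstract disintegrated representation into a sufficiently concrete ``weighted convolution on the pulled-back groupoid'' for the Fubini/change-of-variables argument of Proposition~\ref{prop:regular_disintegrated} to apply, which requires carefully tracking the Radon--Nikodym cocycle encoded in the $\Phi_t$-weights (equivalently, the quasi-invariance of the base measure) and checking that the pointwise modulus of the kernel of $\psi_r(f)$ equals $|f(\gamma)|$ independently of the decomposition $f=\sum_t a_t\delta_t$ and of the chosen unitary sections $c_U$. A secondary technical point is the well-definedness of the family $\{\psi_r\}$ (independence of all choices), and the $p=2$ case leans on the classical groupoid disintegration theorem rather than on our inverse-semigroup one.
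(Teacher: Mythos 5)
Your reduction — pass to the case where $\psi|_{C_0(X)}$ acts by multiplication operators, then disintegrate via Theorem~\ref{thm:disintegration} — is exactly the paper's opening move. But the way you then obtain the estimate (upgrade each $v_t$ to a spatial partial isometry, build a family $\{\psi_r\}_{r\in[1,\infty]}$ by changing the exponent on the Radon--Nikodym weight, prove two endpoint bounds, and interpolate) has a genuine gap at the interpolation step. The operators $\psi_1(f)$ and $\psi_\infty(f)$ are \emph{not} one operator viewed on different $L^r$-spaces: they differ by the weights $\big(\tfrac{d\mu\circ\Phi_{t^*}}{d\mu}\big)^{1/r}$, which depend on $r$ (they coincide only for counting measure, which is why Riesz--Thorin does apply in Proposition~\ref{prop:regular_disintegrated}). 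Classical Riesz--Thorin therefore does not yield $\|\psi_p(f)\|\le\|f\|_{*d}^{1/p}\|f\|_{*r}^{1/q}$ from the endpoints; you would need Stein's interpolation theorem for an analytic family of operators, a complex-variable argument that does not cover $\F=\R$ — precisely the case the multiplication-operator hypothesis is meant to handle. A second problem sits at the endpoints themselves: for $r=1,\infty$ the space is not reflexive, so Theorem~\ref{thm:disintegration}\ref{enu:disintegration3} does not produce a covariant representation \emph{on the space}, and the identities $v_tv_t^*=\pi_0(1_{X_t})$ needed to invoke Theorem~\ref{thm:spatial^partial_isometries_description} live a priori only in $B''$; so your endpoint representations are not actually constructed. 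Relatedly, the reduction to the non-degenerate part via a contractive projection fails at $p=\infty$ (cf.\ Corollary~\ref{cor:degeneracy_of_L_infty_representations} and the remark after Proposition~\ref{prop:from_degenerate_to_nondegenerate_Lp}), and your $p=2$ route through Renault's disintegration theorem imports second-countability hypotheses that the theorem does not assume.

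The paper avoids all of this with one direct computation: writing $f=\sum_{t\in F}f_t\delta_t$ and $\psi(f)=\sum_t\pi(f_t)v_t$, it splits $|f_t|=|f_t|^{1/p}\cdot|f_t|^{1/q}$ inside the pairing $\int(\psi(f)\xi)\,\eta\,d\mu$, uses only that each $v_t$ is \emph{contractive} (no spatiality) to get $\int|\pi(a)v_t\xi|^p\,d\mu\le\int|\pi(a\circ h_t)\xi|^p\,d\mu$, and applies H\"older with respect to the measure $\sum_{t\in F}\mu$ on the disjoint union of $|F|$ copies of the underlying measure space. This gives the interpolated bound $\|f\|_{*d}^{1/p}\,\|f\|_{*r}^{1/q}$ in one stroke, with no interpolation theorem and no endpoint family; the cases $p=2$, $\F=\C$, and $C_0(\Omega)$ are then handled by unitary conjugation (Lemma~\ref{lem:Hilbert_space_representations of C_0(X)}), automatic positivity (Theorem~\ref{thm:L^p_representations of C_0(X)} together with Proposition~\ref{prop:from_degenerate_to_nondegenerate_Lp}), and duality with the $p=1$ case, respectively. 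To repair your argument, replace the spatiality-plus-interpolation core by this direct H\"older estimate.
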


\begin{rem} 
We do not know if the above assertion holds for $\F=\R$ without any further restrictions. 
The reason is that representations as in Example~\ref{ex:real_representation_from_bicontractive} may occur.
\end{rem}
\begin{rem} When $\F=\C$, Theorem \ref{thm:L_p_norm_estimates} implies that   every covariant representation $(\pi,v)$ of  $(\alpha,u)$ 
on an $L^p$-space integrates to a $\|\cdot\|_{I}$-contractive representation $\pi\rtimes v$   of $\mathfrak{C}_c(\G,\LL)$, cf. 
Proposition \ref{prop:integration_of_rep}.
In particular, this shows that the contractive homomorphism in \cite[Proposition 6.5]{cgt} is isometric, solving an issue formulated therein. 

\end{rem}
\begin{cor}\label{cor:Cstar_boundedness}
Every $*$-homomorphism  $\psi:\mathfrak{C}_c(\G,\LL)\to B$ into a $C^*$-algebra is automatically $\|\cdot\|_{I}$-contractive.
In fact,  $\| \psi(f)\| \leq \| f \|_{d_*}^{1/2} \, \| f \|_{r_*}^{1/2} \leq \|f\|_{I}$, for  $f\in  \mathfrak{C}_c(\G,\LL)$.
\end{cor}
\begin{proof}
By Corollary \ref{cor:representations into C*-algebras}, 
 $\psi$ extends to a representation $\psi:F^{S}(\G,\LL)\to B$, for any unital inverse semigroup $S\subseteq \Bis(\G)$  covering $\G$. Hence the assertion follows by Theorem  \ref{thm:L_p_norm_estimates} 
as we may embed $B$ into $B(L^2(\mu))$ for some $\mu$.
\end{proof}
\begin{rem} The above corollary improves upon one of the main results in \cite{Clark_Zimmerman}, which states  $\|\cdot\|_{I}$-boundedness of
the largest $C^*$-norm on $\mathfrak{C}_c(\G)$ where $\G$ is second countable.
When  $\G$ is Hausdorff this is well-known, see \cite[Lemma 3.2.3]{Sims}, \cite[Lemma 4.3]{BHM}. In the non-Hausdorff setting this seems to be new  and much more subtle. In particular, the proof in \cite[Section 6]{Clark_Zimmerman} requires 5 pages of  estimates,  passes through inductive limit topology  and uses Renault's disintegration theorem. We use  Borel extension-disintegration instead. 
\end{rem}

We first note that when $p<\infty$  we may assume that $\psi$ is non-degenerate.

\begin{prop}\label{prop:from_degenerate_to_nondegenerate_Lp}
Let $p\in [1,\infty)$. 
For any representation $\psi:F^{S}(\G,\LL)\to B(L^p(\mu))$ there is an isometric isomorphism $\Phi : \overline{C_0(X)L^p(\mu)} \to L^p(\overline{\mu})$, where $\overline{\mu}$ is localizable, and then $\overline{\psi}(f) = \Phi(\psi(f)|_{\overline{C_0(X)L^p(\mu)}})$, $f\in F^S(\G,\LL)$, defines a non-degenerate representation $\overline{\psi} : F^S(\G,\LL) \to B(L^p(\overline{\mu}))$ such that $\|\overline{\psi}(f)\|=\|\psi(f)\|$ for all $f\in F^S(\G,\LL)$. 
Moreover, if $\F=\C$ or if $\psi|_{C_0(X)}$ is positive we may choose $\overline{\psi}$ so that $\overline{\psi}|_{C_0(X)}$ consists of multiplication operators.
\end{prop}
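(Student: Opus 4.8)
Write $A:=F^{S}(\G,\LL)$, $E:=L^{p}(\mu)$, and $E_{0}:=\overline{\psi(A)E}$. The plan is to realise $E_{0}$ as an isometric copy of an $L^{p}$-space over a localizable measure and then to let $\overline\psi$ be the (renormalised) restriction of $\psi$ to $E_{0}$. First I would record two easy facts. Since $\psi(f)\,\overline{\psi(A)E}\subseteq\overline{\psi(fA)E}\subseteq E_{0}$, the algebra $\psi(A)$ leaves $E_{0}$ invariant, so $f\mapsto\psi(f)|_{E_{0}}$ is a representation of $A$ on $E_{0}$; it is non-degenerate, since $A$ has an approximate unit and hence $A^{2}$ is dense in $A$. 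Next, by Lemma~\ref{lem:approximate_units} the algebra $A$ carries a contractive approximate unit $\{\mu_{i}\}_{i}$; as $f\mu_{i}\to f$ in $A$ we get $\psi(f)\psi(\mu_{i})=\psi(f\mu_{i})\to\psi(f)$ in norm, while every operator $\psi(f)\psi(\mu_{i})$ has range inside $\psi(\mu_{i})E\subseteq E_{0}$, so that $\|\psi(f)\psi(\mu_{i})\|\le\|\psi(f)|_{E_{0}}\|$. Letting $i$ run gives $\|\psi(f)\|\le\|\psi(f)|_{E_{0}}\|\le\|\psi(f)\|$, i.e.\ restriction to $E_{0}$ is norm preserving.

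\textbf{The core point: $E_{0}$ is isometrically an $L^{p}$-space.} I would prove that $E_{0}$ is the range of a contractive projection on $E$. For $p\in(1,\infty)$ this is immediate: $E=L^{p}(\mu)$ is reflexive, so the bounded net $\{\psi(\mu_{i})\}_{i}$ has a subnet converging in the weak operator topology to some $P\in B(E)$ with $\|P\|\le1$; for $\xi\in E_{0}$ one has $\psi(\mu_{i})\xi\to\xi$ in norm, hence $P\xi=\xi$, and for $\eta\in E$ one has $\psi(\mu_{i})\eta\in E_{0}$, hence $P\eta\in\overline{E_{0}}^{\,w}=E_{0}$; thus $P^{2}=P$ and the range of $P$ is $E_{0}$. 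For $p=1$ reflexivity is unavailable and a separate argument is needed: one can work with the adjoint operators $\psi(\mu_{i})'$ on $L^{\infty}(\mu)=L^{1}(\mu)'$, or, when $\F=\C$, invoke that a non-degenerate representation of $C_{0}(X)$ on $L^{1}(\mu)$ acts by multiplication operators (Theorem~\ref{thm:L^p_representations of C_0(X)}), so that $E_{0}$ is the band generated by the relevant supports and is visibly an $L^{1}$-space. This case is the point I expect to require the most care. Granting the contractive projection, a classical theorem of Ando on contractive projections in $L^{p}$-spaces ($1\le p<\infty$) shows that $E_{0}$ is isometrically isomorphic to $L^{p}(\overline\mu)$ for some measure $\overline\mu$, which by the discussion in Section~\ref{sec:Preliminaries} may be taken localizable (in fact decomposable). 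Fix such an isometric isomorphism $\Phi:E_{0}\to L^{p}(\overline\mu)$.

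\textbf{Assembling $\overline\psi$.} I would then set $\overline\psi(f):=\Phi\circ\bigl(\psi(f)|_{E_{0}}\bigr)\circ\Phi^{-1}\in B(L^{p}(\overline\mu))$ for $f\in F^{S}(\G,\LL)$. Being the composition of the representation $f\mapsto\psi(f)|_{E_{0}}$ with conjugation by an isometric isomorphism, $\overline\psi$ is again a representation; it is non-degenerate because $f\mapsto\psi(f)|_{E_{0}}$ is, this property being transported by $\Phi$; and $\|\overline\psi(f)\|=\|\psi(f)|_{E_{0}}\|=\|\psi(f)\|$ by the norm-preservation established in the first step. This is precisely the assertion. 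The only genuinely non-trivial ingredients are the identification of $E_{0}$ with an $L^{p}$-space (Ando's theorem plus existence of the contractive projection, the latter being the obstacle at $p=1$) and the passage to a localizable model, all the rest being routine bookkeeping with the approximate unit.
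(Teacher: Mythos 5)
Your architecture is the paper's: produce a contractive projection $P$ of $L^p(\mu)$ onto $E_0:=\overline{\psi(F^S(\G,\LL))L^p(\mu)}$, identify its range isometrically with $L^p(\overline{\mu})$ for a localizable $\overline{\mu}$ via the Ando--Tzafriri theorem on ranges of contractive projections (the paper cites \cite[Theorem 6]{Tzafriri}), and recover $\|\psi(f)\|=\|\psi(f)|_{E_0}\|$ from the approximate unit of Lemma~\ref{lem:approximate_units}; those two bookends of your argument are correct. For $p\in(1,\infty)$ your construction of $P$ as a weak-operator cluster point of $\{\psi(\mu_i)\}_i$, using reflexivity of $L^p(\mu)$, is also correct and self-contained.

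The gap is at $p=1$, which the statement includes, and neither of your two suggested repairs works as written. (a) A weak$^*$ cluster point of the adjoints $\psi(\mu_i)'$ in $B(L^\infty(\mu))$ need not be weak$^*$-continuous, hence need not be the adjoint of any operator on $L^1(\mu)$; this is precisely the obstruction that reflexivity removes when $p>1$, so nothing has been gained. (b) Theorem~\ref{thm:L^p_representations of C_0(X)} assumes non-degeneracy, which here is available only on $E_0$ --- the very space you are trying to identify --- so the appeal is circular; and the hypothesis cannot be dropped: on $\ell^1(\{1,2\})$ the contractive idempotent $(\xi_1,\xi_2)\mapsto(\xi_1+\xi_2,0)$ gives a (degenerate, even positive) contractive representation of $\F\cong C_0(\mathrm{pt})$ that is not a multiplication operator, so degenerate representations of $C_0(X)$ on $L^1$ need not act by multiplication and $E_0$ need not be a band. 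The uniform fix, and the one the paper uses, is \cite[Corollary 3.10]{Gardella_Thiel1}: if the algebra has a contractive approximate unit and the Banach space contains no isomorphic copy of $c_0$, then the essential subspace $\overline{\pi(A)E}$ is the range of a norm-one projection. Every $L^p(\mu)$ with $p<\infty$ contains no copy of $c_0$, so this covers $p=1$ (and renders the separate reflexivity argument unnecessary); with that replacement your proof coincides with the paper's.
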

\begin{proof}
By Lemma~\ref{lem:approximate_units}, $F^{S}(\G,\LL)$ and $C_0(X)$ have a common contractive approximate unit $\{\mu_{i}\}_{i}$. 
In particular, $\overline{\psi(F^{S}(\G,\LL))L^p(\mu)}=\overline{\psi(C_0(X))L^p(\mu)}$ and as $L^p(\mu)$ does not contain an isomorphic copy of $c_0$ (see \cite[2.6]{Gardella_Thiel1}) there is a norm one projection $P : L^p(\mu) \to \overline{\psi(C_0(X))L^p(\mu)}$ by \cite[Corollary 3.10]{Gardella_Thiel1}. 
In fact $P$ is a strong limit of $\{\psi(\mu_{i})\}_{i}$, and so it is positive if  $\psi|_{C_0(X)}$ is.
Thus by \cite[Theorem 6]{Tzafriri} there is an isometric isomorphism $\Phi : \overline{\psi(C_0(X))L^p(\mu)} \to L^p(\overline{\mu})$ for some measure $\overline{\mu}$, which can be chosen to be localizable, and if $P$ is positive then we can choose the  isomorphism $\Phi$ to be positive, cf.\ \cite[Corollary 2]{Ando}.  
Also for all $f\in F^{S}(\G,\LL)$ we have $\|\psi(f)\|=\|\psi(f)P\|$ because $\|\psi(f) \xi\| = \lim_{i} \|\psi(f) \psi(\mu_{i}) \xi \| = \lim_{i} \| \psi(f) P \psi(\mu_{i}) \xi \| \leq \|\psi(f)P \| \|\xi\|$. 
This implies that $\overline{\psi}$ is a non-degenerate representation, and $\overline{\psi}|_{C_0(X)}$ is positive if $\psi|_{C_0(X)}$ is. 
Hence for $p\neq 2$ the last part of the assertion follows from  Theorem~\ref{thm:L^p_representations of C_0(X)}.
For $p=2$  it follows from  Lemma~\ref{lem:Hilbert_space_representations of C_0(X)}.
\end{proof}

\begin{rem}
Proposition~\ref{prop:from_degenerate_to_nondegenerate_Lp} fails when $p=\infty$ and $X$ is not compact, see Corollary~\ref{cor:degeneracy_of_L_infty_representations}.
We do not know whether it extends to $C_0$ or Lindenstrauss spaces.
\end{rem}

\begin{proof}[Proof of Theorem \ref{thm:L_p_norm_estimates}]
We may assume that $S$ is a wide inverse subsemigroup  of $S(\LL)$. Indeed, by \eqref{eq:majorization_by_S(L)_semigroup} we have a canonical representation $F^{\widetilde{S}\cap S(\LL)}(\G,\LL)\donto F^{S}(\G,\LL)$. Thus composing $\psi$ with this representation, we may replace $S$ with  $\widetilde{S}\cap S(\LL)$. Then there is  a twisted action $(\alpha, u)$ of $S$ that models $(\G,\LL)$, see Lemma~\ref{lem:twisted_groupoids_come_from_twisted_actions}. Thus we have $\G = S\ltimes_{h} X$ where $h : S \to \PHomeo(X)$ is an inverse semigroup action and $\alpha_t : C_0(X_{t^*}) \to C_0(X_{t})$;\ $\alpha_t(a) = a\circ h_{t^*}$,
 for $a\in C_0(X_{t^*})$, $t\in S$, and   $F^{S}(\G,\LL)\cong C_0(X)\rtimes_{(\alpha,u)} S$. 
When $p=1$ we may  assume that $L^1(\mu)$ has a predual as we may pass to the representation $\psi'':F^{S}(\G,\LL)\to B(L^1(\mu)'')$ given by $\psi''(a):=\psi(a)''$, $a\in F^{S}(\G,\LL)$.
By the duality between abstract $M$-spaces and abstract $L$-spaces, see \cite[pages 25, 135]{Lacey}, we get that $L^1(\mu)''$ is isomorphic as a Banach lattice  to  $L^1(\overline{\mu})$ for some localizable measure $\overline{\mu}$. 
Thus we may replace $\psi$ with  $\psi'':F^{S}(\G,\LL)\to B(L^1(\overline{\mu}))$  (then $\psi''(C_0(X)^+)$ consists of positive operators,  if $\psi(C_0(X)^+)$ does). 
Hence for any $p\in [1,\infty]$, $L^p(\mu)$ has a predual and so Theorem~\ref{thm:Borel_extension_of_representations} applies. Accordingly, $\psi$ extends to a representation 
\[
    \widetilde{\psi}=\widetilde{\pi}\rtimes v:\overline{\B^S(\G,\LL)}^{\| \cdot \|_{\max}^{S}}\cong\B(X)\rtimes_{(\widetilde{\alpha},u)} S \to B(L^p(\mu))
\]
 where $(\widetilde{\pi},v)$ is a covariant representation of the extended action $(\widetilde{\alpha},u)$ on $L^p(\mu)$.
Let us now assume that $p<\infty$. 
Then by Proposition~\ref{prop:from_degenerate_to_nondegenerate_Lp}  we may  assume that  $\widetilde{\pi}$ is given by multiplication operators. 
Hence we have a representation $\pi_0:\B(X)\to L^{\infty}(\mu)$ such that 
\[
    \widetilde{\pi}(a)\xi= \pi_0(a)\cdot \xi, \quad \text{ for  all } \xi \in L^{p}(\mu),\ a\in \B(X). 
\]
Since $\pi_0$ is a $*$-homomorphism between $C^*$-algebras, the continuous functional calculus applies  (also in the real case \cite[1.1.5]{Schroder}), and in particular we have $|\pi_0(a)|^{\alpha}=\pi_0(|a|^{\alpha})$ for any $\alpha>0$. 
Using this for $q\in [1,+\infty]$ with $1/p+1/q=1$, and any $\xi \in L^{p}(\mu)$, we get 
\begin{equation}\label{eq:spectral_calc_consequences}
    | \pi_0(a)|=  \pi_0(|a|^{\frac{1}{p}})\pi_{0}(|a|^{\frac{1}{q}}), \qquad  |\pi_0(a)\cdot \xi|^p=\pi_0(|a|^p) \cdot|\xi|^p.
\end{equation}
Let $f = \sum_{t\in F} f_t \delta_t \in\B^S(\G,\LL)$, where $F \subseteq S$ is finite and $f_t\in \B(X_t)$, see \eqref{eq:coeffcients_isomorphisms2}.
Since we work with Borel sections we may disjointify them, that is we may assume that the strict supports of  $f_t \delta_t$'s are pairwise disjoint subsets of $\G$
(here extension to Borel sections is crucial, especially in the non-Hausdorff case).
Then for each $x\in X$ we get  
$\sum_{t\in F} |f_t(x)| =\sum_{t\in F} | (f_t\delta_t)\circ r|_{U_t}^{-1}(x)|=\sum_{r(\gamma)=x} |f(\gamma)|
$
and  $\sum_{t\in F} |f_t (h_{t} (x))|=\sum_{t\in F} | (f_t\delta_t)\circ d|_{U_t}^{-1}|=\sum_{d(\gamma)=x} |f(\gamma)|$.
In particular, 
\begin{equation}\label{eq:epasilon_estimates_for_Hahn}
\| f \|_{r_*}=  \|\sum_{t\in F} |f_t|  \|_\infty  \quad \text{ and } \quad
\quad  \| f \|_{d_*}=  \|\sum_{t\in F} |f_t| \circ h_{t} \|_\infty.
\end{equation} 
Let $\xi \in L^{p}(\mu)$ and $\eta \in L^{q}(\mu)$. 
By covariance of $(\widetilde{\pi},v)$, condition  \ref{item:covariant_representation1}, and contractiveness of $v_t$,  for any $a\in \B(X_{t})$ we have
$\int |\pi(a)v_t \xi |^p d\mu = \int | v_t \pi (a\circ h_{t}) \xi |^p d\mu \leq \int | \pi(a\circ h_{t}) \xi |^p d\mu$, so
\begin{equation}\label{eq:spectral_calc_covariant}
\int |\pi_0(a)  v_t \xi |^p d\mu  \leq \int | \pi_0(a\circ h_{t}) \xi |^p d\mu.
\end{equation} 
Applying H\"older's inequality to the measure $\sum_{t\in F} \mu$, when $p>1$   ($q<\infty$), we get
\[\begin{split}
    \left| \int \big( \pi \rtimes v(f) \xi \big) \eta \, d\mu \right|& \leq \sum_{t\in F} \int  \big| (\pi_0 (f_t ) (v_t \xi) \eta \big| \, d\mu
	\stackrel{\eqref{eq:spectral_calc_consequences}}{=}\sum_{t\in F} \int  \big| \pi_0 ( |f_t|^{1/p} ) (v_t \xi) \big| \, \big| \pi_0 ( |f_t|^{1/q} ) \eta \big| \, d\mu 
		\\
	&\stackrel{\text{H\"older}}{\leq} \left( \sum_{t\in F} \int \big| \pi_0 ( |f_t|^{1/p} ) v_t \xi \big|^p \, d\mu \right)^{1/p}  \left( \sum_{t\in F} \int \big| \pi_0 ( |f_t|^{1/q} ) \eta \big|^{q} \, d\mu \right)^{1/q} 
\\
&\stackrel{\eqref{eq:spectral_calc_covariant}}{\leq} \left( \sum_{t\in F} \int \big| \pi_0 ( |f_t|^{1/p}\circ h_{t}  )  \xi \big|^p \, d\mu \right)^{1/p}  \left( \sum_{t\in F} \int \big| \pi_0 ( |f_t|^{1/q} ) \eta \big|^{q} \, d\mu \right)^{1/q}  
\\
&\stackrel{\eqref{eq:spectral_calc_consequences}}{=} \left( \int \pi_0 \left( \sum_{t\in F} |f_t| \circ h_{t} \right) |\xi|^p \, d\mu \right)^{1/p}  \left( \int \pi_0\left( \sum_{t\in F} |f_t| \right) |\eta|^{q} \, d\mu \right)^{1/q} 
\\
          &\stackrel{\|\pi_0\|\leq 1}{\leq} \left\| \sum_{t\in F} |f_t| \circ h_{t} \right\|_{\infty}^{1/p} \, \| \xi \|_{p} \, \left\| \sum_{t\in F} |f_t| \right\|_{\infty}^{1/q} \, \|\eta\|_{q}.
\end{split}
\]
Hence  $\|\pi \rtimes v(f) \| \leq 
\| f \|_{d_*}^{1/p} \, \| f \|_{r_*}^{1/q}$ by \eqref{eq:epasilon_estimates_for_Hahn}. For $p=1$ the proof is even simpler, as then
$$
    \left| \int  \pi \rtimes v(f) \xi  \, d\mu \right| \leq \sum_{t\in F} \int  \big| v_t \pi (  |f_t\circ h_t| )  \xi\big| \, d\mu
        \leq  \sum_{t\in F}  \int   \pi (  |f_t\circ h_t| )  \big|\xi\big| \, d\mu  \leq   \left\| \sum_{t\in F} |f_t| \circ h_{t} \right\|_{\infty} \|\xi\|_{1}.
$$
Thus $\| \pi\rtimes v(f) \| \leq \|f\|_{d_*}$ again by \eqref{eq:epasilon_estimates_for_Hahn}.

Now let us consider a representation $\psi:F^{S}(\G,\LL)\to B(C_0(\Omega))$ (and assume $\psi|_{C_0(X)}$ is positive when $\F=\R$). 
This in particular covers the case where $\psi$ is a representation on $L^{\infty}(\mu)$. 
The dual space $E'$ of $E:=C_0(\Omega)$ is  isomorphic as a Banach lattice to an $L^1$-space $L^{1}(\mu)$.
Thus the formula $\tilde{\psi}(f) := \psi(\widecheck{f})'$,  $f\in \mathfrak{C}_c(\G, \LL^{\op})^{\op}$, defines a representation $\tilde{\psi} :F^S(\G, \LL^{\op})^{\op} \to B(E')\cong B(L^{1}(\mu) )$ (with $\tilde{\psi}|_{C_0(X)}$ positive when $\psi|_{C_0(X)}$ is positive). Therefore by the previous step 
  $\|\psi(f)\|=\|\psi(f)'\|=\|\tilde{\psi}(\widecheck{f})\|\leq  \| \widecheck{f}\|_{*d}=  \| f\|_{*r}$.
\end{proof}

\subsection{Full $L^p$-groupoid algebras and spatial covariant representations}

The following definition is motivated by Theorem \ref{thm:L_p_norm_estimates} and the desire to have a consistent theory in both complex and real case.
\begin{defn}\label{defn:full_Lp}  
Let $p \in [1,\infty]$.
The \emph{full $L^p$-operator algebra} is the completion $F^p(\G,\LL) = \overline{\mathfrak{C}_c(\G,\LL)}^{\|\cdot\|_{L^p} }$ in the norm $\|f\|_{L^p} := \sup_{\psi\in\RR} \| \psi(f) \|$, where $\RR$ consists of homomorphisms $\psi:\mathfrak{C}_c(\G,\LL)\to B(E)$ such that: 
\begin{enumerate}[label={(R\arabic*)}]
    \item\label{enu:full_Lp1} $E=L^p(\mu)$ for some measure $\mu$;
    \item\label{enu:full_Lp2} $\psi$ is $\|\cdot\|_{\max}^{S}$-contractive for some unital inverse subsemigroup $S\subseteq \Bis(\G)$ covering $\G$;
    \item\label{enu:full_Lp3}  operators in $\psi(C_c(X)^+)$ preserve the cone of positive functions in $E$.
\end{enumerate}
\end{defn}

\begin{thm}\label{thm:initial_on_L^p_full}
Let $p,q\in [1,\infty]$ with $1/p+1/q=1$.
\begin{enumerate}

	\item\label{enu:initial_on_L^p_full1} We have $F^1(\G,\LL)=F^1_{\red}(\G,\LL) = F_{d_*}(\G,\LL)$ and $F^{\infty}(\G,\LL) = F^{\infty}_{\red}(\G,\LL) = F_{r_*}(\G,\LL)$.	

 	\item\label{enu:initial_on_L^p_full5} $\|f\|_{L_p} \leq \| f \|_{L^1}^{1/p} \, \| f \|_{L^\infty}^{1/q} \leq \|f\|_{I}$ for  $f\in  \mathfrak{C}_c(\G,\LL)$, and so condition \ref{enu:full_Lp2} in Definition~\ref{defn:full_Lp} can be replaced by $\|\cdot\|_{I}$-contractiveness of $\psi$ (which alone is stronger than \ref{enu:full_Lp2}).

	\item\label{enu:initial_on_L^p_full2}  $\|\cdot \|_{L^{2}} = \|\cdot\|_{C^*\max}$ is the largest $C^*$-norm on $\mathfrak{C}_c(\G,\LL)$, so $F^{2}(\G,\LL) = C^*(\G,\LL)$.

		\item\label{enu:initial_on_L^p_full4} If $\F=\C$ or $p=2$, condition \ref{enu:full_Lp3} in Definition \ref{defn:full_Lp} can be dropped, so $F^p(\G,\LL)$ is universal for all 
		 homomorphisms from $\mathfrak{C}_c(\G,\LL)$  into $L^p$-operator algebras, which satisfy \ref{enu:full_Lp2} (equivalently are $\|\cdot\|_I$-contractive).
\item \label{enu:initial_on_L^p_full4.5} If $p<\infty$, the definition of $\|\cdot\|_{L^p}$  is not affected if one  restricts to non-degenerate homomorphisms or if
one replaces \ref{enu:full_Lp2}  with ``$\psi(C_c(X))$ consists of multiplication operators''. 
%


    \item\label{enu:initial_on_L^p_full7} If $p=\infty$ condition \ref{enu:full_Lp1} in Definition~\ref{defn:full_Lp} can be replaced by ``$E = C_0(\Omega)$ for some locally compact Hausdorff space'', and if $\F = \C$ we can replace \ref{enu:full_Lp1} by ``$E$ is a Lindenstrauss space'' (and drop condition \ref{enu:full_Lp3}). 
  
    \item\label{enu:initial_on_L^p_full3} $F^p(\G,\LL)^{\op}\cong F^{q}(\G,\LL^{\op})$ and   $F^p(\G,\LL)\anti F^{q}(\G,\LL)$ where these isometric maps are induced by the involutions 
		$\widecheck{\,}$ 
		and $*$ on $\mathfrak{C}_c(\G,\LL)$, respectively.
\end{enumerate}
\end{thm}
\begin{proof} 
By  Theorem~\ref{thm:L_p_norm_estimates} we have $\|f\|_{L^p}\leq \| f \|_{d_*}^{1/p} \, \| f \|_{r_*}^{1/q} \leq \|f\|_{I}$
for 
$f\in  \mathfrak{C}_c(\G,\LL)$. Applying this to $p=1$ and $p=\infty$ we get $\|f\|_{L^1}\leq \| f \|_{d_*}$ and $\|f\|_{L^{\infty}}\leq \| f \|_{d_*}$. 
On the other hand, by Proposition~\ref{prop:regular_disintegrated}\ref{enu:regular_disintegrated1} we have $\|f\|_{*d}=\|\Lambda_1(f)\|\leq \|f\|_{L^1} $ and $\|f\|_{*r}=\|\Lambda_{\infty}(f)\|\leq  \|f\|_{L^{\infty}}$. This shows \ref{enu:initial_on_L^p_full1}. As a consequence the estimates from Theorem~\ref{thm:L_p_norm_estimates} translate to \ref{enu:initial_on_L^p_full5}. 
For $p=2$,  a homomorphism $\psi:\mathfrak{C}_c(\G,\LL)\to B(L^2(\mu))$ is $\|\cdot\|_{\max}^{S}$-contractive if and only if it is a $*$-homomorphism, see Corollary~\ref{cor:representations into C*-algebras}. Also by Lemma~\ref{lem:Hilbert_space_representations of C_0(X)} we may assume that for any such homomorphism  $\psi(C_c(X))$ acts by multiplication operators, and so operators in $\psi(C_c(X)^+)$ are automatically positive.  This implies \ref{enu:initial_on_L^p_full2} (and  also \ref{enu:initial_on_L^p_full4}  for $p=2$). 
 Proposition \ref{prop:from_degenerate_to_nondegenerate_Lp} implies 
 \ref{enu:initial_on_L^p_full4.5} and   \ref{enu:initial_on_L^p_full4} for $p<\infty$.
Statement \ref{enu:initial_on_L^p_full4} for $p=\infty$ follows from the case $p=1$ and Corollary~\ref{cor:L_infty_lindenstrauss_spaces_etc}.  Now
\ref{enu:initial_on_L^p_full7} follows from the last part of Theorem~\ref{thm:L_p_norm_estimates}, and for $\F=\C$ from Corollary~\ref{cor:L_infty_lindenstrauss_spaces_etc} and \ref{enu:initial_on_L^p_full4}. The isomorphisms in \ref{enu:initial_on_L^p_full3} for $p=1,\infty$ and $q=\infty,1$ follow from \ref{enu:initial_on_L^p_full1} and 
Proposition~\ref{prop:regular_disintegrated}\ref{enu:regular_disintegrated2}.
Assuming $p\in (1,\infty)$, $F^p(\G,\LL)^{\op}\cong F^{q}(\G,\LL^{\op})$ holds by Corollary~\ref{cor:reflexive_spaces_oppositie_iso}.  
In fact, if $\F=\R$ we need to modify the proof of Proposition~\ref{prop:duality} by considering only representations where $\psi|_{C_0(X)}$ is positive, but under the standard isomorphism $L^p(\mu)'\cong L^q(\mu)$, an operator $a\in B(L^p(\mu))$ is positive if and only if its dual $a'\in B(L^q(\mu))$ is positive, and so the proof works.
The isomorphism $F^p(\G,\LL)^{\op}\cong F^{q}(\G,\LL^{\op})$ translates to an anti-isomorphism $F^p(\G,\LL)\anti F^{q}(\G,\LL)$, see \cite[Remark 3.2]{Buss_Sims}.
\end{proof}
\begin{rem}\label{rem:second_countable_vs_measures}
If $\G$ is second countable,  Theorem \ref{thm:initial_on_L^p_full} implies that  the complex algebra $F^p(\G)$ is universal for $\|\cdot\|_{I}$-contractive homomorphisms 
from $\mathfrak{C}_c(\G)$ into $B(L^p(\lambda))$ where $\lambda$ is a standard $\sigma$-finite Borel measure, which is consistent with \cite[Definition 6.4]{Gardella_Lupini17}.
Indeed, second countability of $\G$ implies that  $F^p(\G,\LL)$ is separable, and therefore  \cite[Proposition 1.25]{Phillips} shows that   $F^p(\G,\LL)$, for $p\in [1,\infty)$,
is  isometrically represented on a separable $L^p$-space $L^p(\mu)$. We get the same claim for $p=\infty$ by using the  claim for $p=1$ and Theorem \ref{thm:initial_on_L^p_full}\ref{enu:initial_on_L^p_full3}. 
It is well known that separable space $L^p(\mu)$ is isometrically isomorphic to $L^p(\lambda)$ where $\lambda$   is a $\sigma$-finite  Borel measure  on a standard Borel space (and this isomorphism respects indicator functions).
\end{rem}

We may view representations of $F^p(\G,\LL)$ as representations of $F^S(\G,\LL)$ for any unital inverse semigroup $S\subseteq \Bis(\G)$ covering $\G$. 
In particular, we may use our disintegration theorem to treat them as covariant representations for an inverse semigroup  action. 
To this end, until the end of this section 
\begin{center}
\emph{we fix a twisted action $(\alpha, u)$ that models $(\G,\LL)$}
\end{center}
as in Lemma~\ref{lem:twisted_groupoids_come_from_twisted_actions}. 
Thus we have $\G = S\ltimes_{h} X$ where $h : S \to \PHomeo(X)$ is an inverse semigroup action and $\alpha_t : C_0(X_{t^*}) \to C_0(X_{t^*})$;\ $\alpha_t(a) = a\circ h_{t^*}$, for $a\in C_0(X_{t})$, $t\in S$.
We also fix for a while a localizable measure space $(\Omega, \Sigma,\mu)$ and discuss covariant representations of $\alpha$ that are given by data coming from the space $(\Omega, \Sigma,\mu)$. 

\begin{defn}\label{defn:covariant_representation_spatial}
A covariant representation of $(\alpha,u)$ on $L^p(\mu)$, $p\in [1,\infty]$, as in Definition~\ref{defn:covariant_representation_on_space}, is called \emph{spatial} if $\pi : C_0(X) \to B(L^p(\mu))$ acts by multiplication operators on $L^p(\mu)$ and $v : S \to \SPIso(L^p(\mu))$ takes values in the inverse semigroup of spatial partial isometries  (Definition~\ref{def:spatial^partial_isos}).
\end{defn}


\begin{prop} \label{prop:covariant_rep_is_necessarily_spatial}
If $p\in (1,\infty)$ and $(\pi,v)$ is a  covariant representation  of $(\alpha,u)$ on $L^p(\mu)$ where $\pi$ is given by multiplication operators, then 
$(\pi,v)$ is spatial. If $\F=\C$ and $p\in (1,\infty) \setminus \{2\}$ then every non-degenerate covariant representation $(\pi,v)$ of $(\alpha,u)$ on $L^p(\mu)$ is spatial. 
\end{prop}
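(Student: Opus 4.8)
The plan is to reduce everything to the structure theorems already established, namely Theorem~\ref{thm:spatial^partial_isometries_description} (the Banach--Lamperti description of partial isometries on $L^p(\mu)$), Theorem~\ref{thm:L^p_representations of C_0(X)} (positive representations of $C_0(X)$ act by multiplication), and Corollary~\ref{cor:covariant_reps_implies_MP^partial_isos} (non-degenerate covariant representations on complex Banach spaces consist of $MP$-partial isometries). Throughout I fix $p\in(1,\infty)$ and a covariant representation $(\pi,v)$ of $(\alpha,u)$ on $L^p(\mu)$; recall from Definition~\ref{defn:covariant_representation_on_space} that each $v_t$ admits a contractive generalised inverse $v_t^*$ with $v_tv_t^*=\overline{\pi}_{tt^*}(1_{tt^*})$ and $v_t^*v_t=\overline{\pi}_{t^*t}(1_{t^*t})$ the projections onto $\overline{\pi(I_t)E}$ and $\overline{\pi(I_{t^*})E}$ respectively.

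First I would handle the case where $\pi$ acts by multiplication operators. Then $\pi$ corresponds to a representation $\pi_0:C_0(X)\to L^\infty(\mu)$, and for each idempotent $e\in E(S)$ the multiplier $1_e=1_{X_e}\in\Mult(I_e)$ is sent by $\overline{\pi}_e$ to multiplication by the characteristic function $\pi_0''(1_{X_e})\in L^\infty(\mu)$ (using that the strict extension of a multiplication representation is again given by multiplication; cf.\ the identification $\Mult(C_0(X_e))=C_b(X_e)$ and that characteristic functions of clopen-in-$X_e$ sets pull back to idempotents of $L^\infty$). Hence for every $t\in S$ the operators $v_tv_t^*$ and $v_t^*v_t$ are multiplication operators by characteristic functions, since $tt^*,t^*t\in E(S)$. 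By Theorem~\ref{thm:spatial^partial_isometries_description} this means exactly that $v_t\in\SPIso(L^p(\mu))$, so $(\pi,v)$ is spatial. This gives the first assertion.

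For the second assertion, assume $\F=\C$, $p\in(1,\infty)\setminus\{2\}$, and $(\pi,v)$ non-degenerate. The key point is that $\pi:C_0(X)\to B(L^p(\mu))$ is then automatically non-degenerate: indeed $C_0(X)$ has an approximate unit which is also an approximate unit for $F^S(\G,\LL)\cong C_0(X)\rtimes_{(\alpha,u)}S$ (Lemma~\ref{lem:approximate_units}), and by Proposition~\ref{prop:integration_of_rep} the integrated representation $\pi\rtimes v$ has $\overline{\pi\rtimes v(F^S(\G,\LL))L^p(\mu)}=\overline{\pi(C_0(X))L^p(\mu)}$, which equals $L^p(\mu)$ by non-degeneracy of $(\pi,v)$. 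Now Theorem~\ref{thm:L^p_representations of C_0(X)} applies: a non-degenerate complex representation of $C_0(X)$ on $L^p(\mu)$ with $p\neq 2$ is positive and therefore acts by multiplication operators. With $\pi$ given by multiplication operators we are back in the first case, so $(\pi,v)$ is spatial.

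The main obstacle I expect is the bookkeeping at the idempotent/multiplier level: one must verify carefully that when $\pi$ acts by multiplication, the canonically extended representations $\overline{\pi}_e:\Mult(I_e)\to B(\overline{\pi(I_e)E})$ send the units $1_e$ to multiplication by characteristic functions, so that $v_e=\overline{\pi}_e(1_e)$ is genuinely a multiplication operator and not merely a norm-one idempotent. This hinges on the identification of $\overline{\pi(I_e)E}$ with an ideal $L^p(\mu|_{D_e})$ for a measurable set $D_e$ (which follows since a norm-one idempotent given by an $L^p$-projection is multiplication by a characteristic function, by \cite[Proposition~4.9]{BDEGGMM}, once one knows $v_e$ is an $L^p$-projection --- and it is, being a contractive idempotent whose complement is also contractive on an $L^p$-space, or alternatively an $MP$-partial isometry idempotent via Corollary~\ref{cor:covariant_reps_implies_MP^partial_isos} together with Theorem~\ref{thm:spatial^partial_isometries_description}). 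Once this identification is in place the rest is a direct appeal to the cited theorems.
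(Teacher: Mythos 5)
Your overall architecture coincides with the paper's: reduce the second assertion to the first via Theorem~\ref{thm:L^p_representations of C_0(X)} (non-degeneracy of $(\pi,v)$ is by definition non-degeneracy of $\pi$, so your detour through the integrated representation is unnecessary), and for the first assertion show that the idempotents $v_e$, $e\in E(S)$, are multiplications by characteristic functions and then invoke Theorem~\ref{thm:spatial^partial_isometries_description}. The problem is the justification you give for that crucial step. Your first proposed argument --- that $v_e$ is an $L^p$-projection because it is a contractive idempotent with contractive complement --- is false: bicontractive projections on $L^p$-spaces need not be $L^p$-projections. The flip $U\xi=(\xi(2),\xi(1))$ on $\ell^p(\{1,2\})$ gives a bicontractive projection $P=\tfrac{1+U}{2}$ that is not a multiplication operator (this is exactly Example~\ref{ex:real_representation_from_bicontractive} and Remark~\ref{rem:types_of_projections}; by \cite{Bernau_Lacey2} bicontractive projections correspond to involutive isometries, which are generally not multiplications by $\pm1$-valued functions). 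Your fallback via Corollary~\ref{cor:covariant_reps_implies_MP^partial_isos} does not repair this, because that corollary requires $\F=\C$, non-degeneracy, and (to pass from hermitian to multiplication) $p\neq 2$, none of which are hypotheses of the first assertion, which is stated for both fields, for possibly degenerate $\pi$, and for all $p\in(1,\infty)$.

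The information you are not using is that $v_e$ is not merely bicontractive: by Lemma~\ref{lem:covariant_reps_implies_MP^partial_isos} (equivalently, Lemma~\ref{lem:reflexive_covariant_representations}) it is the strong (or weak$^*$) limit of the net $\{\pi(\mu_i^e)\}_i$, and each $\pi(\mu_i^e)$ is a multiplication operator. A bounded strong limit of multiplication operators commutes with every multiplication by a characteristic function, hence lies in the commutant of $L^\infty(\mu)$ acting on $L^p(\mu)$, which for localizable $\mu$ is $L^\infty(\mu)$ itself; being an idempotent, $v_e$ is therefore multiplication by a characteristic function, i.e.\ an $L^p$-projection by \cite[Proposition 4.9]{BDEGGMM}. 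This is precisely the argument the paper runs, and it is field-independent and free of non-degeneracy assumptions. Once this is in place, the rest of your write-up (that $v_tv_t^*=v_{tt^*}$ and $v_t^*v_t=v_{t^*t}$ are then multiplications by characteristic functions, so Theorem~\ref{thm:spatial^partial_isometries_description} applies) goes through as you describe.
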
  
\begin{proof} By Lemma \ref{lem:reflexive_covariant_representations}, for $e\in E(S)$ we have $v_e = B_*\mhyphen\lim\pi(\mu_i^e)$, where $\{\mu_{i}^e\}_{i}$ is an approximate unit in $C_0(X_{e})$
and  $B_* := L^q(\mu) \mathbin{\widehat{\otimes}} L^p(\mu)$. This means that  $\int \eta   (v_e\xi)\,d\mu = \lim_{i} \int \eta  \pi(\mu_i^e)\xi \,d\mu$ for all $(\eta,\xi)\in L^q(\mu)\times L^p(\mu)$.
Using that $\pi(\mu_i^e)$ are operators of multiplication, the latter condition readily implies that $v_e$ commutes with projections given by characteristic functions. 
Thus by \cite[Proposition 4.9]{BDEGGMM},  $v_e$ is an $L^p$-projection. Hence by Theorem~\ref{thm:spatial^partial_isometries_description}, the map $v$ takes values in $\SPIso(L^p(\mu))$, and so 
$(\pi,v)$ is spatial.  The second part now follows from Theorem~\ref{thm:L^p_representations of C_0(X)}. 
\end{proof}

In what follows we use the notation of Subsection~\ref{subsect:Partial isometries}.

\begin{defn}\label{defn:covariant_triple_spatial}
A \emph{spatial covariant triple} $(\pi_0,\Phi, \omega)$ for $(\alpha,u)$ and $\mu$ consists of 
a representation $\pi_0 : C_0(X)\to L^\infty(\mu)$, an inverse semigroup $\Phi = \{ [\Phi_s] \}_{s\in S}\subseteq \PAut([\Sigma])$ of partial set automorphisms $\Phi_{s} : \Sigma_{D_{s}} \to \Sigma_{D_{s^*}}$, $D_{s}, D_{s^*}\in \Sigma$, $s\in S$, satisfying $[\Phi_s]\circ [\Phi_t]=[\Phi_{st}]$ for $s,t\in S$,
and a family (cocycle) $\omega = \{\omega_{s}\}_{s\in S}$ of partial unimodal maps $\omega_s\in UL^{\infty}(\mu|_{D_{s^*}})$, $s\in S$. 
These must satisfy: 
\begin{enumerate}[labelindent=40pt,label={(SCT\arabic*)},itemindent=1em]
    \item\label{enu:covariant_representation_spatial1} $T_{\Phi_t}(\pi_0(a)) = \pi_0(\alpha_t(a))$ for all $a\in C_0(X_{t*})$, $t\in S$; 
    \item\label{enu:covariant_representation_spatial2} $\pi_0(\mu_i^e) 1_{D}\nearrow 1_D$, for every  measurable $D\subseteq D_e$ with $\mu(D)<\infty$, every positive  approximate unit $\{\mu_i^e\}_{i} \subseteq C_0(X_e)$ and all $e\in E(S)$;
    \item\label{enu:covariant_representation_spatial3} $\pi_0(a)\omega_{s}T_{\Phi_s}(\omega_t) = \pi_0(au(s,t))\omega_{st}$ $\mu$-almost everywhere, for $s,t \in S$, $a\in C_0(X_{st})$. 
\end{enumerate}
We say that $(\pi_0, \Phi, \omega)$ is \emph{non-degenerate} if $\pi_0(\mu_i) 1_{D}\nearrow 1_D$, for every measurable $D$ with $\mu(D)<\infty$ and  a positive  contractive approximate unit $\{\mu_i\}_{i} \subseteq C_0(X)$. 
\end{defn}

\begin{prop} \label{prop:spatial_cov_reps22222}
For each $p\in [1,\infty)$ we have a bijective correspondence between spatial covariant representations $(\pi, v)$ of $(\alpha , u)$ on $L^{p}(\mu)$ and spatial covariant triples $(\pi_0, \Phi, \omega)$ for $(\alpha , u)$ and $\mu$. This correspondence is given by 
\[ 
    \big( \pi(a)\xi \big) (\omega) = \pi_0(a)(\omega) \xi(\omega), \quad v_{t} := \omega_t  \left( \frac{d\mu \circ \Phi_{t*}}{d\mu|_{D_{t^*}}} \right)^{\frac{1}{p}} T_{\Phi_{t}}, \quad a\in C_0(X),\ \xi \in L^p(\mu),\ t\in S.
\]
The representation $(\pi, v)$ is non-degenerate if and only if the corresponding triple $(\pi_0,\Phi, \omega)$ is non-degenerate.
\end{prop}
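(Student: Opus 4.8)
The plan is to establish the correspondence in both directions and verify that the relations in Definition~\ref{defn:covariant_representation_spatial} (spatiality) and Definition~\ref{defn:covariant_representation_on_space} (covariance on a Banach space) translate exactly into the relations \ref{enu:covariant_representation_spatial1}--\ref{enu:covariant_representation_spatial3}. The key technical input is the description of spatial partial isometries from Proposition~\ref{prop:spatial^partial_isometries}: every element of $\SPIso(L^p(\mu))$ is uniquely (up to null sets) of the form $\omega U_\Phi$ for a partial set automorphism $\Phi$ and a partial unimodular function $\omega$, together with the composition and adjoint formulas \eqref{eq:spatial^partial_isometries_relations}. Combined with the fact (cited before Theorem~\ref{thm:spatial^partial_isometries_description}) that $L^p$-projections on $L^p(\mu)$ are exactly multiplication operators by characteristic functions, this pins down the data $(\pi_0,\Phi,\omega)$ attached to a spatial covariant representation.

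First I would start from a spatial covariant representation $(\pi,v)$ on $L^p(\mu)$. Since $\pi$ acts by multiplication operators, there is a unique representation $\pi_0:C_0(X)\to L^\infty(\mu)$ with $[\pi(a)\xi](\omega)=\pi_0(a)(\omega)\xi(\omega)$. Since $v_t\in\SPIso(L^p(\mu))$, Proposition~\ref{prop:spatial^partial_isometries} gives uniquely $v_t=\omega_t U_{\Phi_t}$ with $\Phi_t:\Sigma_{D_{\Phi_t}}\to\Sigma_{D_{\Phi_{t^*}}}$ and $\omega_t\in UL^\infty(\mu|_{D_{\Phi_{t^*}}})$; I would fix the convention $D_s:=D_{\Phi_s}$. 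Now I translate the three covariance axioms. Axiom \ref{item:covariant_representation3'} (ranges of $v_t,v_t^*$ are $\overline{\pi(I_t)E}$, $\overline{\pi(I_{t^*})E}$) together with Lemma~\ref{lem:covariant_reps_implies_MP^partial_isos}, which identifies $v_{e}=v_{tt^*}$ as the strong limit of $\pi(\mu_i^t)$, forces $D_t$ (resp.\ $D_{t^*}$) to be the support of $\overline{\pi_0(C_0(X_t))L^p(\mu)}$, and this is precisely condition \ref{enu:covariant_representation_spatial2} spelled out in terms of $\pi_0$ and the approximate units. Axiom \ref{item:covariant_representation1'}, $v_t\pi(a)v_t^*=\pi(\alpha_t(a))$, rewrites using $(\omega_tU_{\Phi_t})^*=T_{\Phi_{t^*}}(\overline{\omega_t})U_{\Phi_{t^*}}$ and the composition rule: the unimodular factors cancel and one is left with $T_{\Phi_t}(\pi_0(a))=\pi_0(\alpha_t(a))$ on the appropriate domain, which is \ref{enu:covariant_representation_spatial1}. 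Axiom \ref{item:covariant_representation3'} for the product, $v_sv_t=\overline\pi_{st}(u(s,t))v_{st}$, combined with the semigroup property $\Phi_s\circ\Phi_t=\Phi_{st}$ coming from $v_sv_t$ and $v_{st}$ having the same ``underlying'' set automorphism, and with the cocycle formula $(\omega_sU_{\Phi_s})(\omega_tU_{\Phi_t})=\omega_s T_{\Phi_s}(\omega_t)U_{\Phi_s\circ\Phi_t}$, yields $\pi_0(a)\omega_sT_{\Phi_s}(\omega_t)=\pi_0(au(s,t))\omega_{st}$, i.e.\ \ref{enu:covariant_representation_spatial3}. (I would need to check that $\overline\pi_{st}(u(s,t))$, the extension to the multiplier algebra, acts by multiplication by the $L^\infty$-function obtained from $u(s,t)$ via $\pi_0$ — this follows from strict-to-$B_*$ continuity of the extension and the fact that multiplication operators are $B_*$-closed.) Conversely, given a spatial covariant triple $(\pi_0,\Phi,\omega)$, I define $\pi$ and $v_t=\omega_t U_{\Phi_t}$ by the displayed formulas and run the same computations backwards: \ref{enu:covariant_representation_spatial1} gives \ref{item:covariant_representation1'}, \ref{enu:covariant_representation_spatial2} identifies the ranges of $v_t$, $v_t^*$ and gives \ref{item:covariant_representation3'} (using that $U_{\Phi_t}$ is an $L^p$-partial isometry with $v_tv_t^*=1_{D_{t^*}}$, $v_t^*v_t=1_{D_t}$ by the relations \eqref{eq:spatial^partial_isometries_relations}), and \ref{enu:covariant_representation_spatial3} together with $[\Phi_s]\circ[\Phi_t]=[\Phi_{st}]$ gives \ref{item:covariant_representation3'} for products. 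Uniqueness on both sides is immediate from the uniqueness clause in Proposition~\ref{prop:spatial^partial_isometries} and the uniqueness of $\pi_0$. Finally, non-degeneracy of $(\pi,v)$ means $\overline{\pi(C_0(X))L^p(\mu)}=L^p(\mu)$, which by the support description is exactly $\pi_0(\mu_i)1_D\nearrow 1_D$ for all finite-measure $D$, i.e.\ non-degeneracy of the triple.

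The main obstacle I expect is the careful bookkeeping of domains and of the passage to multiplier algebras: the relation \ref{item:covariant_representation3'} involves $\overline\pi_{st}(u(s,t))$, and one must justify that this coincides with multiplication by the bounded function on $D_{st}$ determined by $u(s,t)$ under $\pi_0$, and that all equalities of functions that are a priori only defined on various measurable subsets match up $\mu$-almost everywhere where they should. A related subtlety is checking that condition \ref{enu:covariant_representation_spatial2} (respectively its non-degenerate version) is genuinely equivalent to the range condition \ref{item:covariant_representation3'}; here one uses Lemma~\ref{lem:covariant_reps_implies_MP^partial_isos} to see $v_e$ is the strong limit of $\pi(\mu_i^e)$, hence a multiplication projection whose support is the monotone limit of the supports of $\pi_0(\mu_i^e)$, and that $L^p(\mu)$ contains no copy of $c_0$ so that the monotone limit exists in the Dedekind-complete Boolean algebra $[\Sigma]$. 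Once these measure-theoretic identifications are in place, the algebraic verifications are routine applications of \eqref{eq:spatial^partial_isometries_relations} and the defining axioms, so I would present those compactly rather than in full.
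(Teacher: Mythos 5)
Your proposal is correct and follows essentially the same route as the paper: decompose each $v_t$ uniquely as $\omega_t U_{\Phi_t}$ via Proposition~\ref{prop:spatial^partial_isometries}, identify $\pi$ with $\pi_0:C_0(X)\to L^\infty(\mu)$, and then match the covariance axioms \ref{item:covariant_representation1'}--\ref{item:covariant_representation3'} one-by-one with \ref{enu:covariant_representation_spatial1}--\ref{enu:covariant_representation_spatial3}, using the composition and adjoint formulas \eqref{eq:spatial^partial_isometries_relations} and the fact that $L^p$-projections are multiplication by characteristic functions. The extra care you take with the multiplier extension $\overline{\pi}_{st}(u(s,t))$ and the range condition via Lemma~\ref{lem:covariant_reps_implies_MP^partial_isos} is sound and only fills in details the paper leaves implicit.
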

\begin{proof}
A representation $\pi : C_0(X)\to B(L^p(\mu))$ acting by multiplication operators is equivalent to
a homomorphism $\pi_0 : C_0(X)\to L^\infty(\mu)$. 
By Proposition~\ref{prop:spatial^partial_isometries}, every map $v : S \to \SPIso(L^p(\mu))$ is given by  $v_{t} = \omega_t (\frac{d\mu\circ\Phi^{*}}{d\mu|_{D_{\Phi^*}}})^{\frac{1}{p}} T_{\Phi_{t}}$, $t\in S$, where $\Phi = \{ [\Phi_s] \}_{s\in S} \subseteq \PAut([\Sigma])$ and $\omega_t\in UL^{\infty}(\mu|_{D_{t^*}})$, $t\in S$. 
The final statement of Proposition~\ref{prop:spatial^partial_isometries} (and uniqueness of the presentation in the Banach--Lamperti theorem, see Proposition~\ref{prop:group_of_spatial_isometries}) tells us that $\pi(a) v_s v_t = \pi (au(s,t)) v_{st}$ holds for all $s,t \in S$, $a\in C_0(X_{st})$ if and only if   \ref{enu:covariant_representation_spatial3} in Definition~\ref{defn:covariant_triple_spatial} holds and $\Phi=\{[\Phi_s]\}_{s\in S}\subseteq \PAut([\Sigma])$ is an inverse semigroup. 
Condition \ref{enu:covariant_representation_spatial2} in Definition~\ref{defn:covariant_triple_spatial} is equivalent to the equality $\overline{\pi(C_0(X_e))L^{p}(\mu)} = L^{p}(\mu|_{D_e})$ for all $e\in E(S)$.    
Assuming this, condition \ref{enu:covariant_representation_spatial1} in Definition~\ref{defn:covariant_triple_spatial} is equivalent to $v_t \pi(a) v_{t^*} = \pi(\alpha_t(a))$  
for all $a\in C_0(X_{t^*})$, $t\in S$. 
This gives the assertion.
\end{proof}

\begin{thm}\label{thm:spatial_representations_of_groupoid_algebras}
Let $(\G,\LL)$ be a twisted groupoid and let $(\alpha,u)$ be any twisted inverse semigroup action that models $(\G,\LL)$.
\begin{enumerate}
    \item \label{enu:spatial_representations_of_groupoid_algebras1} For each $p\in [1,\infty]$ there is an isometric representation $\pi\rtimes v : F^p(\G,\LL)\to B(L^p(\mu))$ where $(\pi,v)$ is a spatial covariant representation  of $(\alpha,u)$ on $L^p(\mu)$. If  $p< \infty$ or when $X$ is compact, then $\pi$ can be chosen to be non-degenerate.
    \item \label{enu:spatial_representations_of_groupoid_algebras2} For $p\in (1,\infty)$ every representation of $F^p(\G,\LL)$ on $L^p(\mu)$ where $C_0(X)$ acts by multiplication operators, and $\mu$ is localizable, is of the form $\pi\rtimes v$ for a spatial covariant representation $(\pi,v)$ of $(\alpha,u)$. This gives  a bijective correspondence between such representations of $F^p(\G,\LL)$ on $L^p(\mu)$ and  spatial covariant triples for $(\alpha,u)$ and $\mu$.
    \item\label{enu:spatial_representations_of_groupoid_algebras3} For any $p, p'\in (1,\infty)$ we have a bijective correspondence between  representations of $F^p(\G,\LL)$ and $F^{p'}(\G,\LL)$ on $L^p$ and $L^{p'}$-spaces such that $C_0(X)$ acts by multiplication operators. This correspondence matches $\pi_p : F^p(\G,\LL)\to B(L^p(\mu))$ and $\pi_{p'} : F^{p'}(\G,\LL) \to B(L^{p'}(\mu))$, given by 
\[
    \pi_p (a_t \delta_t) = \pi_0(a_t) \omega_t \left( \frac{d\mu\circ\Phi_{t^*}}{d\mu|_{D_{\Phi_{t^*}}}} \right)^{\frac{1}{p}} T_{\Phi_{t}}, \qquad  
    \pi_{p'} (a_t\delta_t) = \pi_0(a_t) \omega_t \left( \frac{d\mu\circ\Phi_{t^{*}}}{d\mu|_{D_{\Phi_{t^*}}}} \right)^{\frac{1}{p'}} T_{\Phi_{t}},
\]
    where $a_t\in C_c(X_{t}), t\in S$, and $(\pi_0,\Phi, \omega)$ is a non-degenerate spatial covariant triple for $(\alpha,u)$ and a localizable $\mu$.
\end{enumerate} 
If  all domains $X_{t}$, $t\in S$, are compact, then statements \ref{enu:spatial_representations_of_groupoid_algebras2} and \ref{enu:spatial_representations_of_groupoid_algebras3} hold for all $p,p'\in [1,\infty]$.
\end{thm}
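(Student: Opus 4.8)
## Proof proposal

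The plan is to prove Theorem~\ref{thm:spatial_representations_of_groupoid_algebras} by first establishing the three statements for the generic localizable measure and then upgrading to the compact-domain case. For \ref{enu:spatial_representations_of_groupoid_algebras1}, I would start from the reduced representation $\Lambda_p$ of Proposition~\ref{prop:regular_disintegrated}, which is manifestly spatial: by Remark~\ref{rem:not_important_remark} it equals $\pi \rtimes v$ for the covariant representation on $\ell^p(\G) \cong \ell^p(\G,\LL)$ where $\pi$ acts by multiplication by functions on $X$ and each $v_t$ is a (weighted) composition operator, i.e.\ an element of $\SPIso(\ell^p(\mu))$. To get an \emph{isometric} spatial representation of the full algebra $F^p(\G,\LL)$ rather than the reduced one, I would take the supremum over the defining class $\RR$ in Definition~\ref{defn:full_Lp}: each $\psi \in \RR$ is, by Theorem~\ref{thm:disintegration} applied to the modelling action $(\alpha,u)$, of the form $\pi \rtimes v$ with $\pi$ acting by multiplication operators, and by Proposition~\ref{prop:covariant_rep_is_necessarily_spatial} (for $p \in (1,\infty)$) or directly (for $p=1,\infty$, using Theorem~\ref{thm:initial_on_L^p_full}\ref{enu:initial_on_L^p_full1} which identifies $F^1$ and $F^\infty$ with $F_{d_*}$ and $F_{r_*}$, whose canonical representations are spatial) this representation is spatial. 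Forming the $\ell^p$-direct sum over a norm-dense set of such $(\pi,v)$ — legitimate since an $\ell^p$-direct sum of spatial covariant representations on $L^p(\mu_j)$ is a spatial covariant representation on $L^p(\bigsqcup \mu_j)$, and the direct sum measure is again localizable — gives the desired isometric $\pi\rtimes v$ on a single $L^p(\mu)$. For non-degeneracy when $p<\infty$, apply Proposition~\ref{prop:from_degenerate_to_nondegenerate_Lp}, which preserves the spatial form because compressing to $\overline{\psi(C_0(X))L^p(\mu)}$ and invoking Tzafriri's theorem keeps characteristic functions going to characteristic functions; when $X$ is compact, $C_0(X)=C(X)$ is unital and non-degeneracy is automatic by Lemma~\ref{lem:approximate_units}.

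For \ref{enu:spatial_representations_of_groupoid_algebras2}, a representation $\psi$ of $F^p(\G,\LL)$ on $L^p(\mu)$ with $C_0(X)$ acting by multiplication operators restricts to a representation of $F^S(\G,\LL)$ for any wide $S \subseteq S(\LL)$ (using the canonical map $F^S \donto F^p$ from Theorem~\ref{thm:initial_on_L^p_full}\ref{enu:initial_on_L^p_full5}), hence by Theorem~\ref{thm:disintegration}\ref{enu:disintegration3} — $L^p(\mu)$ is reflexive for $p\in(1,\infty)$ — equals $\pi\rtimes v$ for a covariant representation $(\pi,v)$ of $(\alpha,u)$ on $L^p(\mu)$; since $\pi$ is multiplicative, Proposition~\ref{prop:covariant_rep_is_necessarily_spatial} makes $(\pi,v)$ spatial. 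The bijection with spatial covariant triples is then exactly Proposition~\ref{prop:spatial_cov_reps22222}, once one checks that every spatial covariant triple integrates to a genuine representation of $F^p(\G,\LL)$ — this follows from Proposition~\ref{prop:integration_of_rep} (the pair $(\pi,v)$ built from the triple is covariant by the equivalences recorded in the proof of Proposition~\ref{prop:spatial_cov_reps22222}) together with the norm estimate of Theorem~\ref{thm:L_p_norm_estimates}, which guarantees $\|\pi\rtimes v(f)\| \le \|f\|_I$, so $\pi\rtimes v$ descends from $F^S$ through $F^p$. Statement~\ref{enu:spatial_representations_of_groupoid_algebras3} is then purely formal: Proposition~\ref{prop:spatial_cov_reps22222} sets up, for each $p$, a bijection between such representations and spatial covariant triples, and the triple $(\pi_0,\Phi,\omega)$ does not involve $p$ — only the Radon--Nikodym factor $(\tfrac{d\mu\circ\Phi_{t^*}}{d\mu|_{D_{\Phi_{t^*}}}})^{1/p}$ does — so composing the bijection for $p$ with the inverse bijection for $p'$ gives the stated correspondence, with the explicit formulas read off from Proposition~\ref{prop:spatial_cov_reps22222}. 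One should remark here that non-degeneracy is needed to make the triple canonical (a degenerate triple can be enlarged), which is why \ref{enu:spatial_representations_of_groupoid_algebras3} is phrased for non-degenerate triples, and that for $p\in(1,\infty)$, $\F=\C$, $p\neq 2$, Theorem~\ref{thm:L^p_representations of C_0(X)} removes the ``acts by multiplication operators'' hypothesis automatically.

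For the final sentence — $X_t$ all compact implies \ref{enu:spatial_representations_of_groupoid_algebras2} and \ref{enu:spatial_representations_of_groupoid_algebras3} extend to $p\in[1,\infty]$ — the point is that the only place reflexivity (hence $p\in(1,\infty)$) was used is in invoking Theorem~\ref{thm:disintegration}\ref{enu:disintegration3}; but by the last clause of Theorem~\ref{thm:disintegration}, when all $X_t$ are compact the covariant representations on $E$ coincide with covariant representations in $B(E)$ satisfying $v_t = \pi(1_{X_t})v_t$, and this characterization is available for any Banach space $E = L^p(\mu)$, $p\in[1,\infty]$. Likewise Proposition~\ref{prop:covariant_rep_is_necessarily_spatial} needs $p\in(1,\infty)$ only because it goes through Lemma~\ref{lem:reflexive_covariant_representations}; in the compact-domain case one instead argues directly that each $v_e = \pi(1_{X_e})$ is an operator of multiplication by a characteristic function, hence an $L^p$-projection (by \cite[Proposition~4.9]{BDEGGMM}), so $v$ takes values in $\SPIso(L^p(\mu))$ by Theorem~\ref{thm:spatial^partial_isometries_description}, which holds for all $p\in[1,\infty]\setminus\{2\}$, and the case $p=2$ is covered by Lemma~\ref{lem:Hilbert_space_representations of C_0(X)} up to unitary conjugacy. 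The main obstacle I anticipate is precisely the bookkeeping at the endpoints $p\in\{1,\infty\}$ and $p=2$: there the ambient space is not reflexive (or the Banach--Lamperti machinery degenerates), so one must carefully substitute the unital/compact-support arguments and the identifications $F^1 \cong F_{d_*}$, $F^\infty\cong F_{r_*}$, $F^2 \cong C^*(\G,\LL)$ from Theorem~\ref{thm:initial_on_L^p_full} for the generic-$p$ reasoning, and verify that the spatial-triple correspondence of Proposition~\ref{prop:spatial_cov_reps22222} — stated there for $p<\infty$ — genuinely makes sense and remains bijective at $p=\infty$ when the measures and composition operators behave well under the compactness hypothesis.
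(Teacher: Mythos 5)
Your proposal follows essentially the same route as the paper: part \ref{enu:spatial_representations_of_groupoid_algebras1} by an $\ell^p$-direct sum of almost-norming spatial representations (with the endpoints $p=1,\infty$ handled via Theorem~\ref{thm:initial_on_L^p_full}\ref{enu:initial_on_L^p_full1}), part \ref{enu:spatial_representations_of_groupoid_algebras2} by disintegration (Theorem~\ref{thm:disintegration}\ref{enu:disintegration3}) followed by Propositions~\ref{prop:covariant_rep_is_necessarily_spatial} and \ref{prop:spatial_cov_reps22222}, part \ref{enu:spatial_representations_of_groupoid_algebras3} formally, and the compact-domain extension via the last clause of the disintegration theorem. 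One small repair: to see that an integrated spatial triple descends to $F^p(\G,\LL)$ you should not argue through $\|\pi\rtimes v(f)\|\leq\|f\|_{I}$ (which only gives factorization through $F_I(\G,\LL)$, a weaker conclusion since $\|\cdot\|_{L^p}\leq\|\cdot\|_{I}$), but simply observe that $\pi\rtimes v$ satisfies \ref{enu:full_Lp1}--\ref{enu:full_Lp3} of Definition~\ref{defn:full_Lp} and hence lies in the class over which $\|\cdot\|_{L^p}$ is defined as a supremum.
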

\begin{proof} 
\ref{enu:spatial_representations_of_groupoid_algebras1}. For $p\in \{1,\infty\}$, the assertion follows from Theorem \ref{thm:initial_on_L^p_full}\ref{enu:initial_on_L^p_full1}. 
For $p\in [1,\infty)$, Theorem \ref{thm:initial_on_L^p_full}\ref{enu:initial_on_L^p_full4.5}  implies that for each $a\in F^p(\G,\LL)$ and $k\in \N$
there is a non-degenerate representation $\psi_{a,k}:F^p(\G,\LL)\to B(L^p(\mu_{a,k}))$ such that   $\psi_{a,k}(C_0(X))$ consists of multiplication operators
and $\|a\|_{F^p(\G,\LL)}\leq \|\psi_{a,k}(a)\|+1/k$. Thus the $\ell^p$-direct sum of $\{\psi_{a,k}\}_{a\in F^p(\G,\LL), k\in \N}$ is
 an isometric non-degenerate representation $\psi$ of $F^p(\G,\LL)$ where $\psi(C_0(X))$ consists of multiplication operators.  
Disintegrating $\psi$ and applying Proposition \ref{prop:covariant_rep_is_necessarily_spatial} we get that $\psi=\pi\rtimes v$ where $(\pi,v)$ is a spatial covariant representation  of $(\alpha,u)$.

\ref{enu:spatial_representations_of_groupoid_algebras2}. If $p\in (1,\infty)$, Theorem~\ref{thm:disintegration}\ref{enu:disintegration3} implies that every representation of $F^p(\G,\LL)$ on  $L^p(\mu)$ is of the form $\pi\rtimes v$ for a covariant represenation $(\pi,v)$ on the space $L^p(\mu)$. If  all $X_{t}$, $t\in S$, are compact, the same holds for any  $p\in [1,\infty]$ by the last part of Theorem~\ref{thm:disintegration}.
Assume $\mu$ is localizable.  Then $(\pi,v)$ is necessarily spatial by Proposition~\ref{prop:covariant_rep_is_necessarily_spatial}, and so it is given by a spatial covariant triple by Proposition~\ref{prop:spatial_cov_reps22222}. 
This proves \ref{enu:spatial_representations_of_groupoid_algebras2}.
Item \ref{enu:spatial_representations_of_groupoid_algebras3} now follows readily. 
\end{proof}

\begin{rem} 
If $\F=\C$, then Theorem~\ref{thm:spatial_representations_of_groupoid_algebras} combined with Theorem~\ref{thm:L^p_representations of C_0(X)}
gives a description of all non-degenerate representations of  $F^p(\G,\LL)$ for $p\in (1,\infty)\setminus\{2\}$, and a bijective correspondence between non-degenerate representations of $F^p(\G,\LL)$ and $F^{p'}(\G,\LL)$  for all $p, p'\in (1,\infty)\setminus\{2\}$. 
This is all the more striking, as usually there are no non-zero continuous homomorphisms between $F^p(\G,\LL)$ and $F^{p'}(\G,\LL)$ for $p\neq p'$.
\end{rem}

\section{Banach algebras associated to inverse semigroups} 
\label{subsec:Banach_algebras_inverse_semigroups}
Let $S$ be an inverse semigroup. 
The algebra  $\F S$  of formal finite $\F$-linear combinations of elements in $S$ and  multiplication induced from $S$ is the universal algebra for semigroup homomorphisms of $S$. 
Since  $S$ is more than just a semigroup, we will consider here completions of $\F S$ such that the commutative subalgebra $\F E$, generated by the idempotents $E := E(S)$, is  a uniform Banach algebra.
This boils down to an assumption of \emph{joint contractiveness} that we introduce in Definition \ref{defn:jointly_contractive} below.
When applied to tight representations this will give us a useful description of  Banach algebras associated to ample groupoids.
Some of our results here are already new and might be interesting for $C^*$-algebraists.
 
We start by recalling the groupoid models for $\F S$ and $\F E$ from \cite{Steinberg}. 
Recall that the semigroup $E$ is a semilattice: the `meet' and `preorder' are defined by $e\wedge f := e f$ and $e\leq f$ if and only if $ef = e$.
We write $e<f$ if $e\leq f$ and $e \neq f$. 
If $E$ has a zero, we say $e$ and $f$ are orthogonal if $ef = 0$.
The set $\widehat{E}\subseteq \{0, 1\}^E$ of all non-zero homomorphisms $\phi : E \to \{0,1\}$ is a totally disconnected locally compact Hausdorff space in the product topology of the Cantor space $\{0, 1\}^E$ (which is the topology of pointwise convergence). 
In fact, putting $Z(e) := \{ \phi \in \widehat{E} : \phi(e) = 1 \}$, the sets $Z(e) \setminus \bigcup_{f \in F} Z(f)$, where $e\in E$ and $F\subseteq eE$ is finite, form a basis of compact-open sets for $\widehat{E}$. 
We call $\widehat{E}$ the \emph{spectrum} of $E$. 
A map $\phi : E\to \{0,1\}$ is in $\widehat{E}$ if and only if its support $\supp(\phi) := \{ e\in E : \phi(e)=1\}$ is a \emph{filter} of $E$, i.e. a non-empty, upward-closed and downward directed and subset of $E$. 
There is a natural action $h : S \to \PHomeo(\widehat{E})$ given by 
\[
    h_{t} : Z(t^*t)\to Z(tt^*) ;\ h_{t}(\phi)(e) := \phi ( t^*et ) , \qquad t\in S ,\ e\in E ,\ \phi\in \widehat{E} 
\]
(see \cite[Proposition 10.3]{Exel}). 
The corresponding transformation groupoid $\G(S) := S \ltimes_{h} \widehat{E}$ is ample. 
It is covered by compact open bisections $U_{t} = \{ [t,\phi] : \phi\in Z(t^*t) \}$, $t\in S$. 
The associated Steinberg algebra is
\[
    A(\G(S)) := \spane \{ 1_{U}: \text{$U\in \Bis(\G(S))$ compact open} \} = \spane \{ 1_{U_t} : t\in S \} \subseteq C_c(\G(S)).
\]
By a homomorphism from the semigroup $S$ to an algebra $B$ we mean a semigroup homomorphism $v:S\to B$ into the multiplicative semigroup of $B$.

\begin{prop}[{\cite[Theorem 5.1]{Steinberg}}] \label{prop:Steinberg_algebra_of_inverse_semigroup} 
The map $t \mapsto 1_{U_t}$ determines an isomorphism $\F S\cong A(\G(S))$. 
Thus for any $\F$-algebra $B$, the relation $\psi(1_{U_t}) = v_t$, $t\in S$, establishes a bijective correspondence between algebra homomorphisms $\psi : A(\G(S))\to B$ and homomorphisms $v : S \to B$.
\end{prop}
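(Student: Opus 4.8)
The plan is to reduce the statement to the identification of the Steinberg algebra $A(\G(S))$ with the universal algebra of its covering inverse semigroup of compact open bisections. First I would recall the general structure: $\G(S) = S\ltimes_h\widehat{E}$ is an ample (hence second-countable only if $S$ is countable, but we do not need this) \'etale groupoid, covered by the compact open bisections $U_t = \{[t,\phi]:\phi\in Z(t^*t)\}$, $t\in S$, and the map $t\mapsto U_t$ is a semigroup homomorphism from $S$ onto this cover by Example~\ref{ex:groupoids_from_actions}. The Steinberg algebra $A(\G(S))$ is by definition the linear span of characteristic functions $1_U$ of compact open bisections, with convolution product, and since $\mathcal{C}=\{U_t:t\in S\}$ is a cover closed under the groupoid operations one has $A(\G(S)) = \spane\{1_{U_t}:t\in S\}$, as noted in the excerpt.

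The key steps, in order, are as follows. First, check that $t\mapsto 1_{U_t}$ is multiplicative: this follows from $U_s U_t = U_{st}$ together with the fact that convolution of characteristic functions of bisections satisfies $1_U * 1_V = 1_{UV}$ whenever $U,V$ are bisections (the sum defining the convolution has at most one nonzero term at each point, and that term equals $1$ precisely on $UV$). Second, this gives an algebra homomorphism $\Psi:\F S\to A(\G(S))$ determined by $\Psi(t)=1_{U_t}$, which is surjective since the $1_{U_t}$ span $A(\G(S))$. Third — and this is the crux — establish injectivity of $\Psi$. Here I would invoke the universal property of $\F S$ in the opposite direction: produce a homomorphism $A(\G(S))\to \F S$ splitting $\Psi$, or else argue directly that any finite linear relation $\sum_{t\in F}\lambda_t 1_{U_t}=0$ forces a corresponding relation in $\F S$. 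The cleanest route is to use the canonical ``regular'' representation: let $\F S$ act on the vector space with basis $\widehat{E}$ (or on $\F[\widehat{E}]$) by $t\cdot\phi := h_t(\phi)$ if $\phi\in Z(t^*t)$ and $0$ otherwise, extended linearly, and observe that this representation is faithful on $\F S$ (a standard fact, essentially \cite[Proposition 10.3]{Exel} combined with the description of $\F S$ via $\widehat{E}$), while it factors through $\Psi$; hence $\Psi$ is injective. Fourth, once $\F S\cong A(\G(S))$ is established, the final assertion is immediate: $A(\G(S))$ inherits the universal property of $\F S$, so $\psi\mapsto (t\mapsto\psi(1_{U_t}))$ is a bijection between algebra homomorphisms $A(\G(S))\to B$ and semigroup homomorphisms $S\to B$, the inverse being $v\mapsto$ the linear extension of $v$ composed with $\Psi^{-1}$.

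The main obstacle I expect is the injectivity of $\Psi$, i.e.\ showing no new linear relations among the $1_{U_t}$ beyond those already present among the $t\in S$ in $\F S$. The subtlety is that distinct $t,t'\in S$ with $t\neq t'$ can have $U_t\cap U_{t'}\neq\emptyset$ (indeed $U_t\cap U_{t'}=\bigcup_{s\leq t,t'}U_s$), so the characteristic functions overlap and one must be careful that a cancellation $\sum\lambda_t 1_{U_t}=0$ is genuinely forced by the semilattice/order relations in $S$ rather than being an accident of the groupoid. This is exactly the content of Steinberg's theorem and is handled by the faithfulness of the action on $\F[\widehat{E}]$ together with the fact that the filters in $\widehat{E}$ separate the relevant elements; I would cite \cite{Steinberg} for the detailed bookkeeping rather than reproduce it, since the excerpt already attributes Proposition~\ref{prop:Steinberg_algebra_of_inverse_semigroup} to \cite[Theorem 5.1]{Steinberg}.
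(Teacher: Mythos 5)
The paper offers no argument for this proposition beyond the citation to \cite[Theorem 5.1]{Steinberg}, so there is no in-paper proof to compare against; your first two steps (multiplicativity of $t\mapsto 1_{U_t}$ via $1_{U_s}*1_{U_t}=1_{U_sU_t}=1_{U_{st}}$ and surjectivity) and your final step (transfer of the universal property along the isomorphism) are fine. The problem is the mechanism you propose for the crux, injectivity of $\Psi$. The representation of $\F S$ on $\F[\widehat{E}]$ by $t\cdot\phi:=h_t(\phi)$ for $\phi\in Z(t^*t)$ (and $0$ otherwise) is \emph{not} faithful in general: if $S=G$ is a nontrivial group, then $E(S)=\{1\}$, $\widehat{E}$ is a single point, every $g\in G$ acts as the identity on the one-dimensional space $\F[\widehat{E}]$, and the augmentation ideal is killed. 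The cited \cite[Proposition 10.3]{Exel} only asserts that $h$ is a well-defined action of $S$ by partial homeomorphisms of $\widehat{E}$; it gives no faithfulness of the induced linear representation, and indeed an action on the unit space cannot detect the isotropy of $\G(S)$, which is exactly where the content of the theorem lies.

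The repair is to evaluate on the arrow space of $\G(S)$ rather than on its unit space. Suppose $\sum_{t\in F}\lambda_t 1_{U_t}=0$ with $F\subseteq S$ finite, pick $t_0\in F$ maximal for the natural partial order on $S$ restricted to $F$, and let $\phi_0\in\widehat{E}$ be the character of the principal filter $\{e\in E : e\geq t_0^*t_0\}$. For $t\in F$ one has $[t_0,\phi_0]\in U_t$ if and only if there is $v\leq t,t_0$ with $\phi_0(v^*v)=1$; any such $v$ satisfies $v=t_0v^*v$ and $v^*v\,t_0^*t_0=t_0^*t_0$, whence $v\,t_0^*t_0=t_0$ and therefore $v=t_0$, so that $[t_0,\phi_0]\in U_t$ precisely when $t\geq t_0$. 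Evaluating the relation at the point $[t_0,\phi_0]$ gives $\sum_{t\in F,\, t\geq t_0}\lambda_t=\lambda_{t_0}=0$ by maximality of $t_0$, and induction on $|F|$ yields linear independence of $\{1_{U_t}\}_{t\in S}$; equivalently, one lets $\F S$ act on $\F[\G(S)]$ rather than on $\F[\widehat{E}]$. This is essentially Steinberg's argument, so your decision to defer the bookkeeping to \cite{Steinberg} is consistent with the paper's own treatment, but the faithfulness claim you interpolate in its place is false as stated and should be removed or replaced by the argument above.
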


We have the following commutative subalgebra of $A(\G(S))$:
\[
    A(\widehat{E}) := \spane \{ 1_{U} : \text{$U\subseteq \widehat{E}$ compact open} \} = \spane \{ 1_{Z(e)} : e\in E \} \subseteq C_c(\widehat{E}) \subseteq C_c(\G(S)).
\]
Proposition~\ref{prop:Steinberg_algebra_of_inverse_semigroup} applied to $E$ gives a bijective correspondence between algebra homomorphisms $\pi : A(\widehat{E}) \to B$ and homomorphisms $v : E \to B$. 
We now aim to characterise injectivity of $\pi$ in terms of $v$. 
For finite $F\subseteq E$ we write $\bigwedge{F} := \bigwedge_{e\in F} e= \prod_{e\in F}e$.

\begin{lem}[Inclusion-exclusion principle]\label{lem:inclusion-exclusion}
Let $v:E\to B$ be a homomorphism into an algebra $B$. 
For any finite $F\subseteq E$ and $F_0\subseteq F$ put
\begin{equation}\label{eq:general_projections_to_be_contractive}
    P_{F_0}^F := \prod_{f\in F\setminus F_0} (v_{\bigwedge{F_0}} - v_{\bigwedge{F_0}\wedge f}) = \sum_{F_0\subseteq G\subseteq F} (-1)^{|G\setminus F_0|} v_{\bigwedge{G}} ,
\end{equation}
with the convention $P_{F}^F=v_{\bigwedge{F}}$ and $P_{\emptyset}^F=0$. 
Then $\{ P_{F_0}^{F} \}_{F_0\subseteq F}$ are mutually orthogonal idempotents such that 
$v_e = \sum_{e\in F_0\subseteq F} P_{F_0}^F$, $e\in F$.
\end{lem}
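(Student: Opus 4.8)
The plan is to verify the two displayed identities in \eqref{eq:general_projections_to_be_contractive} and then deduce the orthogonality and the decomposition $v_e=\sum_{e\in F_0\subseteq F}P_{F_0}^F$. First I would observe that since $E$ is a commutative semigroup and $v$ is a homomorphism, all the elements $v_e$, $e\in E$, commute, so all products below may be expanded freely. The equality of the product form and the alternating-sum form of $P_{F_0}^F$ is a purely formal computation: expanding $\prod_{f\in F\setminus F_0}(v_{\bigwedge F_0}-v_{\bigwedge F_0\wedge f})$ produces, for each subset $H\subseteq F\setminus F_0$, the term $(-1)^{|H|}v_{\bigwedge F_0}^{\,|F\setminus F_0|-|H|}v_{\bigwedge(F_0\cup H)\setminus F_0}$ — wait, more carefully: each factor contributes either $v_{\bigwedge F_0}$ or $-v_{\bigwedge F_0\wedge f}=-v_{\bigwedge(F_0\cup\{f\})}$, and since the $v$'s are commuting idempotents (note $v_e^2=v_{e\wedge e}=v_e$) the product of the chosen factors collapses to $\pm v_{\bigwedge(F_0\cup H)}=\pm v_{\bigwedge G}$ where $G=F_0\cup H$ ranges over all sets with $F_0\subseteq G\subseteq F$, with sign $(-1)^{|H|}=(-1)^{|G\setminus F_0|}$. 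This gives the second equality; the conventions $P_F^F=v_{\bigwedge F}$ (empty product) and $P_\emptyset^F=0$ (since then the factor $f=e$ with $\bigwedge\emptyset$ interpreted as a unit is not available — actually $P_\emptyset^F$ with the alternating sum is $\sum_{G\subseteq F}(-1)^{|G|}v_{\bigwedge G}$, which one checks equals $0$ using the telescoping below) are consistent.

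Next I would prove orthogonality: for $F_0\neq F_1$, $P_{F_0}^FP_{F_1}^F=0$. Without loss of generality pick $f_0\in F_1\setminus F_0$ (or symmetrically); then $P_{F_0}^F$ contains the factor $(v_{\bigwedge F_0}-v_{\bigwedge F_0\wedge f_0})$ while $P_{F_1}^F=v_{\bigwedge F_1}\cdot(\text{stuff})$ and $v_{\bigwedge F_1}=v_{\bigwedge F_1\wedge f_0}$ since $f_0\in F_1$, i.e. $v_{\bigwedge F_1}v_{f_0}=v_{\bigwedge F_1}$. Multiplying, the factor $(v_{\bigwedge F_0}-v_{\bigwedge F_0\wedge f_0})v_{\bigwedge F_1}=v_{\bigwedge F_0}v_{\bigwedge F_1}-v_{\bigwedge F_0}v_{f_0}v_{\bigwedge F_1}=v_{\bigwedge F_0}v_{\bigwedge F_1}-v_{\bigwedge F_0}v_{\bigwedge F_1}=0$ using commutativity and $v_{f_0}v_{\bigwedge F_1}=v_{\bigwedge F_1}$. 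Hence the product vanishes. Idempotency $(P_{F_0}^F)^2=P_{F_0}^F$ then follows from the identity $v_e=\sum_{e\in F_0\subseteq F}P_{F_0}^F$ proved next together with orthogonality, or directly: each factor $(v_{\bigwedge F_0}-v_{\bigwedge F_0\wedge f})$ is an idempotent (difference of comparable commuting idempotents) and distinct factors commute, so the product is idempotent.

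Finally, the decomposition $v_e=\sum_{e\in F_0\subseteq F}P_{F_0}^F$ for $e\in F$: substitute the alternating-sum expression and swap the order of summation, so the right side becomes $\sum_{G\subseteq F}\Big(\sum_{e\in F_0\subseteq G}(-1)^{|G\setminus F_0|}\Big)v_{\bigwedge G}$, where $G$ ranges over subsets of $F$ (only those containing $e$ actually contribute a nonzero inner sum, since $F_0\ni e$ forces $G\supseteq\{e\}$). For fixed $G$ with $e\in G$, the inner sum is $\sum_{\{e\}\subseteq F_0\subseteq G}(-1)^{|G\setminus F_0|}=\sum_{A\subseteq G\setminus\{e\}}(-1)^{|(G\setminus\{e\})\setminus A|}$, which equals $0$ unless $G\setminus\{e\}=\emptyset$, i.e. $G=\{e\}$, in which case it is $1$. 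Thus the right side collapses to $v_{\bigwedge\{e\}}=v_e$, as claimed. The main obstacle — really the only point requiring care — is bookkeeping the signs and the index swap cleanly; everything else is immediate from commutativity and idempotency of the $v_e$'s, so I would present the formal expansion compactly and spend most words on the two combinatorial cancellations.
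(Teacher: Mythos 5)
Your argument is correct, but it takes a different route from the paper's. The paper proves the lemma by induction on $|F|$: it fixes $e_0\in F$, sets $H=F\setminus\{e_0\}$, expresses each $P_{F_0}^F$ in terms of the idempotents $P_{H_0}^H$ (distinguishing the cases $e_0\notin F_0$, $e_0\in F_0$ with $F_0\neq\{e_0\}$, and $F_0=\{e_0\}$), and then transports orthogonality and the decomposition from $H$ to $F$. You instead verify everything directly: the expansion of the product into the alternating sum (using that the $v_e$ are commuting idempotents with $v_ev_f=v_{e\wedge f}$), orthogonality via a single distinguishing element $f_0\in F_1\setminus F_0$ and the absorption $v_{f_0}v_{\bigwedge F_1}=v_{\bigwedge F_1}$, idempotency as a product of commuting idempotents, and the decomposition by the sum swap and the binomial cancellation $\sum_{A\subseteq G\setminus\{e\}}(-1)^{|G\setminus\{e\}|-|A|}=0$ for $G\neq\{e\}$. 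Your version is self-contained and makes the inclusion--exclusion mechanism explicit, at the cost of more index bookkeeping; the paper's induction is shorter to write but hides the combinatorics in the inductive step. Both are complete proofs. One small remark: your aside that $P_\emptyset^F=\sum_{G\subseteq F}(-1)^{|G|}v_{\bigwedge G}$ ``equals $0$'' is not something to check --- $\bigwedge\emptyset$ is undefined unless $E$ is unital, so $P_\emptyset^F:=0$ is purely a convention (and in the orthogonality and decomposition statements the case $F_0=\emptyset$ is then trivial); this does not affect the rest of your argument.
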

\begin{proof}
This can be proved by induction on $|F|$. 
If $|F| = 1$ the assertion is trivial.
Suppose the assertion holds whenever $|F| < n$, and fix $F$ with $|F| = n$. 
Fix also $e_0\in F$ and let $H=F\setminus\{e_0\}$. 
For any $F_0\subseteq F$ we then have 
\[
    P_{F_0}^F = \begin{cases} 
                    P_{F_0}^H - P_{F_0}^H v_{e_0} , & e_0\not\in F_0, \\
                    P_{F_0 \setminus \{ e_0 \} }^H v_{e_0}, & e_0 \in F_0,\ F_0\neq\{e_0\}, \\
                    v_{e_0} - \sum_{H_0 \subseteq H} P_{H_0}^H , & F_0 \neq \{e_0\}.
                \end{cases}
\]
By the inductive hypothesis $\{ P_{H_0}^{H} \}_{H_0\subseteq H}$ are mutually orthogonal and $v_e = \sum_{e\in H_0\subseteq H} P_{H_0}^H$, $e\in H$. 
Using this and the above relations it is readily checked that $\{ P_{F_0}^{F} \}_{F_0\subseteq F}$ are also mutually orthogonal, and $v_e = \sum_{e\in F_0\subseteq F} P_{F_0}^F$, $e\in F$.
\end{proof}


\begin{cor}\label{cor:injectivity_representation_A(E)} 
Let $\pi : A(\widehat{E}) \to B$ be an algebra homomorphism and let $v : E\to B$ be the corresponding semigroup homomorphism.
Then $\pi$ is injective if and only if $\prod_{f\in F} (v_e - v_f) \neq 0$ for every $e\in E\setminus \{0\}$ and finite $F\subseteq eE\setminus\{e\}$.
\end{cor}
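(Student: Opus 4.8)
The plan is to use the basis of compact-open sets for $\widehat{E}$ together with the inclusion-exclusion decomposition from Lemma~\ref{lem:inclusion-exclusion}, matching the algebraic projections $P_{F_0}^F$ to characteristic functions of basic open sets in $\widehat{E}$. Recall that under the isomorphism $\F E\cong A(\widehat E)$ of Proposition~\ref{prop:Steinberg_algebra_of_inverse_semigroup} applied to $E$, the generator $e\in E$ corresponds to $1_{Z(e)}\in A(\widehat E)$, so $v_e=\pi(1_{Z(e)})$. First I would observe that a nonzero element $x\in A(\widehat E)$ can, after refining a finite cover, be written as $x=\sum_{j}\lambda_j 1_{W_j}$ where the $W_j=Z(e_j)\setminus\bigcup_{f\in F_j}Z(f)$ (with $e_j\in E\setminus\{0\}$, $F_j\subseteq e_jE\setminus\{e_j\}$ finite) are \emph{pairwise disjoint} nonempty compact-open sets and not all $\lambda_j$ vanish; this uses the lattice structure of the basis and that nonempty basic sets have the stated form. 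Since $1_{Z(e)\setminus\bigcup_{f\in F}Z(f)}$ is precisely the element corresponding to $\prod_{f\in F}(e-ef)$ in $\F E$, we have $\pi(1_{W_j})=\prod_{f\in F_j}(v_{e_j}-v_{e_jf})=\prod_{f\in F_j}(v_{e_j}-v_{f})$ because $e_jf=e_j\wedge f$ and $f\le e_j$ forces $e_j\wedge f=f$ after replacing $F_j$ by $\{e_j\wedge f:f\in F_j\}$ (which does not change $W_j$).

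For the \textbf{``if'' direction}, assume $\prod_{f\in F}(v_e-v_f)\neq 0$ for all admissible $e,F$. Given $0\neq x\in A(\widehat E)$ written as above with disjoint $W_j$, Lemma~\ref{lem:inclusion-exclusion} (applied inside each block, or more directly the mutual orthogonality of characteristic functions of the disjoint $W_j$) shows the $\pi(1_{W_j})$ are mutually orthogonal idempotents. Pick $j_0$ with $\lambda_{j_0}\neq 0$; multiplying $\pi(x)$ by the orthogonal idempotent $\pi(1_{W_{j_0}})$ yields $\lambda_{j_0}\pi(1_{W_{j_0}})$. By hypothesis $\pi(1_{W_{j_0}})=\prod_{f\in F_{j_0}}(v_{e_{j_0}}-v_f)\neq 0$, hence $\pi(x)\neq 0$. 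Thus $\ker\pi=0$. For the \textbf{``only if'' direction}, if some $\prod_{f\in F}(v_e-v_f)=0$ with $e\neq 0$ and $F\subseteq eE\setminus\{e\}$ finite, then $\pi(1_{Z(e)\setminus\bigcup_{f\in F}Z(f)})=0$; since $Z(e)\setminus\bigcup_{f\in F}Z(f)$ is a nonempty compact-open set (it contains any filter extending $\{e\}$ that omits every $f\in F$ — such a filter exists by a standard Zorn/ultrafilter argument on the semilattice $E$, using $f<e$), the element $1_{Z(e)\setminus\bigcup_{f\in F}Z(f)}$ is a nonzero element of $A(\widehat E)$ in $\ker\pi$, so $\pi$ is not injective.

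The main obstacle I expect is the bookkeeping in the first step: showing that an arbitrary nonzero element of $A(\widehat E)=\spane\{1_{Z(e)}:e\in E\}$ can be rewritten as a linear combination over a \emph{disjoint} family of basic compact-open sets, and that the coefficients can be taken not all zero. This is where the explicit description of the basis $Z(e)\setminus\bigcup_{f\in F}Z(f)$ and the closure of this family under finite intersections (which follows from $Z(e)\cap Z(f)=Z(ef)$) is needed; the inclusion-exclusion identity~\eqref{eq:general_projections_to_be_contractive} is exactly the tool that makes this refinement effective and simultaneously guarantees the orthogonality used in the ``if'' direction. The nonemptiness claim $Z(e)\setminus\bigcup_{f\in F}Z(f)\neq\emptyset$ whenever $e\neq 0$ and $F\subseteq eE\setminus\{e\}$ is routine but must be stated: one extends the proper filter generated by $e$ avoiding the finitely many elements strictly below $e$, which is possible precisely because each $f\in F$ satisfies $f<e$.
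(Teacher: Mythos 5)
Your proof is correct and follows essentially the same route as the paper: decompose elements of $A(\widehat{E})$ into linear combinations of the mutually orthogonal idempotents $1_{Z(e)\setminus\bigcup_{f\in F}Z(f)}$ via Lemma~\ref{lem:inclusion-exclusion}, observe these map to $\prod_{f\in F}(v_e-v_f)$, and use nonemptiness of the basic sets. The only cosmetic difference is that no Zorn/ultrafilter extension is needed for nonemptiness: the principal filter $\{g\in E: g\geq e\}$ already defines a character lying in $Z(e)\setminus\bigcup_{f\in F}Z(f)$, since every $f\in F$ satisfies $f<e$ and hence $f\not\geq e$.
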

\begin{proof} 
Idempotents \eqref{eq:general_projections_to_be_contractive} corresponding to idempotents $\{ 1_{Z(f)} \}_{f\in F} \subseteq A(E)$ are characteristic functions of basic cylinder sets $Z \big( \bigwedge F_0 \big) \setminus \bigcup_{f\in F\setminus F_0} Z\big( \bigwedge F_0\wedge f \big)$. 
Hence, by Lemma ~\ref{lem:inclusion-exclusion}, every element in $A(E)$ is a finite linear combination of mutually orthogonal idempotents of the form $1_{Z(e) \setminus \bigcup_{f \in F} Z(f)}$, where $e\in E\setminus\{0\}$ and $F\subseteq eE\setminus\{e\}$, which are mapped by $\pi$ to mutually orthogonal idempotents of the form $\prod_{f\in F} (v_e - v_f)$. 
Moreover, $1_{Z(e) \setminus \bigcup_{f \in F} Z(f)}\neq 0$ because $\supp\phi:=\{f\in E: f\geq e\}$ is a filter that defines $\phi$ that is in $Z(e) \setminus \bigcup_{f \in F} Z(f)$. 
Hence $\pi$ is injective if and only if it is non-zero on elements of the form $ 1_{Z(e) \setminus \bigcup_{f \in F} Z(f)}$.
\end{proof}

For representations in Banach algebras we need an analytic condition.

\begin{defn}\label{defn:jointly_contractive}
Let $B$ be a  Banach algebra over $\F$ and let $M\subseteq \F$ be a set. 
We say that a collection of mutually orthogonal idempotents $\{P_i\}_{i\in F}\subseteq B$ is \emph{jointly $M$-contractive} if 
\begin{equation*}\label{eq:jointly_contractive}
    \| \sum_{i\in F} a_i{P_i} \| \leq \max_{i\in F} |a_{i}|, \qquad a_i\in M,\ i \in F .
\end{equation*}
\end{defn}

\begin{rem}\label{rem:types_of_projections}
The above concept embraces a number of other notions existing in the literature. 
For instance, if $B$ is a unital Banach algebra then $\{P,1-P\}$ are jointly $\{0,1\}$-contractive if and only if $\{ P,1-P \}\subseteq B_1$, that is $P$ is bicontractive \cite{Byrne_Sullivan}, \cite{Bernau_Lacey2}. The projection in Example \ref{ex:real_representation_from_bicontractive} is bicontractive  but not an $L^p$-projection.
If $\F = \C$ and $\T = \{ z\in \C : |z|=1 \}$, then $\{P,1-P\}$ are jointly $\T$-contractive if and only if $zP +w (1-P)$ are isometries (contractive invertible elements with contractive inverses), for all $z,w\in \T$, which by definition means that $P$ is \emph{bicircular}, see \cite{Stacho_Zalar}. 
Every bicircular projection is bicontractive but not conversely, see \cite{Stacho_Zalar}. 
In fact, a projection $P$ is bicircular if and only if it is hermitian \cite{Jamison}, and so bicontractive projections in Example \ref{ex:real_representation_from_bicontractive} are not bicircular by Theorem \ref{thm:spatial^partial_isometries_description}.
For any $p\in [1,\infty]$ and any Banach space $E$, mutually orthogonal $L^p$-projections  $\{P_i\}_{i\in F}\subseteq B(E)$ are jointly $\F$-contractive (the Banach algebra they generate embeds isometrically into $c_0(F)$). 
\end{rem}

\begin{defn}\label{defn:representation_semigroup_contractive}
A \emph{representation of the inverse semigroup $S$ in a Banach algebra} $B$ is a semigroup homomorphism $v : S \to B_1$ such that for every finite $F\subseteq E$ the idempotents $\{ P_{F_0}^F \}_{F_0\subseteq  F}$ given by \eqref{eq:general_projections_to_be_contractive} are jointly $\F$-contractive.
\end{defn}

\begin{lem}\label{lem:representation_characterization}
Let $v : S \to B_1$ be a  semigroup homomorphism into the contractive elements in a Banach algebra $B$, and let $A := \clsp \{ v_{e} : e\in E \} \subseteq B$. 
The following are equivalent:
\begin{enumerate}
    \item\label{enu:representation_characterization1} $v$ is a representation of $S$ in $B$;
    \item\label{enu:representation_characterization2} the map $1_{Z(e)}\mapsto v_{e}$ extends to a representation $\pi:C_0(\widehat{E})\to A\subseteq B$; 
    \item\label{enu:representation_characterization3} $A\cong C_0(X)$ for a closed subset $X\subseteq \widehat{E}$;
    \item\label{enu:representation_characterization4} $A$ embeds isometrically into a  $C^*$-algebra.
\end{enumerate}
The above equivalent conditions always hold when $B=B(K)$ and $v|_E$ takes values in the $L^p$-projections on a Banach space $K$, for some fixed $p\in[1,\infty]\setminus\{2\}$. 
In particular, any homomorphism $v : S \to \SPIso(L^p(\mu)) \subseteq B(L^p(\mu))$, where $p\in[1,\infty]$, is a representation of $S$, and any $*$-homomorphism $v : S \to B_1$ where $B$ is a $C^*$-algebra is a representation of $S$. 
\end{lem}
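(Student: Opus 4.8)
\textbf{Proof plan for Lemma~\ref{lem:representation_characterization}.}
The plan is to prove the cycle of implications $\ref{enu:representation_characterization1}\Rightarrow\ref{enu:representation_characterization2}\Rightarrow\ref{enu:representation_characterization3}\Rightarrow\ref{enu:representation_characterization4}\Rightarrow\ref{enu:representation_characterization1}$, and then dispatch the three ``in particular'' claims at the end. For $\ref{enu:representation_characterization1}\Rightarrow\ref{enu:representation_characterization2}$, I would first use Proposition~\ref{prop:Steinberg_algebra_of_inverse_semigroup} (applied to the semilattice $E$) to get an algebra homomorphism $\pi_0 : A(\widehat E)\to B$ with $\pi_0(1_{Z(e)})=v_e$, and then show $\pi_0$ is contractive for the supremum norm on $A(\widehat E)\subseteq C_c(\widehat E)$. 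The key point is Lemma~\ref{lem:inclusion-exclusion}: every element of $A(\widehat E)$ is a finite linear combination $\sum_{F_0\subseteq F} a_{F_0} 1_{Z(\bigwedge F_0)\setminus\bigcup Z(\bigwedge F_0\wedge f)}$ over mutually orthogonal idempotents, whose supremum norm is exactly $\max_{F_0}|a_{F_0}|$ (over those $F_0$ with the corresponding cylinder set non-empty), while the image under $\pi_0$ is $\sum a_{F_0} P_{F_0}^F$, whose norm is $\le\max|a_{F_0}|$ by joint $\F$-contractiveness. Hence $\pi_0$ extends to a representation $\pi:C_0(\widehat E)\to B$, and its image is $\overline{\pi_0(A(\widehat E))}=A$ since $\{1_{Z(e)}\}$ spans a dense subalgebra of $C_0(\widehat E)$.

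The implications $\ref{enu:representation_characterization2}\Rightarrow\ref{enu:representation_characterization3}$ and $\ref{enu:representation_characterization3}\Rightarrow\ref{enu:representation_characterization4}$ are the easy links. For the first: a representation $\pi:C_0(\widehat E)\to B$ has image a quotient $C_0(\widehat E)/\ker\pi$, which by Proposition~\ref{prop:minimality_of_sup_norm} (Kaplansky--Bonsall) is isometrically $C_0(\widehat E)/\ker\pi\cong C_0(X)$ for the closed set $X=\widehat E\setminus\hull(\ker\pi)$, so $A\cong C_0(X)$ isometrically. The implication $\ref{enu:representation_characterization3}\Rightarrow\ref{enu:representation_characterization4}$ is immediate since $C_0(X)$ is a (commutative) $C^*$-algebra. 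For $\ref{enu:representation_characterization4}\Rightarrow\ref{enu:representation_characterization1}$: if $A$ sits isometrically in a $C^*$-algebra, then the idempotents $v_e$, being mutually compatible idempotents in a commutative $C^*$-algebra of the form $C_0(X)$, are characteristic functions of clopen sets; the idempotents $P_{F_0}^F$ are then also characteristic functions of (disjoint) clopen sets, and $\|\sum a_i P_i\|_{C_0(X)}=\max_i|a_i|$ for scalars $a_i\in\F$ supported on the nonzero ones, so in particular $\le\max|a_i|$ — i.e. they are jointly $\F$-contractive, and $v$ is a representation of $S$ in $B$.

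For the final paragraph of the Lemma: if $B=B(K)$ and $v|_E$ takes values in $L^p$-projections ($p\ne2$), then by the last sentence of Remark~\ref{rem:types_of_projections} any family of mutually orthogonal $L^p$-projections is jointly $\F$-contractive; since the $P_{F_0}^F$ are built from the $v_e=v|_E$-values by the algebraic formula \eqref{eq:general_projections_to_be_contractive} and $\mathbb{P}_p(K)$ is a Boolean algebra closed under the Boolean operations (meet, complement relative to a larger projection), each $P_{F_0}^F$ is again an $L^p$-projection, and mutual orthogonality follows from Lemma~\ref{lem:inclusion-exclusion}; hence $v$ is a representation. For $v:S\to\SPIso(L^p(\mu))$ with $p\in[1,\infty]$: when $p\ne2$ the idempotents of $\SPIso(L^p(\mu))$ are multiplication operators by characteristic functions (see the Remark after Definition~\ref{def:spatial^partial_isos}), which are precisely $L^p$-projections by \cite[Proposition 4.9]{BDEGGMM}, so the previous case applies; for $p=2$ one uses instead that $\SPIso(L^2(\mu))$ sits in the $C^*$-algebra $B(L^2(\mu))$ with $v|_E$ taking values in (orthogonal) projections, so $A$ embeds isometrically in a $C^*$-algebra and $\ref{enu:representation_characterization4}$ applies. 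Finally if $v:S\to B_1$ is a $*$-homomorphism into a $C^*$-algebra $B$, then $\{v_e\}_{e\in E}$ generate a commutative $C^*$-subalgebra and the same argument as in $\ref{enu:representation_characterization4}\Rightarrow\ref{enu:representation_characterization1}$ gives that $v$ is a representation of $S$.

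\textbf{Anticipated main obstacle.} The one spot requiring genuine care is $\ref{enu:representation_characterization1}\Rightarrow\ref{enu:representation_characterization2}$: one must check that the supremum norm of the generic element $\sum a_{F_0}1_{Z(\bigwedge F_0)\setminus\bigcup_{f}Z(\bigwedge F_0\wedge f)}$ of $A(\widehat E)$ really equals $\max\{|a_{F_0}| : \text{the cylinder is nonempty}\}$ — this uses that distinct such basic cylinders (for a fixed $F$) are disjoint, that the nonempty ones are detected by the filter $\supp\phi=\{f\in E:f\ge \bigwedge F_0\}$ exactly as in the proof of Corollary~\ref{cor:injectivity_representation_A(E)}, and that every element of $A(\widehat E)$ can be brought to this normal form by refining a common finite $F$. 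Matching this against the bound $\|\sum a_{F_0}P_{F_0}^F\|\le\max|a_{F_0}|$ coming from Definition~\ref{defn:representation_semigroup_contractive} — where one must be slightly careful that joint $\F$-contractiveness is stated for \emph{all} scalar coefficients, hence in particular for the ones indexed by empty cylinders too, which is harmless since $P_{F_0}^F=0$ precisely when the cylinder is empty — then yields contractivity of $\pi_0$. Everything else is bookkeeping with the Steinberg-algebra picture and the Kaplansky--Bonsall minimality of the sup-norm.
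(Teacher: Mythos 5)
Your proposal is correct and follows essentially the same route as the paper's proof: the cycle \ref{enu:representation_characterization1}$\Rightarrow$\ref{enu:representation_characterization2}$\Rightarrow$\ref{enu:representation_characterization3}$\Rightarrow$\ref{enu:representation_characterization4}$\Rightarrow$\ref{enu:representation_characterization1} via Steinberg-algebra universality, the inclusion--exclusion decomposition into disjoint cylinders, Kaplansky--Bonsall minimality, and the fact that contractive idempotents in a $C^*$-algebra are orthogonal projections. The only immaterial deviations are that for the $L^p$-projection case the paper simply observes $A\subseteq C_p(K)\cong C_0(X)$ (the Cunningham algebra) and invokes \ref{enu:representation_characterization3}/\ref{enu:representation_characterization4} rather than checking joint contractiveness directly, and your phrase ``$P_{F_0}^F=0$ precisely when the cylinder is empty'' overstates what is needed --- only the implication ``cylinder empty $\Rightarrow P_{F_0}^F=0$'' holds in general (emptiness forces $\bigwedge F_0\le f$ for some $f\in F\setminus F_0$, killing a factor), and that direction is exactly what the norm comparison requires.
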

\begin{proof} 
\ref{enu:representation_characterization1}$\implies$\ref{enu:representation_characterization2}.  
By Proposition~\ref{prop:Steinberg_algebra_of_inverse_semigroup}, the map $1_{Z(e)}\mapsto v_{e}$ extends to an algebra homomorphism $\pi : A(\widehat{E}) \to A\subseteq B$ given by $\pi(\sum_{e\in F} a_e 1_{Z(e)}) = \sum_{e\in F} a_e P_{e}$, for finite $F\subseteq E$ and $a_e\in \F$, $e\in F$.  
Let $\{ P_{F_0}^{F} \}_{F_0\subseteq  F}$ be the orthogonal idempotents associated to $F$ in Lemma~\ref{lem:inclusion-exclusion}.
The corresponding idempotents associated to $e\mapsto 1_{Z(e)}$ are $\{1_{Z(\bigwedge F_0)\setminus \bigcup_{f\in F\setminus F_0} Z(\bigwedge F_0\wedge f)} \}_{F_0\subseteq  F}$.
So there are $b_{F_0}\in \F$, $F_0\subseteq F$, such that 
\[
    \sum_{e\in F} a_e 1_{Z(e)} = \sum_{F_0\subseteq F} b_{F_0} 1_{Z\big(\bigwedge F_0\big) \setminus \bigcup_{f\in F\setminus F_0} Z\big(\bigwedge F_0\wedge f\big)} \quad \text{ and }\quad
    \sum_{e\in F} a_e P_{e} = \sum_{F_0\subseteq F} b_{F_0} P_{F_0}^{F}. 
\]
The joint $\F$-contractiveness of $\{P_{F_0}^{F}\}_{F_0\subseteq  F}$ implies 
\[
    \Big\| \sum_{e\in F} a_e P_{e} \Big\| \leq \max_{P_{F_0}^{F} \neq 0} | b_{F_0} | \leq \max_{Z\big(\bigwedge F_0\big) \setminus \bigcup_{f\in F\setminus F_0} Z \big( \bigwedge F_0\wedge f \big) \neq \emptyset } | b_{F_0} | = \Big\| \sum_{e\in F} a_e 1_{Z(e)} \Big\|_{\infty}.
\]
Hence the map $\pi:A(\widehat{E})\to A\subseteq B$ is contractive and so it extends (uniquely) to a representation $\pi : C_0(\widehat{E}) \to A\subseteq B$, because $A(\widehat{E})$ is dense in $C_0(\widehat{E})$. 

Implication \ref{enu:representation_characterization2}$\implies$\ref{enu:representation_characterization3} follows from Proposition~\ref{prop:minimality_of_sup_norm}, and \ref{enu:representation_characterization3}$\implies$\ref{enu:representation_characterization4} is trivial. 
Condition~\ref{enu:representation_characterization4} allows us to assume that $A\subseteq B(H)$ for a Hilbert space $H$. 
Then $v_{e}$, $e \in E$, are orthogonal projections on $H$ (as they are contractive idempotents), so the  idempotents \eqref{eq:general_projections_to_be_contractive} are orthogonal projections (as they are self-adjoint idempotents). 
In particular, idempotents \eqref{eq:general_projections_to_be_contractive} are mutually orthogonal $L^2$-projections, and so they are jointly $\F$-contractive.
This shows \ref{enu:representation_characterization4}$\implies$\ref{enu:representation_characterization1}.

When $v|_E$ takes values in the $L^p$-projections on a Banach space $K$, for  $p\in[1,\infty]\setminus\{2\}$, 
then  $v|_E$ takes values in the $p$-Cunningham algebra  $C_p(E) := \clsp \mathbb{P}_{p}(E)$  which is isometrically isomorphic to   $C_0(X)$, see \cite{BDEGGMM},
 and hence the equivalent conditions hold.
\end{proof}

\begin{defn}
We define the \emph{universal inverse semigroup Banach algebra} $F(S)$ as the completion of $\F S$ in the norm 
\[
    \| {\textstyle \sum_{t\in F}} a_f t \|_{F(S)} := \sup\{ \| {\textstyle \sum_{t\in F}} a_f v_t \| : \text{$v:S\to B$ is a representation in a Banach algebra $B$} \} .
\]
For $p\in[1,\infty]$ we define the $L^p$-operator algebra $F^p(S)$ in a similar way, as the completion of $\F S$ in the maximal norm for homomorphisms into spatial partial isometries on $L^p$-spaces.
\end{defn}

\begin{thm}\label{thm:inverse_semigroups_Banach_representations}
 Let $\G(S) := S \ltimes_{h} \widehat{E}$ be the associated  universal groupoid for an inverse semigroup  $S$ and let $\overline{S}:=\{U_t\}_{t\in S}\cup \{\widehat{E}\}\subseteq \Bis(\G(S))$ be the associated unital inverse semigroup covering $\G(S)$.
We have an isometric isomorphism $F(S) \cong F^{\overline{S}}(\G(S))$.
Namely, the relations 
\begin{equation}\label{eq:semigroup_correspondence}
\psi(1_{U_t}) = v_t,\qquad  t\in S,
\end{equation} establish a bijective correspondence between representations $\psi : F^{\overline{S}}(\G(S)) \to B$ and $v : S \to B_1$ in a Banach algebra $B$. 
Moreover, $\pi := \psi|_{C_0(\widehat{E})}$ is isometric if and only if for every $e\in E$ and every finite $F\subseteq eE\setminus\{e\}$ we have $\prod_{f\in F} (v_e -v_f) \neq 0$.
\end{thm}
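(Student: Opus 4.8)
The plan is to build everything on top of the disintegration machinery already in place. First I would recall that by Lemma~\ref{lem:inverse_semigroup_of_trivialtwist_bisections} applied to the trivial twist on $\G(S)$, the compact open bisections $\{U_t\}_{t\in S}$ form a wide inverse subsemigroup of $\Bis(\G(S))$ covering $\G(S)$, so that $F^S(\G(S))$ is well defined; indeed $\widetilde S\cap S(\LL)$ can be taken to be exactly the saturation of $\{U_t : t\in S\}$. Since $\G(S)$ is ample and $A(\G(S))$ is dense in $C_c(\G(S))$, and since on each $C_c(U,\F)$ with $U\in\{U_t\}$ the norm $\|\cdot\|_{\max}^S$ restricts to $\|\cdot\|_\infty$, I claim $\|\cdot\|_{\max}^S$ restricted to $\F S\cong A(\G(S))$ (via Proposition~\ref{prop:Steinberg_algebra_of_inverse_semigroup}) coincides with $\|\cdot\|_{F(S)}$. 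The key point is the correspondence between representations: a representation $\psi:F^S(\G(S))\to B$ restricts, by Theorem~\ref{thm:disintegration} (or directly by $\|\cdot\|_{\max}^S$-contractiveness), to the data $v_t:=\psi(1_{U_t})$, which is a semigroup homomorphism $S\to B_1$ because $1_{U_s}*1_{U_t}=1_{U_{st}}$ in $A(\G(S))$ and each $\|1_{U_t}\|_{\max}^S=1$.

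So the heart of the argument is: a semigroup homomorphism $v:S\to B_1$ integrates to a $\|\cdot\|_{\max}^S$-contractive homomorphism on $\mathfrak C_c(\G(S))=A(\G(S))\otimes\mathbb{F}$ \emph{if and only if} $v$ is a representation in the sense of Definition~\ref{defn:representation_semigroup_contractive}, i.e.\ the joint $\F$-contractiveness condition holds. For the ``only if'' direction: if $\psi$ is $\|\cdot\|_{\max}^S$-contractive, then $\pi:=\psi|_{C_0(\widehat E)}$ is a representation of $C_0(\widehat E)=\overline{A(\widehat E)}$, and the idempotents $P^F_{F_0}$ are the images under $\pi$ of the characteristic functions of the basic cylinder sets $Z(\bigwedge F_0)\setminus\bigcup_{f\in F\setminus F_0}Z(\bigwedge F_0\wedge f)$ (Lemma~\ref{lem:inclusion-exclusion}), which are mutually orthogonal and span an isometric copy of $c_0$ of finitely many points; hence they are jointly $\F$-contractive. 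For the ``if'' direction, this is precisely the content of Lemma~\ref{lem:representation_characterization}, implication \ref{enu:representation_characterization1}$\Rightarrow$\ref{enu:representation_characterization2}: joint $\F$-contractiveness forces $\pi:=(1_{Z(e)}\mapsto v_e)$ to extend to a representation of $C_0(\widehat E)$; then one constructs a covariant representation $(\pi,v)$ of the twisted (here untwisted, trivially twisted) action $(\alpha,\mathbf 1)$ of $S$ on $C_0(\widehat E)$ associated to $\G(S)$ — the covariance relations \ref{item:covariant_representation1}--\ref{item:covariant_representation3} follow from the semigroup relations $v_t v_e=v_{te}$, $v_sv_t=v_{st}$ together with $\pi(1_{Z(e)})=v_e$ — and Proposition~\ref{prop:integration_of_rep} yields a representation $\pi\rtimes v=\psi$ of $F^S(\G(S))$ with $\psi(1_{U_t})=\pi(1_{Z(t^*t)})v_t=v_tv_{t^*t}=v_t$. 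Passing to the supremum over all such $v$ (equivalently all $\psi$) gives $\|\cdot\|_{F(S)}=\|\cdot\|_{\max}^S$ on $\F S$, hence the isometric isomorphism $F(S)\cong F^S(\G(S))$ and the correspondence \eqref{eq:semigroup_correspondence}.

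For the final assertion about when $\pi=\psi|_{C_0(\widehat E)}$ is isometric: by Proposition~\ref{prop:minimality_of_sup_norm}, $\pi$ is isometric if and only if it is injective (as $C_0(\widehat E)$ is a $C^*$-algebra), and $\pi$ is injective if and only if its restriction to the dense subalgebra $A(\widehat E)$ is injective. But $\pi|_{A(\widehat E)}$ is exactly the algebra homomorphism corresponding to the semigroup homomorphism $v|_E:E\to B$, so Corollary~\ref{cor:injectivity_representation_A(E)} applies verbatim: injectivity is equivalent to $\prod_{f\in F}(v_e-v_f)\neq 0$ for every $e\in E\setminus\{0\}$ and finite $F\subseteq eE\setminus\{e\}$ (and the case $e=0$ gives $v_0=0$ automatically, or is vacuous if $E$ has no zero, so one states the condition for all $e\in E$ with the convention that the empty product over $F=\emptyset$ is $v_e$, which is always nonzero for $e\neq 0$).

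The step I expect to be the main obstacle is verifying cleanly that the norm $\|\cdot\|_{\max}^S$ on $\F S$ really agrees with $\|\cdot\|_{F(S)}$ rather than merely being dominated by it — one must make sure that \emph{every} $\|\cdot\|_{\max}^S$-contractive representation $\psi$ of $F^S(\G(S))$ arises from a representation $v$ of $S$ (so that the two suprema defining the norms coincide), and conversely that the $v$ built from joint $\F$-contractiveness genuinely produces a $\|\cdot\|_{\max}^S$-contractive — not just $\|\cdot\|_I$-contractive — map. This is where Proposition~\ref{prop:integration_of_rep}, which gives $\|\cdot\|_{\max}^S$-contractiveness directly from the estimate $\|\pi(a_t)v_t\|\le\|a_t\|_\infty$, does the essential work; the rest is bookkeeping with the (trivial) twist and the identification $\mathfrak C_c(\G(S),\F)=A(\G(S))$.
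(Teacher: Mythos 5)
Your proposal is correct and follows essentially the same route as the paper: both directions rest on the integration/disintegration machinery (using that each $Z(t^*t)$ is compact so no bidual is needed), Lemma~\ref{lem:representation_characterization} to pass between joint $\F$-contractiveness of the idempotents $P^F_{F_0}$ and contractiveness of $\pi$ on $C_0(\widehat E)$, and Steinberg's algebraic universality for the converse. The isometry criterion is also handled as in the paper, via $\ker\pi = C_0(U)$ and the basic compact open sets $Z(e)\setminus\bigcup_{f\in F}Z(f)$ (Corollary~\ref{cor:injectivity_representation_A(E)}).
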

\begin{proof}
By the last part of Theorem~\ref{thm:disintegration} every representation of $F^{\overline{S}}(\G(S))$ in $B$ is of the form $\pi\rtimes v$ where $\pi  :C_0(\widehat{E}) \to B$ is a representation and $v  :S\to B_1$ is a semigroup homomorphism such that $\pi(1_{Z(e)})=v_e$, $e\in E$,
 and $v_t\pi(a)v_{t^*} = \pi(a\circ h_{t^*})$ for all $a\in C(Z(t^*t))$, $t\in S$. 
In particular, Lemma~\ref{lem:representation_characterization} implies that $v:S\to B_1$ is a representation of the inverse semigroup $S$.
Conversely, if $v:S\to B_1$ is a representation of the inverse semigroup $S$, then by Proposition~\ref{prop:Steinberg_algebra_of_inverse_semigroup} there is a unique algebra homomorphism  $\psi : A(\G(S))\to B$ such that $\psi(1_{U_t}) = v_{t}$, $t\in S$. 
Putting $\pi:=\psi|_{A(\widehat{E})}$, we have $v_t \pi(a )v_{t^*} = \pi(a\circ h_{t^*})$ for all $a\in 1_{Z(t^*t)} A(\widehat{E})$, $t\in S$. 
By Lemma~\ref{lem:representation_characterization}, $\pi : A(\widehat{E}) \to B$ extends to a representation $\pi : C_0(\widehat{E})\to B$ and hence the latter condition holds for all $a\in C(Z(t^*t)) = \overline{1_{Z(t^*t)} A(\widehat{E})}$.
Thus $\psi$ extends to a representation $\psi = \pi\rtimes v : F^{\overline{S}}(\G(S))\to B$. 
This gives the first part of the assertion. 

For the second part recall that basic non-empty open sets are of the form  $Z(e) \setminus \bigcup_{f \in F} Z(f)$ for $e\in E$ and finite $F\subseteq eE\setminus\{e\}$.
Moreover, for the corresponding representations we have $\prod_{f\in F} (v_e -v_f) = \pi(1_{Z(e) \setminus \bigcup_{f \in F} Z(f)})$. 
Thus if $\pi$ is injective then $\prod_{f\in F} (P_e -P_f)\neq 0$, see also Corollary~\ref{cor:injectivity_representation_A(E)}. 
If $\pi$ is not isometric then  $\ker\pi  =C_0(U)$ for a non-empty open set $U\subseteq \widehat{E}$.
Hence $\pi$ vanishes on some non-empty set  $1_{Z(e) \setminus \bigcup_{f \in F} Z(f)}$, which implies that $\prod_{f\in F} (P_e -P_f)= 0$.
\end{proof}


\begin{cor}\label{cor:Lp_semigroup_representations} 
Let $p\in [1,\infty]$. 
The isomorphism from Theorem~\ref{thm:inverse_semigroups_Banach_representations} descends to an isometric isomorphism $F^p(S) \cong F^p(\G(S))$. 
If $\F=\C$ then $F^p(S)$ is universal for all representations of $S$ in $L^p$-operator algebras, and if $p\neq 2$, then every non-degenerate representation $\psi:F^p(S)\to B(L^p(\mu))$, where $\mu$ is localizable, is given by a semigroup homomorphism $V : S \to\SPIso(L^p(\mu))$ into spatial partial isometries.
\end{cor}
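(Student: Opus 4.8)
The plan is to derive Corollary~\ref{cor:Lp_semigroup_representations} as a direct consequence of Theorem~\ref{thm:inverse_semigroups_Banach_representations}, Corollary~\ref{cor:Lp_semigroup_representations}'s groupoid counterpart being the already-established facts about $F^p(\G,\LL)$ from Section~\ref{sec:Groupoid Lp-operator algebras}. First I would recall that, by definition, $F^p(S)$ is the completion of $\F S$ in the maximal norm coming from homomorphisms $v : S \to \SPIso(L^p(\mu))$. Under the identification $t \leftrightarrow U_t \in \Bis(\G(S))$ from Theorem~\ref{thm:inverse_semigroups_Banach_representations} and its correspondence \eqref{eq:semigroup_correspondence}, a homomorphism $v : S \to \SPIso(L^p(\mu))$ is, by Lemma~\ref{lem:representation_characterization}, automatically a representation of the inverse semigroup $S$; and the corresponding representation $\psi = \pi \rtimes v : F^S(\G(S)) \to B(L^p(\mu))$ has $\pi = \psi|_{C_0(\widehat E)}$ acting by multiplication operators (the $v_e$, $e \in E(S)$, being $L^p$-projections, hence multiplication operators on $L^p(\mu)$ by Theorem~\ref{thm:spatial^partial_isometries_description} / \cite[Proposition 4.9]{BDEGGMM}), and $v$ taking values in $\SPIso(L^p(\mu))$. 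Thus $\psi$ is precisely a \emph{spatial} covariant representation in the sense of Definition~\ref{defn:covariant_representation_spatial}, i.e.\ exactly the kind of representation that computes $\|\cdot\|_{L^p}$ on $\mathfrak{C}_c(\G(S), \LL)$ with $\LL$ trivial. Conversely, by Theorem~\ref{thm:spatial_representations_of_groupoid_algebras}\ref{enu:spatial_representations_of_groupoid_algebras1} the norm on $F^p(\G(S))$ is attained by spatial covariant representations, and each of those restricts (via $\psi(1_{U_t}) = v_t$) to a homomorphism $S \to \SPIso(L^p(\mu))$. Matching the two supremum norms gives the isometric isomorphism $F^p(S) \cong F^p(\G(S))$.

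The next step is the $\F = \C$ statement: that $F^p(S)$ is then universal for \emph{all} representations of $S$ in $L^p$-operator algebras. Here I would invoke Theorem~\ref{thm:initial_on_L^p_full}\ref{enu:initial_on_L^p_full4}, which says that for $\F = \C$ (or $p = 2$) the requirement ``$\psi|_{C_0(X)}$ acts by multiplication operators'' in Definition~\ref{defn:full_Lp} is redundant --- every $\|\cdot\|_{\max}^S$-contractive representation of $\mathfrak{C}_c(\G(S))$ on an $L^p$-space automatically has this property, by Theorem~\ref{thm:L^p_representations of C_0(X)} (after reducing to the non-degenerate case via Proposition~\ref{prop:from_degenerate_to_nondegenerate_Lp}). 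Translating through Theorem~\ref{thm:inverse_semigroups_Banach_representations}: an arbitrary homomorphism $v : S \to B(L^p(\mu))$ with $v(S) \subseteq B(L^p(\mu))_1$ integrates to a representation $\psi : F^S(\G(S)) \to B(L^p(\mu))$ --- here one must first check $v$ is a representation of $S$ in the sense of Definition~\ref{defn:representation_semigroup_contractive}, but this follows because the $v_e$ are contractive idempotents on an $L^p$-space, and using Theorem~\ref{thm:L^p_representations of C_0(X)} (applied to the commutative algebra $\clsp\{v_e\}$ acting after degeneracy-removal) one sees they are, up to the canonical identification, multiplication operators by characteristic functions, hence $L^p$-projections and jointly $\F$-contractive by Remark~\ref{rem:types_of_projections}. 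Then $\|\psi\| \le \|\cdot\|_{L^p}$ by Definition~\ref{defn:full_Lp}, so $\|\sum a_t v_t\| \le \|\sum a_t t\|_{F^p(S)}$, giving universality.

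Finally, for $p \neq 2$ (and $\F = \C$), the claim that every non-degenerate $\psi : F^p(S) \to B(L^p(\mu))$ with $\mu$ localizable is given by $V : S \to \SPIso(L^p(\mu))$ follows by pulling back through the isomorphism $F^p(S) \cong F^p(\G(S))$ and applying Theorem~\ref{thm:spatial_representations_of_groupoid_algebras}\ref{enu:spatial_representations_of_groupoid_algebras2}: since $\G(S)$ is ample, all domains $X_t = Z(t^*t)$ are compact open, so statement \ref{enu:spatial_representations_of_groupoid_algebras2} applies for all $p \in [1,\infty]$; moreover by Theorem~\ref{thm:L^p_representations of C_0(X)} the non-degenerate representation $\psi|_{C_0(\widehat E)}$ acts by multiplication operators automatically (this is where $p \neq 2$ is used), so the hypothesis of \ref{enu:spatial_representations_of_groupoid_algebras2} is met. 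The disintegrated $(\pi, v)$ is then spatial, and $V := v$ is the desired semigroup homomorphism into $\SPIso(L^p(\mu))$ with $\psi(1_{U_t}) = V_t$. I expect the only genuinely delicate point --- and hence the ``main obstacle'' --- to be the careful bookkeeping in the $\F = \C$ universality step: one must make sure that an \emph{a priori} arbitrary contractive-valued homomorphism $v : S \to B(L^p(\mu))$ really does satisfy the joint $\F$-contractiveness condition of Definition~\ref{defn:representation_semigroup_contractive}, which requires first passing to the non-degenerate part on $\clsp\{v_e : e \in E(S)\}$, identifying that closed span with some $C_0(Y)$ via Theorem~\ref{thm:L^p_representations of C_0(X)} / Proposition~\ref{prop:minimality_of_sup_norm}, and only then concluding the $v_e$ are (conjugate to) $L^p$-projections; everything else is a routine transcription of the already-proved groupoid results.
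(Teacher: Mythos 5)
Your first step (the isometric isomorphism $F^p(S)\cong F^p(\G(S))$) and your last step (the description of non-degenerate representations for $\F=\C$, $p\neq 2$) are correct and follow the paper's own route: Theorem~\ref{thm:spatial_representations_of_groupoid_algebras} plus the correspondence from Theorem~\ref{thm:inverse_semigroups_Banach_representations} and the last part of Lemma~\ref{lem:representation_characterization} give the isomorphism, and Theorem~\ref{thm:L^p_representations of C_0(X)} together with compactness of the domains $Z(t^*t)$ handles the final claim.

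The universality step for $\F=\C$ is where your argument has a genuine gap. You read the statement as being about \emph{arbitrary} contractive-valued semigroup homomorphisms $v:S\to B(L^p(\mu))_1$ and then try to show that such a $v$ automatically satisfies the joint $\F$-contractiveness of Definition~\ref{defn:representation_semigroup_contractive}, by applying Theorem~\ref{thm:L^p_representations of C_0(X)} to $\clsp\{v_e\}$. This is circular: that theorem takes as input a \emph{contractive} homomorphism from $C_0(X)$, and the existence of a contractive homomorphism $C_0(\widehat E)\to\clsp\{v_e:e\in E\}$ is precisely what joint $\F$-contractiveness is equivalent to (Lemma~\ref{lem:representation_characterization}). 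Worse, the intermediate claim is false: a contractive idempotent on a complex $L^p$-space with $p\neq 2$ need not be an $L^p$-projection. The complex version of Example~\ref{ex:real_representation_from_bicontractive}, $P=\tfrac12(1+U)$ with $U\xi=(\xi_2,\xi_1)$ on $\ell^p(\{1,2\})$, is bicontractive but satisfies $\|(1-P)+iP\|\geq 2^{1/p-1/2}>1$ for $p<2$ (and the case $p>2$ follows by duality), so it is not bicircular, hence not hermitian and not an $L^p$-projection, cf.\ Remark~\ref{rem:types_of_projections}; taking $S=\{1,e\}$, $v_1=1$, $v_e=P$ yields a contractive-valued semigroup homomorphism that is not a representation of $S$ and does not integrate to a representation of $F^S(\G(S))$. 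The corollary is about representations of $S$ in the sense of Definition~\ref{defn:representation_semigroup_contractive}, so the joint contractiveness is a hypothesis rather than something to prove; with that reading the step is immediate and matches the paper: Theorem~\ref{thm:inverse_semigroups_Banach_representations} integrates such a $v$ into an $L^p$-operator algebra $B\subseteq B(L^p(\mu))$ to a $\|\cdot\|^{S}_{\max}$-contractive representation $\psi$, and Theorem~\ref{thm:initial_on_L^p_full}\ref{enu:initial_on_L^p_full4} gives $\|\psi(f)\|\leq\|f\|_{L^p}$.
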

\begin{proof} 
By Theorem~\ref{thm:spatial_representations_of_groupoid_algebras},  $F^p(\G(S))$ is universal for spatial representations of $F^S(\G(S))$ on $L^p(\mu)$, which via Theorem~\ref{thm:inverse_semigroups_Banach_representations} correspond to homomorphisms $v : S \to \SPIso(L^p(\mu)) \subseteq B(L^p(\mu))$, cf.\ the last part of Lemma~\ref{lem:representation_characterization}.
This gives $F^p(S) \cong F^p(\G(S))$. 
The remaining part follows from Theorems  \ref{thm:initial_on_L^p_full}\ref{enu:initial_on_L^p_full4} and \ref{thm:L^p_representations of C_0(X)} 
\end{proof}

\begin{rem}
When $S$ has a zero it is natural to consider zero-preserving homomorphisms. 
The corresponding \emph{contracted algebras} $\F_0 S$, $F_0^p(S)$, $p\in[1,\infty]$, and $F_0(S)$, could be defined as  quotients of $\F S$, $F^p(S)$, $p\in[1,\infty]$, and $F(S)$ respectively by the one-dimensional space generated by $0\in S$.
The \emph{contracted groupoid} $\G_0(S)$ modeling these algebras is the transformation groupoid for the $S$-action restricted to the closed $S$-invariant set $\widehat{E}\setminus \{1\}$. 
\end{rem}

We now pass to representations of $S$ that exploit the Boolean ring structure of the range of the semilattice $E$. 
Here by a \emph{Boolean ring} (a \emph{generalized Boolean algebra} in the nomenclature of \cite{Steinberg}) we 
mean a possibly non-unital strucutre, so it is  a quintuple $(R,0,\vee, \wedge, \setminus)$ where $\setminus$ is a binary operation (relative complement). 
We  generalize here Exel's theory of tight representations \cite{Exel}, or rather its `non-unital version' \cite{Donsig_Milan},  to the Banach algebra context. We also characterise injectivity of the relevant representations, 
which seems to be a new results even  in the  $C^*$-algebraic setting. 

To this end, we fix again a semilatice $E$ of idempotents in an inverse semigroup $S$.

\begin{defn} 
A \emph{cover} of $e\in E$ is a finite set $F\subseteq eE$ such that for every nonzero $z\leq e$ we have $zf\neq 0$ for some $f\in F$.
We say that a map $v : E\to R$ into a Boolean ring $R$ is \emph{tight} (or \emph{cover-to-join}) if the element $v_e - \bigvee_{f\in F} v_f = \bigwedge_{f\in F} (v_e\setminus v_f)$ is zero whenever $F$ covers $e\in E$.
A semigroup homomorphism $v : S\to B$ into an $\F$-algebra $B$ is \emph{tight} if $\prod_{f\in F} (v_e - v_f) = 0$ for every cover $F$ of $e\in F$ 
(this is equivalent to saying that $v|_E$ is tight as a map into a Boolean ring of idempotents in $\spane \{ v_e : e\in E \} \subseteq B$).
\end{defn}

\begin{rem}
If $E$ has a zero, then $\emptyset$ covers $0$ and so $v_0=0$ for every tight map $v:E\to R$ into a Boolean ring $R$.
If $E$ is a Boolean ring, then a semigroup homomorphism $v:E\to R$ is a Boolean ring homomorphism if and only if $v$ is tight.
\end{rem}

By  \cite[Theorem 12.9]{Exel}, the set
\[
    \widehat{E}_{\tight} := \{ \phi : E \to \{0,1\} \ \text{is a tight homomorphism} \}
\]
is a closed subset of $\widehat{E}$. 
In fact, $\widehat{E}_{{\tight}}$ is the closure of the set of $\phi\in \widehat{E}$ whose support $\supp(\phi ) = \{ e\in E : \varphi(e) = 1 \}$ is an ultrafilter. 
We call $\widehat{E}_{{\tight}}$ the \emph{tight spectrum} of $E$.
When $E = E(S)$ is the semilattice of idempotents in an inverse semigroup $S$ then $\widehat{E}_{{\tight}}$ is invariant for the $S$-action on $\widehat{E}$, and hence the corresponding tranformation groupoid $\G_{\tight}(S) := S\ltimes \widehat{E}_{{\tight}}$, called the \emph{tight groupoid of $S$}, is the restriction of $\G(S)$ to the set $\widehat{E}_{{\tight}}$. 
By \cite[Corollary 2.14]{Steinberg_Szakacs} the isomorphism from Proposition~\ref{prop:Steinberg_algebra_of_inverse_semigroup} factors to the isomorphism 
\begin{equation}\label{eq:tight_algebra}
    A(\G_{\tight}(S)) \cong \F S/ \Big\langle \prod_{f\in F} e - f : \text{$F$ covers $e\in E$} \Big\rangle,   
\end{equation}
and this algebra is universal for tight semigroup homomorphisms. 

\begin{defn} 
The \emph{tight Banach algebra} $F_{\tight}(S)$ is the completion of the algebra \eqref{eq:tight_algebra} in the maximal norm induced by tight representations $v : S\to B$ in Banach algebras $B$.
Similarly, for $p\in[1,\infty]$, we define the \emph{tight $L^p$-Banach algebra} $F^p_{\tight}(S)$, using tight representations in spatial partial isometries on $L^p$-spaces. 
\end{defn}

\begin{thm}\label{thm:tight_inverse_semigroups_Banach_representations}
Let $\G_{{\tight}}(S)$ be the tight groupoid for an inverse semigroup $S$ and let $\overline{S}$ be the canonical image of $S$ in $\Bis(\G_{{\tight}}(S))$ with added the unit fiber (if necessary).
The isomorphism \eqref{eq:tight_algebra} extends to an isometric isomorphism 
\[
    F_{\tight}(S)\cong F^{\overline{S}}(\G_{\tight}(S))
\]
and \eqref{eq:semigroup_correspondence} gives a bijective correspondence between representations $\psi:F^{\overline{S}}(\G_{\tight}(S))\to B$ and tight representations $v:S\to B_1$ in a Banach algebra $B$.
Moreover, $\psi$ is isometric on $C_0(\widehat{E}_{\tight})$ if and only if $\prod_{f\in F} (v_e -v_f) \neq 0$ for every $e\in E$ and finite $F\subseteq eE$ that does not cover $e$.
\end{thm}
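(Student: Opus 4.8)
The plan is to deduce this from the already-established disintegration machinery applied to the tight groupoid, together with the structural description of representations of inverse semigroups from Lemma~\ref{lem:representation_characterization} and the injectivity criterion in Corollary~\ref{cor:injectivity_representation_A(E)}. The tight groupoid $\G_{\tight}(S)$ is, by construction, the restriction of $\G(S) = S\ltimes_h\widehat{E}$ to the closed $S$-invariant subset $\widehat{E}_{\tight}\subseteq \widehat{E}$; in particular it is itself a twisted (trivially twisted) \'etale groupoid, modeled by the restricted inverse semigroup action $h : S\to\PHomeo(\widehat{E}_{\tight})$, which is again modeled by a twisted action $(\alpha, u)$ with trivial twist of $S$ on $C_0(\widehat{E}_{\tight})$. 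So Theorem~\ref{thm:disintegration} applies verbatim with the groupoid $\G_{\tight}(S)$ and the inverse semigroup $S$ of compact open bisections $U_t$.

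First I would show that under the correspondence $\psi\leftrightarrow(\pi,v)$ of Theorem~\ref{thm:inverse_semigroups_Banach_representations}, restricted along the canonical quotient $F^S(\G(S))\to F^S(\G_{\tight}(S))$, a representation $v : S\to B_1$ descends to $F^S(\G_{\tight}(S))$ precisely when $v$ is \emph{tight}. Indeed, $F^S(\G_{\tight}(S))$ is a Hausdorff completion of $F^S(\G(S))$, and its defining seminorm kills an element $f\in\mathfrak{C}_c(\G(S))$ iff $f$ vanishes on $\widehat{E}_{\tight}$ after restriction; applying this to the idempotents $1_{Z(e)\setminus\bigcup_{f\in F}Z(f)}$ and using that such a cylinder set meets $\widehat{E}_{\tight}$ iff $F$ does \emph{not} cover $e$ (this is \cite[Theorem 12.9]{Exel} / the definition of the tight spectrum), one sees that $v$ factors through the quotient iff $\prod_{f\in F}(v_e-v_f)=0$ for every cover $F$ of every $e\in E$, i.e.\ iff $v$ is tight. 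This gives the isometric isomorphism $F_{\tight}(S)\cong F^S(\G_{\tight}(S))$ — the universal norm on the left is by definition the supremum over tight representations, which by the previous sentence is exactly the supremum over representations of $F^S(\G_{\tight}(S))$, i.e.\ the $\|\cdot\|_{\max}^S$-norm — and simultaneously establishes the bijective correspondence \eqref{eq:semigroup_correspondence} (one only needs that the disintegration of Theorem~\ref{thm:disintegration}, and Lemma~\ref{lem:representation_characterization}, carry over with $\widehat{E}$ replaced by $\widehat{E}_{\tight}$, which they do since $C_0(\widehat{E}_{\tight})$ is a quotient of $C_0(\widehat{E})$ and tightness of $v|_E$ is exactly the condition forcing $\pi$ to annihilate $C_0(\widehat{E}\setminus\widehat{E}_{\tight})$).

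Finally, for the injectivity statement: under $\psi\leftrightarrow(\pi,v)$ we have $\pi=\psi|_{C_0(\widehat{E}_{\tight})}$, and the nonempty basic compact-open subsets of $\widehat{E}_{\tight}$ are precisely the sets $Z(e)\setminus\bigcup_{f\in F}Z(f)$ intersected with $\widehat{E}_{\tight}$ where $e\in E$ and $F\subseteq eE$ does \emph{not} cover $e$ (again by \cite[Theorem 12.9]{Exel}). Running the argument of Corollary~\ref{cor:injectivity_representation_A(E)} word for word inside $C_0(\widehat{E}_{\tight})$ — every element of $A(\widehat{E}_{\tight})$ is a finite linear combination of mutually orthogonal idempotents $1_{(Z(e)\setminus\bigcup_{f\in F}Z(f))\cap\widehat{E}_{\tight}}$ with $F$ a non-cover of $e$, and these are sent by $\pi$ to the orthogonal idempotents $\prod_{f\in F}(v_e-v_f)$ — one concludes that $\pi$ is injective (equivalently, by Proposition~\ref{prop:minimality_of_sup_norm} and Lemma~\ref{lem:representation_characterization}, isometric on $C_0(\widehat{E}_{\tight})$) iff $\prod_{f\in F}(v_e-v_f)\neq 0$ for every $e\in E$ and every finite $F\subseteq eE$ that does not cover $e$. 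The main obstacle, and the one point requiring genuine care rather than bookkeeping, is the identification of which basic cylinder sets survive the passage to $\widehat{E}_{\tight}$: one must invoke the precise characterization of the tight spectrum (that $Z(e)\setminus\bigcup_{f\in F}Z(f)$ meets $\widehat{E}_{\tight}$ exactly when $F$ is not a cover of $e$, equivalently contains a tight — indeed ultra — filter through $e$ avoiding all $f\in F$), and then check that the joint-$\F$-contractiveness bound from Lemma~\ref{lem:representation_characterization} still matches the sup-norm on $A(\widehat{E}_{\tight})$ after this restriction, which it does since the surviving idempotents are still mapped to mutually orthogonal jointly-$\F$-contractive idempotents.
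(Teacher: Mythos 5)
Your proposal is correct and follows essentially the same route as the paper: the paper's proof likewise mimics the argument of Theorem~\ref{thm:inverse_semigroups_Banach_representations}, using the Steinberg--Szak\'acs universality of the algebra in \eqref{eq:tight_algebra} for cover-to-join maps, and for the last part identifies the nonempty basic open sets of $\widehat{E}_{\tight}$ as exactly those $Z(e)\setminus\bigcup_{f\in F}Z(f)$ with $F$ not a cover of $e$ (witnessed by an ultrafilter through some $0\neq z\leq e$ orthogonal to $F$), which is precisely your key step.
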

\begin{proof} 
One may mimic the proof of Theorem~\ref{thm:inverse_semigroups_Banach_representations}, using that the algebras in \eqref{eq:tight_algebra} are universal for cover-to-join maps. 
In particular, for the second part note that basic non-empty open sets in $\widehat{E}_{\tight}$ are of the form  $Z(e) \setminus \bigcup_{f \in F} Z(f) \cap \widehat{E}_{\tight}$ for $e\in E$ and finite $F\subseteq eE\setminus\{e\}$. 
Such a set is empty if and only if $F$ is a cover of $e$.
Indeed, if $F$ is not a cover, then there is $0\neq z\leq e$ with $zf=0$ for all $f\in F$, and taking
any ultrafilter containing $z$ the corresponding $\phi\in \widehat{E}_{\tight}$ is in $Z(e) \setminus \bigcup_{f \in F} Z(f)$.
\end{proof}
\begin{cor}\label{cor:tight_Lp}
Let $p\in [1,\infty]$. The isomorphism \eqref{eq:tight_algebra} extends to an isometric isomorphism 
$
F^p_{\tight}(S)\cong F^p(\G_{\tight}(S)).
$ 
If $\F=\C$, then $F^p_{\tight}(S)$ is a universal for all tight representations of $S$ in $L^p$-operator algebras, and if $p\neq 2$, then every non-degenerate representation $\psi:F^p(S)\to B(L^p(\mu))$, where $\mu$ is localizable, is given by a tight  homomorphism $V : S \to\SPIso(L^p(\mu))$ into spatial partial isometries.
\end{cor}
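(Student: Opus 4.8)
The plan is to deduce Corollary~\ref{cor:tight_Lp} from Theorem~\ref{thm:tight_inverse_semigroups_Banach_representations} in exactly the same way that Corollary~\ref{cor:Lp_semigroup_representations} was deduced from Theorem~\ref{thm:inverse_semigroups_Banach_representations}, replacing the universal groupoid $\G(S)$ by the tight groupoid $\G_{\tight}(S)$ throughout. First I would recall that $\G_{\tight}(S)$ is an ample (hence \'etale, with totally disconnected locally compact Hausdorff unit space $\widehat{E}_{\tight}$) groupoid, so all the machinery of Section~\ref{sec:Groupoid Lp-operator algebras} applies to it; in particular $F^p(\G_{\tight}(S))$ is defined and Theorem~\ref{thm:spatial_representations_of_groupoid_algebras} holds for it.

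For the isometric isomorphism $F^p_{\tight}(S)\cong F^p(\G_{\tight}(S))$: by definition $F^p_{\tight}(S)$ is the completion of the algebra \eqref{eq:tight_algebra} in the maximal norm coming from tight representations $v:S\to\SPIso(L^p(\mu))\subseteq B(L^p(\mu))$ into spatial partial isometries on $L^p$-spaces. By Theorem~\ref{thm:tight_inverse_semigroups_Banach_representations} (more precisely, the correspondence \eqref{eq:semigroup_correspondence} specialised to $\G_{\tight}(S)$), such homomorphisms correspond bijectively, via $\psi(1_{U_t})=v_t$, to those representations $\psi:F^S(\G_{\tight}(S))\to B(L^p(\mu))$ whose restriction to $C_0(\widehat{E}_{\tight})$ takes values in multiplication operators and whose associated partial isometries $\psi(1_{U_t})$ are spatial --- i.e.\ to spatial covariant representations in the sense underlying Theorem~\ref{thm:spatial_representations_of_groupoid_algebras}. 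Here one uses the last part of Lemma~\ref{lem:representation_characterization}: a semigroup homomorphism into $\SPIso(L^p(\mu))$ is automatically a representation of $S$, and a tight such homomorphism is a tight representation. Conversely, $v$ being tight forces $\psi$ to factor through the quotient \eqref{eq:tight_algebra}. Matching the supremum defining $\|\cdot\|_{F^p_{\tight}(S)}$ with that defining $\|\cdot\|_{L^p}$ on $\mathfrak{C}_c(\G_{\tight}(S),\LL)$ (with trivial twist), using that $\widehat{E}_{\tight}$ is totally disconnected so $\mathfrak{C}_c = A(\G_{\tight}(S))$ is the span of the $1_{U_t}$, gives the isometric identification.

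For the remaining assertions: when $\F=\C$, Theorem~\ref{thm:initial_on_L^p_full}\ref{enu:initial_on_L^p_full4} says condition \ref{enu:full_Lp3} in Definition~\ref{defn:full_Lp} is automatic, so $F^p(\G_{\tight}(S))$ is universal for \emph{all} $\|\cdot\|_{\max}^{S}$-contractive representations into $L^p$-operator algebras; transporting this along the isomorphism above and using Theorem~\ref{thm:tight_inverse_semigroups_Banach_representations} once more shows $F^p_{\tight}(S)$ is universal for all tight representations of $S$ in $L^p$-operator algebras. Finally, for $p\neq 2$ and a non-degenerate $\psi:F^p_{\tight}(S)\to B(L^p(\mu))$ with $\mu$ localizable: by Theorem~\ref{thm:L^p_representations of C_0(X)} the restriction $\psi|_{C_0(\widehat{E}_{\tight})}$ acts by multiplication operators (as $\psi$ non-degenerate forces the restriction to $C_0(\widehat{E}_{\tight})$ to be non-degenerate after passing to the essential part, as in Proposition~\ref{prop:from_degenerate_to_nondegenerate_Lp}), so Theorem~\ref{thm:spatial_representations_of_groupoid_algebras}\ref{enu:spatial_representations_of_groupoid_algebras2} (the ample case makes all domains $X_t=Z(t^*t)$ compact open, so it applies for every $p\in[1,\infty]$) shows $\psi=\pi\rtimes v$ for a spatial covariant representation, whence $V:S\to\SPIso(L^p(\mu))$, $V_t:=v_t$, is the required tight homomorphism. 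The main obstacle --- a very mild one --- is bookkeeping the non-degeneracy/essential-part reduction so that Theorem~\ref{thm:L^p_representations of C_0(X)} genuinely applies; everything else is a transcription of the proof of Corollary~\ref{cor:Lp_semigroup_representations}.

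\begin{proof}
Since $\G_{\tight}(S)$ is the restriction of the ample groupoid $\G(S)$ to the closed invariant set $\widehat{E}_{\tight}\subseteq\widehat{E}$, it is itself an ample \'etale groupoid with totally disconnected locally compact Hausdorff unit space $\widehat{E}_{\tight}$; in particular all the results of Section~\ref{sec:Groupoid Lp-operator algebras} apply to it, and all domains $X_t=Z(t^*t)\cap\widehat{E}_{\tight}$ of the modelling action are compact open.

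By Theorem~\ref{thm:tight_inverse_semigroups_Banach_representations} the relations $\psi(1_{U_t})=v_t$ give a bijective correspondence between representations $\psi:F^S(\G_{\tight}(S))\to B$ and tight representations $v:S\to B_1$ in a Banach algebra $B$. Now fix $p\in[1,\infty]$ and apply this with $B=B(L^p(\mu))$, $\mu$ a measure. By Theorem~\ref{thm:spatial_representations_of_groupoid_algebras}\ref{enu:spatial_representations_of_groupoid_algebras1}--\ref{enu:spatial_representations_of_groupoid_algebras2} (valid for all $p\in[1,\infty]$ here, as the domains $X_t$ are compact), and using the last part of Lemma~\ref{lem:representation_characterization}, a semigroup homomorphism $v:S\to\SPIso(L^p(\mu))\subseteq B(L^p(\mu))$ is automatically a representation of $S$, and it is tight precisely when the corresponding $\psi$ factors through the quotient \eqref{eq:tight_algebra}; spatiality of $v$ corresponds to $\psi|_{C_0(\widehat{E}_{\tight})}$ acting by multiplication operators together with the $\psi(1_{U_t})$ being spatial partial isometries. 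Hence, on the dense subalgebra \eqref{eq:tight_algebra} $=\spane\{1_{U_t}:t\in S\}=\mathfrak{C}_c(\G_{\tight}(S))$ (the twist being trivial, $\G_{\tight}(S)$ ample), the supremum norm defining $F^p_{\tight}(S)$ coincides with $\|\cdot\|_{L^p}$ defining $F^p(\G_{\tight}(S))$. Therefore \eqref{eq:tight_algebra} extends to an isometric isomorphism
\[
    F^p_{\tight}(S)\cong F^p(\G_{\tight}(S)).
\]

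Assume now $\F=\C$. By Theorem~\ref{thm:initial_on_L^p_full}\ref{enu:initial_on_L^p_full4}, condition \ref{enu:full_Lp3} in Definition~\ref{defn:full_Lp} may be dropped, so $F^p(\G_{\tight}(S))$ is universal for all $\|\cdot\|_{\max}^S$-contractive representations into $L^p$-operator algebras. Transporting this along the above isomorphism and invoking Theorem~\ref{thm:tight_inverse_semigroups_Banach_representations} once more, $F^p_{\tight}(S)$ is universal for all tight representations of $S$ in $L^p$-operator algebras.

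Finally, let $p\neq 2$ and let $\psi:F^p_{\tight}(S)\to B(L^p(\mu))$ be non-degenerate with $\mu$ localizable. By the isomorphism above we may view $\psi$ as a representation of $F^S(\G_{\tight}(S))$. Passing, as in the proof of Proposition~\ref{prop:from_degenerate_to_nondegenerate_Lp} (or directly, since $\psi$ is non-degenerate), to the essential part we may assume $\psi|_{C_0(\widehat{E}_{\tight})}$ is non-degenerate, so by Theorem~\ref{thm:L^p_representations of C_0(X)} it acts by multiplication operators. Then Theorem~\ref{thm:spatial_representations_of_groupoid_algebras}\ref{enu:spatial_representations_of_groupoid_algebras2} (applicable for every $p\in[1,\infty]$ as the $X_t$ are compact) shows $\psi=\pi\rtimes v$ for a spatial covariant representation $(\pi,v)$ of the modelling action, i.e.\ $v$ takes values in $\SPIso(L^p(\mu))$. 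Setting $V_t:=v_t=\psi(1_{U_t})$, $t\in S$, we obtain a tight homomorphism $V:S\to\SPIso(L^p(\mu))$ implementing $\psi$.
\end{proof}
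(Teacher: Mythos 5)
Your proposal is correct and takes essentially the same route as the paper, whose proof is literally ``follow the proof of Corollary~\ref{cor:Lp_semigroup_representations} using Theorem~\ref{thm:tight_inverse_semigroups_Banach_representations}'' --- i.e.\ combine Theorem~\ref{thm:tight_inverse_semigroups_Banach_representations} with Theorem~\ref{thm:spatial_representations_of_groupoid_algebras}, Lemma~\ref{lem:representation_characterization}, Theorem~\ref{thm:initial_on_L^p_full}\ref{enu:initial_on_L^p_full4} and Theorem~\ref{thm:L^p_representations of C_0(X)}, exactly as you do. Your expansion of the details (compactness of the domains $Z(t^*t)\cap\widehat{E}_{\tight}$ making the last part of Theorem~\ref{thm:disintegration}/Theorem~\ref{thm:spatial_representations_of_groupoid_algebras} available for all $p$, and non-degeneracy of $\psi$ passing to $\psi|_{C_0(\widehat{E}_{\tight})}$ via the common approximate unit) is accurate.
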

\begin{proof} Follow the proof of Corollary~\ref{cor:Lp_semigroup_representations} using Theorem~\ref{thm:tight_inverse_semigroups_Banach_representations}.
\end{proof}

En passant we improve the main result of \cite{Exel_recon} by removing the second-countability assumption, and replacing the assumption that the semigroup forms a basis for the topology by the requirement that it is wide and separates the points. This seemingly slight generalization is very useful in applications, see e.g. \cite{BKM}. 

\begin{cor}\label{cor:disconnected_are_tight}
Let $X$ be a totally disconnected locally compact Hausdorff space and let $E$ be a semilattice consisting of compact open sets that separate the points of $X$ and cover $X$. 
The pairing $\phi(e) = 1_{e}(x)$, for $x\in X$, $\phi\in \widehat{E}_{{\tight}}$, $e\in E$, gives a homeomorphism $X\cong \widehat{E}_{{\tight}}$.
\end{cor}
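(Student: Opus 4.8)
\textbf{Proof plan for Corollary~\ref{cor:disconnected_are_tight}.} The strategy is to set up the natural map, verify it is a well-defined continuous map into the tight spectrum, and then show it is a homeomorphism by exhibiting a continuous inverse and using compactness. First I would define $\iota : X \to \widehat{E}$ by $\iota(x)(e) := 1_e(x)$; since $E$ covers $X$, for each $x$ there is $e\in E$ with $x\in e$, so $\iota(x)$ is a non-zero homomorphism $E\to\{0,1\}$, hence lies in $\widehat{E}$. Continuity of $\iota$ is immediate from the definition of the product topology on $\{0,1\}^E$: the coordinate function $x\mapsto 1_e(x)$ is continuous because each $e$ is clopen. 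Injectivity is exactly the separation hypothesis: if $x\neq y$ there is $e\in E$ with $x\in e$, $y\notin e$ (or vice versa), so $\iota(x)\neq\iota(y)$.

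Next I would check that $\iota(x)$ is a \emph{tight} homomorphism, i.e.\ that $\iota(x)\in\widehat{E}_{\tight}$. Suppose $F\subseteq eE$ is a cover of $e$ and $x\in e$ (if $x\notin e$ the condition $\iota(x)(e)=\bigvee_{f\in F}\iota(x)(f)$ is trivial as both sides are $0$, using that each $f\le e$). In the Boolean ring of compact open subsets of $X$, "$F$ covers $e$" says that for every nonzero $z\le e$ there is $f\in F$ with $zf\neq 0$. I claim this forces $e=\bigcup_{f\in F}f$ as subsets of $X$: if not, $z := e\setminus\bigcup_{f\in F}f$ is a nonzero element of $E$ (it is compact open, being a finite Boolean combination of compact open sets, and it is $\le e$) with $zf=0$ for all $f\in F$, contradicting the covering property. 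Hence $x\in e=\bigcup_{f\in F}f$ gives $x\in f$ for some $f\in F$, so $\iota(x)(f)=1$ and the cover-to-join condition holds. Thus $\iota$ maps $X$ into $\widehat{E}_{\tight}$.

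For surjectivity onto $\widehat{E}_{\tight}$, recall that $\widehat{E}_{\tight}$ is the closure in $\widehat{E}$ of the ultrafilter characters. I would first show every ultrafilter character is of the form $\iota(x)$: given an ultrafilter $\xi\subseteq E$, the family $\{e : e\in\xi\}$ of compact open sets has the finite intersection property (a filter is downward directed and $0\notin\xi$), so by compactness of any fixed $e_0\in\xi$ the intersection $\bigcap_{e\in\xi}e$ is nonempty; pick $x$ in it. Then $\supp(\iota(x))\supseteq\xi$, and since $\xi$ is an ultrafilter and $\supp(\iota(x))$ is a proper filter, $\supp(\iota(x))=\xi$, i.e.\ the character with support $\xi$ equals $\iota(x)$. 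Now $\iota(X)$ is a closed subset of $\widehat{E}_{\tight}$ (it is the continuous image of the locally compact $X$ under an injective proper map — properness because $\iota^{-1}(Z(e)) = e$ is compact for $e\in E$, and the $Z(e)$ with $e\in E$ form a base of compact neighbourhoods — or, when $X$ is compact, simply the continuous image of a compact space in a Hausdorff space) containing all ultrafilter characters, so $\iota(X)\supseteq\widehat{E}_{\tight}$, giving equality. Finally, a continuous bijection that is proper (or, in the compact case, any continuous bijection onto a Hausdorff space) is a homeomorphism, so $\iota : X\to\widehat{E}_{\tight}$ is the desired homeomorphism.

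The main obstacle I anticipate is the properness/closedness argument needed for surjectivity in the non-compact case: one must be careful that $\iota$ is a \emph{proper} map (preimages of compact sets compact) so that $\iota(X)$ is closed in $\widehat{E}_{\tight}$, and this is where the hypotheses "$E$ consists of \emph{compact} open sets" and "$E$ covers $X$" are both essential — the former gives $\iota^{-1}(Z(e))=e$ compact, the latter gives that the $Z(e)$, $e\in E$, cover $\widehat{E}$ and hence form a cofinal family of compact sets. Everything else (tightness, injectivity, continuity) is a direct unwinding of definitions using that the elements of $E$ are clopen and separate points.
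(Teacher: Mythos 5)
Your route is genuinely different from the paper's: you argue entirely at the level of point-set topology (continuity, injectivity, tightness, surjectivity via ultrafilters, properness), whereas the paper shows that $e\mapsto 1_e$ induces an isometric isomorphism $C_0(\widehat{E}_{\tight})\cong C_0(X)$ using the injectivity criterion of Theorem~\ref{thm:tight_inverse_semigroups_Banach_representations} together with Stone--Weierstrass, and then dualizes. Most of your steps are sound, but there is a genuine gap in the tightness verification. You claim that if $e\neq\bigcup_{f\in F}f$ then $z:=e\setminus\bigcup_{f\in F}f$ is ``a nonzero element of $E$''. That $z$ is compact open is true, but $E$ is only assumed to be a \emph{semilattice} of compact open sets --- closed under intersections, not under relative complements --- so $z$ need not belong to $E$, and the covering condition quantifies only over elements $z\le e$ \emph{lying in $E$}. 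Hence no contradiction follows. The defect is not cosmetic: your tightness argument never uses the separation hypothesis, and without it the implication ``$F$ covers $e$ $\Rightarrow$ $e=\bigcup F$'' is false. On $X=\{a,b,c\}$ (discrete) the semilattice $E=\{\emptyset,\{b\},\{a,b\},\{b,c\}\}$ covers $X$ and separates points in the weak sense (some element contains exactly one of any two given points), yet $\{\{b\}\}$ covers $\{a,b\}$ in the semilattice sense while $\{b\}\neq\{a,b\}$; there $\widehat{E}_{\tight}$ is a single point while $|X|=3$. So separation must be read in the strong form ``for $x\neq y$ there is $e\in E$ with $x\in e$ and $y\notin e$'', and it must actually be used in the tightness step.

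The repair: take $x\in e\setminus\bigcup_{f\in F}f$ and set $K:=\bigcup_{f\in F}f$; for each $y\in K$ choose $e_y\in E$ with $x\in e_y$ and $y\notin e_y$; since each $e_y$ is compact, hence closed, the sets $X\setminus e_y$ are open and cover the compact set $K$, so finitely many suffice, and then $z:=e\cdot e_{y_1}\cdots e_{y_n}\in E$ is a nonzero element below $e$ with $zf=0$ for all $f\in F$, contradicting the covering hypothesis. With this fix the rest of your argument goes through: the ultrafilter/finite-intersection-property argument gives that $\iota(X)$ contains the ultrafilter characters, and properness of $\iota$ (via $\iota^{-1}(Z(e))=e$ compact and the $Z(e)$ covering $\widehat{E}$) gives that $\iota(X)$ is closed, hence contains $\widehat{E}_{\tight}$ and $\iota$ is a homeomorphism onto it.
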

\begin{proof} 
The map $E\ni e \mapsto 1_e\in C_0(X)$ is a tight representation, and so it gives rise to a representation $\pi : C_0(\widehat{E}_{{\tight}}) \to C_0(X)$ which is surjective by the Stone--Weierstrass Theorem. 
To see that $\pi$ is isometric note that $\pi( 1_{Z(e) \setminus \bigcup_{f} Z(f)} |_{\widehat{E}_{{\tight}}} ) = \prod_{f\in F} (1_e - 1_f) = 1_{e\setminus \bigcup  F}$ for every $e\in E$ and finite $F\subseteq eE$.
So if $\prod_{f\in F} (1_e -1_f) = 0$, then $e = \bigcup F$ which means that $F$ is a cover of $e$.
Hence $\pi$ is isometric by Theorem~\ref{thm:tight_inverse_semigroups_Banach_representations}. 
It is immediate that the homeomorphism dual to the isomorphism $C_0(\widehat{E}_{{\tight}}) \cong C_0(X)$ is of the described form.
\end{proof}

\begin{cor}\label{cor:realization_of_ample_groupoids}
Let $\G$ be an ample groupoid, and let $S\subseteq \Bis(\G)$ be a wide inverse semigroup of compact open bisections that separate points in $X$. 
We have a natural isomorphism $\G\cong \G_{\tight}(S)$ of topological groupoids, and so we have a natural isomorphism $F_{\tight}(S)\cong F^S(\G)$.
\end{cor}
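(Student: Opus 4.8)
The plan is to deduce this corollary directly from the preceding results, with essentially no new work beyond invoking Corollary~\ref{cor:disconnected_are_tight} and the general machinery of transformation groupoids. First I would recall that since $\G$ is ample, its unit space $X$ is totally disconnected and locally compact Hausdorff, and $S\subseteq\Bis(\G)$ consists of compact open bisections; the canonical action $h\colon S\to\PHomeo(X)$ (Example~\ref{ex:actions_from_groupoids}) restricts on the semilattice $E:=E(S)$ to the action on the compact open subsets of $X$ that lie in $S$. By hypothesis these sets separate points of $X$ and (since $S$ is wide, hence covers $\G$, hence covers $X$) cover $X$, so Corollary~\ref{cor:disconnected_are_tight} applies and yields a homeomorphism $X\cong\widehat{E}_{\tight}$ given by $\phi(e)=1_e(x)$. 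The key point is that this homeomorphism is \emph{equivariant}: under it, the partial homeomorphism $h_t\colon d(t)\to r(t)$ of $X$ corresponds to the partial homeomorphism $h_t\colon Z(t^*t)\to Z(tt^*)$ of $\widehat{E}_{\tight}$ from the definition of $\G_{\tight}(S)$, because $h_t(\phi)(e)=\phi(t^*et)$ translates, via $\phi=\mathrm{ev}_x$, precisely to $1_{e}(h_t(x))=1_{t^*et}(x)$, which is the canonical action formula $h_t(1_{t^*t}\cdot(\textrm{sets}))$.

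Next I would pass from the equivariance of the homeomorphism to an isomorphism of the transformation groupoids. Since $\G\cong S\ltimes_h X$ as topological groupoids whenever $S$ is wide (Example~\ref{ex:groupoids_from_actions}, the last sentence), and $\G_{\tight}(S)=S\ltimes_h\widehat{E}_{\tight}$ by definition, an $S$-equivariant homeomorphism $X\cong\widehat{E}_{\tight}$ induces an isomorphism $S\ltimes_h X\cong S\ltimes_h\widehat{E}_{\tight}$ of étale groupoids (it sends $[t,x]$ to $[t,\phi_x]$, is well defined because the equivalence relation defining the transformation groupoid is expressed purely in terms of the order on $S$ and the domains $X_{v^*}$, which are matched by the equivariant homeomorphism, and is continuous and open since it carries the basic bisections $U_t$ bijectively onto the basic bisections $U_t$). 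Composing, $\G\cong\G_{\tight}(S)$ as topological groupoids, and this isomorphism is the identity on the distinguished copies of $S$ inside both $\Bis(\G)$ and $\Bis(\G_{\tight}(S))$.

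Finally, the statement about Banach algebras is immediate: the groupoid isomorphism $\G\cong\G_{\tight}(S)$ carrying $S$ to $S$ induces an isometric $*$-isomorphism $\mathfrak{C}_c(\G)\cong\mathfrak{C}_c(\G_{\tight}(S))$ that preserves the norm $\|\cdot\|_{\max}^S$ (this norm depends only on the pair $(\G,S)$ via formula~\eqref{eq:projective_norm}), hence extends to $F^S(\G)\cong F^S(\G_{\tight}(S))$, and the right-hand side is $F_{\tight}(S)$ by Theorem~\ref{thm:tight_inverse_semigroups_Banach_representations}. I expect the only genuine point requiring care — the ``main obstacle,'' though it is mild — to be the verification that the homeomorphism of Corollary~\ref{cor:disconnected_are_tight} is $S$-equivariant and that equivariance upgrades to a groupoid isomorphism; everything else is a direct citation. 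One should also double-check that ``wide'' indeed forces $S$ to cover $X$ and that the separation hypothesis is exactly what Corollary~\ref{cor:disconnected_are_tight} needs, so that no ampleness condition beyond total disconnectedness of $X$ is secretly used.
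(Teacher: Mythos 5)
Your proposal is correct and follows essentially the same route as the paper: invoke Corollary~\ref{cor:disconnected_are_tight} to get the homeomorphism $X\cong\widehat{E}_{\tight}$, check $S$-equivariance via $1_e(h_t(\Psi(\phi)))=\phi(t^*et)=\hat{h}_t(\phi)(e)$, conclude the transformation groupoids are isomorphic, and then apply Theorem~\ref{thm:tight_inverse_semigroups_Banach_representations}. Your write-up is in fact slightly more explicit than the paper's about why the equivariant homeomorphism upgrades to a groupoid isomorphism and why the norms match.
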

\begin{proof}  
Recall that $\G_{\tight}(S)$ is the transformation groupoid for the action $\hat{h}$ where 
$  \hat{h}_{t} : \hat{Z}(t^*t) \to \hat{Z}(tt^*)$,   
$\hat{Z}(t^*t) := \{\phi \in \widehat{E}_{\tight}: \phi(t^*t)=1\}$ and $\hat{h}_{t}(\phi)(e) := \phi(t^*et)$ for  $t\in S$, $e\in E$, $\phi\in \hat{Z}(t^*t)$. 
Similarly $\G$ is the transformation groupoid for the action 
\[
    h:S\to \PHomeo(X) ;\ h_t = r\circ d|_{t}^{-1}: t^*t\to tt^*, \qquad t\in S .
\]
Note that for $e\leq t^*t$ we have $h_t(e)=tet^*$. 
Let $\Psi:\widehat{E}_{{\tight}}\to X$ be the homeomorphism from Corollary \ref{cor:disconnected_are_tight}.
For $\phi \in \widehat{E}_{{\tight}}$, $\phi(t^*t)=1$ if and only if $1_{t^*t}(\Psi(\phi))=1$. Thus $\Psi:\hat{Z}(t^*t)\to t^*t$. 
Moreover, for $\phi\in \hat{Z}(t^*t)$ and $e\in E$,
we have
\[
    1_{e} \Big( \Psi \big( \hat{h}_{t}(\phi) \big) \Big) = \hat{h}_{t}(\phi)(e) = \phi(t^*et) = \phi \big( h_t^{-1} (e) \big) = 1_{h_t^{-1}(e)}(\phi) = 1_e \Big( h_t \big( \Psi(\phi) \big) \Big) .
\]
Thus $\Psi$ intertwines the $S$-actions and so the transformation groupoids are isomorphic.
\end{proof}

\subsection{Banach algebras associated to directed graphs}
\label{subsect:directed_graphs}

Let $Q = (Q^0,Q^1, r_Q, s_Q)$ be a \emph{directed graph} (sometimes also called a \emph{quiver}). 
So $Q^0$ is the set of vertices, $Q^1$ is the set of edges and $r_Q, s_Q : Q^{1}\to Q^0$ are the range and source maps. 
The vertices in $Q_{\textup{reg}} := \{ v\in Q^0: 1 < |r_Q^{-1}(v)|<\infty\}$ are called \emph{regular} and the remaining $Q_{\textup{sing}} := Q^0\setminus Q_{\textup{reg}}$ are called \emph{singular}.
We denote by  $Q^n$, $n>0$, the set of finite paths $\mu=\mu_1 \cdots \mu_n$, where $s_Q(\mu_i) = r_Q(\mu_{i+1})$ for all $i=1,\ldots ,n-1$. 
Then $|\mu|=n$ stands for the length of $\mu$ and $Q^* = \bigcup_{n=0}^\infty Q^n$ is the set of all finite paths (vertices are treated as paths of length zero). 
We denote by $Q^\infty$  the set of infinite paths and put $Q^{\leq \infty} := Q^*\cup Q^{\infty}$. 
The maps $r_Q,s_Q$ extend naturally to $Q^*$ and $r_Q$ extends to $Q^\infty$.  
We recall the defintion of the Leavitt path algebra associated to $Q$, see \cite{AAM}.

\begin{defn}\label{defn:graph_famillies} 
A \emph{$Q$-family in an algebra $B$} (over $\F=\R,\C$) is a pair $(P,T)$ where $P=\{P_v\}_{v\in Q^0}\subseteq B$ consists of pairwise orthogonal idempotents and $T=\{T_e, T_e^*\}_{e\in Q^1}\subseteq B$
consists of elements where $T_e^{*}$ is a generalized inverse of $T_e$, $e\in Q^1$, such that 
\begin{enumerate}[label={(CK\arabic*)}]
    \item $T_e^*T_e=P_{s(e)}$  for all $e\in Q^{1}$;
    \item   $\{T_e T_e^*\}_{e \in E^1}$ are mutually orthogonal and $T_e T_e^*\leq P_{r(e)}$ for all $e\in Q^{1}$;
    \item $P_v=\sum_{e \in r^{-1}(v)} T_e T_e^*$ for all $v\in Q^{0}_{\reg}$.
\end{enumerate} 
\end{defn}

Let $(P,T)$ be a $Q$-family in $B$. 
For $\mu = \mu_1 \cdots \mu_n \in Q^{*}$ we put $T_\mu := T_{\mu_1} \cdots T_{\mu_n}$ and $T_\mu^* := T_{\mu_n}^*\cdots T_{\mu_1}^*$. 
Then the elements $T_{\mu}T_{\nu}^*$ form an inverse semigroup with multiplication from $B$ and so $\spane\{T_{\mu}T_{\nu}^*: \mu,\nu \in Q^*\}$ is a subalgebra of $B$ generated by $P\cup T$.
In fact, it is a $*$-algebra with the involution determined by $(T_{\mu}T_{\nu}^*)^* = T_{\nu}T_{\mu}^*$, $\mu,\nu \in Q^*$. 
By definition the \emph{Leavitt path algebra} $\L(Q)$ is the algebra generated by a universal $Q$-family $(p,t)$. 
Thus 
\[
    \L(Q) = \spane\{t_{\mu}t_{\nu}^*: \mu,\nu \in Q^*\} ,
\]
and for any $Q$-family $(P,T)$ in $B$ there is a unique algebra homomorphism $\psi_{(P,T)} : \L(Q)\to B$ such that $\psi_{(P,T)}(p_v)=P_v$, $\psi_{(P,T)}(t_e)=T_e$ and $\psi_{(P,T)}(t_e^*)=T_e^*$, for $v\in Q^0$, $e\in Q^1$. 
We refer to \cite{AAM} for more details. 
The elements $\{t_{\mu}t_{\nu}^*: \mu,\nu \in Q^*\}$ form an inverse semigroup that can be described in terms of $Q$ as follows.
 Put  $S_{Q}:=\{(\mu,\nu)\in Q^* \times Q^*:s_Q(\mu) = s_Q(\nu)\}\cup \{0\}$ and 
define multiplication in $S_Q$ by the formula
\[
    (\mu,\nu)(\alpha,\beta) :=  \begin{cases}
                                    (\mu \alpha',\beta) & \text{if $\alpha = \nu \alpha'$}, \\
                                    (\mu, \beta \nu') & \text{if $\nu = \alpha \nu'$}, \\
                                    0 &\text{otherwise.}
                                \end{cases}
\]
Then the involution is given by $(\mu,\nu)^* =(\nu,\mu)$. 
It is  known to experts that tight representations of $S_{Q}$ in are in bijective correspondence with  $Q$-families. 
We deduce this from our results. 
The groupoid model for $\L(Q)$ (isomorphic to the tight groupoid $\G_T(S_Q)$) can be described as follows \cite[Examples 2.1, 3.2]{Clark_Sims}. 
We put $Q^*_{\mathrm{sing}} := \{ \mu\in Q^*: s_Q(\mu) \in Q_{\mathrm{sing}}\}$. 
For any $\eta\in Q^*\setminus Q^0$ let ${\eta}Q^{\leq \infty} := \{ \mu = \mu_1 \cdots \in Q^{\leq \infty} : \mu_1 \cdots \mu_{|\eta|} = \eta \}$ and for $v\in Q^0$ put ${v}Q^{\leq \infty} := \{\mu \in Q^{\leq \infty}: r_Q(\mu)=v\}$.
The \emph{boundary space} of $Q$ \cite[Section 2]{Webster} is the set 
\[ 
    \partial Q := Q^\infty\cup Q^*_{\mathrm{sing}} 
\]
equipped with the topology generated by the `cylinders' $Z(\mu):=\partial Q\cap {\mu}Q^{\leq \infty}$, $\eta \in Q^*$, and their relative complements. 
In fact, the sets $Z(\mu) \setminus \bigcup_{e \in F} Z(\mu e)$, where $\mu\in Q^*$ and $F\subseteq {s(\mu)}Q^1$ is a finite set of edges, form a basis of compact-open sets for the Hausdorff topology on $\partial Q$ \cite[Section 2]{Webster}. 
The one-sided \emph{topological Markov shift} associated to $Q$ is the map $\sigma:\partial Q\setminus Q^0 \to \partial Q$ defined, for $\mu=\mu_1\mu_2 \cdots \in \partial Q\setminus Q^0$, by the formulas  
\[
    \sigma(\mu) := \mu_2\mu_3 \cdots \ \text{ if $\mu \notin Q^1$}, \qquad \sigma(\mu) := s(\mu_1) \  \text{if $\mu=\mu_1 \in Q^1$} .
\]
This is a partial local homeomorphism on $\partial Q$. 
By definition the \emph{groupoid of the graph} $Q$ is the Deaconu--Renault groupoid $\G_Q := \G(\partial Q,\sigma)$ of $\sigma$. 
Thus
\[
    \G_Q = \{ (\mu x, |\mu|-|\eta|, \eta x) : \mu, \eta \in Q^*, \ x\in \partial Q,\ s_Q(\mu) = s_Q(\nu) = r_Q(x) \} 
\]
is an ample Hausdorff groupoid with the topology generated by the `cylinders' $Z(\mu,\eta) := \{ (\mu x, |\mu|-|\eta|, \eta x) \in \G_Q \}$, for $(\mu,\nu)\in S_{Q}$, and their relative complements. 
Hence the sets $Z(\mu,\eta) \setminus \bigcup_{\alpha \in F} Z(\mu \alpha)$, where $(\mu,\nu)\in S_{Q}$ and $F\subseteq {s(\mu)}Q^*$ is finite, form a basis of compact open bisections for $\G_Q$. The algebraic structure is given by 
$    (\mu,n,\eta) (\eta,m,\kappa) := (\mu,n+m,\kappa)$ and  $(\mu,n,\eta)^{-1} := (\eta,-n,\mu)$.
We identify the unit space $\{(x,0,x):x\in \partial Q\}$ with $\partial Q$. 
The unitization of the natural image of $S_Q$ in $\Bis(\G_Q)$ is $\overline{S_Q}:=\{Z(\mu,\eta)\}_{(\mu,\nu)\in S_{Q}} \cup \{\emptyset, \partial Q \} $.
The associated Steinberg algebra is 
\[
    A(\G_Q) := \spane \{ 1_{Z(\mu,\eta)} : \mu, \eta \in Q^*,\ s_Q(\mu)=s_Q(\eta) \} .
\]
Note that $A(\G_Q)\subseteq C_c(\G_Q)$ is a dense $*$-subalgebra of $F(\G_Q)$, and $A(\partial Q) := \spane\{ 1_{Z(\mu)} : \mu\in Q^* \} \subseteq C_c(\partial Q)$ is a dense $*$-subalgebra in $C_0(\partial Q)$.

\begin{prop}[\cite{Clark_Sims}]\label{prop:Leavitt_groupoid_model}
There is an algebra isomorphism $\Psi:\LL(Q)\to A(\G_Q)$ that maps $t_{\mu}t_{\nu}^*$ to $1_{Z(\mu,\nu)}$, $(\mu,\nu)\in S_{Q}$.
It is uniquely determined by $\Psi(p_v) = 1_{Z(v)}$, $\Psi(t_e)=1_{Z(e,s_Q(e))}$ and $\Psi(t_e^*)=1_{Z(s_Q(e),e)}$, for $v\in Q^0$, $e\in Q^1$. 
In particular we have an isomorphism $ A(\partial Q) \cong \spane\{t_{\mu}t_{\mu}^*: \mu \in Q^*\}$ where $1_{Z(\mu)}\mapsto t_{\mu}t_{\mu}^*$.
\end{prop}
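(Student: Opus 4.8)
The plan is to invoke Proposition~\ref{prop:Steinberg_algebra_of_inverse_semigroup} applied to the inverse semigroup $S_Q$, together with the explicit identification of the universal groupoid $\G(S_Q)$ (more precisely its tight version $\G_{\tight}(S_Q)$) with $\G_Q = \G(\partial Q,\sigma)$, and then match up the generators. First I would observe that the family $\{1_{Z(\mu,\nu)}\}_{(\mu,\nu)\in S_Q}$ inside $A(\G_Q)$ is a homomorphic image of $S_Q$: the multiplication rule in $S_Q$ is designed exactly so that $1_{Z(\mu,\nu)}\cdot 1_{Z(\alpha,\beta)} = 1_{Z((\mu,\nu)(\alpha,\beta))}$, which is a direct (if slightly tedious) check using $Z(\mu,\nu)\cdot Z(\alpha,\beta)$ in $\G_Q$; here one uses that $\G_Q$ is ample and that the indicator functions of compact open bisections multiply by convolution to the indicator of the product bisection, with $Z(\mu,\nu) Z(\alpha,\beta)$ being empty, $Z(\mu\alpha',\beta)$, or $Z(\mu,\beta\nu')$ according to the three cases in the definition of $S_Q$. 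Likewise the involution $(\mu,\nu)^*=(\nu,\mu)$ matches $1_{Z(\mu,\nu)}^* = 1_{Z(\nu,\mu)}$.

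Next I would show that this homomorphism $S_Q\to A(\G_Q)$ is the \emph{universal} (tight) one, so that by Proposition~\ref{prop:Steinberg_algebra_of_inverse_semigroup} (or rather its tight refinement \eqref{eq:tight_algebra}) the induced algebra map $\F_0 S_Q \to A(\G_Q)$ is an isomorphism. Since $\LL(Q)$ is, by its generators-and-relations presentation, canonically isomorphic to the contracted semigroup algebra of $S_Q$ modulo exactly the Cuntz--Krieger relation (CK2), which is precisely the cover-to-join/tightness relation for the semilattice $E(S_Q)$ of idempotents $\{t_\mu t_\mu^*\}$, the composite gives the desired isomorphism $\Psi:\LL(Q)\to A(\G_Q)$. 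Concretely, (CK1) encodes $t_e^*t_e = p_{s(e)}$, i.e.\ $1_{Z(s(e),s(e))}$, and $t_et_e^*\le p_{r(e)}$, i.e.\ $Z(e,e)\subseteq Z(r(e))$; and (CK2), $p_v=\sum_{e\in r^{-1}(v)}t_et_e^*$ for $v\in Q_{\reg}$, says $Z(v)=\bigsqcup_{e\in r^{-1}(v)}Z(e,e)$, which holds in $\partial Q$ exactly for regular vertices — this is the content of tightness. I would then read off $\Psi(t_\mu t_\nu^*)=1_{Z(\mu,\nu)}$ on the spanning set, and uniqueness follows since $\{p_v,t_e,t_e^*\}$ generate $\LL(Q)$ and their images are specified. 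Finally, restricting to the commutative $*$-subalgebra $\spane\{t_\mu t_\mu^*\}$ (whose elements correspond to idempotents in $S_Q$), $\Psi$ restricts to the isomorphism $\spane\{t_\mu t_\mu^*\}\cong A(\partial Q)$ sending $t_\mu t_\mu^*$ to $1_{Z(\mu)}$, since $Z(\mu,\mu)=Z(\mu)$ viewed inside the unit space $\partial Q\subseteq\G_Q$.

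The main obstacle is the careful bookkeeping in identifying $\G_Q$ with the tight groupoid $\G_{\tight}(S_Q)$ and, correspondingly, in verifying that the tightness relation on $E(S_Q)$ is \emph{exactly} (CK2) restricted to regular vertices (with singular vertices contributing no relation, which is why $\partial Q$ includes $Q^*_{\sing}$ as finite paths and the cylinders $Z(\mu)\setminus\bigcup_{e\in F}Z(\mu e)$ form the basis). This is essentially the computation in \cite[Examples 2.1, 3.2]{Clark_Sims} and \cite{Webster}; since the proposition is explicitly attributed to \cite{Clark_Sims}, in the write-up I would state that this identification is carried out there and limit the present proof to recording how the generators correspond, i.e.\ I would phrase the proof as: apply the cited identification $\LL(Q)\cong A(\G_{\tight}(S_Q))\cong A(\G_Q)$ and track the images of $p_v,t_e,t_e^*$ through the isomorphisms, noting that the spanning elements $t_\mu t_\nu^*$ go to $1_{Z(\mu,\nu)}$ by multiplicativity.
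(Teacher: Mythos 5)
Your outline is correct, but be aware that the paper does not prove this proposition at all: it is stated with an attribution to \cite{Clark_Sims} and used as a black box. What you propose is in effect an independent derivation inside the paper's own framework, and it is the natural one: the collection $\{Z(\mu,\nu)\}$ is a wide inverse semigroup of compact open bisections separating points of $\partial Q$, so Corollary~\ref{cor:realization_of_ample_groupoids} gives $\G_Q\cong\G_{\tight}(S_Q)$, and then the Steinberg--Szak\'acs presentation \eqref{eq:tight_algebra} identifies $A(\G_Q)$ with $\F S_Q$ modulo the cover relations; matching that quotient against the generators-and-relations presentation of $\LL(Q)$ yields $\Psi$, and $\Psi(t_\mu t_\nu^*)=1_{Z(\mu,\nu)}$ follows from multiplicativity since $1_U*1_V=1_{UV}$ for compact open bisections of an ample groupoid. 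This is exactly the argument the paper runs one level up, in the proof of Theorem~\ref{thm:representations_of_graph_algebras}, so your route has the merit of making the algebraic input self-contained rather than imported. The one step that carries real content and that you only assert is the identification of the cover-to-join relations on $E(S_Q)$ with (CK2): you must check both that every finite cover of $(\mu,\mu)$ refines to the basic cover $\{(\mu e,\mu e): e\in r_Q^{-1}(s_Q(\mu))\}$ when $s_Q(\mu)$ is regular, and that idempotents over singular vertices admit no finite covers at all (which is why $Q^*_{\mathrm{sing}}\subseteq\partial Q$ and why singular vertices impose no relation). Deferring that computation to \cite{Webster} and \cite{Clark_Sims} is acceptable here since the proposition is attributed, but it is the part you would have to write out for a genuinely self-contained proof; everything else in your sketch is routine verification.
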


\begin{lem}\label{lem:Webster_lemma}
For any $Q$-family $(P,T)$ in an algebra $B$ and any finite $F\subseteq Q^*$ the elements
\[
    P^F_\mu := \prod_{\mu\mu'\in F\setminus \{\mu\}} T_\mu T_\mu^*-  T_{\mu\mu'} T_{\mu\mu'}^*, \qquad \mu \in F, 
\]
(with the convention $P^F_\mu:= T_\mu T_\mu^*$ if the product is over the empty set)
are mutually orthogonal idempotents such that $T_{\mu}T_{\mu}^* = \sum_{\mu\mu'\in F\setminus \{\mu\}}P^F_\mu$, so they span the algebra $\spane \{ T_{\mu}T_{\mu}^* : \mu \in F \}$.
If $B$ is a Banach algebra, then the map $(A(\partial Q), \|\cdot\|_{\infty}) \rightarrow \spane\{T_{\mu}T_{\mu}^*: \mu \in Q^*\}\subseteq B$; $1_{Z(\mu)}\mapsto t_{\mu}t_{\mu}^*$ is contractive if and only if for every finite $F\subseteq E^*$ the idempotents $\{P^F_\mu\}_{\mu \in F}$ are jointly $\F$-contractive (Definition~\ref{defn:jointly_contractive}).
\end{lem}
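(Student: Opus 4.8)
The statement has two parts: a purely algebraic claim about the idempotents $\{P^F_\mu\}_{\mu\in F}$, and an analytic characterization of contractivity of the map $1_{Z(\mu)}\mapsto t_\mu t_\mu^*$. The first part is an instance of the inclusion--exclusion principle, so I would simply invoke Lemma~\ref{lem:inclusion-exclusion} (or mimic its proof). The key observation is that $\spane\{T_\mu T_\mu^*:\mu\in Q^*\}$ is a commutative algebra of idempotents, since $T_\mu T_\mu^*\cdot T_\nu T_\nu^* = T_{\mu}T_\mu^*$ if $\nu$ extends $\mu$, equals $T_\nu T_\nu^*$ if $\mu$ extends $\nu$, and is zero otherwise (because $T_e^* T_e = P_{s(e)}$ and the $P_v$ are orthogonal, so distinct-direction extensions give orthogonal ranges). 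Thus $E':=\{T_\mu T_\mu^*:\mu\in Q^*\}\cup\{0\}$ is a semilattice of idempotents, with meet $T_\mu T_\mu^*\wedge T_\nu T_\nu^*$ equal to whichever of the two is smaller (or $0$), and order $T_\mu T_\mu^*\le T_\nu T_\nu^*$ iff $\mu$ extends $\nu$. Applying Lemma~\ref{lem:inclusion-exclusion} to the finite subset $\{T_\mu T_\mu^*:\mu\in F\}$ of this semilattice, and noting that for $\mu\in F$ the meet $\bigwedge_{\mu'\in G}(T_{\mu'}T_{\mu'}^*)$ over a subset $G\subseteq F$ containing $\mu$ is just $T_\mu T_\mu^*$ if $\mu$ is the longest path in $G$ and $0$ otherwise, one checks that the general idempotents $P^F_{F_0}$ of \eqref{eq:general_projections_to_be_contractive} collapse to the $P^F_\mu$ as displayed, with $F_0$ ranging effectively over the singletons $\{\mu\}$, $\mu\in F$ (together with the contributions that vanish). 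This gives mutual orthogonality and the decomposition $T_\mu T_\mu^* = \sum_{\mu\mu'\in F\setminus\{\mu\}} P^F_\mu$ (the sum being over those elements of $F$ that extend $\mu$, with the convention that $\mu$ itself is included).

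\textbf{For the second, analytic part}, I would argue exactly as in the proof of Lemma~\ref{lem:representation_characterization}, implication \ref{enu:representation_characterization1}$\Rightarrow$\ref{enu:representation_characterization2}. First note that by Proposition~\ref{prop:Leavitt_groupoid_model} there is an algebra isomorphism $A(\partial Q)\cong\spane\{t_\mu t_\mu^*:\mu\in Q^*\}$ sending $1_{Z(\mu)}\mapsto t_\mu t_\mu^*$, and the $Q$-family $(P,T)$ induces an algebra homomorphism $\pi_0:A(\partial Q)\to B$ with $\pi_0(1_{Z(\mu)}) = T_\mu T_\mu^*$. Under this isomorphism, the idempotent $1_{Z(\mu)\setminus\bigcup_{\mu'\in F\setminus\{\mu\}} Z(\mu')}$ in $A(\partial Q)$ corresponds precisely to $P^F_\mu$ for the finite set $F\subseteq Q^*$ (here one uses that $Z(\mu)\setminus Z(\mu\mu')$ is the relevant cylinder difference, matching the product $T_\mu T_\mu^* - T_{\mu\mu'}T_{\mu\mu'}^*$). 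By Lemma~\ref{lem:Webster_lemma}'s first part, every element of $A(\partial Q)$ is a finite linear combination $\sum_\mu a_\mu 1_{Z(\mu)} = \sum_\mu b_\mu 1_{Z(\mu)\setminus\bigcup\cdots}$ of mutually orthogonal such basic idempotents, with $\|\sum a_\mu 1_{Z(\mu)}\|_\infty = \max\{|b_\mu| : Z(\mu)\setminus\bigcup\cdots\ne\emptyset\}$; and the corresponding element of $B$ is $\sum_\mu b_\mu P^F_\mu$. Hence $\pi_0$ is $\|\cdot\|_\infty$-contractive if and only if $\|\sum_\mu b_\mu P^F_\mu\|\le\max_\mu|b_\mu|$ for all scalars and all $F$, which is precisely the joint $\F$-contractiveness of $\{P^F_\mu\}_{\mu\in F}$. (Strictly speaking joint $\F$-contractiveness is phrased for \emph{all} of $F$ rather than only the nonzero idempotents, but dropping a zero summand does not change either side, so the two conditions coincide.)

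\textbf{The main obstacle} I anticipate is bookkeeping: matching the combinatorial indexing of \eqref{eq:general_projections_to_be_contractive} (subsets $F_0\subseteq F$) with the simpler indexing by paths $\mu\in F$ that appears in the statement of Lemma~\ref{lem:Webster_lemma}. One must verify carefully that the only surviving idempotents $P^F_{F_0}$ are those where $F_0$ is generated by a single longest element (equivalently, $\bigwedge F_0\ne 0$ forces all elements of $F_0$ to lie on one path), and that the relative-complement products $\prod_{f}(v_{\bigwedge F_0} - v_{\bigwedge F_0\wedge f})$ then reduce to $\prod_{\mu\mu'\in F\setminus\{\mu\}}(T_\mu T_\mu^* - T_{\mu\mu'}T_{\mu\mu'}^*)$. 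This is routine once one observes that the semilattice $E'$ is, as a poset, just the forest of paths in $F$ ordered by extension, but it requires some care with the empty-product conventions and with the possibility that $F$ contains incomparable paths. Everything else --- the algebraic identities, the passage through $A(\partial Q)$, and the equivalence with joint contractiveness --- is a direct transcription of arguments already carried out in Lemma~\ref{lem:inclusion-exclusion} and Lemma~\ref{lem:representation_characterization}.
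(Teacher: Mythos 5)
Your proof is correct. For the analytic half you follow essentially the paper's own route: write $f\in A(\partial Q)$ as $\sum_{\mu\in F}a_\mu 1_{Z(\mu,F)}$ over the pairwise disjoint compact open sets $Z(\mu,F)=Z(\mu)\setminus\bigcup_{\mu\mu'\in F\setminus\{\mu\}}Z(\mu\mu')$, identify $\|f\|_\infty$ with the maximum of the coefficients over the nonempty cylinders, and compare the image $\sum_\mu a_\mu P^F_\mu$ with Definition~\ref{defn:jointly_contractive}; your parenthetical observation that $Z(\mu,F)=\emptyset$ forces $P^F_\mu=0$ (because the map factors through the homomorphism $A(\partial Q)\to B$ supplied by Proposition~\ref{prop:Leavitt_groupoid_model} and the universal property of $\L(Q)$) is precisely what reconciles the two maxima, a point the paper glosses over. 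Where you genuinely diverge is the algebraic first part: the paper disposes of it by citing Sims--Webster, whereas you derive it internally from Lemma~\ref{lem:inclusion-exclusion} applied to the semilattice $\{T_\mu T_\mu^*:\mu\in Q^*\}\cup\{0\}$. That derivation is sound: since $T_\mu T_\mu^*\,T_\nu T_\nu^*$ equals the smaller of the two when the paths are comparable and $0$ otherwise, $\bigwedge F_0\neq 0$ forces $F_0$ to be a chain with longest element $\mu_0$, and the factor $T_{\mu_0}T_{\mu_0}^*-T_{\mu_0}T_{\mu_0}^*\,T_\nu T_\nu^*$ kills $P^F_{F_0}$ whenever some prefix $\nu$ of $\mu_0$ lies in $F\setminus F_0$, so the only surviving idempotents are those with $F_0$ equal to the set of prefixes in $F$ of a single $\mu\in F$, and these collapse to $P^F_\mu$ after discarding the trivial factors coming from paths incomparable with $\mu$. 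What this buys is a self-contained proof (and it makes the relation $T_\mu T_\mu^*=\sum_{\mu\mu'\in F}P^F_{\mu\mu'}$ drop out of the general lemma), at the cost of the indexing bookkeeping you flag; the paper's citation is shorter but external.
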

\begin{proof}
The first part follows from \cite[Lemma 3.1]{Sims_Webster}, see also \cite[Lemma 3.4]{Webster}.
This corresponds to the fact every element $f\in A(\partial Q)$ can be written in the form $f=\sum_{\mu \in F} a_{\mu} 1_{Z(\mu, F)}$ where $Z(\mu, F):=Z(\mu) \setminus \bigcup_{\mu' \in F\setminus \{\mu\}} Z(\mu\mu')$ are pairwise disjoint compact open sets for all $\mu\in F$ and a fixed finite set $F\subseteq Q^*$. 
Then $\| f \|_{\infty}=\max_{\mu\in F} |a_{\mu}|$, and the map in the second part of the assertion sends $f$ to $\sum_{\mu \in F} a_\mu{P^F_\mu}$, so it is contractive if $\sum_{\mu \in F} a_\mu{P^F_\mu}\leq \max_{\mu\in F} |a_{\mu}|$ for all choices of $F$ and $a_\mu$. 
\end{proof}

\begin{defn}\label{defn:Banach_space_Q_family}
A \emph{Banach $Q$-family} is a $Q$-family $(P,T)$ in a Banach algebra $B$ where $P\cup T\subseteq B_1$ and for any finite $F\subseteq Q^*$ the idempotents $\{P^F_\mu\}_{\mu \in F}$ are jointly $\F$-contractive. 
A \emph{spatial $Q$-family on $L^p(\mu)$}, where $p\in [1,\infty]$, is a $Q$-family $(P,T)$ in the algebra $B(L^p(\mu))$ where $P\cup T\subseteq \SPIso(L^p(\mu))$ are spatial partial isometries (then $T_e^*$ is the adjoint of $T_e$ in the sense of the inverse semigroup $\SPIso(L^p(\mu))$). 
We define the \emph{universal graph Banach algebra} $F(Q)$ of $Q$ to be the completion of $\LL(Q)$ in 
\[
    \| b \|_{F(Q)} := \sup \{ \psi_{(P,T)}(b): \text{$(P,T)$ is a $Q$-family in a Banach algebra} \}.
\]
For $p\in [1,\infty]$ we define the \emph{graph $L^p$-operator algebra} $F^p(Q)$ to be the completion of $\L(Q)$ in the norm $\|b\|_{L^p}=\sup\{\psi_{(P,T)}(b): \text{$(P,T)$ is spatial $Q$-family on an $L^p$-space} \}$. 
\end{defn}

\begin{rem}
Lemma~\ref{lem:representation_characterization} implies that a spatial $Q$-family on $L^p(\mu)$ is a Banach $Q$-family in $B(L^p(\mu))$, and any $Q$-family $(P,T)$ in a $C^*$-algebra $B$ consisting of contractive elements is a Banach $Q$-family in $B$.
\end{rem}

\begin{thm}\label{thm:representations_of_graph_algebras}
For any directed graph $Q$ we have natural isometric isomorphisms 
\begin{equation}\label{eq:graph_algebra_isomorphisms}
F(Q)\cong F^{\overline{S_{Q}}}(\G_Q)\cong F_{\tight}(S_Q)
\end{equation}
More precisely,  
for any Banach algebra $B$ the relations 
\begin{equation}\label{eq:graph_algebra_isomorphism_relations}
\psi(p_v)=P_v=V_{(v,v)},\qquad  \psi(t_e)=T_e=V_{(e,s(e))},\qquad  \psi(t_e^*)=T_e^*=V_{(s(e),e)},
\end{equation}
 for $v\in Q^0$, $e\in Q^1$, establish bijective correspondences between representations $\psi:F(Q) \to B$, Banach $Q$-families $(P,T)$ in $B$, and tight Banach algebra representations $V:S_Q\to B_1$.
	
\end{thm}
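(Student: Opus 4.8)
The strategy is to reduce everything to results already established for inverse semigroups and for twisted groupoid Banach algebras, using the known Leavitt path algebra isomorphism $\Psi : \L(Q) \to A(\G_Q)$ (Proposition~\ref{prop:Leavitt_groupoid_model}) as the bridge. The first step is to identify $\L(Q)$ with the algebra $A(\G_{\tight}(S_Q))$ of the tight groupoid of the inverse semigroup $S_Q$; this is standard (the tight groupoid of $S_Q$ is canonically $\G_Q$, and the defining relations of $\L(Q)$ are exactly the cover-to-join relations for $S_Q$), so by \eqref{eq:tight_algebra} we have an algebra isomorphism $\L(Q)\cong \F S_Q/\langle \prod_{f\in F} e - f : F \text{ covers } e\rangle$ matching $t_\mu t_\nu^*$ with the class of $(\mu,\nu)$. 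Under this identification, tight semigroup homomorphisms $V : S_Q \to B$ correspond bijectively to $Q$-families $(P,T)$ via \eqref{eq:graph_algebra_isomorphism_relations}: a tight $V$ automatically satisfies (CK1)--(CK2) because the covers of $s(e)$ in $E(S_Q)$ are precisely finite sets $F\subseteq s(e)Q^1$ exhausting the range fibre, and conversely a $Q$-family yields a tight representation of $S_Q$ because the Cuntz--Krieger relation (CK2) forces the cover-to-join condition on $E(S_Q)$ (one checks $\prod_{f\in F}(v_e - v_f)=0$ for $F$ a cover by an inclusion-exclusion argument as in Lemma~\ref{lem:inclusion-exclusion} and Corollary~\ref{cor:injectivity_representation_A(E)}).

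The second step is to match the Banach/analytic conditions. By Definition~\ref{defn:Banach_space_Q_family}, a Banach $Q$-family additionally requires $P\cup T\subseteq B_1$ and that the idempotents $\{P^F_\mu\}_{\mu\in F}$ from Lemma~\ref{lem:Webster_lemma} are jointly $\F$-contractive for every finite $F\subseteq Q^*$; by Definition~\ref{defn:representation_semigroup_contractive}, a representation of the inverse semigroup $S_Q$ requires $V : S_Q\to B_1$ with the idempotents $\{P^F_{F_0}\}_{F_0\subseteq F}$ of \eqref{eq:general_projections_to_be_contractive} jointly $\F$-contractive for every finite $F\subseteq E(S_Q)$. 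These two joint-contractiveness conditions are equivalent because the idempotents $1_{Z(\mu)}$, $\mu\in Q^*$, generate $A(\partial Q)$, and Lemma~\ref{lem:Webster_lemma} expresses each element of $A(\partial Q)$ as a combination of mutually disjoint cylinder idempotents $1_{Z(\mu,F)}$ whose images are exactly the $P^F_\mu$; conversely the basic idempotents of $E(S_Q)$ are the $1_{Z(\mu)}$'s, so the two collections span the same subalgebra and joint $\F$-contractiveness of one family is equivalent to that of the other (this is the content of the equivalence \ref{enu:representation_characterization1}$\Leftrightarrow$\ref{enu:representation_characterization2} in Lemma~\ref{lem:representation_characterization} transported through $\Psi$). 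Hence tight representations $V : S_Q\to B_1$ of the inverse semigroup correspond exactly to Banach $Q$-families, and the universal norms coincide: $\|\cdot\|_{F(Q)} = \|\cdot\|_{F_{\tight}(S_Q)}$ on $\L(Q)\cong A(\G_{\tight}(S_Q))$.

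The third step simply invokes the machinery already proved. Corollary~\ref{cor:realization_of_ample_groupoids} (applied to the ample Hausdorff groupoid $\G_Q$ with the wide inverse semigroup $S_Q$ of compact open bisections $Z(\mu,\nu)$, which separate points of $\partial Q$) gives $\G_Q\cong \G_{\tight}(S_Q)$ and hence $F_{\tight}(S_Q)\cong F^{S_Q}(\G_Q)$ isometrically; combined with the norm identification from step two this yields the chain of isometric isomorphisms \eqref{eq:graph_algebra_isomorphisms}, and the bijective correspondence between representations $\psi : F(Q)\to B$ and tight representations $V : S_Q\to B_1$ is then exactly Theorem~\ref{thm:tight_inverse_semigroups_Banach_representations} via \eqref{eq:semigroup_correspondence}, read off through the identifications $p_v\leftrightarrow 1_{U_{(v,v)}}$, $t_e\leftrightarrow 1_{U_{(e,s(e))}}$, $t_e^*\leftrightarrow 1_{U_{(s(e),e)}}$. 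Finally, the correspondence $V\leftrightarrow (P,T)$ established in steps one and two closes the triangle of bijections asserted.

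\textbf{Main obstacle.} The only genuinely delicate point is step two: verifying carefully that the joint $\F$-contractiveness condition packaged in Definition~\ref{defn:Banach_space_Q_family} (over finite sets of \emph{paths} $F\subseteq Q^*$, using the Webster idempotents $P^F_\mu$) is literally the same condition as Definition~\ref{defn:representation_semigroup_contractive} (over finite sets of \emph{idempotents} $F\subseteq E(S_Q)$, using the inclusion-exclusion idempotents $P^F_{F_0}$). This requires unwinding the combinatorics of the semilattice $E(S_Q)=\{(\mu,\mu):\mu\in Q^*\}\cup\{0\}$ — in particular that $(\mu,\mu)\wedge(\nu,\nu)$ is $(\mu,\mu)$ if $\nu$ is a prefix of $\mu$ (or vice versa) and $0$ otherwise — so that the two families of cylinder sets coincide up to relabelling, and checking that nonzero basic cylinders correspond on both sides (so the two suprema defining the norms agree). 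Once this bookkeeping is done, everything else is an application of the theorems quoted above.
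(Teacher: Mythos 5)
Your proposal is correct and follows essentially the same route as the paper: realize $\G_Q$ as $\G_{\tight}(S_Q)$ via Corollary~\ref{cor:realization_of_ample_groupoids}, invoke Theorem~\ref{thm:tight_inverse_semigroups_Banach_representations} for the tight-representation correspondence, and use Lemmas~\ref{lem:representation_characterization} and~\ref{lem:Webster_lemma} to match the joint $\F$-contractiveness conditions on Banach $Q$-families with those on inverse semigroup representations. Your step two merely spells out the bookkeeping that the paper compresses into a citation of those two lemmas, and your identification of that as the only delicate point is accurate.
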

\begin{proof}
Note that $S:=\overline{S_{Q}}\setminus \{\partial Q\}= \{ Z(\mu,\nu) : (\mu, \eta) \in S_Q\} \cup \{\emptyset\}$ forms a wide inverse semigroup of compact open bisections in $\G_Q$ that separates the points in $\G_{Q}$.
Thus $\G_Q\cong \G_{\tight}(S)$ is the tight groupoid of $S$ by Corollary~\ref{cor:realization_of_ample_groupoids}. 
We have natural semigroup isomorphisms $S\cong S_Q\cong \{t_{\mu}t_{\nu}^*: \mu,\nu \in Q^*\}$ from  Proposition~\ref{prop:Leavitt_groupoid_model}. 
Thus by Theorem~\ref{thm:tight_inverse_semigroups_Banach_representations} and Lemmas~\ref{lem:representation_characterization} and \ref{lem:Webster_lemma}  we have isometric isomorphims \eqref{eq:graph_algebra_isomorphisms}, and the relations in the assertion establish bijective correspondences between the corresponding objects. 

\end{proof}
\begin{cor}\label{cor:graph_Lp}
 For $p\in [1,\infty]$ the isomorphisms \eqref{eq:graph_algebra_isomorphisms} descend to 
$
F^p(Q)\cong F^{p}(\G_Q)\cong F_{\tight}^p(S_Q),
$
and for any localizable $\mu$,  \eqref{eq:graph_algebra_isomorphism_relations} establish bijective correspondences between  spatial $Q$-families $(P,T)$ on $L^p(\mu)$,  tight homomorphisms $V:S_Q\to\SPIso(L^p(\mu))$  and representations $\psi:F^p(Q) \to B(L^p(\mu))$ such that $\psi(t_{\mu}t_{\mu}^*)$, $\mu \in Q^*$, are operators of multiplication.
 
  If $\F=\C$, then $F^p(Q)$ is universal for  $Q$-families in all $L^p$-operator algebras and for $p\neq 2$ every non-degenerate representation $\psi:F^p(Q)\to B(L^p(\mu))$ comes from a spatial $Q$-family $(P,T)$ on $L^p(\mu)$.
\end{cor}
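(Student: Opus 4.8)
The plan is to deduce everything from the corresponding results already established for the tight inverse semigroup algebra and for full groupoid $L^p$-operator algebras, transported along the isomorphisms of Theorem~\ref{thm:representations_of_graph_algebras}. First I would invoke Corollary~\ref{cor:tight_Lp} applied to $S=S_Q$: since $\G_Q\cong\G_{\tight}(S_Q)$ (as noted in the proof of Theorem~\ref{thm:representations_of_graph_algebras}), that corollary gives $F^p_{\tight}(S_Q)\cong F^p(\G_{\tight}(S_Q))=F^p(\G_Q)$, and the defining norm of $F^p_{\tight}(S_Q)$ — the maximal norm for tight homomorphisms into $\SPIso(L^p(\mu))$ — matches, under the semigroup isomorphism $S_Q\cong\{t_\mu t_\nu^*\}$ and Lemma~\ref{lem:Webster_lemma}, the defining norm of $F^p(Q)$ (the supremum of $\psi_{(P,T)}$ over spatial $Q$-families). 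This last identification of norms is the only genuinely graph-specific point: one checks that the data of a tight homomorphism $V:S_Q\to\SPIso(L^p(\mu))$ is the same as that of a spatial $Q$-family $(P,T)$ via \eqref{eq:graph_algebra_isomorphism_relations}, because the Cuntz--Krieger relation (CK2) at a regular vertex is exactly the tightness (cover-to-join) condition for the cover $r_Q^{-1}(v)$ of $v$, and (CK1) encodes the semilattice/order relations in $S_Q$. Hence $\|b\|_{L^p}$ for $F^p(Q)$ equals the $F^p_{\tight}(S_Q)$-norm of the corresponding element, giving the chain $F^p(Q)\cong F^p(\G_Q)\cong F^p_{\tight}(S_Q)$.

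Next I would read off the correspondence of representations. By Theorem~\ref{thm:tight_inverse_semigroups_Banach_representations} together with Theorem~\ref{thm:spatial_representations_of_groupoid_algebras}\ref{enu:spatial_representations_of_groupoid_algebras2} (applied to the ample groupoid $\G_Q$, all of whose unit-space pieces $X_t=Z(e)\setminus\bigcup Z(f)$ are compact open, so the statement holds for every $p\in[1,\infty]$), a representation $\psi:F^p(\G_Q)\to B(L^p(\mu))$ for which $C_0(\partial Q)$ — equivalently each $\psi(t_\mu t_\mu^*)$ — acts by multiplication operators is precisely a spatial covariant representation, which via the identification $S_Q\cong\{t_\mu t_\nu^*\}$ is exactly a tight homomorphism $V:S_Q\to\SPIso(L^p(\mu))$, i.e.\ a spatial $Q$-family $(P,T)$ on $L^p(\mu)$. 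Translating back through $\Psi:\LL(Q)\to A(\G_Q)$ of Proposition~\ref{prop:Leavitt_groupoid_model} and the universal property of $F^p(Q)$ yields the asserted bijective correspondence \eqref{eq:graph_algebra_isomorphism_relations}.

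For the complex case, I would apply Theorem~\ref{thm:initial_on_L^p_full}\ref{enu:initial_on_L^p_full4}: when $\F=\C$, condition \ref{enu:full_Lp3} in Definition~\ref{defn:full_Lp} is automatic, so $F^p(\G_Q)$ is universal for all $\|\cdot\|_{\max}^S$-contractive representations into $L^p$-operator algebras; combined with the observation that every $Q$-family in an $L^p$-operator algebra is automatically a Banach $Q$-family (Lemma~\ref{lem:representation_characterization}, since the relevant idempotents lie in a commutative $C^*$-subalgebra and are jointly $\F$-contractive), this shows $F^p(Q)$ is universal for $Q$-families in all $L^p$-operator algebras. Finally, for $p\neq 2$, Theorem~\ref{thm:L^p_representations of C_0(X)} forces any non-degenerate representation's restriction to $C_0(\partial Q)$ to act by multiplication operators, so $\psi$ is then spatial and comes from a spatial $Q$-family by the correspondence above.

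The step I expect to be the main obstacle is the precise matching of the two universal norms, i.e.\ verifying that a spatial $Q$-family $(P,T)$ on $L^p(\mu)$ is literally the same datum as a tight homomorphism $S_Q\to\SPIso(L^p(\mu))$. This requires checking that the product in $S_Q$ (concatenation-with-cancellation) is faithfully realized by $(\mu,\nu)\mapsto T_\mu T_\nu^*$ inside the inverse semigroup $\SPIso(L^p(\mu))$ — using that products $T_e^*T_f$ vanish for $e\neq f$ with common range, which follows from orthogonality of the $P_v$ and (CK1) — and that tightness of $V|_{E(S_Q)}$ is equivalent to (CK2); the former is a careful but routine bookkeeping argument with spatial partial isometries, and once it is in place the rest of the proof is assembly of already-proven theorems. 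Everything else is a direct citation, so I would keep the written proof short: \begin{proof} Follow the proof of Theorem~\ref{thm:representations_of_graph_algebras}, replacing the use of Theorem~\ref{thm:tight_inverse_semigroups_Banach_representations} by Corollary~\ref{cor:tight_Lp}, and using that all bisections in $S_Q$ have compact open image so that Theorem~\ref{thm:spatial_representations_of_groupoid_algebras}\ref{enu:spatial_representations_of_groupoid_algebras2} applies for every $p\in[1,\infty]$; the matching of $Q$-families with spatial homomorphisms of $S_Q$ is immediate from \eqref{eq:graph_algebra_isomorphism_relations} and Lemma~\ref{lem:Webster_lemma}. The final two assertions follow from Theorem~\ref{thm:initial_on_L^p_full}\ref{enu:initial_on_L^p_full4} and Theorem~\ref{thm:L^p_representations of C_0(X)} exactly as in Corollary~\ref{cor:Lp_semigroup_representations}. \end{proof}
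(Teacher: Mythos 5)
Your proposal is correct and follows essentially the same route as the paper, whose entire proof is ``Combine Theorem~\ref{thm:representations_of_graph_algebras} and Corollary~\ref{cor:tight_Lp}.'' The norm-matching step you flag as the main obstacle is already contained in the bijective correspondence of Theorem~\ref{thm:representations_of_graph_algebras} (between Banach $Q$-families and tight representations of $S_Q$), so specializing to $\SPIso(L^p(\mu))$-valued data and invoking Corollary~\ref{cor:tight_Lp} is all that is needed.
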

\begin{proof}
Combine Theorem \ref{thm:representations_of_graph_algebras} and Corollary \ref{cor:tight_Lp}.
\end{proof}
\begin{rem}
Originally $L^p$-Cuntz algebras \cite{PhLp1} and  graph $L^p$-operator algebras $F^p(Q)$ \cite{cortinas_rodrogiez}   were defined  (for $p\in [1,\infty)$ and $\F=\C$) using spatial representations on standard measure spaces. As $F^p(Q)$ is separable, one can limit only to such measures, see Remark \ref{rem:second_countable_vs_measures}. 
The last part of Corollary \ref{cor:graph_Lp} 
says that for $p\in [1,\infty]\setminus \{2\}$ the use of spatial $Q$-families  
is inevitable. For $p=2$, Corollary \ref{cor:graph_Lp} gives the tight semigroup  picture for  $C^*$-algebras associated to arbitrary graphs, which seems to be absent from the literature.
\end{rem}

\begin{rem}[Banach--Cuntz algebras]\label{rem:Cuntz-algebras}
For any $n>1$ consider the graph $Q_n$ with a single vertex $v$ and $n$ edges $e_1,...,e_n$. 
Then our $F(Q_n)$ is a Banach algebra analogue of the Cuntz $C^*$-algebra $F_n$.
Daws and Horv\'{a}th~\cite{Daws_Horwath} proposed a different construction: the \emph{Daws--Horv\'{a}th algebra} $\mathcal{DH}_n$ is by definition the quotient of the Banach $*$-algebra $\ell^{1}(S_{Q_{n}}\setminus \{0\})$ by an ideal $J$ generated by the element $\delta_{(v,v)} - \sum_{i=1}^{n} \delta_{(e_i,e_i)}$.
Thus it is the universal Banach algebra generated by a contractive $Q_n$-family, or equivalently the completion of $\LL(Q_n)$ in the largest submutliplicative norm $\|\cdot\|_{DH}$ such that $\|t_{e_i}\|_{DH}=\|t_{e_i}^*\|_{DH}=\|t_{v} \|_{DH}=1$ for $i=1, \ldots ,n$.
As shown in \cite{Daws_Horwath}, $\mathcal{DH}_n =\overline{\LL(Q_n)}^{\|\cdot\|_{DH}}$ is naturally a simple purely infinite Banach $*$-algebra.
Thus we have an injective $*$-homomorphism $\mathcal{DH}_n \donto F(Q_n)$ which is the identity on $\LL(Q_n)$. 
It seems unlikely that this homomorphism is surjective. 
To prove this one could try to show that there are no bounded traces on $F(Q_n)$, cf. \cite[Section 4]{Daws_Horwath}.
\end{rem}

\bibliographystyle{plain}

\end{document}